\theoremstyle{plain}
\newtheorem{lemma}{Lemma}[section]
\newtheorem{prop}[lemma]{Proposition}
\newtheorem{theorem}{Theorem}[section]
\theoremstyle{definition}
\theoremstyle{remark}
\newtheorem{remark}[lemma]{Remark}
\numberwithin{equation}{section}
\def\to{\rightarrow}
\renewcommand{\div}{\mathrm{div} \hspace{0.5mm}}        %divergence
\newcommand{\supp}{\mathrm{supp}}
\newcommand{\nchi}{{\raise.3ex\hbox{$\chi$}}}
\def\XXint#1#2#3{{\setbox0=\hbox{$#1{#2#3}{\int}$ }
		\vcen{\hbox{$#2#3$ }}\kern-.6\wd0}}
\renewcommand{\det}{{\rm det}}
\newcommand{\justified}{%
	\rightskip\z@skip%
	\leftskip\z@skip}
\newcommand\restr[2]{{% we make the whole thing an ordinary symbol
		\left.\kern-\nulldelimiterspace % automatically resize the bar with \right
		#1 % the function
		\vphantom{\big|} % pretend it's a little taller at normal size
		\right|_{#2} % this is the delimiter
}}
\newcommand{\var}{\varepsilon}
\DeclareFontFamily{U}{mathx}{\hyphenchar\font45}
\DeclareFontShape{U}{mathx}{m}{n}{<-> mathx10}{}
\DeclareSymbolFont{mathx}{U}{mathx}{m}{n}
\DeclareMathAccent{\widebar}{0}{mathx}{"73}
\renewcommand{\i}{\ifmmode\mathit{\mathchar"7010 }\else\char"10 \fi}
\renewcommand{\j}{\ifmmode\mathit{\mathchar"7011 }\else\char"11 \fi}
\renewcommand{\le}{\leq} 
\renewcommand{\ge}{\geq}
\def\char{{1\!\mbox{\rm l}}}
\definecolor{orange}{rgb}{1,.549,0}
\definecolor{GreenYellow }{rgb}{ 0.15,   0.69, 0}
\definecolor{Yellowone}{rgb}{ 0, 1., 0} \definecolor{Goldenrod }{rgb}{  0, 0.10, 0.84}
\definecolor{Dandelion }{rgb}{ 0, 0.29, 0.84} 
\definecolor{Apricot }{rgb}{ 0, 0.32, 0.52}
\definecolor{Peach }{rgb}{ 0, 0.50, 0.70} 
\definecolor{GreenYellow}{cmyk}{0.15,0,0.69,0}
\definecolor{RoyalPurple}{cmyk}{0.75,0.90,0,0}
\definecolor{Yellow}{cmyk}{0,0,1,0}
\definecolor{BlueViolet}{cmyk}{0.86,0.91,0,0.04}
\definecolor{Goldenrod}{cmyk}{0,0.10,0.84,0}
\definecolor{Periwinkle}{cmyk}{0.57,0.55,0,0}
\definecolor{Dandelion}{cmyk}{0,0.29,0.84,0}
\definecolor{CadetBlue}{cmyk}{0.62,0.57,0.23,0}
\definecolor{Apricot}{cmyk}{0,0.32,0.52,0}
\definecolor{CornflowerBlue}{cmyk}{0.65,0.13,0,0}
\definecolor{Peach}{cmyk}{0,0.50,0.70,0}
\definecolor{MidnightBlue}{cmyk}{0.98,0.13,0,0.43}
\definecolor{Melon}{cmyk}{0,0.46,0.5,0}
\definecolor{NavyBlue}{cmyk}{0.94,0.54,0,0}
\definecolor{YellowOrange}{cmyk}{0,0.42,1,0}
\definecolor{RoyalBlue}{cmyk}{1,0.50,0,0}
\definecolor{Orange}{cmyk}{0,0.61,0.87,0}
\definecolor{Blue}{cmyk}{1,1,0,0}
\definecolor{BurntOrange}{cmyk}{0,0.51,1,0}
\definecolor{Cerulean}{cmyk}{0.94,0.11,0,0}
\definecolor{Bittersweet}{cmyk}{0,0.75,1,0.24}
\definecolor{Cyan}{cmyk}{1,0,0,0}
\definecolor{RedOrange}{cmyk}{0,0.77,0.87,0}
\definecolor{ProcessBlue}{cmyk}{0.96,0,0,0}
\definecolor{Mahogany}{cmyk}{0,0.85,0.87,0.35}
\definecolor{SkyBlue}{cmyk}{0.62,0,0.12,0}
\definecolor{Maroon}{cmyk}{0,0.87,0.68,0.32}
\definecolor{Turquoise}{cmyk}{0.85,0,0.20,0}
\definecolor{BrickRed}{cmyk}{0,0.89,0.94,0.28}
\definecolor{TealBlue}{cmyk}{0.86,0,0.34,0.02}
\definecolor{Red}{cmyk}{0,1,1,0}
\definecolor{Aquamarine}{cmyk}{0.82,0,0.30,0}
\definecolor{OrangeRed}{cmyk}{0,1,0.50,0}
\definecolor{BlueGreen}{cmyk}{0.85,0,0.33,0}
\definecolor{RubineRed}{cmyk}{0,1,0.13,0}
\definecolor{Emerald}{cmyk}{1,0,0.50,0}
\definecolor{WildStrawberry}{cmyk}{0,0.96,0.39,0}
\definecolor{JungleGreen}{cmyk}{0.99,0,0.52,0}
\definecolor{Salmon}{cmyk}{0,0.53,0.38,0}
\definecolor{SeaGreen}{cmyk}{0.69,0,0.50,0}
\definecolor{CarnationPink}{cmyk}{0,0.63,0,0}
\definecolor{Green}{cmyk}{1,0,1,0}
\definecolor{Magenta}{cmyk}{0,1,0,0}
\definecolor{ForestGreen}{cmyk}{0.91,0,0.88,0.12}
\definecolor{VioletRed}{cmyk}{0,0.81,0,0}
\definecolor{PineGreen}{cmyk}{0.92,0,0.59,0.25}
\definecolor{Rhodamine}{cmyk}{0,0.82,0,0}
\definecolor{LimeGreen}{cmyk}{0.50,0,1,0}
\definecolor{Mulberry}{cmyk}{0.34,0.90,0,0.02}
\definecolor{YellowGreen}{cmyk}{0.44,0,0.74,0}
\definecolor{RedViolet}{cmyk}{0.07,0.90,0,0.34}
\definecolor{SpringGreen}{cmyk}{0.26,0,0.76,0}
\definecolor{Fuchsia}{cmyk}{0.47,0.91,0,0.08}
\definecolor{OliveGreen}{cmyk}{0.64,0,0.95,0.40}
\definecolor{Lavender}{cmyk}{0,0.48,0,0}
\definecolor{RawSienna}{cmyk}{0,0.72,1,0.45}
\definecolor{Thistle}{cmyk}{0.12,0.59,0,0}
\definecolor{Sepia}{cmyk}{0,0.83,1,0.70}
\definecolor{Orchid}{cmyk}{0.32,0.64,0,0}
\definecolor{Brown}{cmyk}{0,0.81,1,0.60}
\definecolor{DarkOrchid}{cmyk}{0.40,0.80,0.20,0}
\definecolor{Tan}{cmyk}{0.14,0.42,0.56,0}
\definecolor{Purple}{cmyk}{0.45,0.86,0,0}
\definecolor{Gray}{cmyk}{0,0,0,0.50}
\definecolor{Plum}{cmyk}{0.50,1,0,0}
\definecolor{Black}{cmyk}{0,0,0,1}
\definecolor{Violet}{cmyk}{0.79,0.88,0,0}
\definecolor{White}{cmyk}{0,0,0,0}
\definecolor{rltred}{rgb}{0.75,0,0}
\definecolor{rltgreen}{rgb}{0,0.5,0}
\definecolor{oneblue}{rgb}{0,0,0.75}
\definecolor{marron}{rgb}{0.64,0.16,0.16}
\definecolor{forestgreen}{rgb}{0.13,0.54,0.13}
\definecolor{purple}{rgb}{0.62,0.12,0.94}
\definecolor{dockerblue}{rgb}{0.11,0.56,0.98}
\definecolor{freeblue}{rgb}{0.25,0.41,0.88}
\definecolor{myblue}{rgb}{0,0.2,0.4}
\definecolor{Melon}{rgb}{ 0.46, 0.50, 0}
\definecolor{Melone}{rgb}{ 0, 0.46, 0.50}
\begin{document}

	%\title{{\LARGE \textbf{Asymptotic stability for a two-phase flow system in the low Deborah number limit}}}

    	\title{Relaxation limit and asymptotic stability for the Euler-Navier-Stokes equations}

\author{Mingwen Fei, Ling-Yun Shou, and  Houzhi  Tang}

\keywords{Two-phase flow, Navier-Stokes equations, Kramer-Smoluchowski equation, relaxation limit, large-time behavior, critical regularity.}

\subjclass[2010]{35B25, 35B40, 35Q99, 76N10, 76N17.}

	\date{}
	
\maketitle

\begin{abstract}
The Euler-Navier-Stokes (E-NS) system arises as a macroscopic description of kinetic-fluid interactions, derived from the local-Maxwellian closure of the Vlasov-Fokker-Planck-Navier-Stokes flow.  In this paper, we investigate the singular limit of the system in $\mathbb{R}^d$ ($d\ge2$) when the relaxation parameter $\varepsilon>0$ tends to zero. In contrast to the Euler system with velocity damping, the E-NS model features only a weaker relaxation of the relative velocity, which makes it challenging to analyze its dynamics as $\varepsilon\rightarrow 0$. We develop an energy argument to show global-in-time error estimates between the E-NS system and its limit system, the so-called Kramers-Smoluchowski-Navier-Stokes (KS-NS) system. These error estimates enable us to prove the global existence and uniform-in-$\varepsilon$ regularity of the strong solution to the E-NS system in a hybrid critical Besov space with a sharp frequency threshold of order $\mathcal{O}(\varepsilon^{-1})$ separating the low- and high-frequency regimes. Moreover, the large-time asymptotic stability of the global solution to the E-NS system is established. %Under a $\dot{B}^{\sigma_1}_{2,\infty}$-boundedness assumption on the initial data in $\dot{B}^{\sigma_1}_{2,\infty}$ with $-d/2\le\sigma_1<d/2-1$, 
More precisely, we derive the optimal decay rates %$(1+t)^{-(\sigma-\sigma_1)/2}$ for the $\sigma$-th order derivatives 
of the solution uniformly in $\varepsilon$, and the enhanced decay rates %\varepsilon(1+t)^{-(\sigma-\sigma_1+1)/2}$ 
for the difference between the densities of the E-NS system and the KS-NS system.

	\end{abstract}

	%\tableofcontents
	\section{Introduction}

    \subsection{Presentation of the models}
		Recently, coupled kinetic-fluid models have received  a bulk of attention due to their wide range of applications in modeling reaction flows of sprays, atmospheric pollution modeling,  chemical engineering, wastewater treatment, and dust collecting units \cite{MR2226800,MR2041452,MR709743,1981Collective,2006Large,Williams1958Spray}.  The Euler-Navier-Stokes (E-NS) system describes the motion of the compressible Euler equations coupled with an incompressible viscous fluid derived from kinetic-fluid interactions, which captures the two-phase coupling through a drag (Brinkman) force in $\mathbb{R}^d$ ($d\ge 2$). This takes the form
	\begin{equation}\label{main1}
		\left\{
		\begin{aligned}
			&\partial_t\rho^\var+\frac{1}{\varepsilon}\text{div}(\rho^\var w^\var)=0,\\
			&\partial_t(\rho^\var w^\var)+\frac{1}{\varepsilon}\text{div}(\rho^\var w^\var\otimes w^\var)+\frac{1}{\varepsilon}\nabla \rho^\var=-\frac{1}{\varepsilon^2}\rho^\var (w^\var-\varepsilon u^\var),\\
			&\partial_tu^\var+u^\var\cdot\nabla u^\var+\nabla P^\var=\mu\Delta u+\frac{1}{\varepsilon}\rho^\var(w^\var-\varepsilon u^\var),\\
			&{\rm div} u^\var=0,
		\end{aligned}
		\right.
	\end{equation}
	which governs the evolution of the density $\rho^\var=\rho^\var(t,x)$ and the velocity $w^\var=w^\var(t,x)$ for the isothermal compressible Euler equations, as well as the velocity $u^\var=u^\var(t,x)$ for the incompressible Navier-Stokes equations. Here $\var>0$ denotes the relaxation‐time parameter of the particle velocity, measuring the characteristics  of light particles in the surrounding fluid (cf. \cite{MR2106333}), and $\mu$ is the viscosity coefficient. %which is assumed to be one for simplicity.
	
	We consider the Cauchy problem for the E-NS system \eqref{main1} subject to the initial conditions
	\begin{align}\label{main1d}
		(\rho^\var, w^\var, u^\var)(t,x)=(\rho_0^\var, w_0^\var, u_0^\var)(x)\rightarrow (\bar{\rho},0,0)\quad\text{as}\quad|x|\rightarrow\infty
	\end{align}
	with a constant background density $\bar{\rho}>0$. Without loss of generality, we set $\bar{\rho}=1$ in this paper.

The E-NS system \eqref{main1} can be derived from a kinetic-fluid model describing Brownian particles immersed in a viscous incompressible fluid, known as the Vlasov-Fokker-Planck-Navier-Stokes (VFP-NS) equations. 
Let $f^\var(t,x,\xi)$ denote the distribution function of particles located at $x\in\mathbb{R}^d$ with velocity $\xi\in\mathbb{R}^d$ at time $t\in\mathbb{R}_+$, and $u^\var(t,x)$ be the velocity field in the incompressible Navier-Stokes equations. 
In the light-particle regime, the coupled kinetic-fluid system is given by
\begin{equation}\label{NSVFP}
\left\{
\begin{aligned}
&\partial_t f^\var
   + \frac{1}{\varepsilon}\,\xi\cdot\nabla_x f^\var
   + \frac{1}{\varepsilon^{2}}\, \operatorname{div}_\xi\!\left((\varepsilon u^\var - \xi) f^\var - \nabla_\xi f^\var\right) = 0,\\[2mm]
&\partial_t u^\var + u^\var\cdot\nabla u^\var + \nabla P^\var
   = \mu \Delta u 
     + \frac{1}{\varepsilon}\int_{\mathbb{R}^d} (\xi - \varepsilon u^\var)\, f^\var \, d\xi,\\[2mm]
&\operatorname{div} u^\var = 0 .
\end{aligned}
\right.
\end{equation}
Define 
\[
\rho^\var(t,x):=\int_{\mathbb{R}^d} f^\var(t,x,\xi)\, d\xi,
\qquad
\rho^\var\, w^\var(t,x):=\int_{\mathbb{R}^d} \xi\, f^\var(t,x,\xi)\, d\xi .
\]
and assume that $f^\var$ is a local Maxwellian of the form
\[
f^\var(t,x,\xi)
   := \frac{\rho^\var(t,x)}{(2\pi)^{d/2}}
     \exp\!\left(-\frac{\lvert \xi - \varepsilon u(t,x)\rvert^2}{2}\right).
\]
Under this Maxwellian ansatz, the system \eqref{NSVFP} leads to the E-NS system \eqref{main1}. There have been many important mathematical works related to the kinetic-fluid model \eqref{NSVFP}, see \cite{MR3227296,MR3403400,MR2729436,MR2106333,MR2106334,MR4076066,MR3073216} and references therein.

%In this paper, we are interested in studying the global dynamics of the E-NS system \eqref{main1} in the singular limit $\varepsilon \to 0$. 
Formally, suppose that $(\rho^\varepsilon, w^\varepsilon, u^\varepsilon)$ is a sufficiently regular solution to \eqref{main1} and 
\begin{align}
\lim_{\varepsilon\to 0}~\Big(\rho^\varepsilon, \tfrac{1}{\varepsilon} w^\varepsilon, u^\varepsilon, P^\var\Big)
   = (\rho^*, W^*, u^*, \tilde{P}^*),
   \qquad 
\lim_{\varepsilon\to 0}~(\rho_0^\varepsilon, u_0^\varepsilon)
   = (\rho_0^*, u_0^*),\label{limit111}
\end{align}
then we deduce from $\eqref{main1}_1$ that 
\begin{align}\label{YUI1}
\partial_t\rho^*+\text{div}(\rho^* W^*)=0.
\end{align}
Taking the limit in $\eqref{main1}_2$ as $\var\rightarrow0$ and using \eqref{limit111}, we derive a new \emph{Darcy-type law} 
\begin{align}\label{darcy}
\rho^* W^* = - \nabla \rho^* + \rho^* u^* .
\end{align}
Combining \eqref{YUI1} and \eqref{darcy} leads to 
\begin{align}\label{YUI3}
\partial_t\rho^*+\text{div}(\rho^* u^*)=\Delta \rho^*.
\end{align}
Taking the limit in $\eqref{main1}_3$ as $\var\rightarrow0$, then using \eqref{darcy} yields that
\begin{align}\label{YUI4}
&\partial_t u^* + u^* \cdot \nabla u^* + \nabla P^* = \mu \Delta u^*,
\end{align}
with the new pressure $P^*:=\tilde{P}^*+\rho^*$.

We combine \eqref{YUI3}, \eqref{YUI4} and $\eqref{main1}_4$ together to obtain the Kramer-Smoluchowski equation coupled with the incompressible Navier-Stokes equations, namely the KS-NS system:
\begin{equation}\label{main2}
\left\{
\begin{aligned}
&\partial_t \rho^* + \operatorname{div}(\rho^* u^*) = \Delta \rho^*,\\
&\partial_t u^* + u^* \cdot \nabla u^* + \nabla P^* = \mu \Delta u^*,\\
&\operatorname{div} u^* = 0,
\end{aligned}
\right.
\end{equation}
supplemented with the initial condition
\begin{align}\label{main2d}
(\rho^*, u^*)(0,x) = (\rho_0^*, u_0^*)(x)
   \longrightarrow (1,0) 
   \quad \text{as } |x| \to \infty.
\end{align}
%while the first two equations in \eqref{main1} further imply the Darcy-type law
%\begin{align}\label{darcy}
%\rho^* W^* = - \nabla \rho^* + \rho^* u^* .
%\end{align}

To the best of our knowledge, no rigorous results are currently available concerning the justification of the convergence from the E-NS system \eqref{main1} to the KS-NS system \eqref{main2}, nor the derivation of the Darcy-type law \eqref{darcy}.

        \subsection{State of the art}
We recall some related developments that are closely connected to the present work.

\medskip
\noindent
{\textbf{Euler equations with damping.}}
The system~\eqref{main1} is strongly related to the following classical compressible Euler equations with damping under the diffusive scaling:
\begin{equation}\label{euler}
	\left\{
	\begin{aligned}
		&\partial_t\rho^\var+\frac{1}{\varepsilon}\operatorname{div}(\rho^\var w^\var)=0,\\[2pt]
		&\partial_t(\rho^\var w^\var)+\frac{1}{\varepsilon}\operatorname{div}(\rho^\var w^\var\otimes w^\var)
		+\frac{1}{\varepsilon}\nabla P(\rho^\var)
		=-\frac{1}{\varepsilon^2}\rho^\var w^\var.
	\end{aligned}
	\right.
\end{equation}
Indeed, when $u^\var=0$, the system~\eqref{main1} reduces to~\eqref{euler} in the isothermal case of $P(\rho^\var)=\rho^\var$.

In the absence of the damping term $\varepsilon^{-2}\rho^\var w^\var$, it is well-known that local classical 
solutions of the Euler equations may develop singularities (such as shock waves), even for smooth 
and small initial data (see Dafermos~\cite{MR3468916} and Lax~\cite{MR165243}).  
When the damping term is present, Wang and Yang~\cite{MR1834121}, and later Sideris, Thomases, and Wang~\cite{MR1978315}, 
proved that such blowup can be prevented and global classical solutions exist for small perturbations 
in Sobolev spaces $H^{s}(\mathbb{R}^{d})$ with $s> d/2+1$.  
We also refer to extensive works \cite{MR2754341,MR2349349,MR4951948,MR2107774,MR2058165} on global existence and large-time asymptotics for general hyperbolic systems 
satisfying the Shizuta-Kawashima condition. Xu and Kawashima~\cite{MR3149065,MR3360739} extended the well-posedness and stability theory for general partially dissipative systems to the 
inhomogeneous critical space $B^{d/2+1}_{2,1}$.  
More recently, Danchin and Crin-Barat~\cite{MR4419369,MR4612415} refined this framework by improving 
the control of low frequencies from $L^2$ to $\dot{B}^{d/2}_{2,1}$, or more generally 
$\dot{B}^{d/p}_{p,1}$ for $2\le p\le4$.  
We also mention that the second author, together with Xu and Zhang~\cite{xuzhangshou}, 
developed an $L^{p}$ characterization of sharp upper and lower decay rates for partially dissipative systems.

Formally, if $(\rho^\varepsilon, \varepsilon^{-1} w^\varepsilon)$ converges to $(\rho^*, W^*)$ as $\varepsilon\to0$, 
then the limit dynamics of \eqref{euler} are governed by the porous medium equation
\[
\partial_t\rho^* - \Delta P(\rho^*) = 0,
\]
along with the classical {\emph{Darcy law}}
\[
\rho^* W^* = -\nabla P(\rho^*).
\]

The rigorous justification of such relaxation limit has a long history.  
In one spatial dimension, the results can be traced back to 
Liu~\cite{MR872145}, Marcati, Milani and Secchi~\cite{MR920759}, and Marcati and Milani~\cite{MR1042662}.  
For large $BV$ data, Junca and Rascle~\cite{MR1900673} proved the relaxation limit from the isothermal Euler system 
to the heat equation. In several dimensions, Coulombel, Goudon, and Lin~\cite{MR2255190,MR3057138} 
established uniform-in-$\varepsilon$ estimates of smooth solutions for the isentropic Euler equations~\eqref{euler} 
and justified the corresponding weak relaxation limit.  
See also Xu and Wang~\cite{MR3003282} for an analysis in inhomogeneous critical spaces, 
and Peng and Wasiolek~\cite{MR3519534} for more general hyperbolic systems. Crin-Barat and Danchin~\cite{MR4419369} obtained uniform-in-$\var$ regularity estimates in critical spaces by rescaling the system to the normalized case $\varepsilon=1$.  They further introduced a damped mode that allows global-in-time error estimates with explicit convergence rates.  when chemotactic effects are included, Crin-Barat, He and the second author~\cite{MR4643014} tracked the parameter in energy estimates and justified the relaxation limit toward the Keller-Segel model, together with uniform large-time behavior. 
Related functional frameworks have also been used to study hyperbolic systems that are not covered by the classical Shizuta–Kawashima theory (see for instance
\cite{MR3356581,MR4878172,MR4548999,MR4752555,MR4970471}).  
%Among them, when chemotactic effects are included, Crin-Barat, He, and the second author~\cite{MR4643014} 
%tracked the parameter in energy estimates and justified the relaxation limit 
%toward the Keller-Segel model, together with uniform large-time behavior.

	%There has been much important progress made recently on the analysis of two-phase flow models; refer to \cite{htwz2024,MR4316126,MR4175837,MR4262061,MR3487272,MR3723164,MR4367916,MR4346699} and references therein. However, 

 \vspace{2mm}
 
 \noindent
 \textbf{Euler-Navier-Stokes equations.}  Note that although the relative velocity dissipation in the E-NS system is weaker than the direct velocity dissipation of the damped Euler system, it is enough to prevent finite-time blow up. Indeed, for the E-NS system \eqref{main1} in the case $\var=1$, Choi \cite{MR3546341} first proved the global existence and exponential stability of classical solutions for small initial perturbations in $H^s(\mathbb{T}^3)$ with  $s>7/2$. Then, Huang et al. \cite{MR4776378} proved the global well-posedness and  optimal time-decay estimates of the classical solution to the E-NS system in $\mathbb{R}^3$ with high Sobolev regularity. When the pressure term $\nabla\rho$ in $\eqref{main1}_2$ vanishes, the coupled system is reduced to the pressureless E-NS  system. Choi and Jung \cite{MR4316126} first applied the weighted energy method to investigate the global well-posedness of solutions to the pressureless E-NS system and proved the convergence toward the equilibrium state at almost optimal decay rates. Later, the optimal decay rates were achieved in \cite{choijung24,htz24}.  
		
When the coupled Navier–Stokes phase is compressible, there are many important works. Choi \cite{MR3546341} established the global existence and uniqueness of strong solutions in the whole space and in the periodic domain, and also proved exponential time stability in the periodic case. Choi and Jung \cite{MR4344262} showed the global existence and uniqueness of solutions near the equilibrium state in a  bounded domain. In addition, Wu, Zhang and Zhou \cite{MR4175837}, and Tang and Zhang \cite{MR4262061} obtained the optimal algebraic time-decay rates  in the whole space using the spectral analysis method. Later, Li and Shou \cite{MR4596744} established the global well-posedness and optimal time-decay estimates in the critical Besov space. Recently, without the pressure in the Euler phase, the global dynamics of strong solutions near the equilibrium state were studied in \cite{MR3487272}, and the finite-time blow-up phenomenon of classical solutions was investigated in \cite{MR3723164}.  

%When the density-dependent viscosity term for the compressible flow is taken into account, which can be derived from the Chapman-Enskog expansion of the compressible Navier-Stokes-Vlasov-Fokker-Planck model in \cite{MR4367916}, the existence and nonlinear stability of steady states for the inflow/outflow problem on the half line are proved in \cite{MR4208932,MR4346699}.

 \vspace{2mm}

%\noindent
%{\textbf{Hydrodynamic limits.}}

There are few results on hydrodynamic limits from the VFP-NS equations to the E-NS or KS-NS system. Goudon, Jabin and Vasseur \cite{MR2106333} first studied the hydrodynamic limit for the VFP-NS model to the KS-NS equations in this context. Inspired by this work, Fang, Qi and Wen \cite{fqw2024} studied the global strong convergence from the VFP-NS system to the KS-NS system for small initial perturbations under the Sobolev framework by the method of Hilbert expansion.  When local alignment forces are included in the VFP-NS system, the rigorous convergence of the system to the E-NS system via the relative entropy method was established by Carrillo, Choi and Karper~\cite{MR3465376}. 

 \vspace{2mm}

In conclusion, the existing literature concerns the hydrodynamic limits of the VFP-NS system toward either the E-NS system or the KS-NS system under different scaling regimes, cf. \cite{MR3465376,fqw2024,MR2106333}.  
This naturally raises the following question  (see Figure \ref{fig:1}): \emph{Can one rigorously justify the singular limit from the E-NS system to the KS-NS system?}

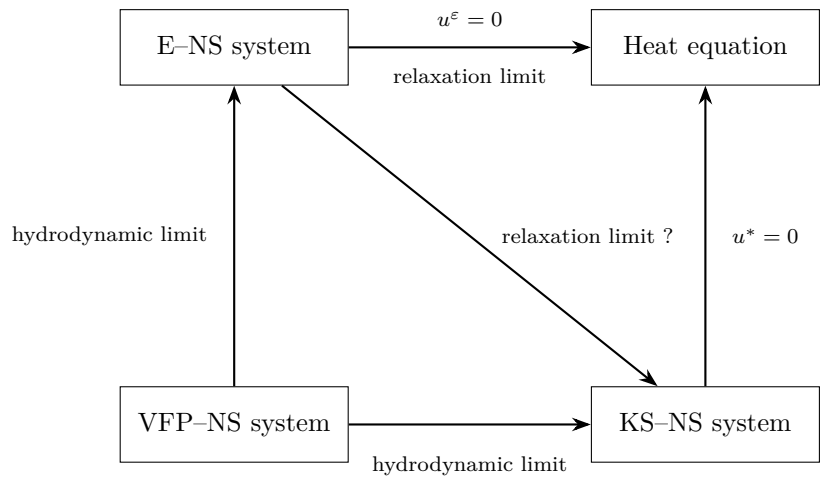
\begin{figure}[!ht]
\centering
\begin{tikzpicture}[
  system/.style={draw, rectangle, minimum width=3cm, minimum height=1cm, align=center, font=\small},
  arrow/.style={-{Stealth[length=2.5mm, width=1.8mm]}, thick, line width=0.8pt}
]

  \def\side{6}          
  \def\height{5.2}

  % === Nodes ===
  \node[system] (ENS)   at (0,\height-0.2) {E--NS system};
  \node[system] (VFPNS) at (0,0) {VFP--NS system};

  % Move KS--NS to align vertically with HEAT
  \node[system] (NSKS) at (\side/2+3.2,0) {KS--NS system};
  \node[system] (HEAT) at (\side/2+3.2,\height-0.2) {Heat equation};

  % === Arrows ===

  % Vertical VFPNS → ENS
  \draw[arrow] (VFPNS) -- 
      node[midway, left=6pt] {\scriptsize hydrodynamic limit} 
      (ENS);

  % ENS → KSNS
  \draw[arrow] (ENS) -- 
      node[midway, right=8pt] {\scriptsize relaxation limit~?} 
      (NSKS);

  % VFPNS → KSNS
  \draw[arrow] (VFPNS) -- 
      node[midway, below=8pt] {\scriptsize hydrodynamic limit} 
      (NSKS);

  % ENS → HEAT, two labels
  \draw[arrow] (ENS) -- 
      node[midway, above=4pt] {\scriptsize $u^\varepsilon = 0$}
      node[midway, below=4pt] {\scriptsize relaxation limit}
      (HEAT);

  % === New arrow: KSNS → HEAT ===
  \draw[arrow] (NSKS) -- 
      node[midway, right=6pt] {\scriptsize $u^* = 0$}
      (HEAT);

\end{tikzpicture}
\caption{Relations between the E--NS, VFP--NS and KS--NS systems}
\label{fig:1}
\end{figure}

        \subsection{Contributions, difficulties and main ideas}

The first objective of this paper is to close the missing link in the macroscopic description of two-phase flow (see Figure~\ref{fig:1}).  
We establish the singular limit from the E-NS system \eqref{main1} to the KS-NS system \eqref{main2} and the Darcy-type law \eqref{darcy}.  
Our convergence result is global in time and accommodates ill-prepared initial data (see Remark \ref{r11.5}).

Secondly, we aim to prove the global well-posedness of the E-NS system \eqref{main1} for small initial perturbations in the critical space. Precisely, we assume
\[
\rho^\varepsilon_0-1\in \dot{B}^{\frac{d}{2}-1}_{2,1}\cap \dot{B}^{\frac{d}{2}+1}_{2,1},\quad 
 w^\varepsilon_0\in \dot{B}^{\frac{d}{2}}_{2,1}\cap \dot{B}^{\frac{d}{2}+1}_{2,1},\quad 
\mathcal{P} w^\varepsilon_0\in \dot{B}^{\frac{d}{2}-1}_{2,1},\quad
u^\varepsilon_0\in \dot{B}^{\frac{d}{2}-1}_{2,1}.
\]
Here $\mathcal{P}$ denotes the incompressible projection (see \eqref{AT1}). We emphasize that the different regularity assumptions on $\rho_0^\varepsilon,w_0^\varepsilon$ and $u_0^\varepsilon$ yield distinct qualitative estimates for the solution. The regularity index $d/2+1$ is regarded as critical since $\dot{B}^{d/2+1}_{2,1}$ is continuously 
embedded into the space of globally Lipschitz functions.  
Controlling the Lipschitz norm is essential in preventing finite-time blowup in hyperbolic systems 
(see \cite{MR3468916}). On the other hand, the space $\dot{B}_{2,1}^{d/2-1}$ is the classical Fujita–Kato scaling-critical space for the incompressible Navier–Stokes equations (e.g., cf. \cite{MR1753481}), and the incompressible component $\mathcal{P} w_0^\varepsilon$ belongs to the same space as $u_0^\varepsilon$ due to the relative velocity coupling.

Our third objective is to characterize the large-time behavior of solutions in the singular limit.  
Under the additional assumption that the initial perturbations are uniformly bounded in $\dot{B}^{\sigma_1}_{2,\infty}$ ($-d/2\le \sigma_1 < d/2-1$), we obtain  uniform-in-$\varepsilon$ time-decay estimates, with the same rates as those of the limit KS-NS system.  
Moreover, we establish an enhanced stability property for the error: the difference between the densities of the E-NS system and the limit KS-NS system decays at faster rates and converges with an explicit $\mathcal{O}(\varepsilon)$ rate.  
This reveals a diffusion phenomenon: the E-NS system is asymptotically equivalent to a diffusive KS-NS system  for both long-time behavior and the singular limit process.

\vspace{2mm}

However, there are two main analytical difficulties.  
First, its dissipation mechanism is fundamentally different from that of the damped Euler system \eqref{euler}, particularly in the low-frequency regime.  
For the damped Euler equations, the velocity decays faster than the density due to direct damping, for example, cf.\cite{MR3360739}.
In contrast, the E-NS system exhibits a mixed hyperbolic–parabolic nature: the velocity enjoys parabolic dissipation inherited from the Navier-Stokes operator, whereas the variables $(\rho^\varepsilon,w^\varepsilon)$ are only weakly damped through nonlinear coupling.  
Consequently, all components decay at heat-like rates. 
%Thus, the decay mechanism in the present system is fundamentally different from that of the damped Euler dynamics. 

The second difficulty stems from the intricate coupling between the Euler and Navier-Stokes parts in \eqref{main1}.  
The transport effects in the Euler phase interact strongly with the viscous dissipation in the Navier-Stokes phase, which produces nonlinear cross-interaction terms without direct coercivity.  
This prevents the direct application of classical hyperbolic relaxation theories \cite{MR4419369,MR4643014,MR4878172,MR3003282} and requires a delicate balance between hyperbolic and parabolic modes.

To overcome these challenges, we develop a refined energy method via Littlewood–Paley theory, which is fundamentally different from the relative entropy method and the Hilbert expansion used in previous works.  With the help of spectral analysis, a delicate frequency partition implies that as $\var\rightarrow0$, the high frequencies  $|\xi|\gtrsim \var^{-1}$ vanish while the low frequencies $|\xi|\lesssim \var^{-1}$, consistent with the limit diffusion regularity, cover all other frequencies. Thus, our result reveals the singular limit process from hyperbolic type equations to parabolic type equations. To carry it out in the energy framework, our key idea is to introduce two damped modes
\begin{align}\label{B1}
		\mathcal{Z}^\varepsilon=\mathcal{P}^{\top}w^\varepsilon+\frac{\varepsilon}{\rho^\varepsilon}\nabla \rho^\varepsilon\quad \text{and} \quad \mathcal{R}^\varepsilon=\mathcal{P}w^\varepsilon-\varepsilon u^\varepsilon.
	\end{align} 
These allow us to uncover the limit KS-NS structure with remainders involving $(\mathcal{Z}^\varepsilon, \mathcal{R}^\varepsilon)$ (see \eqref{main5}) to establish the error estimates in the low-frequency regime $|\xi|\lesssim \var^{-1}$.
    This, combined with a hypocoercivity energy argument in the high-frequency regime $|\xi|\gtrsim  \var^{-1}$, enables us to construct global solutions to the E-NS system by {\emph{performing direct error estimates}} with an explicit convergence rate, supplemented by the regularity properties of the limit system.  
The resulting error analysis not only yields the singular limit but may also be of independent interest, as it offers a new perspective on the relative velocity relaxation in two-phase flow.

	\subsection{Main results}

	Note that the KS-NS system \eqref{main2} is scaling-invariant under the transform
	\begin{align}
		( \rho^*_\lambda, u^*_\lambda,P^*_\lambda)(t,x)=(\rho^*, \lambda u^*,\lambda^2 P^*)(\lambda^2 t, \lambda x).
	\end{align}
	Since the pair of homogeneous Besov spaces $\dot{B}^{d/2}_{2,1}\times \dot{B}^{d/2-1}_{2,1}$ indeed possesses this invariance, we have global well-posedness in the optimal functional space for the system \eqref{main2}. To align with the regularity of $u^*_0$, it is natural to assume the $\dot{B}^{d/2-1}_{2,1}$-regularity for $\rho_0^*-1$. The first result about the global well-posedness  and large-time behavior of system \eqref{main2} is stated as follows, and the proof is given in Appendix A.
	
	\begin{prop}\label{thm1}
		Let $d\geq 2$. There is a positive constant $\alpha_0$ such that if 
		\begin{align}\label{a0}
			E_0:=\|\rho_0^*-1\|_{\dot{B}_{2,1}^{\frac{d}{2}-1}\cap \dot{B}^{\frac{d}{2}}_{2,1}}+\|u_0^*\|_{\dot{B}_{2,1}^{\frac{d}{2}-1}}\leq \alpha_0,
		\end{align}	
		then the Cauchy problem \eqref{main2}-\eqref{main2d} admits a unique global solution $(\rho^*, u^*)$ satisfying that  
        $$
        \rho^*-1\in C(\mathbb{R}_+; \dot{B}_{2,1}^{\frac{d}{2}-1}\cap \dot{B}^{\frac{d}{2}}_{2,1}),\quad  u^*\in C(\mathbb{R}_+; \dot{B}_{2,1}^{\frac{d}{2}-1})
        $$
        and for $t>0$ 
		\begin{align}
			&\|\rho^*-1\|_{L_t^\infty(\dot{B}_{2,1}^{\frac{d}{2}-1}\cap \dot{B}^{\frac{d}{2}}_{2,1})}+\|\rho^*-1\|_{L_t^1( \dot{B}_{2,1}^{\frac{d}{2}+2}\cap \dot{B}^{\frac{d}{2}+1}_{2,1})}\nonumber\\	
			&\quad+\|u^*\|_{L_t^\infty(\dot{B}_{2,1}^{\frac{d}{2}-1})}	
			+\|u^*\|_{L_t^1(\dot{B}_{2,1}^{\frac{d}{2}+1})}\leq CE_0,\label{r0}
		\end{align}
		where the positive constant $C$ is independent of time.
        
          Moreover, let $\sigma_1\in [-\frac{d}{2},\frac{d}{2}-1)$ be a fixed constant. If $
			\rho_0^*-1, u_0^*\in \dot{B}_{2,\infty}^{\sigma_1}$,	
		then the solution $(\rho^*, u^*)$ to the KS-NS system \eqref{main2}-\eqref{main2d} has the following decay rates:
		\begin{align}\label{decayNSKS}
			\|\Lambda^\sigma (\rho^*-1,u^*)(t)\|_{L^2}\leq C (1+t)^{-\frac{1}{2}(\sigma-\sigma_1)},\quad \forall\,\, \sigma>\sigma_1,\ t\geq 1,
		\end{align}
		with $C>0$ a constant independent of time.
	\end{prop}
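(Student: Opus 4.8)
The plan is to treat the KS-NS system \eqref{main2} as a coupling of a scalar convection-diffusion equation for $\rho^*-1$ with the incompressible Navier-Stokes equations for $u^*$, both of parabolic type, and to run a fixed-point/a priori estimate scheme in the critical Besov spaces dictated by the scaling. First I would set $n:=\rho^*-1$ and rewrite the density equation as $\partial_t n - \Delta n = -\operatorname{div}(n u^*) - \operatorname{div} u^* = -\operatorname{div}(n u^*)$, using $\operatorname{div} u^* = 0$, while the velocity equation with the Leray projection $\mathcal P$ becomes $\partial_t u^* - \mu \Delta u^* = -\mathcal P(u^*\cdot\nabla u^*)$. Both are semilinear heat equations with quadratic nonlinearities, so the natural functional setting is the maximal-regularity space $\widetilde L^\infty_t(\dot B^{d/2-1}_{2,1})\cap L^1_t(\dot B^{d/2+1}_{2,1})$ for $u^*$ and $\widetilde L^\infty_t(\dot B^{d/2-1}_{2,1}\cap\dot B^{d/2}_{2,1})\cap L^1_t(\dot B^{d/2+1}_{2,1}\cap\dot B^{d/2+2}_{2,1})$ for $n$. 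Applying the dyadic block $\dot\Delta_j$, using the smoothing estimate for the heat semigroup on each block (so that $\|\dot\Delta_j w\|_{L^1_t L^2}\lesssim 2^{-2j}(\|\dot\Delta_j w_0\|_{L^2} + \|\dot\Delta_j(\text{source})\|_{L^1_t L^2})$), multiplying by $2^{js}$ and summing in $j$ gives the linear part of \eqref{r0}; the extra $\dot B^{d/2}_{2,1}$ regularity on $n$ is propagated the same way since the nonlinearity is subcritical at that level.

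The nonlinear terms are then handled by the standard Besov product laws: $\|\operatorname{div}(nu^*)\|_{L^1_t(\dot B^{d/2-1}_{2,1})}\lesssim \|n\|_{L^2_t(\dot B^{d/2}_{2,1})}\|u^*\|_{L^2_t(\dot B^{d/2}_{2,1})}$ (interpolating the $L^\infty_t$ and $L^1_t$ bounds to get the $L^2_t$ norms), and similarly for $\operatorname{div}(nu^*)$ in $L^1_t(\dot B^{d/2}_{2,1})$ and $L^1_t(\dot B^{d/2+1}_{2,1})$ using that $\dot B^{d/2}_{2,1}$ is an algebra and that one factor always carries at least $d/2$ derivatives' worth of regularity; the Navier-Stokes term $\mathcal P(u^*\cdot\nabla u^*)$ is treated exactly as in the classical Fujita-Kato theory in $\dot B^{d/2-1}_{2,1}$. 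Closing the estimates, one obtains $X(t)\le C E_0 + C X(t)^2$ where $X(t)$ denotes the left-hand side of \eqref{r0}; a continuity argument then yields $X(t)\le 2CE_0$ for all $t$ provided $\alpha_0$ is small enough, and uniqueness follows from a similar estimate on the difference of two solutions in a space with one derivative less. The time-continuity statement $\rho^*-1\in C(\mathbb R_+;\dot B^{d/2-1}_{2,1}\cap\dot B^{d/2}_{2,1})$, $u^*\in C(\mathbb R_+;\dot B^{d/2-1}_{2,1})$ is a routine consequence of the bounds, the density of Schwartz functions, and the fact that the $L^1_t$-in-time control of two-derivative-higher norms gives equicontinuity of the dyadic pieces.

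For the decay rates \eqref{decayNSKS}, the strategy is the now-standard Besov interpolation argument for parabolic equations with extra negative-regularity data. Having fixed $\sigma_1\in[-d/2,d/2-1)$ and assuming $(n_0,u_0^*)\in\dot B^{\sigma_1}_{2,\infty}$, I would first propagate the low-regularity norm uniformly in time: using the product estimates again, one shows $\|(n,u^*)(t)\|_{\dot B^{\sigma_1}_{2,\infty}}\lesssim \|(n_0,u_0^*)\|_{\dot B^{\sigma_1}_{2,\infty}} + (\text{quadratic terms bounded by }X(t))$, where the key point is that $\sigma_1 + d/2 > 0$ so the nonlinear flux lands in $\dot B^{\sigma_1}_{2,\infty}$ with no loss — this requires $\sigma_1\ge -d/2$, exactly the stated lower bound. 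With the pair $(n,u^*)$ bounded in $\widetilde L^\infty(\dot B^{\sigma_1}_{2,\infty})$ and the dissipation integrability from \eqref{r0}, one runs a Lyapunov-type functional argument à la Guo-Wang / Danchin-Xu: splitting frequencies at a moving threshold, the low frequencies decay at the rate governed by $\dot B^{\sigma_1}_{2,\infty}$ and the high frequencies are exponentially damped, yielding $\|\Lambda^\sigma(n,u^*)(t)\|_{L^2}\lesssim (1+t)^{-\frac12(\sigma-\sigma_1)}$ for every $\sigma>\sigma_1$ and $t\ge 1$.

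The main obstacle, and the place where care is genuinely needed, is the nonlinear coupling of the decay estimates: the density flux $\operatorname{div}(nu^*)$ mixes the two unknowns, so one must verify that the time-weighted norms close simultaneously for $n$ and $u^*$ — concretely, that the slowest decaying quantity does not feed back to destroy the rate of the others. This is handled by defining a joint time-weighted energy $\mathcal D(t):=\sup_{\tau\le t}\sum_\sigma (1+\tau)^{\frac12(\sigma-\sigma_1)}\|\Lambda^\sigma(n,u^*)(\tau)\|_{L^2}$ over a suitable finite range of $\sigma$, estimating the Duhamel terms with the bilinear decay lemma, and checking that the resulting inequality $\mathcal D(t)\lesssim \|(n_0,u_0^*)\|_{\dot B^{\sigma_1}_{2,\infty}} + E_0\,\mathcal D(t)$ closes for $E_0$ small; the borderline case $\sigma_1=-d/2$ is the tightest and is where one must be most careful with the endpoint product law in $\dot B^{\sigma_1}_{2,\infty}$. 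The full details are deferred to Appendix A.
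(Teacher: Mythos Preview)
Your proposal is correct and follows essentially the same strategy as the paper's Appendix~A: maximal regularity for the heat semigroup plus Besov product laws to close \eqref{r0}, then propagation of the $\dot B^{\sigma_1}_{2,\infty}$ norm combined with time-weighted interpolation for \eqref{decayNSKS}. Two minor remarks: the paper builds existence via a Picard iteration (showing the approximating sequence is Cauchy) rather than a direct continuity argument on $X(t)\le CE_0+CX(t)^2$, and implements the decay via explicit $t^M$-weighting of the equations rather than a Duhamel-based weighted supremum; also note that after applying the Leray projector the $u^*$ equation is completely decoupled from $n$, so the two-way nonlinear feedback you flag as the main obstacle in the decay step does not actually arise---the system is triangular and one can first close the decay for $u^*$ alone, then feed it into the $n$ equation.
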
	
	\begin{remark}
		%To control the nonlinear terms arising from the drag force, our initial density is assumed to be bounded in $\dot{B}_{2,1}^{\frac{d}{2}-1}\cap \dot{B}^{\frac{d}{2}}_{2,1}$. 
       We achieve the optimal time-decay rates of the density fluctuation and the velocity for any order of derivatives. This reveals a heat-like smoothing effect of the KS-NS system.
	\end{remark}

  \iffalse
Let $(\rho^\varepsilon, w^\varepsilon, u^\varepsilon)$ be a solution of \eqref{main1} satisfying 
	\begin{equation}\label{main1}
		\left\{
		\begin{aligned}
			&\partial_t\rho^\varepsilon+\frac{1}{\varepsilon}\text{div}(\rho^\varepsilon w^\varepsilon)=0,\\
			&\partial_t(\rho^\varepsilon w^\varepsilon)+\frac{1}{\varepsilon}\text{div}(\rho^\varepsilon w^\varepsilon\otimes w^\varepsilon)+\frac{1}{\varepsilon}\nabla \rho^\varepsilon=-\frac{1}{\varepsilon^2}\rho (w^\varepsilon-\varepsilon u^\varepsilon),\\
			&\partial_tu^\varepsilon+u^\varepsilon\cdot\nabla u^\varepsilon+\nabla P^\varepsilon=\delta u+\frac{1}{\varepsilon}\rho^\varepsilon(w^\varepsilon-\varepsilon u^\varepsilon),\\
			&{\rm div} u^\varepsilon=0,
		\end{aligned}
		\right.
	\end{equation}
	with the following initial data and far-field state
	\begin{align}\label{main1d}
		(\rho^\varepsilon,w^\varepsilon, u^\varepsilon)(0,x)=(\rho_0^\varepsilon, w_0^\varepsilon, u_0^\varepsilon)(x)\rightarrow (1,0,0)\quad\text{as}\quad|x|\rightarrow\infty.
	\end{align}
\fi

    Before stating the main results, we introduce several  functionals. If $(\rho^\var, w^\var,u^\var)$ is a solution to \eqref{main1}-\eqref{main1d}, we define the energy functional
	\begin{align}
		E^\varepsilon(t):&=\|\rho^\var-1\|_{L_t^\infty(\dot{B}_{2,1}^{\frac{d}{2}-1}\cap \dot{B}^{\frac{d}{2}}_{2,1})}+\| u^\varepsilon\|_{L_t^\infty(\dot{B}_{2,1}^{\frac{d}{2}-1})}\nonumber\\
		&\quad+\|\mathcal{P}w^\varepsilon\|_{ L_t^\infty(\dot{B}_{2,1}^{\frac{d}{2}-1})}^{\ell,\var}+\|w^\varepsilon\|_{L_t^\infty(\dot{B}_{2,1}^{\frac{d}{2}})}^{\ell,\var}+\varepsilon\| w^\varepsilon\|_{L_t^\infty(\dot{B}_{2,1}^{\frac{d}{2}+1})}^{h,\varepsilon},\label{DTT}
	\end{align}
	and the dissipation functional 
	\begin{align}
		D^\varepsilon(t)&:=
		\|\rho^\var-1\|_{L_t^1( \dot{B}^{\frac{d}{2}+1}_{2,1})}+\|\rho^\varepsilon-1\|_{L_t^1(\dot{B}_{2,1}^{\frac{d}{2}+2})}^{\ell,\var}+\| u^\varepsilon\|_{L_t^1(\dot{B}_{2,1}^{\frac{d}{2}+1})}\nonumber\\
		%&\quad+\frac{1}{\varepsilon}\|\mathcal{P}w^\varepsilon\|_{L^2_t(\dot{B}_{2,1}^{\frac{d}{2}})}^{\ell,\var}+\frac{1}{\varepsilon}\|\mathcal{P}w^\varepsilon\|_{L_t^1(\dot{B}_{2,1}^{\frac{d}{2}+1})}^{\ell,\var}\nonumber\\
		&\quad+\frac{1}{\varepsilon}\|w^\varepsilon\|_{L_t^2(\dot{B}_{2,1}^{\frac{d}{2}})}
		+\frac{1}{\varepsilon}\| w^\varepsilon\|_{L_t^1(\dot{B}_{2,1}^{\frac{d}{2}+1})}\nonumber\\
		&\quad+\frac{1}{\varepsilon^2}\|\mathcal{Z}^\varepsilon\|_{L_t^1(\dot{B}_{2,1}^{\frac{d}{2}})}+\frac{1}{\varepsilon^2}\|\mathcal{R}^\varepsilon\|_{L_t^1(\dot{B}_{2,1}^{\frac{d}{2}-1}\cap\dot{B}_{2,1}^{\frac{d}{2}})},\label{ETT}
	\end{align}
	where $\mathcal{Z}^\varepsilon$ and $\mathcal{R}^\varepsilon$ are defined in \eqref{B1}.

	We also define the initial error energy functional
	\begin{align}\label{deltaE0}
		\delta E_0&:=\frac{1}{\var}\|\rho^\varepsilon_0-\rho_0^*\|_{\dot{B}_{2,1}^{\frac{d}{2}-1}}^{\ell,\var}
		+ \frac{1}{\varepsilon}\| u^\varepsilon_0-u^*\|_{\dot{B}_{2,1}^{\frac{d}{2}-1}}\nonumber\\
		&\quad+\|\mathcal{P}w^\varepsilon_0\|_{\dot{B}_{2,1}^{\frac{d}{2}-1}}^{\ell,\var}+\| w_0^\varepsilon\|_{\dot{B}_{2,1}^{\frac{d}{2}}}^{\ell,\varepsilon}+\varepsilon\|(\rho^\varepsilon_0-1,w^\varepsilon_0)\|_{\dot{B}_{2,1}^{\frac{d}{2}+1}}^{h,\varepsilon}.
	\end{align}  
	
	Our first main theorem concerns the global well-posedness and singular limit of the Cauchy problem \eqref{main1}-\eqref{main1d}, which is stated as follows.
	
	\begin{theorem}\label{thm2}
		Let $(\rho^*, u^*)$ be the global solution of the Cauchy problem \eqref{main2}-\eqref{main2d} with the initial data $(\rho^*_0,u_0^*)$.
		Then there are two generic positive constants $\var^*$ and $\delta_0$ such that if $0<\var\leq \var^*$ and
		\begin{align}\label{a1}
			\delta E_0\leq \delta_0,
		\end{align}
		the Cauchy problem \eqref{main1}-\eqref{main1d} admits a unique global solution $(\rho^\varepsilon,u^\varepsilon,w^\varepsilon)$ satisfying for all $t\geq0$ that
		\begin{align}
			E^\varepsilon(t)+D^\varepsilon(t)\leq C( E_0+\delta E_0),	\label{r1}
		\end{align} 
		and 
		\begin{align}
			&\|\rho^\varepsilon-\rho^*\|_{L_t^\infty(\dot{B}_{2,1}^{\frac{d}{2}-1})}+\|\rho^\varepsilon-\rho^*\|_{L_t^1(\dot{B}_{2,1}^{\frac{d}{2}+1})}+\|\partial_t(\rho^\varepsilon-\rho^*)\|_{L_t^1(\dot{B}_{2,1}^{\frac{d}{2}-1})} \nonumber\\
            &\quad+ \|u^\varepsilon-u^*\|_{L_t^\infty(\dot{B}_{2,1}^{\frac{d}{2}-1})}+\|u^\varepsilon-u^*\|_{L_t^1(\dot{B}_{2,1}^{\frac{d}{2}+1})}+\|\partial_t(u^\varepsilon-u^*)\|_{L_t^1(\dot{B}_{2,1}^{\frac{d}{2}+1})} \nonumber\\
            &\quad    +\|\var^{-1}w^\var-W^*\|_{L_t^1(\dot{B}_{2,1}^{\frac{d}{2}}+\dot{B}_{2,1}^{\frac{d}{2}+1})}\nonumber\\
            &\leq C \varepsilon( E_0+\delta E_0),
		\end{align}
		where $W^*$ is given by \eqref{darcy} and $C>0$ is a positive constant independent of $t$ and $\var$.
	\end{theorem}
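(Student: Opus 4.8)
The plan is to construct the solution of \eqref{main1} \emph{perturbatively around the KS-NS solution} $(\rho^*,u^*)$ supplied by Proposition~\ref{thm1}: I regard $(\rho^\varepsilon,u^\varepsilon)$ as $(\rho^*,u^*)$ plus an $\mathcal O(\varepsilon)$ error, derive a closed uniform-in-$\varepsilon$ estimate for this error together with the genuinely hyperbolic part of $(\rho^\varepsilon,w^\varepsilon)$, and then read off global existence, the bound \eqref{r1}, and the convergence rate all at once via a continuity argument. The starting point is a reformulation of \eqref{main1} in the unknowns $a^\varepsilon:=\rho^\varepsilon-1$, $u^\varepsilon$, and the two damped modes $\mathcal Z^\varepsilon,\mathcal R^\varepsilon$ of \eqref{B1}, with $w^\varepsilon$ split as $\mathcal P^{\top}w^\varepsilon+\mathcal P w^\varepsilon$. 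The algebraic identity $\mathcal Z^\varepsilon+\mathcal R^\varepsilon=w^\varepsilon+\varepsilon\nabla\log\rho^\varepsilon-\varepsilon u^\varepsilon$ shows that the relaxation in $\eqref{main1}_2$ forces this combination to zero, and one checks that $\mathcal Z^\varepsilon$ and $\mathcal R^\varepsilon$ each obey an evolution equation with a damping of order $\varepsilon^{-2}$, consistent with the $\varepsilon^{-2}\|\cdot\|_{L^1_t}$ terms of $D^\varepsilon$. Substituting $w^\varepsilon=\mathcal Z^\varepsilon+\mathcal R^\varepsilon-\frac{\varepsilon}{\rho^\varepsilon}\nabla\rho^\varepsilon+\varepsilon u^\varepsilon$ into $\eqref{main1}_1$ and $\eqref{main1}_3$ recovers the KS-NS system for $(\rho^\varepsilon,u^\varepsilon)$ up to source terms that are $\operatorname{div}$ and $\mathcal P$ of $\varepsilon^{-1}\rho^\varepsilon(\mathcal Z^\varepsilon+\mathcal R^\varepsilon)$ (this is \eqref{main5}); subtracting \eqref{main2} for $(\rho^*,u^*)$ then gives a convection-diffusion equation for $\delta\rho:=\rho^\varepsilon-\rho^*$ and a forced Navier-Stokes equation for $\delta u:=u^\varepsilon-u^*$, with variable coefficients built from $(\rho^*,u^*,\rho^\varepsilon,u^\varepsilon)$ and right-hand sides controlled by $\varepsilon^{-1}(\mathcal Z^\varepsilon+\mathcal R^\varepsilon)$.

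Next I would close a uniform-in-$\varepsilon$ inequality of the form $\mathcal X^\varepsilon(t)\le C(E_0+\delta E_0)+C\,\mathcal X^\varepsilon(t)^2$, where $\mathcal X^\varepsilon$ is $E^\varepsilon+D^\varepsilon$ augmented by the error norms appearing in the statement, working on Littlewood-Paley blocks $\dot\Delta_j$ split at the threshold $|\xi|\sim\varepsilon^{-1}$. For $|\xi|\lesssim\varepsilon^{-1}$ the effective dynamics is parabolic: $L^2$ energy estimates on the reformulated system produce the heat-type bounds on $a^\varepsilon$, $u^\varepsilon$ and the $(\ell,\varepsilon)$-weighted norms of $\mathcal P w^\varepsilon,w^\varepsilon$, while the $\varepsilon^{-2}$ damping makes $\|\mathcal Z^\varepsilon\|_{L^1_t(\dot B^{d/2}_{2,1})}+\|\mathcal R^\varepsilon\|_{L^1_t(\dot B^{d/2-1}_{2,1}\cap\dot B^{d/2}_{2,1})}\lesssim\varepsilon^2\,\mathcal X^\varepsilon$ — the crucial gain feeding the error bound. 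For $|\xi|\gtrsim\varepsilon^{-1}$, where the relaxation is not coercive by itself, I use a hypocoercivity (Lyapunov) functional on the localized Euler block, $\|\dot\Delta_j(a^\varepsilon,w^\varepsilon)\|_{L^2}^2$ corrected by a cross term of the form $\varepsilon\langle\nabla\dot\Delta_j a^\varepsilon,\dot\Delta_j w^\varepsilon\rangle$, which generates dissipation of $\nabla a^\varepsilon$ at the right power of $\varepsilon$; combined with the parabolic gain from the Navier-Stokes part this closes the high-frequency piece and gives the $\varepsilon\|w^\varepsilon\|^{h,\varepsilon}_{L^\infty_t(\dot B^{d/2+1}_{2,1})}$ bound (the KS-NS solution is irrelevant here since $(\rho^*-1,u^*)$ has $\mathcal O(\varepsilon)$-small content for $|\xi|\gtrsim\varepsilon^{-1}$, by its $\dot B^{d/2}_{2,1}$ bound). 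The error norms then follow from parabolic estimates on the error system: the $\varepsilon^2$ gain on $(\mathcal Z^\varepsilon,\mathcal R^\varepsilon)$ makes the right-hand sides $\mathcal O(\varepsilon)$, the initial errors are $\mathcal O(\varepsilon)\,\delta E_0$ by the structure of $\delta E_0$, and the variable-coefficient terms are absorbed via the smallness of $(\rho^*,u^*)$ from Proposition~\ref{thm1} and of the E-NS solution itself. All nonlinearities — the fluxes $\operatorname{div}(\rho^\varepsilon w^\varepsilon)$, $\operatorname{div}(\rho^\varepsilon w^\varepsilon\otimes w^\varepsilon)$, $u^\varepsilon\cdot\nabla u^\varepsilon$, the terms from $\frac{\varepsilon}{\rho^\varepsilon}$, and the commutators produced by $\dot\Delta_j$ on products — are treated by Bony paraproduct and commutator estimates in Besov spaces, arranged so that each power of $\varepsilon$ matches and the output is bounded by $\mathcal X^\varepsilon D^\varepsilon$ or $(\mathcal X^\varepsilon)^2$.

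With this inequality and a standard local-existence result for \eqref{main1} in the hybrid critical space, a continuity/bootstrap argument under $\delta E_0\le\delta_0$ and $\varepsilon\le\varepsilon^*$ yields the global solution with $\mathcal X^\varepsilon(t)\le C(E_0+\delta E_0)$; writing $\rho^\varepsilon-1=(\rho^*-1)+\delta\rho$, $u^\varepsilon=u^*+\delta u$ and invoking \eqref{r0} gives \eqref{r1}, and the same bound is the stated $\mathcal O(\varepsilon)$ convergence for $(\delta\rho,\delta u)$. The controls on $\partial_t(\delta\rho,\delta u)$ are obtained by reading the error equations, and the rate for $\varepsilon^{-1}w^\varepsilon-W^*$ follows from $\varepsilon^{-1}w^\varepsilon=u^\varepsilon-\nabla\log\rho^\varepsilon+\varepsilon^{-1}(\mathcal Z^\varepsilon+\mathcal R^\varepsilon)$ and $W^*=u^*-\nabla\log\rho^*$ (from \eqref{darcy}) together with the bounds already established.

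I expect the main obstacle to be closing the uniform-in-$\varepsilon$ estimate \emph{across} the moving threshold $|\xi|\sim\varepsilon^{-1}$. The cross-interaction between the hyperbolic $(\rho^\varepsilon,w^\varepsilon)$-block and the parabolic $u^\varepsilon$-block has no intrinsic coercivity, so the hypocoercive correction in the high-frequency zone and the effective-diffusion reformulation in the low-frequency zone must be calibrated so that the various powers of $\varepsilon$ — in the $\varepsilon^{-2}$ damping of $(\mathcal Z^\varepsilon,\mathcal R^\varepsilon)$, in the nonlinear estimates, and in the error remainders — line up and no loss of $\varepsilon$ occurs where the two regimes meet; this bookkeeping is the heart of the argument.
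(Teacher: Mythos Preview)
Your proposal is correct and follows essentially the same architecture as the paper: the reformulation \eqref{main5}--\eqref{T8} via the damped modes $\mathcal Z^\varepsilon,\mathcal R^\varepsilon$, the error system \eqref{main6} for $(\delta a,\delta u)$, the low-frequency parabolic estimates combined with the $\varepsilon^{-2}$-damping of $(\mathcal Z^\varepsilon,\mathcal R^\varepsilon)$, the high-frequency hypocoercive Lyapunov functional (with the cross term scaled by $\eta_0/(2^{2j}\varepsilon)$, which in the regime $2^j\gtrsim\varepsilon^{-1}$ is indeed $\mathcal O(\varepsilon)$ as you write), and the bootstrap closure are all exactly what the paper does in Lemmas~\ref{P3.2}--\ref{P3.4} and Proposition~\ref{P31}. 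One small refinement: the raw closing inequality in the paper actually reads $\delta X(t)\le C\big(E_0+\delta E_0+\varepsilon\,\delta X(t)+(\delta X(t))^2\big)$, with a genuine linear $\varepsilon\,\delta X(t)$ contribution (arising for instance from $\varepsilon\|u^\varepsilon\|_{L^1_t(\dot B^{d/2+1}_{2,1})}$ in the $\mathcal R^\varepsilon$-equation), and it is precisely to absorb this term that one needs $\varepsilon\le\varepsilon^*$; your final form is what remains after that absorption.
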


    \begin{remark}\label{r1.5}
We first rigorously prove the validity of the singular limit from the E-NS system to the KS-NS system, with a global rate of convergence as $\var$ goes to zero, for initial data of critical regularity near equilibrium.
Importantly, the smallness assumptions on $\rho_0^\var$ and $u_0^\var$ depend {\emph{only}} on their limit profiles $\rho^*_0$ and $u^*_0$. To the best of our knowledge, our approach—based on the limit system and error estimates—is new in the analysis of Euler-type coupled models. Moreover, the smallness assumptions on $\rho_0^\var$ and $u_0^\var$ in Proposition \ref{thm1} may be removed in certain cases,  for example, in two spatial dimensions. 
\end{remark}

\begin{remark}\label{r11.5}
The initial data $(\rho_0^\varepsilon, w_0^\varepsilon, u_0^\varepsilon)$ considered in this paper are of \emph{ill-prepared} type in the sense that the Darcy-type compatibility condition 
\[
\varepsilon^{-1}\rho_0^\varepsilon w_0^\varepsilon + \nabla \rho_0^\varepsilon - \rho_0^\varepsilon u_0^\varepsilon \to 0 \quad \text{as } \varepsilon \to 0
\]
is not required to hold. 
In fact, we only assume that $w_0^\varepsilon = \mathcal{O}(1)$ (see \eqref{deltaE0} and \eqref{a1}).
    \end{remark}

	\begin{remark}
		By a suitable modification, our method can be extended to a more general $L^p$ framework in low frequencies (see \cite{MR4612415}). When $\var$ is a positive constant (not necessarily small), we can also prove the global existence of the E-NS system in critical Besov space by standard frequency-localized energy estimates (cf. \cite{{MR4596744}}). Here, we leave the details to the interested readers.
	\end{remark}

      	\begin{remark}
		%The Euler–NS system \eqref{main1} arises as the hydrodynamic limit of the Vlasov–Fokker–Planck/incompressible Navier–Stokes (VFP/NS) system.
In this work, our focus is on the singular limit  from \eqref{main1} to the KS–NS system \eqref{main2} in the critical Besov space.
%The corresponding limit from the VFP-NS system to \eqref{main2} has not yet been understood in the critical Besov space.
In the Sobolev setting with high regularity, Fang, Qi and Wen \cite{fqw2024} justified the limit from the VFP-NS system to \eqref{main2} via the Hilbert expansion method.
The case of low-regularity initial data presents additional challenges, which we will study in the future.
	\end{remark}

      %  the equation of the density in $\eqref{main1}_1$ is hyperbolic, where the spatial regularity can not be improved in the high-frequency regime $|\xi|\gtrsim \var^{-1}$. 

    \iffalse
	\begin{remark}
	%	In the high-frequency regime $|\xi|\gtrsim \var^{-1}$, the density behaves like a damping. However, in the low-frequency regime $|\xi|\lesssim \var^{-1}$, the density satisfies a parabolic type dissipation consistent with the regularity of the limit density $\rho^*$.
    As $\var\rightarrow0$, the high frequencies  $|\xi|\gtrsim \var^{-1}$ vanish while the low frequencies $|\xi|\lesssim \var^{-1}$, consistent with the limit diffusion regularity, cover all other frequencies. Thus, our result reveals the singular limit process from hyperbolic type equations to parabolic type equations. %This characterizes the regularity threshold and variation of the relaxation limit process.
	\end{remark}
	\fi
	
	Then, we study the uniform large-time behavior of the global solution $(\rho^\var,u^\var)$ to the Cauchy problem \eqref{main1}-\eqref{main1d}. We establish the asymptotic stability of $(\rho^\var,u^\var)$ toward the solution $(\rho^*,u^*)$ to the KS-NS system \eqref{main2}-\eqref{main2d} for general ill-prepared initial data. 
	
\begin{theorem}\label{thm3}
Let $(\rho^*, u^*)$ and $(\rho^\varepsilon, w^\varepsilon, u^\varepsilon)$ be the global solutions to the Cauchy problems \eqref{main2}–\eqref{main2d} and \eqref{main1}–\eqref{main1d} obtained in Proposition~\ref{thm1} and Theorem \ref{thm2}, respectively.  
In addition to the assumptions \eqref{a0} and \eqref{a1}, suppose that there exists a constant $\beta_1>0$ {\rm(}not necessarily small{\rm)}, independent of $\varepsilon$, such that for 
$-\frac{d}{2}\le \sigma_1<\frac{d}{2}-1$,
\begin{align}\label{a2}
\rho_0^\varepsilon=\rho_0^*, \qquad 
u_0^\varepsilon=u_0^*, \qquad 
\|(\rho_0^*-1, w_0^\varepsilon, u_0^\varepsilon)\|_{\dot{B}^{\sigma_1}_{2,\infty}}
\le \beta_1.
\end{align}
Then, for all $t\ge 1$, the solution $(\rho^\varepsilon, w^\varepsilon, u^\varepsilon)$ satisfies
\begin{align}
\|(\rho^\varepsilon-1,w^\varepsilon, u^\varepsilon)(t)\|_{\dot{B}^{\sigma}_{2,1}}&\le C(1+t)^{-\frac{1}{2}(\sigma-\sigma_1)},
\qquad\quad\,\, \sigma_1<\sigma\le\frac{d}{2}.\label{decay0}
\end{align}
Moreover, the error $(\rho^\varepsilon-\rho^*,\, u^\varepsilon-u^*)$ and the relative velocity $w^\varepsilon-\varepsilon u^\varepsilon$ have the following improved decay estimates:
\begin{align}
\|(\rho^\varepsilon-\rho^*)(t)\|_{\dot{B}^{\sigma}_{2,1}}
&\le C\varepsilon(1+t)^{-\frac{1}{2}(\sigma-\sigma_1)},
\qquad ~~\, \sigma_1<\sigma\le\frac{d}{2}-1,\label{decay1}
\\
\|(u^\varepsilon-u^*)(t)\|_{\dot{B}^{\sigma}_{2,1}}
&\le C\varepsilon(1+t)^{-\frac{1}{2}(\sigma-\sigma_1)},
\qquad ~~\, \sigma_1<\sigma\le\frac{d}{2},\label{decay11}
\\
\|(w^\varepsilon-\varepsilon u^\varepsilon)(t)\|_{\dot{B}^{\sigma}_{2,1}}
&\le C\varepsilon(1+t)^{-\frac{1}{2}(\sigma-\sigma_1+1)},
\qquad \sigma_1<\sigma\le\frac{d}{2}-1,\label{decay2}
%\|\mathcal{P}w^\varepsilon(t)-\varepsilon u^\varepsilon(t)\|_{\dot{B}^{\sigma}_{2,1}} &\le C\varepsilon(1+t)^{-\frac{1}{2}(\sigma-\sigma_1+1)}, \qquad \sigma_1<\sigma\le\frac{d}{2}-1,
%\|w^\varepsilon(t)\|_{\dot{B}^{\sigma}_{2,1}}&\le C\varepsilon(1+t)^{-\frac{1}{2}(\sigma-\sigma_1+1)}, \qquad \sigma_1<\sigma\le\frac{d}{2}-1,
\end{align}
where $C=C(\alpha_0,\delta_0,\beta_1)$ is independent of $t$ and $\varepsilon$. %and  $\mathcal{P}$ and $\mathcal{P}^\top$ denote the incompressible and compressible projections defined in \eqref{AT1}.

If $-\frac{d}{2}\le \sigma_1<\frac{d}{2}-2$, then for all $t\geq1$, the decay rate of the density difference may be improved to
\begin{align}
\|(\rho^\varepsilon-\rho^*)(t)\|_{\dot{B}^{\sigma}_{2,1}}
\le C\varepsilon (1+t)^{-\frac{1}{2}(\sigma-\sigma_1+1)},
\qquad \sigma_1<\sigma\le\frac{d}{2}-1.\label{decay3}
\end{align}
\end{theorem}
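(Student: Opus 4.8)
The plan is to combine the uniform-in-$\varepsilon$ estimates of Theorem \ref{thm2} with a Fourier-splitting/time-weighted argument, exploiting the heat-like dissipation structure already isolated in the functionals $E^\varepsilon(t)$, $D^\varepsilon(t)$. First I would propagate the low-regularity bound: since $(\rho_0^*-1,w_0^\varepsilon,u_0^\varepsilon)\in\dot B^{\sigma_1}_{2,\infty}$ with $\sigma_1<d/2-1$, I would run the nonlinear estimates of Theorem \ref{thm2} in the larger space $\dot B^{\sigma_1}_{2,\infty}$ in low frequencies, using that the nonlinear terms map into $\dot B^{\sigma_1}_{2,\infty}$ because the small data already controls $\dot B^{d/2}_{2,1}$ (product laws $\dot B^{\sigma_1}_{2,\infty}\cdot\dot B^{d/2}_{2,1}\hookrightarrow\dot B^{\sigma_1}_{2,\infty}$ for $\sigma_1>-d/2$). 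The outcome is a uniform bound $\|(\rho^\varepsilon-1,w^\varepsilon,u^\varepsilon)(t)\|^{\ell,\varepsilon}_{\dot B^{\sigma_1}_{2,\infty}}\lesssim\beta_1$ for all $t\ge0$, together with the already-known uniform control at high regularity from \eqref{r1}. Here the damped modes $\mathcal Z^\varepsilon,\mathcal R^\varepsilon$ and the parabolic dissipation of $u^\varepsilon$ and of $\rho^\varepsilon-1$ in low frequencies play the role of the full dissipation needed to run the interpolation.

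Next, to get \eqref{decay0} I would use a Lyapunov/Fourier-splitting scheme on the low-frequency part, exactly as in the proof of the decay estimate \eqref{decayNSKS} for the limit system: one derives a differential inequality for a frequency-localized Lyapunov functional $\mathcal L_j(t)$ (equivalent to $\|(\rho^\varepsilon-1,w^\varepsilon,u^\varepsilon)_j\|_{L^2}^2$ in low frequencies, incorporating cross terms that produce coercivity on $\mathcal Z^\varepsilon$, $\mathcal R^\varepsilon$ uniformly in $\varepsilon$) of the form $\frac{d}{dt}\mathcal L_j + c\,2^{2j}\mathcal L_j\lesssim(\text{nonlinear remainders})$. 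Summing against $2^{j\sigma}$ with the $\dot B^{\sigma_1}_{2,\infty}$-bound as the "reservoir", and splitting the frequency sum at $2^j\sim(1+t)^{-1/2}$, yields $(1+t)^{-(\sigma-\sigma_1)/2}$; in high frequencies the exponential-type decay from the hypocoercivity estimate of Theorem \ref{thm2} is more than enough. For the improved error decays \eqref{decay1}–\eqref{decay3}, I would apply the same machinery to the difference system \eqref{main5} satisfied by $(\rho^\varepsilon-\rho^*, u^\varepsilon-u^*)$: Theorem \ref{thm2} already gives that the source terms in this difference system are $\mathcal O(\varepsilon)$ in the relevant norms, so one bootstraps the $\mathcal O(\varepsilon)$ prefactor through the time-weighted estimates, using that $\rho^*$ and $u^*$ themselves decay at the rates \eqref{decayNSKS}. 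The gain of one extra half-power in \eqref{decay2}, \eqref{decay3} comes from the extra factor $2^{j}$ (equivalently one more spatial derivative) sitting in front of the error source: $\mathcal R^\varepsilon$ and the density error enter the momentum/continuity balance through a gradient, which upgrades $(1+t)^{-(\sigma-\sigma_1)/2}$ to $(1+t)^{-(\sigma-\sigma_1+1)/2}$, provided $\sigma_1$ is small enough ($\sigma_1<d/2-2$) that the target index $\sigma$ still sits below the range where the extra-decay mechanism is available.

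The main obstacle I anticipate is the low-frequency analysis of the coupling between the weakly damped Euler variables $(\rho^\varepsilon,w^\varepsilon)$ and the parabolic Navier–Stokes variable $u^\varepsilon$, uniformly in $\varepsilon$. Unlike the damped Euler system, here the only dissipation on $w^\varepsilon$ at frequency $2^j$ is of order $\varepsilon^{-2}$ on $\mathcal R^\varepsilon=\mathcal P w^\varepsilon-\varepsilon u^\varepsilon$ and $\mathcal Z^\varepsilon$, while $\rho^\varepsilon-1$ and $u^\varepsilon$ relax at rate $\sim 2^{2j}$; constructing the frequency-localized Lyapunov functional whose coercivity and equivalence constants are independent of $\varepsilon$ for all $2^j\lesssim\varepsilon^{-1}$, and simultaneously absorbing the cross-interaction terms generated by $u^\varepsilon\cdot\nabla u^\varepsilon$, $\mathrm{div}(\rho^\varepsilon w^\varepsilon)$ and the Darcy remainder, is the delicate point. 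I would handle it by the same two-damped-mode decomposition $(\mathcal Z^\varepsilon,\mathcal R^\varepsilon)$ used for Theorem \ref{thm2}, so that in low frequencies the system reads as a perturbation of the KS–NS system with $\mathcal O(\varepsilon)$-controlled remainders, reducing the decay analysis to that of Proposition \ref{thm1} plus an $\varepsilon$-uniform error term; the high-frequency regime, being handled by hypocoercivity with exponential decay, is comparatively routine.
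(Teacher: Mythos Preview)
Your overall strategy is sound and shares the same skeleton as the paper: propagate the $\dot B^{\sigma_1}_{2,\infty}$ bound through the error system, then extract decay by interpolation, working throughout via the damped-mode reformulation $(\delta a,\delta u,\mathcal Z^\varepsilon,\mathcal R^\varepsilon)$. However, at the technical level the paper takes a different route. Rather than a Fourier-splitting/Lyapunov differential inequality with a time-dependent threshold $2^j\sim(1+t)^{-1/2}$, the paper uses a \emph{time-weighted energy} method: multiply the difference equations by $t^M$ for large $M$, apply maximal regularity for the heat and damped equations, and absorb the $Mt^{M-1}$ commutator terms by interpolating between the propagated $\dot B^{\sigma_1}_{2,\infty}$ bound and the dissipation in $\dot B^{d/2+2}_{2,1}$, closing an inequality for a weighted functional $\delta\mathcal D(t)\lesssim t^{M-\frac12(d/2-\sigma_1)}$. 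This sidesteps entirely the construction of an $\varepsilon$-uniform frequency-localized Lyapunov functional that you correctly flag as the main obstacle; what your approach would buy instead is a more direct link to the spectral picture, at the cost of having to actually build that functional.

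Two points where your explanation of the mechanisms is off. For \eqref{decay2}, the extra half-power does not come from a gradient structure in how $\mathcal R^\varepsilon$ enters the system (it enters $\partial_t\delta u$ as $\varepsilon^{-1}\mathcal R^\varepsilon$, with no derivative). Instead the paper applies Duhamel directly to the damped equations \eqref{T8} for $(\mathcal Z^\varepsilon,\mathcal R^\varepsilon)$: the kernel $e^{-(t-\tau)/\varepsilon^2}$ integrates against sources that already decay like $(1+\tau)^{-\frac12(\sigma-\sigma_1+1)}$, because they involve $\partial_t a^\varepsilon\sim\varepsilon^{-1}\|w^\varepsilon\|_{\dot B^{\sigma+1}_{2,1}}$ or carry an explicit factor of $\varepsilon$. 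For \eqref{decay3}, the key ingredient you do not mention is that when $\sigma_1<\tfrac d2-2$ the divergence structure of the $\delta a$-equation allows one to propagate $\|\delta a\|_{\dot B^{\sigma_1-1}_{2,\infty}}\lesssim\varepsilon$, one full index \emph{below} the reservoir; the improved rate then comes from interpolating against this lower base rather than against $\dot B^{\sigma_1}_{2,\infty}$.
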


	\begin{remark}
By \eqref{decay0} and the standard embedding $\dot{B}^{\frac{d}{2}}_{2,1}\hookrightarrow L^{\infty}(\mathbb{R}^d)$, one has 
		\begin{align*}
			\|(\rho^\var-1,w^\var,u^\var)(t)\|_{L^{\infty}}\leq C(1+t)^{-\frac{1}{2}(\frac{d}{2}-\sigma_1)},\quad t\geq1.
		\end{align*}
		Moreover, due to $L^1(\mathbb{R}^d)\hookrightarrow \dot{B}^{-\frac{d}{2}}_{2,1}$, if choosing $\sigma_1=-\frac{d}{2}$ in \eqref{decay0}, our results are consistent with previous works on incompressible two-phase flows subject to the $L^1$-assumption (cf. \cite{MR4776378}), that is 
		\begin{align*}
			\|\Lambda^\sigma(\rho^\var-1,w^\var, u^\var)(t)\|_{L^{2}}\leq C(1+t)^{-\frac{d}{4}-\frac{\sigma}{2}},\quad  -\frac{d}{2}<\sigma\le\frac{d}{2},\quad t\geq1.
		\end{align*}
	\end{remark}

	\subsection{Strategies}
    \subsubsection{Reformulation}

     Let $a^*=\rho^*-1$. Then, the system \eqref{main2} reduces to 
	\begin{equation}\label{main4}
		\left\{
		\begin{aligned}
			&\partial_ta^*-\Delta a^*=-\text{div}(a^* u^*),\\
			&\partial_tu^*-\mu\Delta u^*+ \nabla P^*=-u^*\cdot\nabla u^*,\\
			&{\rm div} u^*=0,\\
			&(a^*,u^*)(0,x)=(a_0^*,u_0^*).
		\end{aligned}
		\right.
	\end{equation}
	Denote $a^\varepsilon=\rho^\varepsilon-1$. The E-NS system \eqref{main1} becomes 
	\begin{equation}\label{main3}
		\left\{
		\begin{aligned}
			&\partial_ta^\varepsilon+\frac{1}{\varepsilon}w^\varepsilon\cdot\nabla a^\varepsilon+\frac{1}{\varepsilon}(1+a^\varepsilon)\text{div}w^\varepsilon=0,\\
			&\partial_tw^\varepsilon+\frac{1}{\varepsilon}(1+h(a^\varepsilon))\nabla a^\varepsilon+\frac{1}{\varepsilon^2}(w^\varepsilon-\varepsilon u^\varepsilon)=-\frac{1}{\varepsilon}w^\varepsilon\cdot\nabla w^\varepsilon,\\
			&\partial_tu^\varepsilon-\mu \Delta u^\varepsilon+\nabla P^\varepsilon+\frac{1}{\varepsilon}(1+a^\varepsilon)(\varepsilon u^\varepsilon-w^\varepsilon)=-u^\varepsilon\cdot\nabla u^\varepsilon,\\
			&{\rm div} u^\varepsilon=0,\\
			&(a^\varepsilon,w^\varepsilon,u^\varepsilon)(0,x)=(a_0^\varepsilon,w_0^\varepsilon,u_0^\varepsilon),
		\end{aligned}
		\right.
	\end{equation}
	where $h(a^\varepsilon)$ is given by
	\begin{align}
		h(a^\varepsilon)=-\frac{a^\varepsilon}{1+a^\varepsilon}.	
	\end{align}

	\subsubsection{Spectral behavior and frequency threshold}\label{subsection:sp}
	
	%\noindent
	%\textbf{Reformation and a perspective in spectral analysis}.

    	To help the reader understand the energy estimates, we provide the following asymptotic analysis of the eigenvalues associated with the linearized system for \eqref{main3}. For that, we employ the Hodge decomposition to decompose it into two systems. Let the incompressible projector $\mathcal{P}$ and the compressible projection $\mathcal{P}^{\top}$ be given by 
	\begin{align}\label{AT1}
		\mathcal{P}=\rm{Id}-\nabla(-\Delta)^{-1}\text{div}\quad\text{and}\quad \mathcal{P}^{\top}={\rm Id}-\mathcal{P}=\nabla(-\Delta)^{-1}\text{div}.
	\end{align}
	Then, we have
	\begin{equation}\label{Main:incom}
		\left\{
		\begin{aligned}
			&\partial_ta^\varepsilon+\frac{1}{\varepsilon}{\rm{div}}(\mathcal{P}^{\top} w^\varepsilon)=-\frac{1}{\varepsilon}\text{div}(a^\varepsilon w^\varepsilon), \\
			&\partial_t \mathcal{P}^{\top}w^\varepsilon+\frac{1}{\varepsilon}(1+h(a^\varepsilon))\nabla a^\varepsilon+\frac{1}{\var^2}\mathcal{P}^{\top} w^\varepsilon=-\frac{1}{\varepsilon}\mathcal{P}^{\top}(w^\varepsilon\cdot\nabla w^\varepsilon),\\
			&(a^\varepsilon,\mathcal{P}^{\top}w^\varepsilon)(0,x)=(a_0^\varepsilon,\mathcal{P}^{\top}w_0^\varepsilon),
		\end{aligned}
		\right.
	\end{equation}
	and 
	\begin{equation}\label{Main:com}
		\left\{
		\begin{aligned}
			&\partial_t \mathcal{P} w^\varepsilon+\frac{1}{\var^2}(\mathcal{P} w^\varepsilon-\varepsilon u^\varepsilon)=-\frac{1}{\varepsilon}\mathcal{P}(w^\varepsilon\cdot\nabla w^\varepsilon),\\
			&\partial_t u^\varepsilon-\mu\Delta u^\varepsilon+\frac{1}{\var}(\varepsilon u^\varepsilon-\mathcal{P}w^\varepsilon)=-\mathcal{P}(u^\varepsilon\cdot\nabla u^\varepsilon)+\frac{1}{\var}\mathcal{P}(a^\var(w^\var-\var u^\var)),\\
			&(\mathcal{P}w^\varepsilon,u^\varepsilon)(0,x)=(\mathcal{P}w^\varepsilon_0,u_0^\varepsilon).
		\end{aligned}
		\right.
	\end{equation}

		%\subsection{A perspective  in spectral analysis}\label{S3}

    \noindent
    \textbf{Compressible part}. We first consider the linearized system of \eqref{Main:incom} as follows:
	\begin{equation}\label{Lmain1}
		\left\{
		\begin{aligned}
			&\partial_ta^\varepsilon+\frac{1}{\varepsilon}{\rm{div}}(\mathcal{P}^{\top} w^\varepsilon)=0,\\
			&\partial_t (\mathcal{P}^{\top}w^\varepsilon)+\frac{1}{\varepsilon}\nabla a^\varepsilon+\frac{1}{\var^2}\mathcal{P}^{\top} w^\varepsilon=0,\\
			&(a^\varepsilon,\mathcal{P}^{\top}w^\varepsilon)(0,x)=(a_0^\varepsilon,\mathcal{P}^{\top}w_0^\varepsilon).
		\end{aligned}
		\right.
	\end{equation}
	which can be viewed as the damped compressible Euler equations. Let $G_1(t,x)$ be the Green function of \eqref{Lmain1}. After taking the Fourier transform of \eqref{Lmain1} in $x$, we have
	$$
	\partial_t\hat{G}_1(t,\xi)+B_1\hat{G}_1(t,\xi)=0,\quad \hat{G}_1(0,\xi)={\rm{I}},
	$$
	where
	\begin{equation}
		B_1=\left(\begin{array}{cc}
			0 & \frac{i}{\varepsilon}\xi^t \\
			\frac{i}{\varepsilon}\xi &\frac{1}{\varepsilon^2}\\
		\end{array}\right).
	\end{equation}
The eigenvalues of $B_1$ are determined from
\begin{equation}
	\det(\lambda \mathrm{I}+B_1)
	= \left(\lambda+\frac{1}{\varepsilon^{2}}\right)^{d-1}
	  \left(\lambda^{2}+\frac{1}{\varepsilon^{2}}\lambda
	  +\frac{|\xi|^{2}}{\varepsilon^{2}}\right)
	=0.
\end{equation}
A straightforward calculation yields
\begin{align*}
	\lambda_{0} &= -\frac{1}{\varepsilon^{2}}
	\quad \text{(with multiplicity $d-1$)}, \\
	\lambda_{1} &= -\frac{1}{2\varepsilon^{2}}
		+ \frac{1}{2\varepsilon^{2}}
		  \sqrt{\,1 - 4\varepsilon^{2}|\xi|^{2}\,},\\
	\lambda_{2} &= -\frac{1}{2\varepsilon^{2}}
		- \frac{1}{2\varepsilon^{2}}
		  \sqrt{\,1 - 4\varepsilon^{2}|\xi|^{2}\,}.
\end{align*}

        \noindent
    \textbf{Incompressible part}.
	Considering the linearized system of \eqref{Main:com}, we have the following coupled two-phase system
	\begin{equation}\label{Lmain2}
		\left\{
		\begin{aligned}
			&\partial_t (\mathcal{P} w^\varepsilon)+\frac{1}{\var^2}(\mathcal{P} w^\varepsilon-\varepsilon u^\varepsilon)=0,\\
			&\partial_t u^\varepsilon-\mu\Delta u^\var+\frac{1}{\var}(\varepsilon u^\varepsilon-\mathcal{P}w^\varepsilon)=0,\\
			&(\mathcal{P}w^\varepsilon,u^\varepsilon)(0,x)=(\mathcal{P}w^\varepsilon_0,u_0^\varepsilon).
		\end{aligned}
		\right.
	\end{equation}
	Let $G_2(t,x)$ be the Green function of \eqref{Lmain2}. After taking the Fourier transform in $x$, we have
	$$
	\partial_t\hat{G}_2(t,\xi)+B_2\hat{G}_2(t,\xi)=0,\quad \hat{G}_2(0,\xi)={\rm{I}},
	$$
	where 
	\begin{equation}
		B_2=\left(\begin{array}{cccccc}
			\frac{1}{\varepsilon^2} {\rm{I}}&- \frac{1}{\varepsilon} 	{\rm{I}}\\
			-\frac{1}{\varepsilon}{\rm{I}}& (1+\mu|\xi|^2){\rm{I}}\\
		\end{array}\right).
	\end{equation}
	The eigenvalues of $B_2$ are computed from the determinant
	\begin{equation}
		\det{(\lambda {\rm{I}}+B_2)}=\Big(\lambda^2+(\frac{1}{\varepsilon^2}+1+\mu|\xi|^2)\lambda+\frac{\mu|\xi|^2}{\varepsilon^2}\Big)^d=0,
	\end{equation}
	which gives two eigenvalues with multiplicity $d$:
	\begin{align*}
		&\lambda_3=-\frac{1}{2}(\frac{1}{\varepsilon^2}+1+\mu|\xi|^2)+\frac{1}{2}\sqrt{(1+\frac{1}{\varepsilon^2})^2-\frac{2\mu|\xi|^2}{\varepsilon^2}+2\mu|\xi|^2+\mu^2|\xi|^4} ,\\
		&\lambda_4=-\frac{1}{2}(\frac{1}{\varepsilon^2}+1+\mu|\xi|^2)-\frac{1}{2}\sqrt{(1+\frac{1}{\varepsilon^2})^2-\frac{2\mu|\xi|^2}{\varepsilon^2}+2\mu|\xi|^2+\mu^2|\xi|^4}.
	\end{align*}
	
	Firstly, through a simple calculation, we investigate the asymptotic behavior of the eigenvalues $\lambda_i (i=1,2,3,4)$ as follows:  
    
\noindent For the low-frequency part $|\xi|\ll\frac{1}{\varepsilon}$, the eigenvalues satisfy
		\begin{equation*}
			\begin{aligned}
				&\lambda_1=-|\xi|^2+\varepsilon^2\mathcal{O}(|\xi|^4),\\
				&\lambda_2=-\frac{1}{\varepsilon^2}+|\xi|^2+\varepsilon^2\mathcal{O}(|\xi|^4),\\ 
				&\lambda_3=-\frac{\mu|\xi|^2}{1+\varepsilon^2}+\frac{\varepsilon^2}{1+\varepsilon^2} \mathcal{O}(|\xi|^4),\\
				&\lambda_4=-(1+\frac{1}{\varepsilon^2})-\frac{\mu\varepsilon^2}{1+\varepsilon^2}|\xi|^2+\frac{\varepsilon^2}{1+\varepsilon^2}\mathcal{O}(|\xi|^4).
			\end{aligned}
		\end{equation*}
		For the high-frequency part  $|\xi|\gg\frac{1}{\varepsilon}$, we have
		\begin{align*}
			&\lambda_1=-\frac{1}{2\varepsilon^2}+ {\rm i}\frac{|\xi|}{\varepsilon}+\varepsilon^{-3}\mathcal{O}(|\xi|^{-1}),\\
			&\lambda_2=-\frac{1}{2\varepsilon^2}- {\rm i}\frac{|\xi|}{\varepsilon}+\varepsilon^{-3}\mathcal{O}(|\xi|^{-1}),\\
			&\lambda_3=-\frac{1}{\varepsilon^2}+(1+\frac{1}{\varepsilon^2})^2\mathcal{O}(|\xi|^{-2}),\\
			&\lambda_4=-1-\mu|\xi|^2+(1+\frac{1}{\varepsilon^2})^2\mathcal{O}(|\xi|^{-2}),
		\end{align*}
        with ${\rm i}=\sqrt{-1}$.

The spectral analysis reveals that the threshold between low-frequency and high-frequency regimes should be $\mathcal{O}(\var^{-1})$. %we need to split the frequencies into two parts: low-frequency and high-frequency regimes through a threshold $\mathcal{O}(\var^{-1})$.
At low frequencies, the modes associated with $\lambda_1$ and $\lambda_3$ exhibit a heat–type behavior, which is consistent with the singular limit equation \eqref{main2}. In contrast, the high-frequency modes decay at exponential rates. The corresponding frequency splitting is illustrated in Figures~\ref{fig:2} and \ref{fig:3}. These observations naturally lead us to adopt the Littlewood–Paley decomposition and hybrid Besov spaces to capture the sharp regularity structure of the system.

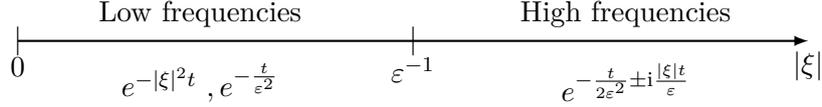
\begin{figure}[!ht]
\centering
\begin{tikzpicture}[xscale=0.4,yscale=0.4, thick]
\draw [-latex] (0,0) -- (26,0) ;
\draw  (26,0)  node [below]  {$|\xi|$} ;
\draw  (13,-0.2)  node [below]  {$\var^{-1}$};
\draw  (0,-0.2)  node [below]  {$0$} ;

\draw (13,0) node {$|$};
\draw (0,0) node {$|$};

\draw (20,0.1) node [above] {High frequencies};
\draw (20,-0.4) node [below] {$e^{-\frac{t}{2\var^2}\pm {\rm i}\frac{|\xi|t}{\var}}$};

\draw (6,0.1) node [above] {Low frequencies};
\draw (6,-0.4) node [below] {$ e^{-|\xi|^2t}\,\,, e^{-\frac{t}{\var^2}} $};
\end{tikzpicture}
\caption{Frequency partition for compressible modes}
\label{fig:2}
\end{figure}

\begin{figure}[!ht]
\centering
\begin{tikzpicture}[xscale=0.4,yscale=0.4, thick]
\draw [-latex] (0,0) -- (26,0) ;
\draw  (26,0)  node [below]  {$|\xi|$} ;
\draw  (13,-0.2)  node [below]  {$\var^{-1}$};
\draw  (0,-0.2)  node [below]  {$0$} ;

\draw (13,0) node {$|$};
\draw (0,0) node {$|$};

\draw (20,0.1) node [above] {High frequencies};
\draw (20,-0.4) node [below] {$e^{-\frac{t}{\var^2}},\,\, e^{-(1+\mu|\xi|^2)t}$};

\draw (6,0.1) node [above] {Low frequencies};
\draw (6,-0.4) node [below] {$e^{-\frac{\mu|\xi|^2}{1+\var^2}},\,\, e^{-(1+\frac{1}{\var^2})t}$};
\end{tikzpicture}
\caption{Frequency partition for incompressible modes}
\label{fig:3}
\end{figure}
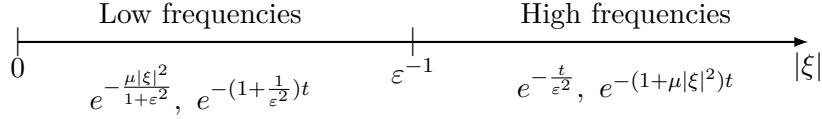

\vspace{-2mm}

	\subsubsection{Key ingredients}
We now outline the main strategies in the proofs of Theorems~\ref{thm1}–\ref{thm3}.  
A crucial step in the global existence analysis for the Cauchy problem 
\eqref{main1}–\eqref{main1d} is to study the evolution of the difference between 
$(\rho^\varepsilon,u^\varepsilon)$ and $(\rho^*,u^*)$. %{\red{To the best of our knowledge, deriving global well-posedness in critcal spaces through a closed error system—rather than directly controlling the original hyperbolic system—appears to be new in the context of Euler-type equations and related hyperbolic models.}}
Compared to the solution itself, this difference enjoys better regularity properties 
and converges at an enhanced rate.  
However, a direct subtraction of the two systems is not feasible, as their 
structures do not align in a way that yields a closed equation.  
To address this structural mismatch, we introduce two auxiliary damped modes
\begin{align}\label{RT1}
	\mathcal{Z}^\varepsilon
	 = \mathcal{P}^{\top}w^\varepsilon
	   + \varepsilon(1+h(a^\varepsilon))\nabla a^\varepsilon,
	\qquad
	\mathcal{R}^\varepsilon
	 = \mathcal{P}w^\varepsilon - \varepsilon u^\varepsilon,
\end{align}
which satisfy
\begin{align*}
	\mathcal{Z}^\varepsilon+\mathcal{R}^\varepsilon
	&= w^\varepsilon + \varepsilon(1+h(a^\varepsilon))\nabla a^\varepsilon
	   - \varepsilon u^\varepsilon,\\
	w^\varepsilon
	&= \mathcal{Z}^\varepsilon+\mathcal{R}^\varepsilon
	   - \varepsilon(1+h(a^\varepsilon))\nabla a^\varepsilon
	   + \varepsilon u^\varepsilon.
\end{align*}
Here, $\mathcal{P}$ and $\mathcal{P}^{\top}$ are defined by \eqref{AT1}. Using the identity
\begin{align*}
	\mathcal{P}\!\left(\varepsilon a^\varepsilon (1+h(a^\varepsilon))\nabla a^\varepsilon\right)=0,
\end{align*}
we have the following key observation
\begin{align}
	\frac{1}{\varepsilon}\mathcal{P}\big(a^\varepsilon(w^\varepsilon-\varepsilon u^\varepsilon)\big)
	= \frac{1}{\varepsilon}\mathcal{P}\big(a^\varepsilon(\mathcal{Z}^\varepsilon+\mathcal{R}^\varepsilon)\big).
\end{align}
Noting that 
\begin{align*}
 Y^\var:=- \operatorname{div}
		    \big((1+a^\varepsilon)(\mathcal{Z}^\varepsilon+\mathcal{R}^\varepsilon)\big)
            =-w^\varepsilon\cdot\nabla a^\varepsilon-(1+a^\varepsilon)\text{div}w^\varepsilon-\var\Delta  a^\var+\var u^\var\cdot\nabla a^\var,
\end{align*}
we can obtain the following reformulation of \eqref{main3}:
\begin{equation}\label{main5}
	\left\{
	\begin{aligned}
		&\partial_t a^\varepsilon-\Delta a^\var
		= - u^\varepsilon\cdot \nabla a^\varepsilon
		  + \frac{1}{\varepsilon}Y^\var,\\[1mm]
		&\partial_t u^\varepsilon-\mu\Delta u^\var
		= -\mathcal{P}(u^\varepsilon\cdot\nabla u^\varepsilon)
		  + \frac{1}{\varepsilon}\mathcal{P}\big(a^\varepsilon(\mathcal{Z}^\varepsilon+\mathcal{R}^\varepsilon)\big)
		  + \frac{1}{\varepsilon}\mathcal{R}^\varepsilon.
		%&(a^\varepsilon,u^\varepsilon)(0,x)=(a_0^\varepsilon,u_0^\varepsilon).
	\end{aligned}
	\right.
\end{equation}

Applying $\mathcal{P}$ to $\eqref{main4}_2$ yields the limit system 
\begin{equation}\label{s2}
	\left\{
	\begin{aligned}
		&\partial_t a^*-\Delta a^*=-u^*\cdot\nabla a^*,\\
		&\partial_t u^*-\mu\Delta u^*=-\mathcal{P}(u^*\cdot\nabla u^*),\\
		&\operatorname{div}u^*=0.
		%&(a^*,u^*)(0,x)=(a_0^*, u_0^*).
	\end{aligned}
	\right.
\end{equation}
Introducing the differences
\begin{align}\label{TR1}
	\delta a=a^\varepsilon-a^*,\qquad
	\delta u=u^\varepsilon-u^*,
\end{align}
we obtain
\begin{equation}\label{main6}
	\left\{
	\begin{aligned}
		&\partial_t (\delta a)-\Delta (\delta a)
		= - u^\varepsilon\cdot \nabla \delta a
		  - \delta u\cdot\nabla a^*
		  + \frac{1}{\varepsilon}Y^\varepsilon,\\[1mm]
		&\partial_t (\delta u)-\mu\Delta (\delta u)
		= -\mathcal{P}\big(u^\varepsilon\cdot\nabla \delta u
		   +\delta u\cdot\nabla u^*\big)
		  + \frac{1}{\varepsilon}\mathcal{P}\big(a^\varepsilon
		    (\mathcal{Z}^\varepsilon+\mathcal{R}^\varepsilon)\big)
		  + \frac{1}{\varepsilon}\mathcal{R}^\varepsilon,
		%&(\delta a,\delta u)(0,x){\red {=(\delta a_0,\delta u_0)}}= (a_0^\varepsilon-a_0^*,u_0^\varepsilon-u_0^*),
	\end{aligned}
	\right.
\end{equation}
where $\mathcal{Z}^\varepsilon$ and $\mathcal{R}^\varepsilon$ satisfy the following damped equations
\begin{equation}\label{T8}
	\left\{
	\begin{aligned}
		&\partial_t\mathcal{Z}^\varepsilon
		 +\frac{1}{\varepsilon^2}\mathcal{Z}^\varepsilon
		 = \varepsilon(1+h(a^\varepsilon))\nabla\partial_ta^\varepsilon
		   + \varepsilon h'(a^\varepsilon)\nabla a^\varepsilon\partial_ta^\varepsilon
		   -\frac{1}{\varepsilon}\mathcal{P}^{\top}(w^\varepsilon\cdot\nabla w^\varepsilon),\\[1mm]
		&\partial_t\mathcal{R}^\varepsilon
		  +\Big(1+\frac{1}{\varepsilon^2}\Big)\mathcal{R}^\varepsilon
		  = -\varepsilon\mu\Delta u^\var
		    +\varepsilon\mathcal{P}(u^\varepsilon\cdot\nabla u^\varepsilon)
		    -\mathcal{P}(a^\varepsilon(\mathcal{Z}^\varepsilon+\mathcal{R}^\varepsilon))
		    -\frac{1}{\varepsilon}\mathcal{P}(w^\varepsilon\cdot\nabla w^\varepsilon).
	\end{aligned}
	\right.
\end{equation}
The above reformulation exhibits a clear structure:  
two modes $(\delta a,\delta u)$ correspond to the diffusive behavior of the limit KS–NS system,  
while the remaining two modes $(\mathcal{Z}^\var,\mathcal{R}^\var)$ are rapidly damped at a rate $\varepsilon^{-2}$.  
This combination of diffusion and fast relaxation occurs naturally in the regime $2^j\lesssim \varepsilon^{-1}$, which is consistent with 
the spectral analysis in Section~\ref{subsection:sp}.  

%In the low-frequency regime.

These spectral features motivate the construction of a parameter-dependent 
energy functional $\delta X(t)$ (see \eqref{Fn1}) adapted to the Littlewood-Paley decomposition and allow 
the original and difference systems to be handled within a unified framework. By energy-dissipation estimates of the diffusion error equations \eqref{main6} and the damped equations \eqref{T8}, we are able to bound   
$(\delta u,\mathcal{P}w^\varepsilon,\mathcal{R}^\varepsilon)$ (Proposition~\ref{P3.2}) and then obtain $\dot{B}_{2,1}^{d/2}$ estimates for  
$(\delta a,\mathcal{P}^{\top}w^\varepsilon,\mathcal{Z}^\varepsilon)$  
(Proposition~\ref{P3.3}). Observe that the threshold $2^{J_\var}=\mathcal{O}(\var^{-1})$ (see \eqref{Jvar}) allows us to absorb higher-order terms in $\eqref{main6}_2$ and $\eqref{T8}_2$ by the dissipation bounds. In the high-frequency regime, we obtain the higher-order $\dot{B}_{2,1}^{d/2+1}$ estimates by constructing high-frequency Lyapunov functional inequalities (cf. Lemma~\ref{P3.4}). It should be noted that, to overcome the loss of derivatives, we need to consider the original system for the solution  $(a^\varepsilon,w^\varepsilon)$ and employ hyperbolic symmetrization and commutator estimates in high frequencies. Combining these results with the definition of $\delta X(t)$ yields
\[
   \delta X(t) \lesssim E_0 + \delta E_0 
      + \varepsilon\,\delta X(t) + (\delta X(t))^2.
\]
If $E_0$, $\delta E_0$ and $\var$ are suitably chosen, then  
$\delta X(t)$ remains uniformly bounded, which allows us to close the 
\emph{a priori} estimate of the error. With the help of the global well-posedness of the limit system \eqref{main2} in Proposition \ref{thm1}, we are able to recover the desired regularity of the solution and complete the proof of 
Theorem~\ref{thm2}.
%With \eqref{main6} and \eqref{T8} in hand, we proceed to the global well-posedness of 
%\eqref{main1}–\eqref{main1d}.  
%Local existence follows from standard arguments, so the main task is to obtain 
%uniform \emph{a priori} estimates.  
%Rather than first deriving estimates for the original system and later studying the difference,  
%we directly obtain uniform bounds for the auxiliary variables and their differences.  
%The triangle inequality then yields uniform control of the original solution,  
%allowing us to treat simultaneously the perturbed system and the limit system.

%To further stabilize the estimates, we introduce a mild compensation term inspired by 
%compactness arguments, which guarantees uniform control as $\varepsilon\to0$.  
%Since \eqref{main1} couples compressible and incompressible components,  
%we make systematic use of the Leray projection.  
  
For the large-time behavior in Theorem~\ref{thm3}, we employ a purely energy-based approach, independent of any detailed spectral analysis. To this end, we introduce a time-weighted functional $\delta\mathcal{D}(t)$ (see~\eqref{deltaDY}), involving Besov norms of higher regularity than those appearing in $\delta X(t)$. Optimal decay for $\delta\mathcal{D}(t)$ then follows from interpolation inequalities together with uniform bounds on the differences in $\dot{B}^{\sigma_1}_{2,\infty}$. Moreover, owing to the intrinsic damping of $(\mathcal{R}^\varepsilon,\mathcal{Z}^\varepsilon)$ in~\eqref{T8}, we also obtain improved decay rates, and in particular, an enhanced decay for the relative velocity. Furthermore, a key observation is that the density error $\delta a$ lies in the lower-regularity space $\dot{B}^{\sigma_1-1}_{2,\infty}$, which indicates that the rate of $\delta \rho$ in $\dot{B}^{\sigma}_{2,1}$ can be improved to $\var(1+t)^{-\frac{1}{2}(\sigma-\sigma_1+1)}$ ($\sigma_1<\sigma\leq d/2-1$) by analyzing the modified time-weighted functional $\delta\mathcal{D}^*(t)$ given in \eqref{DT*}.

Our argument is partly inspired by~\cite{MR4970471,MR4596744}. The main novelty here lies in time weighted energy estimates that capture the enhanced decay \emph{at the level of the error system rather than the original system itself}. By combining these refined bounds for the error with the known decay of the limit system, we eventually recover the optimal decay rates for the solutions to the original E-NS system, and thus conclude the proof of Theorem~\ref{thm3}.

%For the large-time behavior in Theorem~\ref{thm3},   we employ a pure energy method independent of a detailed spectral analysis. For that end, we construct a time-weighted energy functional $\delta\mathcal{D}(t)$ defined in \eqref{deltaDY}  in which some Besov spaces with higher regularity than that of $\delta X(t)$ are used. Then, the optimal decay for $\delta\mathcal{D}(t)$ follows from interpolation inequalities and uniform bounds   of differences in $\dot{B}^{\sigma_1}_{2,\infty}$.   Moreover, thanks to its intrinsic damping of $(\mathcal{R}^\var,\mathcal{Z}^\var)$ in \eqref{T8}, we can derive improved decay rates and subsequently, the relative velocity enjoys enhanced decay. Based on a new energy functional $\delta\mathcal{D}^*(t)$ given in \eqref{DT*}, we can also obtain the improved decay for the density difference. {\red{Our approach is partly inspired by the works \cite{MR4970471,MR4596744}. The main novelty of our current method lies in  weighted energy estimates that captures the enhanced decay of {\emph{the error system instead of the original system}}. By combining these refined bounds for the error with the known decay of the limit system, we finally recover the optimal decay rates for the solution and thereby complete the proof of Theorem~\ref{thm3}.}}

            \subsection{Outline of the paper}

In Section \ref{S2}, we recall the Littlewood-Paley decomposition, Besov spaces and some usual analytic tools. In Section \ref{S4}, we mainly establish the global well-posedness of the system \eqref{main1} in Besov space, where the key issue lies in the construction of the energy functional. In Section \ref{S5}, we aim to obtain the large-time behavior of the solutions to the system \eqref{main1}. Due to the different decay rates of the low and high frequencies of the solutions in Besov space, we establish the time-decay estimates by using the pure energy method instead of the complicated spectral analysis method. In Appendix A, we provide the proof of Proposition \ref{thm1}.

\vspace{2mm}

Throughout the paper, we write $f_{1}\lesssim f_{2}$ if there exists a constant
$C>0$, independent of the functions under consideration, such that
$f_{1}\leq C\,f_{2}$. We write $f_{1}\sim f_{2}$ if $f_{1}\lesssim f_{2}$ and
$f_{2}\lesssim f_{1}$. For any Banach spaces $X$ and $Y$, we set $\|(f,g)\|_{X}:=\|f\|_{X}+\|g\|_{X}$ and $\|f\|_{X\cap Y}:=\|f\|_{X}+\|f\|_{Y}$. The sum space $X+Y$ is defined by $
X+Y:=\{\,f=f_{1}+f_{2}\mid f_{1}\in X,\ f_{2}\in Y\,\}$ equipped with the norm   $\|f\|_{X+Y}:=\inf_{f=f_{1}+f_{2}}\bigl(\|f_{1}\|_{X}+\|f_{2}\|_{Y}\bigr)$.

%%%%%%%%%%%%%%%%%%%%%%%%%%%%%%%%%%%%%%%%%%%%	
	\section{Preliminaries}\label{S2}

    	\subsection{Functional spaces}
        
	In this section, we present functional spaces and some useful lemmas.
    \iffalse
		\subsection{Notations}	
		$C>0$ and $c>0$ denote two constants that are independent of time. $A\lesssim B(A\gtrsim B)$ means $A\leq CB$ $(A\geq CB)$, and $A\sim B$ stands for $A\lesssim B$ and $A\gtrsim B$. For any Banach space $X$ and the functions $g,h\in X$, let $\|(g,h)\|_{X}:=\|g\|_{X}+\|h\|_{X}$. For any $T>0$ and $1\leq \varrho\leq\infty$, we denote by $L^{\varrho}(0,T;X)$ the set of measurable functions $g:[0,T]\rightarrow X$ such that $t\mapsto \|g(t)\|_{X}$ is in $L^{\varrho}(0,T)$ and write $\|\cdot\|_{L^{\varrho}(0,T;X)}:=\|\cdot\|_{L^{\varrho}_{T}(X)}$.
		
	\subsection{Functional spaces}
    \fi
    
We briefly recall the Littlewood-Paley decomposition and the associated Besov
spaces, and refer the readers to \cite[Chapter~2]{bahouri1} for a systematic
presentation. Let $\chi\in\mathcal{C}^\infty(\mathbb{R}^d)$ be radial, equal to
$1$ on $B(0,3/4)$ and supported in $B(0,4/3)$, and set
\[
\varphi(\xi):=\chi(\xi/2)-\chi(\xi),\qquad
\sum_{j\in\mathbb{Z}}\varphi(2^{-j}\cdot)\equiv 1,
\qquad 
\supp\, \varphi\subset\{\tfrac34\le|\xi|\le\tfrac83\}.
\]
For $j\in\mathbb{Z}$ we then define the homogeneous dyadic blocks and the
low-frequency cut-off by
\[
\dot\Delta_j u := \mathcal{F}^{-1}\!\big(\varphi(2^{-j}\cdot)\mathcal{F}u\big),
\qquad
\dot S_j u := \mathcal{F}^{-1}\!\big(\chi(2^{-j}\cdot)\mathcal{F}u\big),
\]
and write $u_j:=\dot\Delta_j u$ for simplicity.

Let $\mathcal{S}'_h$ denote the subspace of tempered distributions $u$ such that
$u\in\mathcal{S}'$ and $\lim_{j\to-\infty}\|\dot S_j u\|_{L^\infty}=0$. Then
\[
u=\sum_{j\in\mathbb{Z}}u_j,
\qquad
\dot S_j u=\sum_{j'\le j-1}u_{j'},\quad\text{in }\mathcal{S}'_h.
\]

The homogeneous Besov space $\dot B^s_{p,r}$ ($s\in\mathbb{R}$,
$1\le p,r\le\infty$) is defined by
\[
\dot B^s_{p,r}:=
\Big\{u\in\mathcal{S}'_h:\ 
\|u\|_{\dot B^s_{p,r}}
:=\big\|\{2^{js}\|u_j\|_{L^p}\}_{j\in\mathbb{Z}}\big\|_{\ell^r}<\infty\Big\}.
\]
Similarly, the Chemin-Lerner-type norm on $(0,T)$ is given by
\[
\widetilde L^\varrho(0,T;\dot B^s_{p,r})
:=\Big\{u\in L^\varrho(0,T;\mathcal{S}'_h):\
\|u\|_{\widetilde L^\varrho_T(\dot B^s_{p,r})}
:=\big\|\{2^{js}\|u_j\|_{L^\varrho_T(L^p)}\}\big\|_{\ell^r}<\infty\Big\}.
\]
By Minkowski’s inequality,
\[
\|u\|_{\widetilde L^\varrho_T(\dot B^s_{p,r})}
\le \|u\|_{L^\varrho_T(\dot B^s_{p,r})}\quad\text{if } r\ge\varrho,
\qquad
\|u\|_{\widetilde L^\varrho_T(\dot B^s_{p,r})}
\ge \|u\|_{L^\varrho_T(\dot B^s_{p,r})}\quad\text{if } r\le\varrho.
\]

To separate low and high frequencies, we fix an integer $J_\varepsilon$
(called the threshold) and set for $s\in\mathbb{R}$
\[
\|u\|_{\dot B^s_{p,r}}^{\ell,\varepsilon}
:=\big\|\{2^{js}\|u_j\|_{L^p}\}_{j\le J_\varepsilon}\big\|_{\ell^r},
\qquad
\|u\|_{\dot B^s_{p,r}}^{h,\varepsilon}
:=\big\|\{2^{js}\|u_j\|_{L^p}\}_{j\ge J_\varepsilon-1}\big\|_{\ell^r},
\]
and define $\widetilde L^\varrho_T(\dot B^s_{p,r})^{\ell,\varepsilon}$,
$\widetilde L^\varrho_T(\dot B^s_{p,r})^{h,\varepsilon}$ in the same manner.
For $u\in\mathcal{S}'_h$ let
\[
u^{\ell,\varepsilon}:=\sum_{j\le J_\varepsilon-1}u_j,
\qquad
u^{h,\varepsilon}:=\sum_{j\ge J_\varepsilon}u_j.
\]
Then for any $s'>0$,
\begin{equation}\label{lh}
\|u^{\ell,\varepsilon}\|_{\dot B^s_{p,r}}\le
\|u\|_{\dot B^s_{p,r}}^{\ell,\varepsilon}
\le 2^{J_\varepsilon s'}\|u\|_{\dot B^{s-s'}_{p,r}}^{\ell,\varepsilon},
\qquad
\|u^{h,\varepsilon}\|_{\dot B^s_{p,r}}
\le \|u\|_{\dot B^s_{p,r}}^{h,\varepsilon}
\le 2^{-(J_\varepsilon-1)s'}\|u\|_{\dot B^{s+s'}_{p,r}}^{h,\varepsilon}.
\end{equation}
Throughout the paper, we choose \vspace{-2mm}
\begin{equation}\label{Jvar}
J_\varepsilon:=-[\log_2\varepsilon]-m_0,
\end{equation}
so that $2^{J_\varepsilon}\sim\varepsilon^{-1}$, where $m_0$ is a sufficiently
large integer independent of $\varepsilon$ (see \eqref{m0}).

	We collect some useful technical lemmas that will be used frequently in our analysis. The first lemma concerns the Bernstein inequalities (see \cite[Lemma 2.1]{bahouri1}).
	\begin{lemma}\label{lemma21}
		Let $0<r<R$, $1\leq p\leq q\leq \infty$ and $k\in \mathbb{N}$. For any $u\in L^p$ and $\lambda>0$, it holds
		\begin{equation}\nonumber
			\left\{
			\begin{aligned}
				&{\rm{Supp}}~ \mathcal{F}(u) \subset \{\xi\in\mathbb{R}^{d}~| ~|\xi|\leq \lambda R\}\Rightarrow \|D^{k}u\|_{L^q}\lesssim\lambda^{k+d(\frac{1}{p}-\frac{1}{q})}\|u\|_{L^p},\\
				&{\rm{Supp}}~ \mathcal{F}(u) \subset \{\xi\in\mathbb{R}^{d}~|~ \lambda r\leq |\xi|\leq \lambda R\}\Rightarrow \|D^{k}u\|_{L^{p}}\sim\lambda^{k}\|u\|_{L^{p}}.
			\end{aligned}
			\right.
		\end{equation}
	\end{lemma}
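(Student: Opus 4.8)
The plan is to reduce both estimates to the unit frequency scale $\lambda=1$ by a dilation argument, and on that scale to realize each derivative $\partial^\alpha$ as a convolution against a fixed Schwartz kernel, so that Young's convolution inequality closes the bounds. To this end I would first fix two cut-off functions adapted to the geometry: $\Phi\in\mathcal{C}^\infty_c(\mathbb{R}^d)$ with $\Phi\equiv1$ on the ball $\{|\xi|\le R\}$, and $\Psi\in\mathcal{C}^\infty_c(\mathbb{R}^d\setminus\{0\})$ with $\Psi\equiv1$ on the annulus $\{r\le|\xi|\le R\}$.

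For the first inequality, I would take $\lambda=1$ and use that $\mathrm{Supp}\,\mathcal{F}u\subset\{|\xi|\le R\}$ forces $\mathcal{F}u=\Phi\,\mathcal{F}u$. Then for any multi-index $\alpha$ with $|\alpha|=k$ one has $\widehat{\partial^\alpha u}=(i\xi)^\alpha\Phi(\xi)\,\mathcal{F}u$, hence $\partial^\alpha u=g_\alpha*u$ with $g_\alpha:=\mathcal{F}^{-1}\big((i\xi)^\alpha\Phi\big)\in\mathcal{S}(\mathbb{R}^d)$. Young's inequality with exponents $1+\tfrac1q=\tfrac1m+\tfrac1p$ gives $\|\partial^\alpha u\|_{L^q}\le\|g_\alpha\|_{L^m}\|u\|_{L^p}$, and $\|g_\alpha\|_{L^m}<\infty$ for every $m\in[1,\infty]$ since $g_\alpha$ is Schwartz. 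Undoing the dilation via $v(x):=u(x/\lambda)$, which sends $\mathrm{Supp}\,\mathcal{F}v$ into $\{|\xi|\le R\}$ and satisfies $\|\partial^\alpha v\|_{L^q}=\lambda^{d/q-k}\|\partial^\alpha u\|_{L^q}$ and $\|v\|_{L^p}=\lambda^{d/p}\|u\|_{L^p}$, then produces exactly the factor $\lambda^{k+d(1/p-1/q)}$.

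For the second inequality, the bound $\|D^ku\|_{L^p}\lesssim\lambda^k\|u\|_{L^p}$ is just the case $q=p$ of the first estimate, since the annulus is contained in a ball. For the reverse bound I would again set $\lambda=1$, write $\mathcal{F}u=\Psi\,\mathcal{F}u$, and exploit that $\Psi$ is supported away from the origin: using the multinomial identity $|\xi|^{2k}=\big(\sum_{j}\xi_j^2\big)^k=\sum_{|\beta|=k}\tfrac{k!}{\beta!}\xi^{2\beta}$, I can write
\[
\mathcal{F}u(\xi)=\sum_{|\beta|=k}\frac{k!}{\beta!}\,\frac{\Psi(\xi)\,\xi^\beta}{|\xi|^{2k}}\;\xi^\beta\,\mathcal{F}u(\xi)=\sum_{|\beta|=k}c_\beta\,\widehat{h_\beta}(\xi)\,\widehat{\partial^\beta u}(\xi),
\]
where $h_\beta:=\mathcal{F}^{-1}\big(\Psi(\xi)\,\xi^\beta\,|\xi|^{-2k}\big)\in\mathcal{S}(\mathbb{R}^d)$ and the $c_\beta$ are harmless numerical constants. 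Hence $u=\sum_{|\beta|=k}c_\beta\,h_\beta*\partial^\beta u$, and Young's inequality with the $L^1$ kernels yields $\|u\|_{L^p}\lesssim\sup_{|\beta|=k}\|\partial^\beta u\|_{L^p}$; rescaling supplies the factor $\lambda^{-k}$. Since $\|D^ku\|_{L^p}$ and $\sup_{|\alpha|=k}\|\partial^\alpha u\|_{L^p}$ are comparable up to constants depending only on $d$ and $k$, this finishes the argument.

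I do not expect any genuine obstacle here --- the statement is classical, see \cite[Lemma~2.1]{bahouri1}. The only points that require a little care are the bookkeeping of the scaling exponents and, in the annular case, the observation that dividing by $|\xi|^{2k}$ is legitimate precisely because $\Psi$ vanishes in a neighbourhood of the origin, so that every multiplier appearing above is a genuine Schwartz function and Young's inequality applies.
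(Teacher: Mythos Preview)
Your proof is correct and follows the standard approach to Bernstein's inequalities. The paper does not actually prove this lemma; it merely states it and refers to \cite[Lemma~2.1]{bahouri1}, which you also cite, so there is nothing further to compare.
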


	The Besov spaces have the following properties due to the Bernstein inequalities. See \cite[Pages 64-65, 79]{bahouri1} for more details.
	\begin{lemma}\label{lemma22}
  The following statements hold:
		\begin{itemize}
			\item{} For $s\in\mathbb{R}$, $1\leq p_{1}\leq p_{2}\leq \infty$ and $1\leq r_{1}\leq r_{2}\leq \infty$, it holds
			\begin{equation}\nonumber
				\begin{aligned}
					\dot{B}^{s}_{p_{1},r_{1}}\hookrightarrow \dot{B}^{s-d(\frac{1}{p_{1}}-\frac{1}{p_{2}})}_{p_{2},r_{2}};
				\end{aligned}
			\end{equation}
			\item{} For $1\leq p\leq q\leq\infty$, we have the following chain of continuous embeddings:
			\begin{equation}\nonumber
				\begin{aligned}
					\dot{B}^{0}_{p,1}\hookrightarrow L^{p}\hookrightarrow \dot{B}^{0}_{p,\infty}\hookrightarrow \dot{B}^{\sigma}_{q,\infty},\quad \sigma=-d(\frac{1}{p}-\frac{1}{q})<0;
				\end{aligned}
			\end{equation}
			\item{} If $1\leq p<\infty$, then $\dot{B}^{\frac{d}{p}}_{p,1}$ is continuously embedded in the set of continuous functions that decay to 0 at infinity;
			\item{} The following real interpolation property is satisfied for $1\leq p\leq\infty$, $s_{1}<s_{2}$ and $\theta\in(0,1)$:
			\begin{equation}
				\begin{aligned}
					&\|u\|_{\dot{B}^{\theta s_{1}+(1-\theta)s_{2}}_{p,1}}\lesssim \frac{1}{\theta(1-\theta)(s_{2}-s_{1})}\|u\|_{\dot{B}^{ s_{1}}_{p,\infty}}^{\theta}\|u\|_{\dot{B}^{s_{2}}_{p,\infty}}^{1-\theta}.\label{inter}
				\end{aligned}
			\end{equation}
		%	which, in particular, implies for any $\varepsilon>0$ that
		%	\begin{equation}\nonumber
		%		\begin{aligned}
		%			H^{s+\varepsilon}\hookrightarrow \dot{B}^{s}_{2,1}\hookrightarrow \dot{H}^{s};
		%		\end{aligned}
		%	\end{equation}
			%\item{}For $1\leq p,\varrho\leq \infty$, $s\in\mathbb{R}$ and $0<\varepsilon\leq 1$, we have following log-type inequality:
			%\begin{equation}
			%\begin{aligned}
			%&\|u^\varepsilon\|_{\widetilde{L}^{\varrho}_{T}(\dot{B}^{s}_{p,1})}\lesssim \frac{\|u^\varepsilon\|_{\widetilde{L}^{\varrho}_{T}(\dot{B}^{s}_{p,\infty})}}{\varepsilon}\log\Big{\{}{1+\frac{\|u^\varepsilon\|_{\widetilde{L}^{\varrho}_{T}(\dot{B}^{s-\varepsilon}_{p,\infty})}+\|u^\varepsilon\|_{\widetilde{L}^{\varrho}_{T}(\dot{B}^{s+\varepsilon}_{p,\infty})}}{\|u^\varepsilon\|_{\widetilde{L}^{\varrho}_{T}(\dot{B}^{s}_{p,\infty})}}}\Big{\}};\label{log}
			%\end{aligned}
			%\end{equation}
			\item{}
			Let $\Lambda^{\sigma}$ be defined by $\Lambda^{\sigma}=(-\Delta)^{\frac{\sigma}{2}}u:=\mathcal{F}^{-}\big{(} |\xi|^{\sigma}\mathcal{F}(u) \big{)}$ for $\sigma\in \mathbb{R}$ and $u\in\dot{S}^{'}_{h}$. Then $\Lambda^{\sigma}$ is an isomorphism from $\dot{B}^{s}_{p,r}$ to $\dot{B}^{s-\sigma}_{p,r}$.
		\end{itemize}
	\end{lemma}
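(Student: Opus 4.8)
The plan is to obtain each item as a direct consequence of the Bernstein inequalities of Lemma~\ref{lemma21} together with the monotonicity of $\ell^r$-norms; all of these facts are classical and detailed in \cite[Chapter~2]{bahouri1}, so I only indicate the mechanism. For the first embedding I would freeze a dyadic block $u_j$, whose Fourier support lies in an annulus of radius $\sim 2^j$, apply Bernstein to get $\|u_j\|_{L^{p_2}}\lesssim 2^{jd(1/p_1-1/p_2)}\|u_j\|_{L^{p_1}}$, multiply by $2^{j(s-d(1/p_1-1/p_2))}$ so that the weight reduces to $2^{js}$, and conclude with $\ell^{r_1}\hookrightarrow\ell^{r_2}$ since $r_1\le r_2$. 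The chain in the second item then follows: $\|u\|_{L^p}\le\sum_j\|u_j\|_{L^p}=\|u\|_{\dot{B}^0_{p,1}}$ gives $\dot{B}^0_{p,1}\hookrightarrow L^p$ (the series converging in $L^p$ for $u\in\mathcal{S}'_h$), the uniform $L^p$-boundedness of the projectors $\dot\Delta_j$ gives $L^p\hookrightarrow\dot{B}^0_{p,\infty}$, and $\dot{B}^0_{p,\infty}\hookrightarrow\dot{B}^\sigma_{q,\infty}$ is the first item with $r_1=r_2=\infty$. For the third item I would use the first item with $p_2=\infty$, $r_2=1$ to get $\dot{B}^{d/p}_{p,1}\hookrightarrow\dot{B}^0_{\infty,1}\hookrightarrow L^\infty$; since the partial sums $\dot S_N u$ are continuous and bounded (compactly supported Fourier transform) and form a Cauchy sequence in $L^\infty$, the limit is continuous, and decay at infinity follows by approximating $u$ by Schwartz functions in the $\dot{B}^{d/p}_{p,1}$-norm.

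The only item requiring genuine work is the interpolation inequality \eqref{inter}, where the point is the explicit dependence of the constant on $\theta$. Writing $\sigma=\theta s_1+(1-\theta)s_2$ I would split the series defining $\|u\|_{\dot{B}^\sigma_{p,1}}$ at a free index $N$:
\[
\sum_{j\le N}2^{j\sigma}\|u_j\|_{L^p}\lesssim\frac{2^{N(\sigma-s_1)}}{\sigma-s_1}\,\|u\|_{\dot{B}^{s_1}_{p,\infty}},\qquad
\sum_{j> N}2^{j\sigma}\|u_j\|_{L^p}\lesssim\frac{2^{N(\sigma-s_2)}}{s_2-\sigma}\,\|u\|_{\dot{B}^{s_2}_{p,\infty}},
\]
using $\sigma-s_1>0$ and $\sigma-s_2<0$ to sum the two geometric series. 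Choosing $N$ so that $2^{N(s_2-s_1)}\sim\|u\|_{\dot{B}^{s_2}_{p,\infty}}/\|u\|_{\dot{B}^{s_1}_{p,\infty}}$ balances the two contributions and produces the multiplicative factor $\tfrac{1}{\sigma-s_1}+\tfrac{1}{s_2-\sigma}$, which equals $\tfrac{1}{\theta(1-\theta)(s_2-s_1)}$ after substituting $\sigma-s_1=(1-\theta)(s_2-s_1)$ and $s_2-\sigma=\theta(s_2-s_1)$.

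Finally, for the $\Lambda^\sigma$ isomorphism I would note that on the annulus $\supp\varphi(2^{-j}\cdot)$ one has $|\xi|^\sigma\sim 2^{j\sigma}$, and that $|\xi|^\sigma\varphi(2^{-j}\xi)$, suitably rescaled, is a Fourier multiplier whose $L^p\to L^p$ norm is uniform in $j$; hence $\|\Lambda^\sigma u_j\|_{L^p}\sim 2^{j\sigma}\|u_j\|_{L^p}$, which immediately gives $\|\Lambda^\sigma u\|_{\dot{B}^{s-\sigma}_{p,r}}\sim\|u\|_{\dot{B}^s_{p,r}}$, with $\Lambda^{-\sigma}$ furnishing the inverse. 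I expect no substantial obstacle: the optimization in the proof of \eqref{inter} is the only genuinely computational step, and the decay-at-infinity claim in the third item is the only one needing a density argument rather than a one-line estimate, while everything else is immediate from Bernstein's inequalities and $\ell^r$-monotonicity.
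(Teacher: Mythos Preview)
Your sketch is correct and follows the standard arguments from \cite[Chapter~2]{bahouri1}. Note that the paper itself does not prove this lemma at all: it simply states the properties and refers the reader to \cite[Pages 64--65, 79]{bahouri1}, so there is nothing to compare against beyond observing that your outline is precisely the mechanism one finds in that reference.
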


	The following Morser-type product estimates in Besov spaces play a fundamental role in the analysis of nonlinear terms (cf. \cite[Page 90]{bahouri1}) and \cite[Section 4.4]{MR1419319}).
	\begin{lemma}\label{lemma23}
		The following statements hold:
		\begin{itemize}
			\item{} Let $s>0$ and $1\leq p,r\leq \infty$. Then $\dot{B}^{s}_{p,r}\cap L^{\infty}$ is an algebra and
			\begin{equation}\label{uv1}
				\begin{aligned}
					\|g h\|_{\dot{B}^{s}_{p,r}}\lesssim \|g\|_{L^{\infty}}\|h\|_{\dot{B}^{s}_{p,r}}+ \|h\|_{L^{\infty}}\|g\|_{\dot{B}^{s}_{p,r}};
				\end{aligned}
			\end{equation}
			\item{}
			Let $s_{1}$, $s_{2}$ and $p$ satisfy $2\leq p\leq \infty$, $s_{1}\leq \frac{d}{p}$, $s_{2}\leq \frac{d}{p}$ and $s_{1}+s_{2}>0$. Then we have
			\begin{equation}\label{uv2}
				\begin{aligned}
					&\|g h\|_{\dot{B}^{s_{1}+s_{2}-\frac{d}{p}}_{p,1}}\lesssim \|g\|_{\dot{B}^{s_{1}}_{p,1}}\|h\|_{\dot{B}^{s_{2}}_{p,1}};
				\end{aligned}
			\end{equation}
			\item{} Assume that $s_{1}$, $s_{2}$ and $p$ satisfy $2\leq p\leq \infty$, $s_{1}\leq \frac{d}{p}$, $s_{2}<\frac{d}{p}$ and $s_{1}+s_{2}\geq0$. Then it holds 
			\begin{equation}\label{uv3}
				\begin{aligned}
					&\| g h\|_{\dot{B}^{s_{1}+s_{2}-\frac{d}{p}}_{p,\infty}}\lesssim \|g\|_{\dot{B}^{s_{1}}_{p,1}}\|h\|_{\dot{B}^{s_{2}}_{p,\infty}}.
				\end{aligned}
			\end{equation}
		\end{itemize}
	\end{lemma}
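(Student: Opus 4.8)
The plan is to obtain all three estimates from Bony's paraproduct decomposition
\[
gh=T_g h+T_h g+R(g,h),\qquad
T_g h:=\sum_{j\in\mathbb{Z}}\dot S_{j-1}g\,\dot\Delta_j h,\quad
R(g,h):=\sum_{j\in\mathbb{Z}}\dot\Delta_j g\,\widetilde{\dot\Delta}_j h,
\]
with $\widetilde{\dot\Delta}_j:=\dot\Delta_{j-1}+\dot\Delta_j+\dot\Delta_{j+1}$, and to bound each of the three pieces by localizing in frequency and applying the Bernstein inequalities of Lemma~\ref{lemma21}. Since $T_h g$ is obtained from $T_g h$ by exchanging the roles of $g$ and $h$, it is enough to treat the paraproduct $T_g h$ and the remainder $R(g,h)$.

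For the algebra estimate \eqref{uv1} I would first note that each summand $\dot S_{j-1}g\,\dot\Delta_j h$ of $T_g h$ has Fourier support in an annulus $\{c\,2^j\le|\xi|\le C\,2^j\}$, so that $\dot\Delta_q(T_g h)$ only collects the finitely many terms with $|q-j|\le N_0$; bounding $\|\dot S_{j-1}g\|_{L^\infty}\lesssim\|g\|_{L^\infty}$, multiplying by $2^{qs}$, and using a discrete convolution inequality in $\ell^r$ yields $\|T_g h\|_{\dot B^s_{p,r}}\lesssim\|g\|_{L^\infty}\|h\|_{\dot B^s_{p,r}}$. For $R(g,h)$ the summand $\dot\Delta_j g\,\widetilde{\dot\Delta}_j h$ is supported in a \emph{ball} of radius $\sim 2^j$, so $\dot\Delta_q R$ only sees $j\ge q-N_0$; using $\|\dot\Delta_j g\|_{L^\infty}\lesssim\|g\|_{L^\infty}$ together with the hypothesis $s>0$ to sum the tail $\sum_{j\ge q-N_0}2^{(q-j)s}(\cdots)$ in $\ell^r$, one obtains the same bound. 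Exchanging $g\leftrightarrow h$ and adding the three contributions proves \eqref{uv1}, whence $\dot B^s_{p,r}\cap L^\infty$ is an algebra.

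For \eqref{uv2} and \eqref{uv3} the scheme is identical; the only change is that the low-frequency factor in each paraproduct must now be placed in $L^\infty$ through Bernstein, $\|\dot S_{j-1}g\|_{L^\infty}\lesssim\sum_{j'\le j-2}2^{j'd/p}\|\dot\Delta_{j'}g\|_{L^p}$. When $g$ lives in the $\ell^1$-type space $\dot B^{s_1}_{p,1}$ this is $\lesssim 2^{j(d/p-s_1)}\|g\|_{\dot B^{s_1}_{p,1}}$ for $s_1<d/p$ and $\lesssim\|g\|_{\dot B^{d/p}_{p,1}}$ for $s_1=d/p$ (where the full series already converges), while for a factor measured only in $\dot B^{s_2}_{p,\infty}$ — as $h$ is in \eqref{uv3} — the strict condition $s_2<d/p$ is exactly what makes the geometric series $\sum_{j'\le j-2}2^{j'(d/p-s_2)}$ converge. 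The spurious power $2^{j(d/p-s_1)}$ is absorbed by the companion block $\dot\Delta_j h$, which carries the gain $2^{-js_2}$ with $s_2\le d/p$; this disposes of $T_g h$ and $T_h g$. For the remainder I would use $p\ge2$ so that Hölder gives $\|\dot\Delta_j g\,\widetilde{\dot\Delta}_j h\|_{L^{p/2}}\le\|\dot\Delta_j g\|_{L^p}\|\widetilde{\dot\Delta}_j h\|_{L^p}$ with exponent $p/2\ge1$, then Bernstein to return from $L^{p/2}$ to $L^p$ on the ball of radius $\sim 2^q$; the resulting tail $\sum_{j\ge q-N_0}2^{(q-j)(s_1+s_2)}a_j b_j$ (with $a_j=2^{js_1}\|\dot\Delta_j g\|_{L^p}$ and $b_j=2^{js_2}\|\widetilde{\dot\Delta}_j h\|_{L^p}$) is $\ell^1$-summable in $q$ precisely when $s_1+s_2>0$, which yields \eqref{uv2}, and is bounded uniformly in $q$ when $s_1+s_2\ge0$, which yields \eqref{uv3} once $h$ is kept in the $\ell^\infty$ index throughout.

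No step is deep; the only genuine bookkeeping — and where the hypotheses are actually consumed — is the remainder term. The restriction $p\ge2$ is exactly what legitimizes the Hölder step $L^p\times L^p\hookrightarrow L^{p/2}$, and $s_1+s_2>0$ (resp. $s_1+s_2\ge0$) is precisely the summability (resp. uniform-boundedness) threshold for the low-frequency tail; the strict bound $s_2<d/p$ in \eqref{uv3} comes from the $\ell^\infty$-nature of the space carrying $h$. I do not anticipate any real obstacle: the care lies in tracking the finitely many overlap constants $N_0$, keeping the $\ell^1$ versus $\ell^\infty$ summation indices consistent when passing between \eqref{uv2} and \eqref{uv3}, and checking that every discrete series converges under the stated index constraints.
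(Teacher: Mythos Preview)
Your proposal is correct and follows exactly the standard Bony paraproduct argument. The paper itself does not prove Lemma~\ref{lemma23}; it simply cites \cite[Page 90]{bahouri1} and \cite[Section 4.4]{MR1419319}, where precisely the decomposition and Bernstein-based bookkeeping you outline are carried out, so your approach coincides with the intended one.
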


	We state the following result about the continuity of composition functions, which can be found in \cite[Page 104]{bahouri1}.
	\begin{lemma}\label{lemma24}
		Let $G:I\rightarrow \mathbb{R}$ be a smooth function satisfying $G(0)=0$. For any $1\leq p,r\leq \infty$, $s>0$ and $g\in\dot{B}^{s}_{p,r}\cap L^{\infty}$, there holds $G(g)\in \dot{B}^{s}_{p,r}\cap L^{\infty}$ and
		\begin{align}
			\|G(g)\|_{\dot{B}^{s}_{p,r}}\leq C_{g}\|g\|_{\dot{B}^{s}_{p,r}},\label{F1}
		\end{align}
		where the constant $C_{g}>0$ depends only on $\|g\|_{L^{\infty}}$, $\|G'\|_{L^\infty}$, $s$ and $d$.
		
		%In addition, if $g_{1}, g_{2}\in \dot{B}^{s}_{p,1}\cap L^{\infty}$, then it holds
		%\begin{align}
		%	&\|G(g_{1})-G(g_{2})\|_{\dot{B}^{s}_{p,1}}\leq C_{g_{1},g_{2}}(1+\|(g_{1},g_{2})\|_{\dot{B}^{\frac{d}{2}}_{2,1}})\|g_{1}-g_{2}\|_{\dot{B}^{s}_{p,1}},\quad ~s\in (-\frac{d}{2},\frac{d}{2}],\label{F2}\\
		%	&\|G(g_{1})-G(g_{2})\|_{\dot{B}^{s}_{p,\infty}}\leq C_{g_{1},g_{2}}(1+\|(g_{1},g_{2})\|_{\dot{B}^{\frac{d}{2}}_{2,1}})\|g_{1}-g_{2}\|_{\dot{B}^{s}_{p,\infty}},\quad s\in (-\frac{d}{2},\frac{d}{2}),\label{F3}
		%\end{align}
		%where the constant $C_{g_{1},g_{2}}>0$ depends only on $\|(g_{1},g_{2})\|_{L^{\infty}}$, $G$, $s$, $p$ and $d$.
		
	\end{lemma}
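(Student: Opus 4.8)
The plan is to establish the $L^\infty$ bound directly and the homogeneous Besov bound by the classical dyadic telescoping argument combined with a first-order Taylor expansion, following the scheme of \cite{bahouri1}. First I would set $M:=\|g\|_{L^\infty}$, so that $g$ takes values in the compact interval $[-M,M]\subset I$; since $G(0)=0$, the mean value theorem gives $|G(g(x))|\le\|G'\|_{L^\infty([-M,M])}\,|g(x)|$ pointwise, hence $G(g)\in L^\infty$ with $\|G(g)\|_{L^\infty}\le\|G'\|_{L^\infty}\|g\|_{L^\infty}$.

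For the Besov norm, since $g\in\mathcal{S}'_h\cap L^\infty$ one has $\dot S_j g\to 0$ in $L^\infty$ as $j\to-\infty$, and $G$ is continuous with $G(0)=0$, so I would start from the telescoping series $G(g)=\sum_{j\in\mathbb{Z}}\big(G(\dot S_{j+1}g)-G(\dot S_j g)\big)$, convergent in $\mathcal{S}'_h$. The fundamental theorem of calculus rewrites each summand as $G(\dot S_{j+1}g)-G(\dot S_j g)=m_j\,\dot\Delta_j g$, where $m_j:=\int_0^1 G'(\dot S_j g+\tau\dot\Delta_j g)\,d\tau$ satisfies $\|m_j\|_{L^\infty}\le\|G'\|_{L^\infty([-CM,CM])}$ because $\dot S_j$ and $\dot\Delta_j$ are uniformly bounded on $L^\infty$. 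Then I would estimate $\|\dot\Delta_k G(g)\|_{L^p}$ by splitting the sum at $j=k$. For $j\ge k$, using $\|\dot\Delta_k(m_j\dot\Delta_j g)\|_{L^p}\le\|m_j\|_{L^\infty}\|\dot\Delta_j g\|_{L^p}$, multiplying by $2^{ks}$ and writing $2^{ks}\|\dot\Delta_j g\|_{L^p}=2^{(k-j)s}\,2^{js}\|\dot\Delta_j g\|_{L^p}$, the condition $s>0$ makes $(2^{ns})_{n\le0}$ summable, and Young's inequality for series bounds this contribution by $\lesssim\|g\|_{\dot B^s_{p,r}}$. For $j<k$, I would exploit that $G(\dot S_{j+1}g)-G(\dot S_j g)$ is smooth and quasi-localized at frequency $2^j$: differentiating $m_j\dot\Delta_j g$, using the Leibniz rule, the higher-order chain rule for $\nabla^\ell m_j$, and Bernstein's inequality (each derivative landing on $\dot S_j g$, $\dot S_{j+1}g$ or $\dot\Delta_j g$ producing a factor $2^j$), one obtains $\|\nabla^N(G(\dot S_{j+1}g)-G(\dot S_j g))\|_{L^p}\lesssim 2^{jN}\|\dot\Delta_j g\|_{L^p}$ for every integer $N$, with constant depending on $\|G^{(\ell)}\|_{L^\infty([-CM,CM])}$, $1\le\ell\le N$. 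Bernstein's inequality then gives $\|\dot\Delta_k(G(\dot S_{j+1}g)-G(\dot S_j g))\|_{L^p}\lesssim 2^{(j-k)N}\|\dot\Delta_j g\|_{L^p}$, and choosing an integer $N>s$ and writing $2^{ks}\|\dot\Delta_j g\|_{L^p}\,2^{(j-k)N}=2^{(j-k)(N-s)}\,2^{js}\|\dot\Delta_j g\|_{L^p}$, the sequence $(2^{n(N-s)})_{n<0}$ is summable and Young's inequality again closes the estimate. Adding the two contributions gives $\|G(g)\|_{\dot B^s_{p,r}}\le C_g\|g\|_{\dot B^s_{p,r}}$ with $C_g$ depending only on $s,p,r,d$ and finitely many derivatives of $G$ over $[-CM,CM]$, hence on $\|G'\|_{L^\infty}$ and $\|g\|_{L^\infty}$.

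The hard part will be the regime $j<k$, that is, the portion of $G(g)$ supported at frequencies much larger than the input dyadic block: there the pointwise $L^\infty$ control of $m_j$ is useless, and one must exploit that each difference $G(\dot S_{j+1}g)-G(\dot S_j g)$ is essentially spectrally concentrated near $2^j$, which becomes visible only after differentiating $N>s$ times and reapplying Bernstein. This is exactly where the smoothness of $G$ beyond $C^1$ enters, and explains why one asks $G\in C^\infty$ (or at least $G\in C^{[s]+1}$); by contrast the telescoping identity, the Taylor remainder $m_j$, and the two $\ell^r$-convolution summations are entirely routine. An alternative worth keeping in mind is an induction on $[s]$ based on $\nabla(G(g))=G'(g)\nabla g$ together with the Moser product estimate \eqref{uv1} of Lemma~\ref{lemma23}, but the telescoping argument above is cleaner and self-contained.
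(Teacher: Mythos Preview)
Your argument is correct and is precisely the classical Meyer telescoping (first linearization) proof, which is the one given in \cite[Page 104]{bahouri1}. The paper does not supply its own proof of this lemma but simply cites that reference, so your proposal in fact reproduces the very argument the paper defers to.

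One small remark: you correctly observe that the constant produced by the $j<k$ analysis depends on $\|G^{(\ell)}\|_{L^\infty([-CM,CM])}$ for $1\le\ell\le N$ with some $N>s$, not merely on $\|G'\|_{L^\infty}$. The statement as written in the paper is slightly loose on this point; your version of the dependence is the accurate one, and your parenthetical comment that $G\in C^{[s]+1}$ suffices is exactly right.
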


	Finally, the following commutator estimates will be useful  in high frequencies (see \cite[Page 112]{bahouri1}).
	\begin{lemma}\label{lemma25}
		Let $1\leq p\leq \infty$ and $-\frac{d}{p}\leq s\leq \frac{d}{p}+1$. Then it holds
		\begin{align}
			&\sum_{j\in\mathbb{Z}}2^{js}\|[g ,\dot{\Delta}_{j}]\partial_{k} h\|_{L^{p}}\lesssim\|g\|_{\dot{B}^{\frac{d}{p}+1}_{p,1}}\|h\|_{\dot{B}^{s}_{p,1}},\quad k=1,...,d,\label{commutator1}
		\end{align}
		with the commutator $[A,B]:=AB-BA$.
	\end{lemma}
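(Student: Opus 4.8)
The plan is to prove \eqref{commutator1} by Bony's paraproduct calculus: one isolates the genuine commutator between the Littlewood--Paley block $\dot\Delta_j$ and the low--high paraproduct $T_g$, for which a gain of one derivative of $g$ is available, and disposes of all the remaining contributions by Bernstein's inequalities (Lemma~\ref{lemma21}), the embeddings of Lemma~\ref{lemma22} and the product estimates of Lemma~\ref{lemma23}. Concretely, I would apply Bony's decomposition $g\,f=T_g f+T_f g+R(g,f)$ to both $g\,\partial_k h$ and $g\,\dot\Delta_j\partial_k h$ and subtract, obtaining
\[
[g,\dot\Delta_j]\partial_k h
=[T_g,\dot\Delta_j]\partial_k h
+T_{\dot\Delta_j\partial_k h}\,g
-\dot\Delta_j\big(T_{\partial_k h}g\big)
+R\big(g,\dot\Delta_j\partial_k h\big)
-\dot\Delta_j R\big(g,\partial_k h\big),
\]
and I would call the five terms on the right $\mathrm{I}_j,\dots,\mathrm{V}_j$ in order.

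The heart of the matter is $\mathrm{I}_j$. Since $\dot S_{j'-1}g\,\dot\Delta_{j'}\partial_k h$ has frequencies in an annulus $\{|\xi|\sim 2^{j'}\}$, spectral localization reduces $\mathrm{I}_j$, up to a bounded number of harmless boundary terms, to $\sum_{|j'-j|\le4}[\dot\Delta_j,\dot S_{j'-1}g]\,\dot\Delta_{j'}\partial_k h$. Representing $\dot\Delta_j$ through its kernel $2^{jd}\phi(2^j\cdot)$ with $\phi=\mathcal{F}^{-1}\varphi$ and expanding $\dot S_{j'-1}g$ to first order, one gets the pointwise commutator bound $\|[\dot\Delta_j,\dot S_{j'-1}g]\,u\|_{L^p}\lesssim 2^{-j}\|\nabla g\|_{L^\infty}\|u\|_{L^p}$; combined with $\|\dot\Delta_{j'}\partial_k h\|_{L^p}\lesssim 2^{j'}\|\dot\Delta_{j'}h\|_{L^p}$ and $|j'-j|\le4$, the two powers of $2$ cancel, and summing in $j$ with the almost-orthogonality of the bands yields $\sum_j2^{js}\|\mathrm{I}_j\|_{L^p}\lesssim\|\nabla g\|_{L^\infty}\|h\|_{\dot{B}^{s}_{p,1}}\lesssim\|g\|_{\dot{B}^{d/p+1}_{p,1}}\|h\|_{\dot{B}^{s}_{p,1}}$, the last step by $\dot{B}^{d/p}_{p,1}\hookrightarrow L^\infty$ applied to $\nabla g$. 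It then remains to bound $\mathrm{II}_j,\dots,\mathrm{V}_j$, which are all of strictly lower order: for $\mathrm{II}_j$ and $\mathrm{III}_j$ one uses Bernstein to put $\partial_k h$ (resp.\ $\dot S_{j'-1}\partial_k h$) into $L^\infty$ at the cost of $2^{j(1+d/p)}$, together with the separation of the frequency supports, and for the remainders $\mathrm{IV}_j,\mathrm{V}_j$ one uses H\"older, Bernstein ($\|\dot\Delta_{j'}g\|_{L^\infty}\lesssim 2^{j'd/p}\|\dot\Delta_{j'}g\|_{L^p}$) and summation along the diagonal band $j'\gtrsim j$; this is precisely where the constraints $s\le d/p+1$ (from the paraproduct terms, where $\partial_k h$ must be controlled in a non-negative-index Besov norm) and $s\ge-d/p$ (from the remainder terms, which need positive total regularity) come into play. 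Adding the five bounds gives \eqref{commutator1}; the complete computation is in \cite[Chapter~2]{bahouri1}.

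The genuinely delicate point is this single commutator estimate for $\mathrm{I}_j$ — extracting one derivative of $g$ so that its gain $2^{-j}$ exactly cancels the loss $2^{j'}\sim 2^j$ from $\partial_k$, which is what makes the final sum close with the critical norm $\|g\|_{\dot{B}^{d/p+1}_{p,1}}$ — together with the bookkeeping ensuring that no interaction escapes the band $|j-j'|\lesssim1$. The endpoint cases $s=d/p+1$ and $s=-d/p$ also need slightly more attentive summation (one works with $\|h\|_{\dot{B}^{s}_{p,\infty}}$ inside the estimates and invokes $\dot{B}^{s}_{p,1}\hookrightarrow\dot{B}^{s}_{p,\infty}$, or places $\partial_k h$ directly in $L^\infty$ via $\dot{B}^{d/p}_{p,1}\hookrightarrow L^\infty$); everything else is routine.
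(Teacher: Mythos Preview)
Your proposal is correct and follows exactly the standard Bony paraproduct argument from \cite[Chapter~2]{bahouri1}; the paper itself does not give an independent proof of this lemma but simply cites \cite[Page 112]{bahouri1}, so your sketch is in fact the argument the paper is pointing to.
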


We also state the optimal regularity estimates (see \cite[Page 157]{bahouri1}) for the heat equation in $\mathbb{R}^{d}$:
\begin{equation}\label{heat}
\begin{aligned}
&\partial_{t}g-\mu\Delta g=G,\quad g(0,x)=g_{0}(x).
\end{aligned}
\end{equation}
\begin{lemma}\label{lemma33}
Let $T>0$, $\mu>0$, $s\in\mathbb{R}$ and $1\leq p,r\leq\infty$. Assume that $u_{0}\in\dot{B}^{s}_{p,r}$ and $G\in L^1(0,T;\dot{B}^{s}_{p,r})$. Then the solution $g$ to \eqref{heat} satisfies
\begin{equation*}
\|g\|_{\widetilde{L}^{\infty}_{T}(\dot{B}^{s}_{p,r})}+\|g\|_{\widetilde{L}^{1}_t(\dot{B}^{s+2}_{p,r})}+\|\partial_t g\|_{\widetilde{L}^{1}_T(\dot{B}^{s}_{p,r})}\lesssim \|g_{0}\|_{\dot{B}^{s}_{p,r}}+\|G\|_{\widetilde{L}^{1}_{T}(\dot{B}^{s}_{p,r})}.
\end{equation*}
\end{lemma}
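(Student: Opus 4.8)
The plan is to establish the estimate block by block in frequency, which is precisely what the Chemin--Lerner spaces $\widetilde L^\varrho_T(\dot B^s_{p,r})$ are designed for: since the time integration sits \emph{inside} the $\ell^r$-summation over dyadic blocks, it suffices to bound each piece $g_j:=\dot\Delta_j g$ separately and then reassemble. First I would apply $\dot\Delta_j$ to \eqref{heat}, obtaining $\partial_t g_j-\mu\Delta g_j=G_j$ with $G_j:=\dot\Delta_j G$ and $g_j(0)=\dot\Delta_j g_0$, and write the Duhamel representation
\[
g_j(t)=e^{\mu t\Delta}\dot\Delta_j g_0+\int_0^t e^{\mu(t-\tau)\Delta}G_j(\tau)\dd\tau .
\]

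The key ingredient is the smoothing property of the heat semigroup acting on spectrally localized functions: there exist constants $c_0,C>0$, independent of $j$, such that
\[
\|e^{\mu t\Delta}\dot\Delta_j u\|_{L^p}\le C\,e^{-c_0\mu 2^{2j}t}\,\|\dot\Delta_j u\|_{L^p},\qquad t\ge 0,
\]
which follows from the Fourier support of $\dot\Delta_j u$ lying in an annulus $\{c\,2^j\le|\xi|\le C\,2^j\}$ together with the Bernstein-type bounds of Lemma~\ref{lemma21} (cf.\ \cite{bahouri1}). Inserting this into the Duhamel formula gives the pointwise-in-time estimate
\[
\|g_j(t)\|_{L^p}\le C\,e^{-c_0\mu 2^{2j}t}\|\dot\Delta_j g_0\|_{L^p}+C\int_0^t e^{-c_0\mu 2^{2j}(t-\tau)}\|G_j(\tau)\|_{L^p}\dd\tau ,
\]
from which the three desired contributions follow by elementary manipulations. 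Taking the supremum over $t\in(0,T)$ and bounding $\sup_{t\le T}\int_0^t e^{-c_0\mu 2^{2j}(t-\tau)}(\cdot)\,\dd\tau$ by $\|G_j\|_{L^1_T(L^p)}$ controls $\|g_j\|_{L^\infty_T(L^p)}$; multiplying by $2^{2j}$, integrating over $(0,T)$ and applying Young's convolution inequality in time (the $L^1$-norm of $t\mapsto 2^{2j}e^{-c_0\mu 2^{2j}t}$ being bounded by $(c_0\mu)^{-1}$) controls $2^{2j}\|g_j\|_{L^1_T(L^p)}$, i.e.\ the $\dot B^{s+2}_{p,r}$ part; finally the identity $\partial_t g_j=\mu\Delta g_j+G_j$ gives $\|\partial_t g_j\|_{L^1_T(L^p)}\le\mu 2^{2j}\|g_j\|_{L^1_T(L^p)}+\|G_j\|_{L^1_T(L^p)}$, which is already estimated.

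Multiplying each of these three inequalities by $2^{js}$ and taking the $\ell^r(\mathbb Z)$-norm in $j$, then recalling the definitions of $\dot B^s_{p,r}$ and of the Chemin--Lerner norms, yields the claim. There is no genuine obstacle here: this is the textbook maximal-regularity estimate for the heat flow in (hybrid) Besov spaces, and every step is routine. The only points demanding a little care are the $j$-uniformity of the constants, which forces one to use the annulus-localized decay rate $2^{2j}$ rather than a crude global bound, and the bookkeeping of the fixed factor $\mu>0$, which is harmless. Alternatively, one may simply invoke the corresponding statement in \cite{bahouri1} verbatim.
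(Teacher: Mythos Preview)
Your proposal is correct and is precisely the standard argument underlying the cited result: the paper does not supply its own proof but simply refers to \cite[Page~157]{bahouri1}, and the block-by-block Duhamel computation you outline---localized heat-semigroup decay $\|e^{\mu t\Delta}\dot\Delta_j u\|_{L^p}\lesssim e^{-c_0\mu 2^{2j}t}\|\dot\Delta_j u\|_{L^p}$, Young's inequality in time for the $L^1$ smoothing gain, then $\ell^r$-summation---is exactly the proof found there. Nothing further is needed.
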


For the  damped equation in $\mathbb{R}^{d}$:
\begin{equation}\label{damped}
\begin{aligned}
&\partial_t g+\frac{1}{\var^2} g =F,\quad g(0, x)=g_0(x),
\end{aligned}
\end{equation}
it is easy to prove the following lemma.
\begin{lemma}\label{maximaldamped}
Let $s\in\mathbb{R}$, $1\leq  p,r\leq\infty$, and let $T>0$ be a given time. Assume $g_{0}\in\dot{B}^{s}_{p,r}$ and  $F\in  \widetilde{L}^{1}_T(\dot{B}^{s}_{p,r})$. If $g$ is a solution to the Cauchy problem \eqref{damped}, then it holds that
\begin{equation}\nonumber
\begin{aligned}
&\|g\|_{\widetilde{L}^{\infty}_{T}(\dot{B}^{s}_{p,r})}+\frac{1}{\var^2}\|g\|_{\widetilde{L}^{1}_{T}(\dot{B}^{s}_{p,r})}
\lesssim 
\|g_{0}\|_{\dot{B}^{s}_{p,1}}+\|F\|_{\widetilde{L}^{1}_{T}(\dot{B}^{s}_{p,r})}.
\end{aligned}
\end{equation}
%Here $C>0$ is a constant independent of $T$ and $\var$.
\end{lemma}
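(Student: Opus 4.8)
The plan is to localize in frequency and solve the resulting family of scalar ODEs explicitly by Duhamel's formula. Since the operator $\partial_t+\varepsilon^{-2}$ acts only in time and commutes with the Littlewood--Paley projectors $\dot\Delta_j$, applying $\dot\Delta_j$ to \eqref{damped} gives, with $g_j:=\dot\Delta_j g$ and $F_j:=\dot\Delta_j F$,
\[
\partial_t g_j+\frac{1}{\varepsilon^2}g_j=F_j,\qquad g_j(0)=\dot\Delta_j g_0,
\]
so that
\[
g_j(t)=e^{-t/\varepsilon^2}\,\dot\Delta_j g_0+\int_0^t e^{-(t-\tau)/\varepsilon^2}F_j(\tau)\,d\tau .
\]

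Next I would take the $L^p_x$ norm, use Minkowski's inequality for the time integral, and bound $0\le e^{-(t-\tau)/\varepsilon^2}\le 1$ for $0\le\tau\le t\le T$ to get
\[
\|g_j(t)\|_{L^p}\le\|\dot\Delta_j g_0\|_{L^p}+\int_0^t\|F_j(\tau)\|_{L^p}\,d\tau ,
\]
which after taking the supremum over $t\in[0,T]$ yields the $\widetilde{L}^\infty_T$ part of the estimate at the block level. For the $\widetilde{L}^1_T$ part with the gain of $\varepsilon^{-2}$, I would integrate the block representation in $t$, apply Fubini to the resulting double integral, and use $\tfrac{1}{\varepsilon^2}\int_\tau^T e^{-(t-\tau)/\varepsilon^2}\,dt=1-e^{-(T-\tau)/\varepsilon^2}\le1$ together with $\tfrac{1}{\varepsilon^2}\int_0^T e^{-t/\varepsilon^2}\,dt\le1$, which gives
\[
\frac{1}{\varepsilon^2}\|g_j\|_{L^1_T(L^p)}\le\|\dot\Delta_j g_0\|_{L^p}+\|F_j\|_{L^1_T(L^p)} .
\]

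Finally I would multiply these two blockwise bounds by $2^{js}$ and take the $\ell^r(\mathbb{Z})$ norm in $j$; by the very definition of the Chemin--Lerner norms $\widetilde{L}^\varrho_T(\dot B^s_{p,r})$ this produces the claimed inequality (the same Duhamel formula also delivers existence and uniqueness of the solution, if needed). There is no genuine obstacle here: the operator carries no spatial regularization, so, unlike the heat estimate of Lemma~\ref{lemma33}, no gain of two derivatives appears; the only point requiring mild care is that all manipulations must be carried out blockwise so that the $\ell^r$ summation in $j$ stays outside the time-Lebesgue norm, which is automatic since the sum over $j$ is taken only at the last step.
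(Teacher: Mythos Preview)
Your proposal is correct and is exactly the standard argument one has in mind here; the paper itself omits the proof, simply stating that ``it is easy to prove the following lemma,'' and your Duhamel--Fubini computation at the block level followed by the $\ell^r$ summation is the natural way to fill this in. As a minor bonus, your argument actually yields $\|g_0\|_{\dot B^s_{p,r}}$ on the right-hand side, which is slightly sharper than the $\|g_0\|_{\dot B^s_{p,1}}$ printed in the statement (presumably a typo).
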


%%%%%%%%%%%%%%%%%%%%%%%%%%%%%%%%%%%%%%%%%%

%%%%%%%%%%%%%%%%%%%%%%%%%%%%%%%%%%%%%%%%%%%%
	\section{Proof of Theorem \ref{thm2}}\label{S4}
    
	This section is devoted to the study of the global well-posedness of the system \eqref{main1} in Besov space (Theorem \ref{thm2}). Since local well-posedness can be proven by standard methods, we omit the details here. To extend the local solution to a global one, the key issue is to establish the {\emph{a priori}} estimate. 
	
	To this end, we introduce the error functional as follows:
	\begin{align}\label{Fn1}
		\delta X(t)
		&=\frac{1}{\varepsilon}\|\delta a\|_{\widetilde{L}_t^\infty(\dot{B}_{2,1}^{\frac{d}{2}-1})}+\frac{1}{\varepsilon}\|\delta a\|_{\widetilde{L}_t^1(\dot{B}_{2,1}^{\frac{d}{2}+1})}+\frac{1}{\varepsilon}\|\partial_t\delta a \|_{\widetilde{L}_t^1(\dot{B}_{2,1}^{\frac{d}{2}-1})} \nonumber\\
        &\quad +\frac{1}{\varepsilon}\|\delta u\|_{\widetilde{L}_t^\infty(\dot{B}_{2,1}^{\frac{d}{2}-1})}+\frac{1}{\varepsilon}\|\delta u\|_{\widetilde{L}_t^1(\dot{B}_{2,1}^{\frac{d}{2}+1})}+\frac{1}{\varepsilon}\|\partial_t\delta u\|_{\widetilde{L}_t^1(\dot{B}_{2,1}^{\frac{d}{2}-1})} \nonumber\\
		&\quad+\|\mathcal{P}w^\varepsilon\|_{ \widetilde{L}_t^\infty(\dot{B}_{2,1}^{\frac{d}{2}-1})}+\|w^\varepsilon\|_{\widetilde{L}^{\infty}_t(\dot{B}_{2,1}^{\frac{d}{2}})}^{\ell,\var}+\frac{1}{\varepsilon}\|w^\varepsilon\|_{\widetilde{L}^2_t(\dot{B}_{2,1}^{\frac{d}{2}})}^{\ell,\var}+\frac{1}{\varepsilon}\|w^\varepsilon\|_{\widetilde{L}_t^1(\dot{B}_{2,1}^{\frac{d}{2}+1})}^{\ell,\var}\nonumber\\
		%&\quad+{\red{\|\mathcal{P}^{\top}w^\varepsilon\|_{L_t^\infty( \dot{B}_{2,1}^{\frac{d}{2}})}^{\ell,\var}}}+\frac{1}{\varepsilon}\|\mathcal{P}^{\top}w^\varepsilon\|_{L^2_t(\dot{B}_{2,1}^{\frac{d}{2}})}^{\ell,\var}+\frac{1}{\varepsilon}\|\mathcal{P}^{\top}w^\varepsilon\|_{L_t^1(\dot{B}_{2,1}^{\frac{d}{2}+1})}^{\ell,\var} \nonumber\\
        &\quad+\varepsilon\|( a^\varepsilon, w^\varepsilon)\|_{\widetilde{L}_t^\infty(\dot{B}_{2,1}^{\frac{d}{2}+1})}^{h,\varepsilon}+\frac{1}{\varepsilon}\|( a^\varepsilon, w^\varepsilon)\|_{\widetilde{L}_t^1(\dot{B}_{2,1}^{\frac{d}{2}+1})}^{h,\varepsilon}\nonumber\\
        &\quad
		+\frac{1}{\varepsilon^2}\|\mathcal{Z}^\varepsilon\|_{\widetilde{L}_t^1(\dot{B}_{2,1}^{\frac{d}{2}})}+\frac{1}{\varepsilon^2}\|\mathcal{R}^\varepsilon\|_{\widetilde{L}_t^1(\dot{B}_{2,1}^{\frac{d}{2}-1}\cap \dot{B}_{2,1}^{\frac{d}{2}})}.
	\end{align}
	Then, our task is to prove the uniform boundedness of the energy functional $\delta X(t)$.  At the beginning, we provide the following \emph{a priori} assumption that 
	\begin{align}\label{asss}
		\delta X(t)\leq \zeta_1,
	\end{align}
	where $\zeta_1$ is  a small positive constant determined in \eqref{var1}. %Note that the norms in \ref{Fn1} can be replaced by Chemin-Lerner type norms. However, we 
	
	\begin{prop}\label{P31}
		Let $(\rho^*, u^*)$ be the global solution of the Cauchy problem \eqref{main2}-\eqref{main2d} with the initial data $(\rho^*_0,u_0^*)$. There exist two constants $\zeta_1,\var^*$ independent of $\var$ such that if \eqref{asss} and $\var\leq \var^*$ hold, then it holds 
		\begin{align}
			\delta X(t)\leq C_0( E_0+\delta E_0),\label{3.3}
		\end{align}
	with a generic constant $C_0>0$. Here $E_0$ and $\delta E_0$ are given by \eqref{a0} and \eqref{deltaE0} respectively. 
	\end{prop}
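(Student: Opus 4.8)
The proof is a continuity (bootstrap) argument: the local-in-time solution, obtained by standard methods, is continued globally by improving the a priori bound \eqref{asss} to \eqref{3.3}, which then holds for all $t\ge0$. The plan is to split the frequency space at the threshold $2^{J_\var}\sim\var^{-1}$ fixed in \eqref{Jvar} and to estimate the low- and high-frequency parts of $\delta X(t)$ by structurally different arguments, using throughout the bounds on the limit solution $(a^*,u^*)$ from Proposition~\ref{thm1} (these produce the $E_0$ contribution).

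\textbf{Low frequencies ($j\le J_\var$).} Here I would use the reformulation \eqref{main5}--\eqref{T8}. Apply the heat maximal-regularity estimate of Lemma~\ref{lemma33} to the error system \eqref{main6} for $(\delta a,\delta u)$, and the damped-mode estimate of Lemma~\ref{maximaldamped} to \eqref{T8} for $(\mathcal{Z}^\var,\mathcal{R}^\var)$; the latter yields exactly the $\var^{-2}$-weighted $L^1_t$ norms appearing in $\delta X(t)$. All source terms — the transport terms $u^\var\cdot\nabla\delta a$, $\delta u\cdot\nabla a^*$ and $\mathcal{P}(u^\var\cdot\nabla\delta u+\delta u\cdot\nabla u^*)$, the quadratic coupling $\var^{-1}\mathcal{P}(a^\var(\mathcal{Z}^\var+\mathcal{R}^\var))$, the term $\var^{-1}Y^\var=-\var^{-1}\operatorname{div}((1+a^\var)(\mathcal{Z}^\var+\mathcal{R}^\var))$, and the linear forcing $\var^{-1}\mathcal{R}^\var$ in $\eqref{main6}_2$ — are controlled by the Morse-type product estimates of Lemma~\ref{lemma23} and the composition estimate of Lemma~\ref{lemma24}. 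The key bookkeeping point is that each factor $\var^{-1}(\mathcal{Z}^\var+\mathcal{R}^\var)$ is bounded by $\var\cdot\big(\var^{-2}\|(\mathcal{Z}^\var,\mathcal{R}^\var)\|_{L^1_t(\cdots)}\big)\le\var\,\delta X(t)$, which is how the term $\var\,\delta X(t)$ enters; moreover the higher-order terms $\var\mu\Delta u^\var$ and $\var\mathcal{P}(u^\var\cdot\nabla u^\var)$ in $\eqref{T8}_2$ are reabsorbed into the $\var^{-2}$-dissipation of $\mathcal{R}^\var$ because at frequencies $2^j\lesssim 2^{J_\var}\sim\var^{-1}$ a factor $\var$ becomes a small constant, provided the integer $m_0$ in \eqref{Jvar} is large enough.

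\textbf{High frequencies ($j\ge J_\var$).} The reformulation is useless here (the $\var^{-1},\var^{-2}$ weights would be fatal and the Euler transport terms lose one derivative), so I would return to the original quasilinear system \eqref{main3} for $(a^\var,w^\var)$. Localizing by $\dot\Delta_j$, symmetrizing the hyperbolic block in $(a^\var,w^\var)$, and combining the damping $\var^{-2}(w^\var-\var u^\var)$ with a hypocoercivity correction mixing the $a^\var$- and $w^\var$-equations (reflecting the spectral picture $\lambda_{1,2}\sim-\tfrac1{2\var^2}\pm{\rm i}\tfrac{|\xi|}{\var}$ of Section~\ref{subsection:sp}), one obtains a Lyapunov functional $\mathcal{L}_j\sim\|(a^\var,w^\var)_j\|_{L^2}^2$ with $\tfrac{d}{dt}\mathcal{L}_j+c\,\var^{-2}\mathcal{L}_j\lesssim(\text{commutators})+(\text{coupling with }u^\var)$. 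The transport terms $\var^{-1}w^\var\cdot\nabla a^\var$ and $\var^{-1}w^\var\cdot\nabla w^\var$ are treated with the commutator estimate of Lemma~\ref{lemma25} to recover the lost derivative, while the coupling with $u^\var$ is absorbed by the parabolic dissipation of the Navier--Stokes component; integrating in time at frequencies $2^j\gtrsim\var^{-1}$ gives precisely the weighted norms $\var\|(a^\var,w^\var)\|^{h,\var}_{\widetilde{L}^{\infty}_t(\dot{B}^{d/2+1}_{2,1})}$ and $\var^{-1}\|(a^\var,w^\var)\|^{h,\var}_{\widetilde{L}^{1}_t(\dot{B}^{d/2+1}_{2,1})}$ appearing in $\delta X(t)$.

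\textbf{Closing and main obstacle.} Summing the two regimes produces an inequality of the form
\[
\delta X(t)\le C_1\big(E_0+\delta E_0\big)+C_1\,\var\,\delta X(t)+C_1\,\big(\delta X(t)\big)^2 ;
\]
choosing $\var^*$ with $C_1\var^*\le\tfrac14$, then $\zeta_1$ (the constant in \eqref{asss}) with $C_1\zeta_1\le\tfrac14$, lets the last two terms be absorbed, so $\delta X(t)\le C_0(E_0+\delta E_0)$ with $C_0=4C_1$, and a standard continuity argument makes this bound global. I expect the main difficulty to be the high-frequency step: one must abandon the convenient difference formulation and work with the genuine quasilinear Euler block, and the relaxation gain $\var^{-2}$ must dominate the $\var^{-1}$ in front of the quadratic transport terms uniformly for $2^j\gtrsim\var^{-1}$ — which is exactly where the precise choice of $J_\var$ (the largeness of $m_0$) is needed. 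A secondary difficulty is that the Euler--Navier--Stokes cross terms (for instance $\var^{-1}\mathcal{P}(a^\var(w^\var-\var u^\var))$ and $\var\mu\Delta u^\var$) carry no intrinsic coercivity and must be reabsorbed through the frequency threshold rather than by a Gronwall-type argument.
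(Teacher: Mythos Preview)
Your overall architecture matches the paper: low frequencies via the error system \eqref{main6} combined with the damped equations \eqref{T8}, high frequencies via a symmetrized hypocoercivity argument on the original system \eqref{main3} with commutator estimates, and a final absorption of $\varepsilon\,\delta X(t)+(\delta X(t))^2$. The high-frequency step and the closing argument are right.

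Your low-frequency bookkeeping, however, misidentifies the mechanism. You claim the linear forcing $\varepsilon^{-1}\mathcal{R}^\varepsilon$ in $\eqref{main6}_2$ contributes only $\varepsilon\,\delta X(t)$ because $\varepsilon^{-1}\|\mathcal{R}^\varepsilon\|_{L^1_t}=\varepsilon\cdot\varepsilon^{-2}\|\mathcal{R}^\varepsilon\|_{L^1_t}\le\varepsilon\,\delta X(t)$. But the quantity appearing in $\delta X(t)$ is $\varepsilon^{-1}\|\delta u\|$, not $\|\delta u\|$; after applying Lemma~\ref{lemma33} and dividing by $\varepsilon$, the forcing contributes $\varepsilon^{-2}\|\mathcal{R}^\varepsilon\|_{L^1_t}$, which is exactly of order $\delta X(t)$ and cannot be treated as small. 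The paper closes this loop differently: the damped estimate for $\mathcal{R}^\varepsilon$ produces $\varepsilon^{-2}\|\mathcal{R}^\varepsilon\|_{L^1_t}$ on the \emph{left}, and one multiplies the $\delta u$ estimate by a small uniform constant before adding (see the passage from \eqref{A1}--\eqref{A2} to \eqref{A111}). The same coupling is needed for $(\delta a,\mathcal{Z}^\varepsilon)$, and there lies the genuinely delicate point you miss: the right side of $\eqref{T8}_1$ contains $\varepsilon\nabla\partial_t a^\varepsilon$, which at low frequencies in $\dot B^{d/2}_{2,1}$ costs two powers of $2^{J_\varepsilon}\sim 2^{-m_0}\varepsilon^{-1}$ and feeds back $2^{-2m_0}\varepsilon^{-1}\|\partial_t\delta a\|^{\ell,\varepsilon}_{\dot B^{d/2-1}_{2,1}}$ into the $\delta a$ estimate. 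It is \emph{this} circularity, not the $\varepsilon\mu\Delta u^\varepsilon$ term you single out, that forces $m_0$ large and fixes it once and for all in \eqref{m0}. The actual $\varepsilon\,\delta X(t)$ in the final inequality comes from source terms in the $\mathcal{R}^\varepsilon$ equation such as $\varepsilon\|u^\varepsilon\|_{\widetilde L^1_t(\dot B^{d/2+1}_{2,1})}\lesssim\varepsilon E_0+\varepsilon^2\delta X(t)$ (see \eqref{T2}), not from the linear couplings $\varepsilon^{-1}\mathcal{R}^\varepsilon$ or $\varepsilon^{-1}Y^\varepsilon$.
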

	
	To complete the proof of Proposition \ref{P31}, we divide the proof into the following three steps.
	\subsection{Step 1: Uniform estimates of \texorpdfstring{$(\delta u,\mathcal{P}w^\varepsilon,\mathcal{R}^\varepsilon)$}{(delta u, P w^epsilon, R^epsilon)}}
	In this subsection, we mainly establish the uniform estimates of $(\delta u,\mathcal{P}w^\varepsilon,\mathcal{R}^\varepsilon)$. At the beginning, we consider the system of $\delta u$ and $\mathcal{R}^\var$:
	\begin{equation}\label{main7}
		\left\{
		\begin{aligned}
			&\partial_t (\delta u)-\mu\Delta (\delta u)=-\mathcal{P}(u^\varepsilon\cdot\nabla \delta u+\delta u\cdot\nabla u^*)+\frac{1}{\var}\mathcal{P}(a^\var(\mathcal{Z}^\var+\mathcal{R}^\var))+\frac{1}{\varepsilon}\mathcal{R}^\varepsilon,\\
			&	\partial_t\mathcal{R}^\varepsilon+(1+\frac{1}{\varepsilon^2})\mathcal{R}^\varepsilon=-\varepsilon\mu\Delta u^{\varepsilon}+\varepsilon\mathcal{P}(u^\varepsilon\cdot\nabla u^\varepsilon)-\mathcal{P}(a^\var(\mathcal{Z}^\var+\mathcal{R}^\var))-\frac{1}{\varepsilon}\mathcal{P}(w^\varepsilon\cdot\nabla w^\varepsilon),\\
            &(\delta u, \mathcal{R}^\varepsilon)(0,x)=(\delta u_0, \mathcal{R}_0^\varepsilon)(x):=(u_0^\var-u_0, \mathcal{P} w_0^\var-\var u_0^\var)(x).
		\end{aligned}
		\right.
	\end{equation}
	\begin{lemma}\label{P3.2}
		Assume that $(a^\varepsilon, w^\varepsilon,u^\varepsilon)$ is a solution to the system \eqref{main3}. Then it holds
		\begin{align}
			&\frac{1}{\varepsilon}\|\delta u\|_{{\widetilde{L}_t^\infty(\dot{B}_{2,1}^{\frac{d}{2}-1})}}+\frac{1}{\varepsilon}
			\|\delta u\|_{{\widetilde{L}_t^1(\dot{B}_{2,1}^{\frac{d}{2}+1})}}+\|\mathcal{P}w^\varepsilon\|_{\widetilde{L}_t^\infty(\dot{B}_{2,1}^{\frac{d}{2}-1})}
			\nonumber\\
			&\quad+\|\mathcal{P}w^\varepsilon\|_{\widetilde{L}_t^\infty(\dot{B}_{2,1}^{\frac{d}{2}})}^{\ell,\var}+\frac{1}{\var}\|\mathcal{P}w^\varepsilon\|_{{\widetilde{L}^2_t(\dot{B}_{2,1}^{\frac{d}{2}})}}^{\ell,\varepsilon}+\frac{1}{\varepsilon}
			\|\mathcal{P}w^\varepsilon\|_{{\widetilde{L}_t^1(\dot{B}_{2,1}^{\frac{d}{2}+1})}}^{\ell,\varepsilon}\nonumber\\
			&\quad +\frac{1}{\varepsilon^2}\|\mathcal{R}^\varepsilon\|_{{\widetilde{L}_t^1(\dot{B}_{2,1}^{\frac{d}{2}-1})}}+\frac{1}{\varepsilon^2}\|\mathcal{R}^\varepsilon\|_{{\widetilde{L}_t^1(\dot{B}_{2,1}^{\frac{d}{2}})}}^{\ell,\varepsilon}\nonumber\\
			&\lesssim E_0+\delta E_0+\var\delta X(t)+(\delta X(t))^2.\label{step1:es}
		\end{align}	
	\end{lemma}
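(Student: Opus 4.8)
The plan is to read \eqref{main7} together with the first equation of \eqref{Main:com} as a linear system in which $\delta u$ solves a heat equation while $\mathcal{R}^\varepsilon$ and $\mathcal{P}w^\varepsilon$ solve damped equations with relaxation rate $\varepsilon^{-2}$, and to close \eqref{step1:es} by maximal-regularity estimates for the forcings. Concretely, I would first apply the damped estimate of Lemma~\ref{maximaldamped} to $\mathcal{R}^\varepsilon$ in $\dot{B}^{d/2-1}_{2,1}$ at all frequencies and in $\dot{B}^{d/2}_{2,1}$ on low frequencies (in $\widetilde{L}^\infty_t$, $\widetilde{L}^1_t$, and also in $\widetilde{L}^2_t$, using the elementary variant $\varepsilon^{-1}\|g\|_{\widetilde{L}^2_t(\dot{B}^s_{2,1})}\lesssim\|g_0\|_{\dot{B}^s_{2,1}}+\varepsilon\|F\|_{\widetilde{L}^2_t(\dot{B}^s_{2,1})}$ for $\partial_t g+\varepsilon^{-2}g=F$); then apply the heat estimate of Lemma~\ref{lemma33} to the $\delta u$-equation in $\dot{B}^{d/2-1}_{2,1}$; and finally recover every $\mathcal{P}w^\varepsilon$-norm on the left of \eqref{step1:es} from the algebraic identity $\mathcal{P}w^\varepsilon=\mathcal{R}^\varepsilon+\varepsilon u^\varepsilon$ with $u^\varepsilon=\delta u+u^*$, converting low-frequency $\dot{B}^{d/2}_{2,1}$ and $\dot{B}^{d/2+1}_{2,1}$ norms into the controlled ones at the cost of the Bernstein factor $2^{J_\varepsilon}\sim\varepsilon^{-1}$ (see \eqref{Jvar}), which is precisely balanced by the $\varepsilon^{-1}$, $\varepsilon^{-2}$ weights in \eqref{step1:es}. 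The data are absorbed into $\delta E_0$: $\varepsilon^{-1}\|\delta u_0\|_{\dot{B}^{d/2-1}_{2,1}}\le\delta E_0$, and $\|\mathcal{R}_0^\varepsilon\|_{\dot{B}^{d/2-1}_{2,1}}$, $\|\mathcal{R}_0^\varepsilon\|^{\ell,\varepsilon}_{\dot{B}^{d/2}_{2,1}}$ are $\lesssim\delta E_0+\varepsilon E_0$ after one derivative trade against $2^{J_\varepsilon}$.

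For the forcing terms I would use the product and composition rules (Lemmas~\ref{lemma23}--\ref{lemma24}), the interpolation inequality \eqref{inter}, the global bounds of Proposition~\ref{thm1} for $(\rho^*,u^*)$, the definition of $\delta X(t)$, and the a priori smallness \eqref{asss}. The transport terms $\mathcal{P}(u^\varepsilon\cdot\nabla\delta u)$ and $\mathcal{P}(\delta u\cdot\nabla u^*)$, put in divergence form, are bounded in $\widetilde{L}^1_t(\dot{B}^{d/2-1}_{2,1})$ by $(\|u^\varepsilon\|_{\widetilde{L}^2_t(\dot{B}^{d/2}_{2,1})}+\|u^*\|_{\widetilde{L}^2_t(\dot{B}^{d/2}_{2,1})})\,\|\delta u\|_{\widetilde{L}^2_t(\dot{B}^{d/2}_{2,1})}\lesssim(E_0+\zeta_1)\,\varepsilon\,\delta X(t)$; the quadratic terms $\varepsilon\mathcal{P}(u^\varepsilon\cdot\nabla u^\varepsilon)$ and $\varepsilon^{-1}\mathcal{P}(w^\varepsilon\cdot\nabla w^\varepsilon)$ land in $(\delta X(t))^2$ once one uses that $w^\varepsilon$ is $O(\varepsilon)$ in $\widetilde{L}^2_t(\dot{B}^{d/2}_{2,1})$ (low frequencies from the $\varepsilon^{-1}\|w^\varepsilon\|^{\ell,\varepsilon}_{\widetilde{L}^2_t(\dot{B}^{d/2}_{2,1})}$ piece of $\delta X(t)$, high frequencies by interpolating the $\varepsilon\|w^\varepsilon\|^{h,\varepsilon}_{\widetilde{L}^\infty_t(\dot{B}^{d/2+1}_{2,1})}$ and $\varepsilon^{-1}\|w^\varepsilon\|^{h,\varepsilon}_{\widetilde{L}^1_t(\dot{B}^{d/2+1}_{2,1})}$ pieces). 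The self-interaction $\mathcal{P}(a^\varepsilon(\mathcal{Z}^\varepsilon+\mathcal{R}^\varepsilon))$ splits into a $\mathcal{Z}^\varepsilon$-part, harmless since $\|\mathcal{Z}^\varepsilon\|_{\widetilde{L}^1_t(\dot{B}^{d/2}_{2,1})}\le\varepsilon^2\delta X(t)$, and a part $\mathcal{P}(a^\varepsilon\mathcal{R}^\varepsilon)$ with $\|a^\varepsilon\mathcal{R}^\varepsilon\|_{\dot{B}^{d/2-1}_{2,1}}\lesssim\|a^\varepsilon\|_{\dot{B}^{d/2}_{2,1}}\|\mathcal{R}^\varepsilon\|_{\dot{B}^{d/2-1}_{2,1}}$ and prefactor $\lesssim E_0+\zeta_1$, so the resulting $\varepsilon^{-2}\|\mathcal{R}^\varepsilon\|_{\widetilde{L}^1_t(\dot{B}^{d/2-1}_{2,1})}$ is reabsorbed into the left for $\varepsilon^*,\zeta_1,\delta_0$ small. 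The apparent circularity between $\delta u$ and $\mathcal{R}^\varepsilon$ (the source $\varepsilon^{-1}\mathcal{R}^\varepsilon$ feeds $\delta u$, while $-\varepsilon\mu\Delta u^\varepsilon$ feeds $\mathcal{R}^\varepsilon$) is harmless because the cross contribution carries a factor $\varepsilon^2$: with $A=\varepsilon^{-1}\|\delta u\|_{\widetilde{L}^1_t(\dot{B}^{d/2+1}_{2,1})}$ and $B=\varepsilon^{-2}\|\mathcal{R}^\varepsilon\|_{\widetilde{L}^1_t(\dot{B}^{d/2-1}_{2,1})}$ one obtains $A\lesssim(\mathrm{RHS})+B$ and $B\lesssim(\mathrm{RHS})+\varepsilon^2A$, which closes for $\varepsilon$ small.

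The main difficulty is the bookkeeping of negative powers of $\varepsilon$: one must verify that every term carrying a net $\varepsilon^{-1}$ or $\varepsilon^{-2}$ is multiplied by a quantity that is itself $O(\varepsilon)$ or $O(\varepsilon^2)$ small (true for $\delta a$, $\delta u$, $w^\varepsilon$, and for the $\widetilde{L}^1_t$-in-time norms of the strongly damped modes $\mathcal{Z}^\varepsilon,\mathcal{R}^\varepsilon$), or else is turned into an admissible power by the low/high-frequency trades $2^{J_\varepsilon}\sim\varepsilon^{-1}$ and $2^{-J_\varepsilon}\sim\varepsilon$. The sharpest instance is the forcing $-\varepsilon\mu\Delta u^\varepsilon$ of the $\mathcal{R}^\varepsilon$-equation, where two derivatives are lost on $u^\varepsilon$ against a single power of $\varepsilon$: closing the low-frequency bound on $\varepsilon^{-2}\|\mathcal{R}^\varepsilon\|^{\ell,\varepsilon}_{\widetilde{L}^1_t(\dot{B}^{d/2}_{2,1})}$ forces one to write $\varepsilon\|u^\varepsilon\|^{\ell,\varepsilon}_{\widetilde{L}^1_t(\dot{B}^{d/2+2}_{2,1})}\lesssim\varepsilon\,2^{J_\varepsilon}\|u^\varepsilon\|_{\widetilde{L}^1_t(\dot{B}^{d/2+1}_{2,1})}\lesssim E_0+\varepsilon\,\delta X(t)$, using $\|u^\varepsilon\|_{\widetilde{L}^1_t(\dot{B}^{d/2+1}_{2,1})}\le\|u^*\|_{\widetilde{L}^1_t(\dot{B}^{d/2+1}_{2,1})}+\|\delta u\|_{\widetilde{L}^1_t(\dot{B}^{d/2+1}_{2,1})}$ from Proposition~\ref{thm1} and $\delta X(t)$; the $\widetilde{L}^2_t$-version needs the same trade twice to cancel an extra $\varepsilon$. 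Collecting all contributions yields \eqref{step1:es} after absorbing the $(E_0+\zeta_1)$-small linear pieces, which is legitimate under the standing smallness assumptions.
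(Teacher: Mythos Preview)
Your plan is correct and follows essentially the same route as the paper: apply Lemma~\ref{lemma33} to the $\delta u$-equation, Lemma~\ref{maximaldamped} to the $\mathcal{R}^\varepsilon$-equation in both $\dot{B}^{d/2-1}_{2,1}$ and (low-frequency) $\dot{B}^{d/2}_{2,1}$, combine the two with a small multiplicative constant to break the $\delta u\leftrightarrow\mathcal{R}^\varepsilon$ coupling, and then recover every $\mathcal{P}w^\varepsilon$-norm from the identity $\mathcal{P}w^\varepsilon=\mathcal{R}^\varepsilon+\varepsilon u^\varepsilon$. The only cosmetic differences are that the paper splits the transport terms as $\widetilde{L}^\infty_t\times\widetilde{L}^1_t$ rather than your $\widetilde{L}^2_t\times\widetilde{L}^2_t$, obtains the $\widetilde{L}^2_t$-bound on $\mathcal{R}^\varepsilon$ by interpolating between $\widetilde{L}^\infty_t$ and $\widetilde{L}^1_t$ instead of a direct $\widetilde{L}^2_t$ damped estimate, and handles $\mathcal{P}(a^\varepsilon\mathcal{R}^\varepsilon)$ by bounding $\varepsilon^{-2}\|\mathcal{R}^\varepsilon\|_{\widetilde{L}^1_t(\dot{B}^{d/2-1}_{2,1})}\le\delta X(t)$ directly from the definition of $\delta X(t)$ rather than by reabsorption.
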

	\begin{proof}
		Applying Lemma \ref{lemma33} for the equation $\eqref{main7}_1$ yields that 
		\begin{align}
			&\frac{1}{\varepsilon}\|\delta u\|_{\widetilde{L}_t^\infty(\dot{B}_{2,1}^{\frac{d}{2}-1})}+\frac{1}{\varepsilon}\|\delta u\|_{\widetilde{L}_t^1(\dot{B}_{2,1}^{\frac{d}{2}+1})}+\frac{1}{\varepsilon}\|\partial_t\delta u\|_{\widetilde{L}_t^1(\dot{B}_{2,1}^{\frac{d}{2}-1})}\nonumber\\
			&\lesssim \frac{1}{\varepsilon}\|\delta u_0\|_{\dot{B}_{2,1}^{\frac{d}{2}-1}}+\frac{1}{\varepsilon} \|(u^\varepsilon\cdot\nabla\delta u,\delta u\cdot\nabla u^*)\|_{\widetilde{L}_t^1(\dot{B}_{2,1}^{\frac{d}{2}-1})}+\frac{1}{\var^2}\|a^\var(\mathcal{Z}^\var+\mathcal{R}^\var)\|_{\widetilde{L}_t^1(\dot{B}_{2,1}^{\frac{d}{2}-1})}\nonumber\\
			&\quad+\frac{1}{\varepsilon^2}\|\mathcal{R}^\varepsilon\|_{{\widetilde{L}_t^1(\dot{B}_{2,1}^{\frac{d}{2}-1})}}.
			\label{A1}
		\end{align}	
		To control the last term on the right-hand side of \eqref{A1}, we deduce from $\eqref{main7}_2$ that 
		\begin{align}\label{A2}
			&\|\mathcal{R}^\varepsilon\|_{\widetilde{L}_t^\infty(\dot{B}_{2,1}^{\frac{d}{2}-1})}+\frac{1}{\varepsilon^2}\|\mathcal{R}^\varepsilon\|_{\widetilde{L}_t^1(\dot{B}_{2,1}^{\frac{d}{2}-1})}\nonumber\\
			&\lesssim \|\mathcal{R}^\varepsilon_0\|_{\dot{B}_{2,1}^{\frac{d}{2}-1}}+\varepsilon \|u^\varepsilon\|_{\widetilde{L}_t^1(\dot{B}_{2,1}^{\frac{d}{2}+1})}
			+\varepsilon\|u^\varepsilon\cdot\nabla u^\varepsilon\|_ {\widetilde{L}_t^1(\dot{B}_{2,1}^{\frac{d}{2}-1})}\nonumber\\
			&\quad+\|a^\var(\mathcal{Z}^\var+\mathcal{R}^\var)\|_{\widetilde{L}_t^1(\dot{B}_{2,1}^{\frac{d}{2}-1})}+\frac{1}{\varepsilon}\|w^\varepsilon\cdot\nabla w^\varepsilon\|_{\widetilde{L}_t^1(\dot{B}_{2,1}^{\frac{d}{2}-1})}.	
		\end{align}
	Multiplying \eqref{A1} by a small positive constant, then adding the resultant to \eqref{A2} yields that 
	%	\begin{align*}
	%		&
	%		\kappa_1(	\frac{1}{\varepsilon}\|\delta u\|_{L_t^\infty(\dot{B}_{2,1}^{\frac{d}{2}-1})}+\frac{1}{\varepsilon}\|\delta u\|_{L_t^1(\dot{B}_{2,1}^{\frac{d}{2}+1})})+\|\mathcal{R}^\varepsilon\|_{L_t^\infty(\dot{B}_{2,1}^{\frac{d}{2}-1})}+\frac{1}{\varepsilon^2}\|\mathcal{R}^\varepsilon\|_{L_t^1(\dot{B}_{2,1}^{\frac{d}{2}-1})}\\
	%		&\lesssim \kappa_1(\frac{1}{\varepsilon}\|\delta u_0\|_{\dot{B}_{2,1}^{\frac{d}{2}-1}}+\frac{1}{\varepsilon} \|(u^\varepsilon\cdot\nabla\delta u,\delta u\cdot\nabla u^*)\|_{L_t^1(\dot{B}_{2,1}^{\frac{d}{2}-1})}+\frac{1}{\var^2}\|a^\var(\mathcal{Z}^\var+\mathcal{R}^\var)\|_{L_t^1(\dot{B}_{2,1}^{\frac{d}{2}-1})}+\frac{1}{\varepsilon^2}\|\mathcal{R}^\varepsilon\|_{{L_t^1(\dot{B}_{2,1}^{\frac{d}{2}-1})}})\\
	%		&\quad +\|\mathcal{R}^\varepsilon_0\|_{\dot{B}_{2,1}^{\frac{d}{2}-1}}+\varepsilon \|u^\varepsilon\|_{L_t^1(\dot{B}_{2,1}^{\frac{d}{2}+1})}
	%		+\varepsilon\|u^\varepsilon\cdot\nabla u^\varepsilon\|_ {L_t^1(\dot{B}_{2,1}^{\frac{d}{2}-1})}+\|a^\var(\mathcal{Z}^\var+\mathcal{R}^\var)\|_{L_t^1(\dot{B}_{2,1}^{\frac{d}{2}-1})}+\frac{1}{\varepsilon}\|w^\varepsilon\cdot\nabla w^\varepsilon\|_{L_t^1(\dot{B}_{2,1}^{\frac{d}{2}-1})}.	
	%	\end{align*}
		\begin{align}
			&\frac{1}{\varepsilon}\|\delta u\|_{\widetilde{L}_t^\infty(\dot{B}_{2,1}^{\frac{d}{2}-1})}+\frac{1}{\varepsilon}\|\delta u\|_{\widetilde{L}_t^1(\dot{B}_{2,1}^{\frac{d}{2}+1})}+\frac{1}{\varepsilon}\|\partial_t\delta u\|_{\widetilde{L}_t^1(\dot{B}_{2,1}^{\frac{d}{2}-1})}\nonumber\\
            &\quad+\|\mathcal{R}^\varepsilon\|_{\widetilde{L}_t^\infty(\dot{B}_{2,1}^{\frac{d}{2}-1})}+\frac{1}{\varepsilon^2}\|\mathcal{R}^\varepsilon\|_{\widetilde{L}_t^1(\dot{B}_{2,1}^{\frac{d}{2}-1})}\nonumber\\
			&\lesssim \frac{1}{\varepsilon}\|\delta u_0\|_{\dot{B}_{2,1}^{\frac{d}{2}-1}}+\|\mathcal{R}^\varepsilon_0\|_{\dot{B}_{2,1}^{\frac{d}{2}-1}}+\frac{1}{\varepsilon} \|(u^\varepsilon\cdot\nabla\delta u,\delta u\cdot\nabla u^*)\|_{\widetilde{L}_t^1(\dot{B}_{2,1}^{\frac{d}{2}-1})}+\frac{1}{\var^2}\|a^\var(\mathcal{Z}^\var+\mathcal{R}^\var)\|_{\widetilde{L}_t^1(\dot{B}_{2,1}^{\frac{d}{2}-1})}\nonumber\\
			&\quad +\varepsilon \|u^\varepsilon\|_{\widetilde{L}_t^1(\dot{B}_{2,1}^{\frac{d}{2}+1})}
			+\varepsilon\|u^\varepsilon\cdot\nabla u^\varepsilon\|_ {\widetilde{L}_t^1(\dot{B}_{2,1}^{\frac{d}{2}-1})}+\frac{1}{\varepsilon}\|w^\varepsilon\cdot\nabla w^\varepsilon\|_{\widetilde{L}_t^1(\dot{B}_{2,1}^{\frac{d}{2}-1})}.	\label{A111}
		\end{align}	
 Recall $E_0\leq 1$ and $\var\leq 1$. The first and second terms on the right-hand side of \eqref{A111} are estimated as
		\begin{align*}
			\frac{1}{\varepsilon}\|\delta u_0\|_{\dot{B}_{2,1}^{\frac{d}{2}-1}}+\|\mathcal{R}^\varepsilon_0\|_{\dot{B}_{2,1}^{\frac{d}{2}-1}}
			%&\lesssim \delta E_0+ \|\mathcal{P}w_0^\varepsilon\|_{\dot{B}_{2,1}^{\frac{d}{2}-1}}+\varepsilon\|u_0^\varepsilon\|_{\dot{B}_{2,1}^{\frac{d}{2}-1}}\\
			&\leq \delta E_0+ \|\mathcal{P}w_0^\varepsilon\|_{\dot{B}_{2,1}^{\frac{d}{2}-1}}+\varepsilon\|\delta u_0\|_{\dot{B}_{2,1}^{\frac{d}{2}-1}}+\varepsilon\|u_0^*\|_{\dot{B}_{2,1}^{\frac{d}{2}-1}}\\
			&\leq E_0+\delta E_0.
		\end{align*}		
Then, by Lemma \ref{lemma23}, the estimate of the third term on the right-hand side of \eqref{A111}  is given by
		\begin{align}
			&\frac{1}{\varepsilon}\|(u^\varepsilon\cdot\nabla\delta u, \delta u\cdot\nabla u^*)\|_{{\widetilde{L}_t^1(\dot{B}_{2,1}^{\frac{d}{2}-1})}}\nonumber\\
			&\lesssim \frac{1}{\varepsilon}\|u^\varepsilon\|_{{\widetilde{L}_t^\infty(\dot{B}_{2,1}^{\frac{d}{2}-1})}}\|\nabla\delta u\|_{\widetilde{L}_t^1(\dot{B}_{2,1}^{\frac{d}{2}})}+\frac{1}{\varepsilon}\|\delta u\|_{\widetilde{L}_t^\infty(\dot{B}_{2,1}^{\frac{d}{2}-1})}
			\|\nabla u^*\|_{\widetilde{L}_t^1(\dot{B}_{2,1}^{\frac{d}{2}})}\nonumber\\
			&\lesssim \frac{1}{\varepsilon}(\|u^*\|_{{\widetilde{L}_t^\infty(\dot{B}_{2,1}^{\frac{d}{2}-1})}}+\|\delta u\|_{{\widetilde{L}_t^\infty(\dot{B}_{2,1}^{\frac{d}{2}-1})}})\|\delta u\|_{\widetilde{L}_t^1(\dot{B}_{2,1}^{\frac{d}{2}+1})}
			+\frac{1}{\varepsilon}\|\delta u\|_{{\widetilde{L}_t^\infty(\dot{B}_{2,1}^{\frac{d}{2}-1})}}\| u^*\|_{{\widetilde{L}_t^1(\dot{B}_{2,1}^{\frac{d}{2}+1})}}\nonumber\\
			&\lesssim E_0\delta X(t)+\varepsilon (\delta X(t))^2\nonumber\\
			&\lesssim E_0+(\delta X(t))^2.\label{090801}
		\end{align}
The fourth term on the right-hand side of \eqref{A111} satisfies
		\begin{align}
			&\frac{1}{\var^2}\|a^\var(\mathcal{Z}^\var+\mathcal{R}^\var)\|_{\widetilde{L}_t^1(\dot{B}_{2,1}^{\frac{d}{2}-1})}\nonumber\\
			&\lesssim  \frac{1}{\var^2}\|a^\var\|_{\widetilde{L}_t^\infty(\dot{B}_{2,1}^{\frac{d}{2}-1})}
			\|\mathcal{Z}^\var\|_{\widetilde{L}_t^1(\dot{B}_{2,1}^{\frac{d}{2}})}+\frac{1}{\var^2}\|a^\var\|_{\widetilde{L}_t^\infty(\dot{B}_{2,1}^{\frac{d}{2}})}\|\mathcal{R}^\var\|_{\widetilde{L}_t^1(\dot{B}_{2,1}^{\frac{d}{2}-1})}\nonumber\\
			&\lesssim  \frac{1}{\var^2}(\|a^*\|_{\widetilde{L}_t^\infty(\dot{B}_{2,1}^{\frac{d}{2}-1})}+\|\delta a\|_{\widetilde{L}_t^\infty(\dot{B}_{2,1}^{\frac{d}{2}-1})}^{\ell,\var}+\var^2\| a^\var\|_{\widetilde{L}_t^\infty(\dot{B}_{2,1}^{\frac{d}{2}+1})}^{h,\var})
			\|\mathcal{Z}^\var\|_{\widetilde{L}_t^1(\dot{B}_{2,1}^{\frac{d}{2}})}\nonumber\\
			&\quad+\frac{1}{\var^2}(\|a^*\|_{\widetilde{L}_t^\infty(\dot{B}_{2,1}^{\frac{d}{2}})}+\frac{1}{\var}\|\delta a\|_{\widetilde{L}_t^\infty(\dot{B}_{2,1}^{\frac{d}{2}-1})}^{\ell,\var}+\var\| a^\var\|_{\widetilde{L}_t^\infty(\dot{B}_{2,1}^{\frac{d}{2}+1})}^{h,\var})\|\mathcal{R}^\var\|_{\widetilde{L}_t^1(\dot{B}_{2,1}^{\frac{d}{2}-1})}\nonumber\\
			&\lesssim (E_0+\var\delta X(t)+\delta X(t))\delta X(t)\nonumber\\
			&\lesssim E_0+(\delta X(t))^2.\label{090901}
		\end{align}
By Lemma \ref{lemma23}, the remaining terms on the right-hand side of \eqref{A111} are handled as follows.
		\begin{align}\label{T2}
			&\varepsilon\|u^\varepsilon\|_{{\widetilde{L}_t^1(\dot{B}_{2,1}^{\frac{d}{2}+1})}}+\varepsilon \|u^\varepsilon\cdot\nabla u^\varepsilon\|_{{\widetilde{L}_t^1(\dot{B}_{2,1}^{\frac{d}{2}-1})}}+	\frac{1}{\varepsilon^2} \|w^\varepsilon\cdot\nabla w^\varepsilon\|_{{\widetilde{L}_t^1(\dot{B}_{2,1}^{\frac{d}{2}-1})}}\nonumber\\
			&\lesssim \varepsilon\|u^*\|_{{\widetilde{L}_t^1(\dot{B}_{2,1}^{\frac{d}{2}+1})}}+\varepsilon \|\delta u\|_{{\widetilde{L}_t^1(\dot{B}_{2,1}^{\frac{d}{2}+1})}} \nonumber\\
			&\quad+\varepsilon( \|u^*\|_{{\widetilde{L}_t^\infty(\dot{B}_{2,1}^{\frac{d}{2}-1})}}+\|\delta u\|_{{\widetilde{L}_t^\infty(\dot{B}_{2,1}^{\frac{d}{2}-1})}})(\| u^*\|_{\widetilde{L}_t^1(\dot{B}_{2,1}^{\frac{d}{2}+1})}+\|\delta u\|_{\widetilde{L}_t^1(\dot{B}_{2,1}^{\frac{d}{2}+1})}) \nonumber\\
			&\quad +\frac{1}{\varepsilon^2}\| w^\varepsilon\|_{{\widetilde{L}^2_t(\dot{B}_{2,1}^{\frac{d}{2}})}}\|\nabla w^\varepsilon\|_{{\widetilde{L}^2_t(\dot{B}_{2,1}^{\frac{d}{2}-1})}}\nonumber\\
			&\lesssim \varepsilon E_0+\varepsilon^2\delta X(t)+\varepsilon(E_0+\varepsilon\delta X(t))^2+\var(\delta X(t))^2\nonumber\\
			&\lesssim E_0+\var\delta X(t)+(\delta X(t))^2,
		\end{align}
		where we have used that following inequality from \eqref{lh}: 
		\begin{align*}
			\frac{1}{\varepsilon}\|w^\varepsilon\|_{\widetilde{L}_t^2(\dot{B}_{2,1}^{\frac{d}{2}})}&\lesssim 
			\frac{1}{\varepsilon}\|w^\varepsilon\|_{\widetilde{L}_t^2(\dot{B}_{2,1}^{\frac{d}{2}})}^{\ell,\var}+\| w^\varepsilon\|_{\widetilde{L}_t^2(\dot{B}_{2,1}^{\frac{d}{2}+1})}^{h,\varepsilon}\lesssim \delta X(t).
		\end{align*}
Consequently, by substituting the above estimates into \eqref{A111} we discover that
		\begin{align}\label{T11}
			&\frac{1}{\varepsilon}\|\delta u\|_{{\widetilde{L}_t^\infty(\dot{B}_{2,1}^{\frac{d}{2}-1})}}+\frac{1}{\varepsilon}
			\|\delta u\|_{{\widetilde{L}_t^1(\dot{B}_{2,1}^{\frac{d}{2}+1})}}
			+\|\mathcal{R}^\varepsilon\|_{{\widetilde{L}_t^\infty(\dot{B}_{2,1}^{\frac{d}{2}-1})}}+\frac{1}{\varepsilon^2}\|\mathcal{R}^\varepsilon\|_{{\widetilde{L}_t^1(\dot{B}_{2,1}^{\frac{d}{2}-1})}}\nonumber\\
			&\lesssim E_0+\delta E_0+\varepsilon\delta X(t)+(\delta X(t))^2.
		\end{align}
	
We furthermore establish high-order low-frequency estimates for $\mathcal{R}^\var$. Utilizing the equation $\eqref{T8}_2$ of $\mathcal{R}^\varepsilon$ yields that
		\begin{align}\label{T7}
			&\|\mathcal{R}^\varepsilon\|_{{\widetilde{L}_t^\infty(\dot{B}_{2,1}^{\frac{d}{2}})}}^{\ell,\varepsilon}+\frac{1}{\varepsilon^2}\|\mathcal{R}^\varepsilon\|_{{\widetilde{L}_t^1(\dot{B}_{2,1}^{\frac{d}{2}})}}^{\ell,\varepsilon}\nonumber\\
			&\lesssim \|\mathcal{R}^\varepsilon_0\|_{\dot{B}_{2,1}^{\frac{d}{2}}}^{\ell,\varepsilon}+\varepsilon
			\|\delta u\|_{{\widetilde{L}_t^1(\dot{B}_{2,1}^{\frac{d}{2}})}}^{\ell,\varepsilon}+\varepsilon\| u^\varepsilon\cdot\nabla u^\varepsilon\|_{{\widetilde{L}_t^1(\dot{B}_{2,1}^{\frac{d}{2}})}}^{\ell,\varepsilon}+\frac{1}{\varepsilon}\| w^\varepsilon\cdot\nabla w^\varepsilon\|_{{\widetilde{L}_t^1(\dot{B}_{2,1}^{\frac{d}{2}})}}^{\ell,\varepsilon}
            +\|a^\var(\mathcal{Z}^\var+\mathcal{R}^\var)\|_{\widetilde{L}_t^1(\dot{B}_{2,1}^{\frac{d}{2}})}^{\ell,\var}\nonumber\\
			&\lesssim \|\mathcal{P}w_0^\var\|_{\dot{B}_{2,1}^{\frac{d}{2}}}^{\ell,\varepsilon}+\var \|u_0^\var\|_{\dot{B}_{2,1}^{\frac{d}{2}}}^{\ell,\varepsilon}+\|u^\varepsilon\|_{{\widetilde{L}_t^1(\dot{B}_{2,1}^{\frac{d}{2}+1})}}+ \|u^\varepsilon\cdot\nabla u^\varepsilon\|_{{\widetilde{L}_t^1(\dot{B}_{2,1}^{\frac{d}{2}-1})}}^{\ell,\var}+\frac{1}{\var^2}\|w^\varepsilon\cdot\nabla w^\varepsilon\|_{\widetilde{L}_t^1(\dot{B}_{2,1}^{\frac{d}{2}-1})}\nonumber\\
            &\quad+\frac{1}{\var}\|a^\var(\mathcal{Z}^\var+\mathcal{R}^\var)\|_{\widetilde{L}_t^1(\dot{B}_{2,1}^{\frac{d}{2}-1})}^{\ell,\var}\nonumber\\
			&\lesssim  \|\mathcal{P}w_0^\var\|_{\dot{B}_{2,1}^{\frac{d}{2}}}+\|\delta u_0^\var\|_{\dot{B}_{2,1}^{\frac{d}{2}-1}}+\|u_0^*\|_{\dot{B}_{2,1}^{\frac{d}{2}-1}}+\|\delta u\|_{{\widetilde{L}_t^1(\dot{B}_{2,1}^{\frac{d}{2}+1})}}+	\|u^*\|_{{\widetilde{L}_t^1(\dot{B}_{2,1}^{\frac{d}{2}+1})}}+E_0+(\delta X(t))^2\nonumber\\
			&\lesssim E_0+\delta E_0+\var\delta X(t)+(\delta X(t))^2,
		\end{align}
where we have used Lemma \ref{lemma33} and \eqref{lh}.	In terms of the definition of $\mathcal{P}w^\var$ in \eqref{RT1}, we are able to show that
		\begin{align}\label{AA5}
			\|\mathcal{P}w^\varepsilon\|_{\widetilde{L}_t^\infty(\dot{B}_{2,1}^{\frac{d}{2}-1})}&\leq \|\mathcal{R}^\varepsilon\|_{\widetilde{L}_t^\infty(\dot{B}_{2,1}^{\frac{d}{2}-1})}+\varepsilon \|\delta u\|_{\widetilde{L}_t^\infty(\dot{B}_{2,1}^{\frac{d}{2}-1})}+\varepsilon \|u^*\|_{\widetilde{L}_t^\infty(\dot{B}_{2,1}^{\frac{d}{2}-1})}\nonumber\\
			&\lesssim E_0+\delta E_0+\var \delta X(t)+(\delta X(t))^2,
		\end{align}	
    and
		\begin{align}\label{AA51}
			\|\mathcal{P}w^\varepsilon\|_{\widetilde{L}_t^\infty(\dot{B}_{2,1}^{\frac{d}{2}})}^{\ell,\var}&\leq \|\mathcal{R}^\varepsilon\|_{\widetilde{L}_t^\infty(\dot{B}_{2,1}^{\frac{d}{2}})}^{\ell,\var}+\varepsilon \|\delta u\|_{\widetilde{L}_t^\infty(\dot{B}_{2,1}^{\frac{d}{2}})}^{\ell,\var}+\varepsilon \|u^*\|_{\widetilde{L}_t^\infty(\dot{B}_{2,1}^{\frac{d}{2}})}^{\ell,\var}\nonumber\\
			&\leq \|\mathcal{R}^\varepsilon\|_{\widetilde{L}_t^\infty(\dot{B}_{2,1}^{\frac{d}{2}})}^{\ell,\var}+ \|\delta u\|_{\widetilde{L}_t^\infty(\dot{B}_{2,1}^{\frac{d}{2}-1})}+ \|u^*\|_{\widetilde{L}_t^\infty(\dot{B}_{2,1}^{\frac{d}{2}-1})}\nonumber\\
			&\lesssim E_0+\delta E_0+\var \delta X(t)+(\delta X(t))^2.
		\end{align}	
	Combining Young's inequality, interpolation inequality and the estimates obtained in \eqref{T11} and \eqref{T7}, one has 
		\begin{align}\label{A51}
			\frac{1}{\var}\|\mathcal{P}w^\varepsilon\|_{\widetilde{L}^2_t(\dot{B}_{2,1}^{\frac{d}{2}})}^{\ell,\var} 
			&\leq\frac{1}{\var}\|\mathcal{R}^\varepsilon\|_{\widetilde{L}^2_t(\dot{B}_{2,1}^{\frac{d}{2}})}^{\ell,\var}+\|\delta u\|_{\widetilde{L}^2_t(\dot{B}_{2,1}^{\frac{d}{2}})}^{\ell,\var}+ \|u^*\|_{\widetilde{L}^2_t(\dot{B}_{2,1}^{\frac{d}{2}})}^{\ell,\var}\nonumber\\
			&\lesssim \|\mathcal{R}^\varepsilon\|_{{\widetilde{L}_t^\infty(\dot{B}_{2,1}^{\frac{d}{2}})}}^{\ell,\varepsilon}+\frac{1}{\varepsilon^2}\|\mathcal{R}^\varepsilon\|_{{\widetilde{L}_t^1(\dot{B}_{2,1}^{\frac{d}{2}})}}^{\ell,\varepsilon}
			+\frac{1}{\var}\|\delta u\|_{\widetilde{L}_t^\infty(\dot{B}_{2,1}^{\frac{d}{2}-1})}^{\ell,\var}
			+\var\|\delta u\|_{\widetilde{L}_t^1(\dot{B}_{2,1}^{\frac{d}{2}+1})}^{\ell,\var}\nonumber\\
			&\quad+\|u^*\|_{\widetilde{L}_t^\infty(\dot{B}_{2,1}^{\frac{d}{2}-1})}^{\ell,\var}+\|u^*\|_{\widetilde{L}_t^1(\dot{B}_{2,1}^{\frac{d}{2}+1})}^{\ell,\var}
			\nonumber\\
			%&\lesssim \|\mathcal{R}^\varepsilon\|_{{\widetilde{L}_t^\infty(\dot{B}_{2,1}^{\frac{d}{2}})}}^{\ell,\varepsilon}+\frac{1}{\varepsilon^2}\|\mathcal{R}^\varepsilon\|_{{\widetilde{L}_t^1(\dot{B}_{2,1}^{\frac{d}{2}})}}^{\ell,\varepsilon}
			%+\frac{1}{\var}\|\delta u\|_{\widetilde{L}_t^\infty(\dot{B}_{2,1}^{\frac{d}{2}-1})}
			%+\var\|\delta u\|_{\widetilde{L}_t^\infty(\dot{B}_{2,1}^{\frac{d}{2}+1})}
			%\nonumber\\
			%&\quad+\|u^*\|_{\widetilde{L}_t^\infty(\dot{B}_{2,1}^{\frac{d}{2}-1})}+\|u^*\|_{\widetilde{L}_t^1(\dot{B}_{2,1}^{\frac{d}{2}+1})}\nonumber\\
			&\lesssim E_0+\delta E_0+\var\delta X(t)+(\delta X(t))^2
		\end{align}
		and 
		\begin{align}
			\frac{1}{\var}\|\mathcal{P}w^\varepsilon\|_{\widetilde{L}_t^1(\dot{B}_{2,1}^{\frac{d}{2}+1})}^{\ell,\var}
			&\leq \frac{1}{\var}\|\mathcal{R}^\varepsilon\|_{\widetilde{L}_t^1(\dot{B}_{2,1}^{\frac{d}{2}+1})}^{\ell,\var}+\|\delta u\|_{\widetilde{L}_t^1(\dot{B}_{2,1}^{\frac{d}{2}+1})}+ \|u^*\|_{\widetilde{L}_t^1(\dot{B}_{2,1}^{\frac{d}{2}+1})}\nonumber\\
			&\leq \frac{1}{\var^2}\|\mathcal{R}^\varepsilon\|_{\widetilde{L}_t^1(\dot{B}_{2,1}^{\frac{d}{2}})}^{\ell,\var}+\|\delta u\|_{\widetilde{L}_t^1(\dot{B}_{2,1}^{\frac{d}{2}+1})}+ \|u^*\|_{\widetilde{L}_t^1(\dot{B}_{2,1}^{\frac{d}{2}+1})}\nonumber\\
			&\lesssim E_0+\delta E_0+\varepsilon \delta X(t)+(\delta X(t))^2. \label{A52}
		\end{align}
Therefore, collecting the estimates obtained in \eqref{T11}-\eqref{A52}, we end up with \eqref{step1:es}. 
%to show that
			%	\begin{align}
			%		&\frac{1}{\varepsilon}\|\delta u^\var\|_{{\widetilde{L}_t^\infty(\dot{B}_{2,1}^{\frac{d}{2}-1})}}+\frac{1}{\varepsilon}
				%	\|\delta u\|_{{\widetilde{L}_t^1(\dot{B}_{2,1}^{\frac{d}{2}+1})}}+{\red{\frac{1}{\varepsilon}
				%	\|\partia\widetilde{L}_t \delta u^\var\|_{{\widetilde{L}_t^1(\dot{B}_{2,1}^{\frac{d}{2}+1})}}}}\\
                  %  &\quad+\| \mathcal{R}^\varepsilon\|_{{\widetilde{L}_t^\infty(\dot{B}_{2,1}^{\frac{d}{2}-1})}}+\| \mathcal{R}^\varepsilon\|_{{\widetilde{L}_t^1(\dot{B}_{2,1}^{\frac{d}{2}-1})}}	\nonumber\\
					%&\quad+\|\mathcal{P}w^\varepsilon\|_{\widetilde{L}_t^\infty(\dot{B}_{2,1}^{\frac{d}{2}-1})}+\|\mathcal{P}w^\varepsilon\|_{\widetilde{L}_t^\infty(\dot{B}_{2,1}^{\frac{d}{2}})}^{\ell,\var}+\frac{1}{\var}\|\mathcal{P}w^\varepsilon\|_{{\widetilde{L}^2_t(\dot{B}_{2,1}^{\frac{d}{2}})}}^{\ell,\varepsilon}+\frac{1}{\varepsilon}
					%\|\mathcal{P}w^\varepsilon\|_{{\widetilde{L}_t^1(\dot{B}_{2,1}^{\frac{d}{2}+1})}}^{\ell,\varepsilon}\nonumber\\
				%	&\quad +\frac{1}{\varepsilon^2}\|\mathcal{R}^\varepsilon\|_{{\widetilde{L}_t^1(\dot{B}_{2,1}^{\frac{d}{2}-1})}}+\frac{1}{\varepsilon^2}\|\mathcal{R}^\varepsilon\|_{{\widetilde{L}_t^1(\dot{B}_{2,1}^{\frac{d}{2}})}}^{\ell,\varepsilon}\nonumber\\
				%	&\lesssim E_0+\delta E_0+\var\delta X(t)+(\delta X(t))^2.
			%	\end{align}	
This completes the proof of the lemma.
			\end{proof}
			
	\subsection{Step 2: Low-frequency estimate of \texorpdfstring{$(\delta a,\mathcal{P}^\top w^\varepsilon,\mathcal{Z}^\varepsilon)$}{(delta a, P-top w-epsilon, Z-epsilon)}}
			In this subsection, we aim to establish the uniform estimates of $(\delta a,\mathcal{P}^\top w^\varepsilon,\mathcal{Z}^\varepsilon)$ in the low frequency part. At the beginning, we consider the system of $\delta a$ and $\mathcal{Z}^\var$:
			\begin{equation}\label{A6}
				\left\{
				\begin{aligned}
					&\partial_t (\delta a)-\Delta (\delta a)=- u^\varepsilon\cdot \nabla \delta a-\delta u\cdot\nabla a^*+\frac{1}{\varepsilon}Y^\varepsilon,\\
					&\partial_t\mathcal{Z}^\varepsilon+\frac{1}{\varepsilon^2}\mathcal{Z}^\varepsilon=\varepsilon\partial_t((1+h(a^\varepsilon))\nabla a^\varepsilon)-\frac{1}{\varepsilon}\mathcal{P}^\top(w^\varepsilon\cdot\nabla w^\varepsilon),\\
                    &(\delta a,\mathcal{Z}^\var)(0,x)=(\delta a_0, \mathcal{Z}_0^\var)(x):=(a_0^\var-a_0, \mathcal{P}^{\top}w_0^\var+\var (1+h(a_0^\var))\nabla a_0^\var)(x).
				\end{aligned}
				\right.
			\end{equation}
			\begin{lemma}\label{P3.3}
				Assume $(a^\varepsilon, w^\varepsilon,u^\varepsilon)$ is a solution to the system \eqref{main3}. Then, it holds
				\begin{align}
					&\frac{1}{\varepsilon}\|\delta a\|_{\widetilde{L}_t^\infty(\dot{B}_{2,1}^{\frac{d}{2}-1})}^{\ell,\varepsilon}
					+\frac{1}{\varepsilon}\|\delta a\|_{\widetilde{L}_t^1(\dot{B}_{2,1}^{\frac{d}{2}+1})}^{\ell,\varepsilon}+\frac{1}{\varepsilon}\|\partial_t \delta a\|_{\widetilde{L}_t^1(\dot{B}_{2,1}^{\frac{d}{2}-1})}^{\ell,\varepsilon}\nonumber\\
                    &\quad
					+\|\mathcal{P}^{\top}w^\varepsilon\|_{\widetilde{L}_t^\infty(\dot{B}_{2,1}^{\frac{d}{2}})}^{\ell,\varepsilon}+\frac{1}{\var}\|\mathcal{P}^{\top}w^\varepsilon\|_{\widetilde{L}_t^1(\dot{B}_{2,1}^{\frac{d}{2}+1})}^{\ell,\varepsilon}+	\frac{1}{\varepsilon}\|\mathcal{P}^\top w^\varepsilon\|_{\widetilde{L}_t^2(\dot{B}_{2,1}^{\frac{d}{2}})}^{\ell,\varepsilon}+\frac{1}{\varepsilon^2}\|\mathcal{Z}^\varepsilon\|_{\widetilde{L}_t^1(\dot{B}_{2,1}^{\frac{d}{2}})}^{\ell,\varepsilon}\nonumber\\
					&\lesssim E_0+\delta E_0+(\delta X(t))^2.\label{lowaw}
				\end{align}
			\end{lemma}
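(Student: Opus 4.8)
The plan is to run coupled energy--dissipation estimates for the pair $(\delta a,\mathcal{Z}^\varepsilon)$ solving \eqref{A6} on the low-frequency block $\{j\le J_\varepsilon\}$, and then to recover $\mathcal{P}^{\top}w^\varepsilon$ from the algebraic relation $\mathcal{P}^{\top}w^\varepsilon=\mathcal{Z}^\varepsilon-\varepsilon(1+h(a^\varepsilon))\nabla a^\varepsilon$. All estimates are \emph{a priori} bounds under \eqref{asss}, so local well-posedness and continuation need not be discussed here.

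\textbf{The $\delta a$ estimate.} Since $\eqref{A6}_1$ is a heat equation for $\delta a$ with unit viscosity, I would apply Lemma~\ref{lemma33} to the low-frequency truncation and multiply by $\varepsilon^{-1}$, bounding $\varepsilon^{-1}\|\delta a\|^{\ell,\var}_{\widetilde L^\infty_t(\dot B^{d/2-1}_{2,1})}+\varepsilon^{-1}\|\delta a\|^{\ell,\var}_{\widetilde L^1_t(\dot B^{d/2+1}_{2,1})}+\varepsilon^{-1}\|\partial_t\delta a\|^{\ell,\var}_{\widetilde L^1_t(\dot B^{d/2-1}_{2,1})}$ by $\varepsilon^{-1}\|\delta a_0\|^{\ell,\var}_{\dot B^{d/2-1}_{2,1}}$, the transport contributions $\varepsilon^{-1}\|(u^\varepsilon\cdot\nabla\delta a,\,\delta u\cdot\nabla a^*)\|^{\ell,\var}_{\widetilde L^1_t(\dot B^{d/2-1}_{2,1})}$, and the coupling term $\varepsilon^{-2}\|Y^\varepsilon\|^{\ell,\var}_{\widetilde L^1_t(\dot B^{d/2-1}_{2,1})}$. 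For the last I would use the divergence form $Y^\varepsilon=-\operatorname{div}\big((1+a^\varepsilon)(\mathcal{Z}^\varepsilon+\mathcal{R}^\varepsilon)\big)$ (so that no $\Delta\delta a$ reappears on the right), pull the divergence out, and expand the product via Lemma~\ref{lemma23}, writing $1+a^\varepsilon$ as $1$ plus $a^\varepsilon=a^*+\delta a$: the ``$1$''-part produces $\varepsilon^{-2}\|\mathcal{Z}^\varepsilon\|^{\ell,\var}_{\widetilde L^1_t(\dot B^{d/2}_{2,1})}$ and $\varepsilon^{-2}\|\mathcal{R}^\varepsilon\|^{\ell,\var}_{\widetilde L^1_t(\dot B^{d/2}_{2,1})}$ --- the former being one of the quantities on the left of \eqref{lowaw}, the latter already controlled by Lemma~\ref{P3.2} --- while the remaining products become quadratic in $\delta X(t)$ after invoking Proposition~\ref{thm1} for $a^*$ in $\dot B^{d/2-1}_{2,1}\cap\dot B^{d/2}_{2,1}$.

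\textbf{The $\mathcal{Z}^\varepsilon$ estimate.} Next I would apply the damped estimate of Lemma~\ref{maximaldamped} to $\eqref{A6}_2$, controlling $\|\mathcal{Z}^\varepsilon\|^{\ell,\var}_{\widetilde L^\infty_t(\dot B^{d/2}_{2,1})}+\varepsilon^{-2}\|\mathcal{Z}^\varepsilon\|^{\ell,\var}_{\widetilde L^1_t(\dot B^{d/2}_{2,1})}$ by $\|\mathcal{Z}^\varepsilon_0\|^{\ell,\var}_{\dot B^{d/2}_{2,1}}$, the quadratic term $\varepsilon^{-1}\|\mathcal{P}^{\top}(w^\varepsilon\cdot\nabla w^\varepsilon)\|^{\ell,\var}_{\widetilde L^1_t(\dot B^{d/2}_{2,1})}$, and $\|\varepsilon\partial_t\big((1+h(a^\varepsilon))\nabla a^\varepsilon\big)\|^{\ell,\var}_{\widetilde L^1_t(\dot B^{d/2}_{2,1})}$. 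The last source I would rewrite by inserting $\varepsilon\partial_t a^\varepsilon=-w^\varepsilon\cdot\nabla a^\varepsilon-(1+a^\varepsilon)\operatorname{div}w^\varepsilon$ from $\eqref{main3}_1$ together with the identity $(1+h(a^\varepsilon))(1+a^\varepsilon)=1$, i.e. $(1+h(a^\varepsilon))\nabla a^\varepsilon=\nabla\log\rho^\varepsilon$; equivalently one may recast $\eqref{A6}_2$ as the damped--heat equation $\partial_t\mathcal{Z}^\varepsilon-\Delta\mathcal{Z}^\varepsilon+\varepsilon^{-2}\mathcal{Z}^\varepsilon=\varepsilon\nabla\Delta\log\rho^\varepsilon-\nabla(w^\varepsilon\cdot\nabla\log\rho^\varepsilon)-\varepsilon^{-1}\mathcal{P}^{\top}(w^\varepsilon\cdot\nabla w^\varepsilon)$, using that $\mathcal{Z}^\varepsilon$ is a gradient field and $\operatorname{div}w^\varepsilon=\operatorname{div}\mathcal{P}^{\top}w^\varepsilon=\operatorname{div}\mathcal{Z}^\varepsilon-\varepsilon\Delta\log\rho^\varepsilon$. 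After this the only residual linear source is $\varepsilon\nabla\Delta\log\rho^\varepsilon$, split via $\rho^\varepsilon=1+a^*+\delta a$ (Proposition~\ref{thm1} bounds $a^*$ at the $\widetilde L^1_t(\dot B^{d/2+2}_{2,1})$ level), together with genuinely quadratic remainders handled by Lemmas~\ref{lemma23} and~\ref{lemma24}.

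\textbf{Main obstacle and conclusion.} The two estimates are truly coupled and lose derivatives: $\varepsilon^{-2}\|\mathcal{Z}^\varepsilon\|^{\ell,\var}_{\widetilde L^1_t(\dot B^{d/2}_{2,1})}$ appears on the right of the $\delta a$-estimate through $Y^\varepsilon$, while $\varepsilon^{-1}\|\delta a\|^{\ell,\var}_{\widetilde L^1_t(\dot B^{d/2+1}_{2,1})}$ (from the $\varepsilon\nabla\Delta\delta a$ piece of $\varepsilon\nabla\Delta\log\rho^\varepsilon$) appears on the right of the $\mathcal{Z}^\varepsilon$-estimate. To break this loop I would add a small multiple of the $\delta a$-estimate to the $\mathcal{Z}^\varepsilon$-estimate and, crucially, exploit the threshold $2^{J_\varepsilon}\sim 2^{-m_0}\varepsilon^{-1}$ of \eqref{Jvar}: each Bernstein gain \eqref{lh} across the low-frequency block carries a factor $2^{-m_0}$, so every derivative-losing coupling term comes with an arbitrarily small prefactor once $m_0$ is taken large; combined with the smallness of $\varepsilon^{*}$ and of the a priori bound \eqref{asss}, all such terms are absorbed into the left-hand side of \eqref{lowaw}, leaving only $E_0+\delta E_0+(\delta X(t))^2$. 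Finally the $\mathcal{P}^{\top}w^\varepsilon$ bounds follow from $\mathcal{P}^{\top}w^\varepsilon=\mathcal{Z}^\varepsilon-\varepsilon(1+h(a^\varepsilon))\nabla a^\varepsilon$: the $\widetilde L^\infty_t(\dot B^{d/2}_{2,1})^{\ell,\var}$ and $\varepsilon^{-1}\widetilde L^1_t(\dot B^{d/2+1}_{2,1})^{\ell,\var}$ norms are read off from the corresponding $\mathcal{Z}^\varepsilon$ norms, \eqref{lh} and the $\delta a$, $a^*$ bounds, while the $\varepsilon^{-1}\widetilde L^2_t(\dot B^{d/2}_{2,1})^{\ell,\var}$ norm comes by interpolating between the $\widetilde L^\infty_t$ and $\varepsilon^{-2}\widetilde L^1_t$ controls and using Young's inequality, exactly as in the proof of Lemma~\ref{P3.2}.
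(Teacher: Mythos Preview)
Your overall architecture matches the paper's proof: apply Lemma~\ref{lemma33} to $\eqref{A6}_1$ for $\delta a$, Lemma~\ref{maximaldamped} to $\eqref{A6}_2$ for $\mathcal{Z}^\varepsilon$, combine the two coupled estimates using the $2^{-m_0}$ Bernstein gain from \eqref{lh}--\eqref{Jvar} to absorb the derivative-losing cross terms, and finally recover $\mathcal{P}^{\top}w^\varepsilon$ from $\mathcal{P}^{\top}w^\varepsilon=\mathcal{Z}^\varepsilon-\varepsilon(1+h(a^\varepsilon))\nabla a^\varepsilon$. The one substantive technical difference is in how the source $\varepsilon\,\partial_t\big((1+h(a^\varepsilon))\nabla a^\varepsilon\big)$ is handled: the paper does \emph{not} substitute the continuity equation but simply keeps the time derivative, splitting $a^\varepsilon=\delta a+a^*$ and bounding $\varepsilon\|\nabla\partial_t a^\varepsilon\|^{\ell,\var}_{\widetilde L^1_t(\dot B^{d/2}_{2,1})}\lesssim 2^{-2m_0}\varepsilon^{-1}\|\partial_t\delta a\|^{\ell,\var}_{\widetilde L^1_t(\dot B^{d/2-1}_{2,1})}+2^{-m_0}\|\partial_t a^*\|_{\widetilde L^1_t(\dot B^{d/2}_{2,1})}$; the loop is then closed through the $\partial_t\delta a$ output of the heat estimate rather than through the dissipation $\varepsilon^{-1}\|\delta a\|^{\ell,\var}_{\widetilde L^1_t(\dot B^{d/2+1}_{2,1})}$ as in your substitution route. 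Both routes work and rely on the same $2^{-m_0}$ absorption mechanism; the paper's choice avoids the composition $\log\rho^\varepsilon$ and the nonlinear remainder $\nabla(w^\varepsilon\cdot\nabla\log\rho^\varepsilon)$, at the price of also estimating $\|\partial_t a^\varepsilon\|_{\widetilde L^1_t(\dot B^{d/2}_{2,1})}$ separately (for the $h(a^\varepsilon)\nabla a^\varepsilon$ piece) via $\eqref{main3}_1$. One caveat in your alternative ``damped--heat'' reformulation: since $\mathcal{Z}^\varepsilon$ is a gradient, $\varepsilon\nabla\partial_t\log\rho^\varepsilon$ contributes $-\nabla\operatorname{div}\mathcal{Z}^\varepsilon=-\Delta\mathcal{Z}^\varepsilon$ on the right, so the equation reads $\partial_t\mathcal{Z}^\varepsilon+\Delta\mathcal{Z}^\varepsilon+\varepsilon^{-2}\mathcal{Z}^\varepsilon=\cdots$, a \emph{backward} diffusion, not a damped heat equation. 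This is harmless in the low-frequency regime (throw $\Delta\mathcal{Z}^\varepsilon$ back to the right and absorb with a $2^{-2m_0}$ factor), but the operator does not provide any extra smoothing; it is cleaner to treat the equation as purely damped, as the paper does.
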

			\begin{proof}
				We deduce from Lemma \ref{lemma33} applied to  \eqref{A6} that
				\begin{align}\label{T1}
					&\frac{1}{\varepsilon}\|\delta a\|_{{\widetilde{L}_t^\infty(\dot{B}_{2,1}^{\frac{d}{2}-1})}}^{\ell,\varepsilon}+\frac{1}{\varepsilon}\|\delta a\|_{{\widetilde{L}_t^1(\dot{B}_{2,1}^{\frac{d}{2}+1})}}^{\ell,\varepsilon}+\frac{1}{\varepsilon}
					\|\partial_t\delta a\|_{{\widetilde{L}_t^1(\dot{B}_{2,1}^{\frac{d}{2}-1})}}^{\ell,\varepsilon}\nonumber\\
					&\lesssim \frac{1}{\varepsilon}\|a_0^\varepsilon-a_0^*\|_{\dot{B}_{2,1}^{\frac{d}{2}-1}}^{\ell,\varepsilon}
					+\frac{1}{\varepsilon}\|(u^\varepsilon\cdot\nabla \delta a,\delta u\cdot\nabla a^* )\|_{\widetilde{L}_t^1(\dot{B}_{2,1}^{\frac{d}{2}-1})}^{\ell,\varepsilon}+\frac{1}{\varepsilon^2}\|Y^\varepsilon\|_{\widetilde{L}_t^1(\dot{B}_{2,1}^{\frac{d}{2}-1})}^{\ell,\varepsilon}\nonumber\\
					&\lesssim \frac{1}{\varepsilon}\|\rho_0^\varepsilon-\rho_0^*\|_{\dot{B}_{2,1}^{\frac{d}{2}-1}}^{\ell,\varepsilon}
					+\frac{1}{\varepsilon}\|(u^\varepsilon\cdot\nabla \delta a,\delta u\cdot\nabla a^* )\|_{\widetilde{L}_t^1(\dot{B}_{2,1}^{\frac{d}{2}-1})}^{\ell,\varepsilon}+\frac{1}{\varepsilon^2}\|Y^\varepsilon\|_{\widetilde{L}_t^1(\dot{B}_{2,1}^{\frac{d}{2}-1})}^{\ell,\varepsilon}.
				\end{align}

               For clarity, we divide  into three parts to proceed.

			 \textbf{Part I.} First of all, we provide some useful estimates for later use. By \eqref{T1} and interpolation inequalities, we have
				\begin{align*}
				&\frac{1}{\var}\| \delta a\|_{\widetilde{L}_t^2(\dot{B}_{2,1}^{\frac{d}{2}})}+	\frac{1}{\var}\| \delta u\|_{\widetilde{L}_t^2(\dot{B}_{2,1}^{\frac{d}{2}})}\\
				&\lesssim \Big(\frac{1}{\var} \|\delta a\|_{\widetilde{L}_t^{\infty}(\dot{B}_{2,1}^{\frac{d}{2}-1})}\Big)^{\frac{1}{2}}\Big( \frac{1}{\var} \|\delta a\|_{\widetilde{L}_t^1(\dot{B}_{2,1}^{\frac{d}{2}+1})}\Big)^{\frac{1}{2}}+\Big(\frac{1}{\var} \|\delta u\|_{\widetilde{L}_t^{\infty}(\dot{B}_{2,1}^{\frac{d}{2}-1})}\Big)^{\frac{1}{2}}\Big( \frac{1}{\var} \|\delta u\|_{\widetilde{L}_t^1(\dot{B}_{2,1}^{\frac{d}{2}+1})}\Big)^{\frac{1}{2}}\\
				%&\lesssim \frac{1}{\var} \|\delta a\|_{L_t^{\infty}(\dot{B}_{2,1}^{\frac{d}{2}-1})}+\frac{1}{\var} \|\delta a\|_{L_t^1(\dot{B}_{2,1}^{\frac{d}{2}+1})}+ \frac{1}{\var} \|\delta u\|_{L_t^{\infty}(\dot{B}_{2,1}^{\frac{d}{2}-1})}+\frac{1}{\var} \|\delta u\|_{L_t^1(\dot{B}_{2,1}^{\frac{d}{2}+1})}\\
				&\lesssim \frac{1}{\var} \|\delta a\|_{\widetilde{L}_t^{\infty}(\dot{B}_{2,1}^{\frac{d}{2}-1})}^{\ell,\var}+\var\|a^\var\|_{\widetilde{L}_t^\infty(\dot{B}_{2,1}^{\frac{d}{2}+1})}^{h,\var}+\|a^*\|_{\widetilde{L}_t^\infty(\dot{B}_{2,1}^{\frac{d}{2}})}+\frac{1}{\var} \|\delta a\|_{\widetilde{L}_t^1(\dot{B}_{2,1}^{\frac{d}{2}+1})}^{\ell,\var}\\
				&\quad +\frac{1}{\var}\|a^\var\|_{\widetilde{L}_t^1(\dot{B}_{2,1}^{\frac{d}{2}+1})}^{h,\var}
				+\|a^*\|_{\widetilde{L}_t^1(\dot{B}_{2,1}^{\frac{d}{2}+2})}+ \frac{1}{\var} \|\delta u\|_{\widetilde{L}_t^{\infty}(\dot{B}_{2,1}^{\frac{d}{2}-1})}+\frac{1}{\var} \|\delta u\|_{\widetilde{L}_t^1(\dot{B}_{2,1}^{\frac{d}{2}+1})}\\
				&\lesssim E_0+\delta X(t),
			\end{align*}
			and 
				\begin{align*}
				&\|u^\varepsilon\|_{\widetilde{L}_t^2(\dot{B}_{2,1}^{\frac{d}{2}})}+	\|a^*\|_{\widetilde{L}_t^2(\dot{B}^{\frac{d}{2}}_{2,1})}\\
				%&\leq 	\|\delta u\|_{L_t^2(\dot{B}_{2,1}^{\frac{d}{2}})}	+\| u^*\|_{L_t^2(\dot{B}_{2,1}^{\frac{d}{2}})}+\|a^*\|_{L_t^{\infty}(\dot{B}^{\frac{d}{2}-1}_{2,1})}^{\frac{1}{2}} \|a^*\|_{L_t^1(\dot{B}^{\frac{d}{2}+1}_{2,1})}^{\frac{1}{2}}\\
				&\lesssim \|\delta u\|_{\widetilde{L}_t^\infty(\dot{B}_{2,1}^{\frac{d}{2}-1})}	
				+\|\delta u\|_{\widetilde{L}_t^1(\dot{B}_{2,1}^{\frac{d}{2}+1})}	+\| u^*\|_{\widetilde{L}_t^\infty(\dot{B}_{2,1}^{\frac{d}{2}-1})}+\| u^*\|_{\widetilde{L}_t^1(\dot{B}_{2,1}^{\frac{d}{2}+1})}\\&\quad+				\|a^*\|_{\widetilde{L}_t^{\infty}(\dot{B}^{\frac{d}{2}-1}_{2,1})}+ \|a^*\|_{\widetilde{L}_t^1(\dot{B}^{\frac{d}{2}+1}_{2,1})}\\
				&\lesssim \var\delta X(t)+E_0.			
			\end{align*}
		Then, the second term on  the right-hand side of \eqref{T1} can be estimated as follows,
				\begin{align}
					&\frac{1}{\varepsilon}\|(u^\varepsilon\cdot\nabla \delta a,\delta u\cdot\nabla a^* )\|_{\widetilde{L}_t^1(\dot{B}_{2,1}^{\frac{d}{2}-1})}^{\ell,\varepsilon}\nonumber\\
					&\lesssim \frac{1}{\varepsilon}\|\nabla \delta a\|_{\widetilde{L}_t^2(\dot{B}_{2,1}^{\frac{d}{2}-1})}\|u^\varepsilon\|_{\widetilde{L}_t^2(\dot{B}_{2,1}^{\frac{d}{2}})}+\frac{1}{\varepsilon} \|\delta u\|_{\widetilde{L}_t^2(\dot{B}_{2,1}^{\frac{d}{2}})} \|\nabla a^*\|_{\widetilde{L}_t^2(\dot{B}_{2,1}^{\frac{d}{2}-1})}\nonumber\\
					&\lesssim \frac{1}{\varepsilon}\| \delta a\|_{\widetilde{L}_t^2(\dot{B}_{2,1}^{\frac{d}{2}})}\|u^\varepsilon\|_{\widetilde{L}_t^2(\dot{B}_{2,1}^{\frac{d}{2}})}+\frac{1}{\varepsilon} \|\delta u\|_{\widetilde{L}_t^2(\dot{B}_{2,1}^{\frac{d}{2}})} \|a^*\|_{\widetilde{L}_t^2(\dot{B}_{2,1}^{\frac{d}{2}})}\nonumber\\
					&\lesssim E_0\delta X(t)+(\delta X(t))^2\nonumber\\
					&\lesssim E_0+\delta X(t)^2.\label{TRR1}
				\end{align}				
 Furthermore, we find that 
 \begin{align}
 	\frac{1}{\var^2}\|\mathcal{R}^\varepsilon\|_{\widetilde{L}_t^1(\dot{B}_{2,1}^{\frac{d}{2}})}&\leq 
 	\frac{1}{\var^2}\|\mathcal{R}^\varepsilon\|_{\widetilde{L}_t^1(\dot{B}_{2,1}^{\frac{d}{2}})}^{\ell,\var}+\frac{1}{\var^2}\|\mathcal{R}^\varepsilon\|_{\widetilde{L}_t^1(\dot{B}_{2,1}^{\frac{d}{2}})}^{h,\var}\nonumber\\
 	&\lesssim E_0+\delta E_0+\frac{1}{\var^2}\|\mathcal{P}w^\varepsilon\|_{\widetilde{L}_t^1(\dot{B}_{2,1}^{\frac{d}{2}})}^{h,\var}
 	+\frac{1}{\var}\|u^\var\|_{\widetilde{L}_t^1(\dot{B}_{2,1}^{\frac{d}{2}})}^{h,\var}\nonumber\\
 	&\lesssim E_0+\delta E_0+\frac{1}{\var}\|w^\varepsilon\|_{\widetilde{L}_t^1(\dot{B}_{2,1}^{\frac{d}{2}+1})}^{h,\var}
 	+\frac{1}{\var}\|\delta u\|_{\widetilde{L}_t^1(\dot{B}_{2,1}^{\frac{d}{2}})}^{h,\var}+\|u^*\|_{\widetilde{L}_t^1(\dot{B}_{2,1}^{\frac{d}{2}+1})}\nonumber\\
 	&\lesssim E_0+\delta E_0+\var\delta X(t)+(\delta X(t))^2.\label{TYU7}
 \end{align}	
Using \eqref{TYU7} and the \emph{a priori} assumption \eqref{asss}, the estimate of the third term on the right-hand side of \eqref{T1} can be derived as 
			\begin{align}
				\frac{1}{\varepsilon^2}\|Y^\varepsilon\|_{{\widetilde{L}_t^1(\dot{B}_{2,1}^{\frac{d}{2}-1})}}^{\ell,\varepsilon}
				&\lesssim \frac{1}{\varepsilon^2}\|(1+a^\varepsilon)(\mathcal{Z}^\varepsilon+\mathcal{R}^\varepsilon)\|_{\widetilde{L}_t^1(\dot{B}_{2,1}^{\frac{d}{2}})}^{\ell,\varepsilon}\nonumber\\
				&\lesssim  \frac{1}{\varepsilon^2}\|(\mathcal{Z}^\varepsilon,\mathcal{R}^\varepsilon)\|_{\widetilde{L}_t^1(\dot{B}_{2,1}^{\frac{d}{2}})}^{\ell,\varepsilon}+\frac{1}{\varepsilon^2}\|a^\varepsilon\|_{\widetilde{L}_t^\infty(\dot{B}_{2,1}^{\frac{d}{2}})}\|(\mathcal{Z}^\varepsilon,\mathcal{R}^\varepsilon)\|_{\widetilde{L}_t^1(\dot{B}_{2,1}^{\frac{d}{2}})}\nonumber\\
				&\lesssim 
				E_0+\delta E_0+(\delta X(t))^2+\var\delta X(t)+\frac{1}{\varepsilon^2}\|\mathcal{Z}^\varepsilon\|_{\widetilde{L}_t^1(\dot{B}_{2,1}^{\frac{d}{2}})}^{\ell,\varepsilon},\label{TRR2}
			\end{align}
			where we have used the fact that 
			\begin{align}
				\|a^\var\|_{\widetilde{L}_t^\infty(\dot{B}_{2,1}^{\frac{d}{2}})}&\lesssim \|\delta a\|_{\widetilde{L}_t^\infty(\dot{B}_{2,1}^{\frac{d}{2}})}+\|a^*\|_{\widetilde{L}_t^\infty(\dot{B}_{2,1}^{\frac{d}{2}})}\nonumber\\
				&\lesssim  \frac{1}{\var}\|\delta a\|_{\widetilde{L}_t^\infty(\dot{B}_{2,1}^{\frac{d}{2}-1})}^{\ell,\var}+\var\|\delta a\|_{\widetilde{L}_t^\infty(\dot{B}_{2,1}^{\frac{d}{2}+1})}^{h,\var}+\|a^*\|_{\widetilde{L}_t^\infty(\dot{B}_{2,1}^{\frac{d}{2}})}\nonumber\\
				&\lesssim E_0+\delta X(t).\label{ayj}
			\end{align}
		Substituting \eqref{TRR1} and \eqref{TRR2} into \eqref{T1} yields that 
			\begin{align}\label{T6}
				&\frac{1}{\varepsilon}\|\delta a \|_{\widetilde{L}_t^\infty(\dot{B}_{2,1}^{\frac{d}{2}-1})}^{\ell,\varepsilon}
				+\frac{1}{\varepsilon} \|\delta a \|_{\widetilde{L}_t^1(\dot{B}_{2,1}^{\frac{d}{2}+1})}^{\ell,\varepsilon}
				+\frac{1}{\varepsilon}\|\partial_t\delta a \|_{\widetilde{L}_t^1(\dot{B}_{2,1}^{\frac{d}{2}-1})}^{\ell,\varepsilon}\nonumber\\
				&\lesssim E_0+\delta E_0+(\delta X(t))^2+\var\delta X(t)+\frac{1}{\varepsilon^2}\|\mathcal{Z}^\varepsilon\|_{\widetilde{L}_t^1(\dot{B}_{2,1}^{\frac{d}{2}})}^{\ell,\varepsilon}.
			\end{align}
			
			 \textbf{Part II.} The subsequent challenge is to control the last term on the right-hand side of \eqref{T6}. We apply Lemma \ref{lemma33}  to the equation $\eqref{A6}_2$ and derive
			\begin{align}\label{A7}
				&\|\mathcal{Z}^\varepsilon\|_{\widetilde{L}_t^\infty(\dot{B}_{2,1}^{\frac{d}{2}})}^{\ell,\varepsilon}+
				\frac{1}{\varepsilon^2}\|\mathcal{Z}^\varepsilon\|_{\widetilde{L}_t^1(\dot{B}_{2,1}^{\frac{d}{2}})}^{\ell,\varepsilon}\nonumber\\
				&\lesssim \|\mathcal{Z}^\varepsilon_0\|_{\dot{B}_{2,1}^{\frac{d}{2}}}^{\ell,\varepsilon}+\varepsilon\|\nabla\partial_ta^\varepsilon\|_{\widetilde{L}_t^1(\dot{B}_{2,1}^{\frac{d}{2}})}^{\ell,\varepsilon}+\varepsilon\|\partial_t(h(a^\varepsilon)\nabla a^\varepsilon)\|_{\widetilde{L}_t^1(\dot{B}_{2,1}^{\frac{d}{2}})}^{\ell,\varepsilon}+\frac{1}{\varepsilon}\|w^\varepsilon\cdot\nabla w^\varepsilon\|_{\widetilde{L}_t^1(\dot{B}_{2,1}^{\frac{d}{2}})}^{\ell,\varepsilon}.
			\end{align}
			Now we first handle the first term on the right-hand side of \eqref{A7}, which satisfies 
			\begin{align}
				\|\mathcal{Z}_0^\varepsilon\|_{\dot{B}_{2,1}^{\frac{d}{2}}}^{\ell,\varepsilon}
				&\lesssim \|\mathcal{P}^\top w^\varepsilon_0+\varepsilon(1+h(a_0^\varepsilon))\nabla a_0^\varepsilon\|_{\dot{B}_{2,1}^{\frac{d}{2}}}^{\ell,\varepsilon}\nonumber\\
				&\lesssim \|\mathcal{P}^\top w^\varepsilon_0\|_{\dot{B}_{2,1}^{\frac{d}{2}}}^{\ell,\varepsilon}+2^{-m_0}\|a_0^\varepsilon\|_{\dot{B}_{2,1}^{\frac{d}{2}}}^{\ell,\varepsilon}+2^{-m_0}\|h(a_0^\varepsilon)\nabla a_0^\varepsilon\|_{\dot{B}_{2,1}^{\frac{d}{2}-1}}^{\ell,\varepsilon}\nonumber\\
				&\lesssim (1+2^{-m_0}) \delta E_0+2^{-m_0}\|h(a_0^\varepsilon)\|_{\dot{B}_{2,1}^{\frac{d}{2}}} \|\nabla a_0^\varepsilon\|_{\dot{B}_{2,1}^{\frac{d}{2}-1}}\nonumber\\
				%&\lesssim \delta E_0+ (\|a_0^\varepsilon\|_{\dot{B}_{2,1}^{\frac{d}{2}}}^{\ell,\varepsilon})^2+(\varepsilon  \|a_0^\varepsilon\|_{\dot{B}_{2,1}^{\frac{d}{2}+1}}^{h,\varepsilon})^2\nonumber\\
				&\lesssim (1+2^{-m_0})( E_0+\delta E_0).\label{T4}
			\end{align}		
	%According to the equation $\eqref{main4}_1$, it is shown that 
			%\begin{align*}
				%\|\partial_ta^*\|_{L_t^1(\dot{B}_{2,1}^{\frac{d}{2}})}&\lesssim \|a^*\|_{L_t^1(\dot{B}_{2,1}^{\frac{d}{2}+2})}+\|a^*u^*\|_{L_t^1(\dot{B}_{2,1}^{\frac{d}{2}+1})}\\
				%&\lesssim \|a^*\|_{L_t^1(\dot{B}_{2,1}^{\frac{d}{2}+2})}
				%+\|a^*\|_{L_t^\infty(\dot{B}_{2,1}^{\frac{d}{2}})}\|u^*\|_{L_t^1(\dot{B}_{2,1}^{\frac{d}{2}+1})}\\
				%&\lesssim  E_0+E_0^2\\
				%&\lesssim E_0.
			%\end{align*}
			Before analyzing $\varepsilon \partial_t(h(a^\varepsilon)\nabla a^\varepsilon)$, we first provide a control on $\partial_t a^\varepsilon$. It follows from the definition \eqref{Jvar} of the threshold $J_\var$, Bernstein's inequality (Lemma \ref{lemma21}) and \eqref{ayj} that
			\begin{align}
				\varepsilon\|\nabla\partial_ta^\varepsilon\|_{\widetilde{L}_t^1(\dot{B}_{2,1}^{\frac{d}{2}})}^{\ell,\varepsilon}&\lesssim \var \|\partial_t \delta a\|_{\widetilde{L}_t^1(\dot{B}_{2,1}^{\frac{d}{2}+1})}^{\ell,\var}+\varepsilon\|\nabla\partial_ta^*\|_{\widetilde{L}_t^1(\dot{B}_{2,1}^{\frac{d}{2}})}^{\ell,\varepsilon}\nonumber\\
				&\lesssim\frac{2^{-2m_0}}{\var} \|\partial_t \delta a\|_{\widetilde{L}_t^1(\dot{B}_{2,1}^{\frac{d}{2}-1})}^{\ell,\var}+2^{-m_0}\|\partial_ta^*\|_{\widetilde{L}_t^1(\dot{B}_{2,1}^{\frac{d}{2}})}\nonumber\\
				%&\quad+\|a^*\|_{\widetilde{L}_t^1(\dot{B}_{2,1}^{\frac{d}{2}+2})}
				%+\|a^*\|_{\widetilde{L}_t^\infty(\dot{B}_{2,1}^{\frac{d}{2}})}\|u^*\|_{\widetilde{L}_t^1(\dot{B}_{2,1}^{\frac{d}{2}+1})}\nonumber\\
				&\lesssim \frac{2^{-2m_0}}{\var} \|\partial_t \delta a\|_{\widetilde{L}_t^1(\dot{B}_{2,1}^{\frac{d}{2}-1})}^{\ell,\var}+ 2^{-m_0}E_0.
			\end{align}	
			Define a new function  
			\begin{align*}
				\tilde{h}(a^\varepsilon):=\int_0^{a^\var}h(s)ds=-a^\var+\ln(1+a^\var).	
			\end{align*}
            It is observed that, due to the first equation in \eqref{main3} and the product law \eqref{uv1},
\begin{align}\label{347}
\|\partial_t a^\var\|_{\widetilde{L}^1_t(\dot{B}^{\frac{d}{2}}_{2,1})}&\lesssim \frac{1}{\var}\|w^\var a^\var\|_{\widetilde{L}^1_t(\dot{B}^{\frac{d}{2}+1}_{2,1})}+\frac{1}{\var}\|w^\var \|_{\widetilde{L}^1_t(\dot{B}^{\frac{d}{2}+1}_{2,1})}\nonumber\\
&\lesssim \frac{1}{\var}\|w^\var\|_{\widetilde{L}^2_t(\dot{B}^{\frac{d}{2}}_{2,1})} \|a^\var\|_{\widetilde{L}^2_t(\dot{B}^{\frac{d}{2}+1}_{2,1})}+\frac{1}{\var}\|w^\var\|_{\widetilde{L}^1_t(\dot{B}^{\frac{d}{2}}_{2,1})} \|a^\var\|_{\widetilde{L}^{\infty}_t(\dot{B}^{\frac{d}{2}}_{2,1})}+\frac{1}{\var}\|w^\var\|_{\widetilde{L}^1_t(\dot{B}^{\frac{d}{2}+1}_{2,1})}\nonumber\\
&\lesssim E_0\delta X(t)+(\delta X(t))^2.
\end{align}
Thus, since $h(a^\varepsilon)\nabla a^\varepsilon=\nabla\tilde{h}(a^\varepsilon)$, we obtain from \eqref{uv2}, \eqref{F1} and \eqref{347} that
	\begin{align}\label{dta}
				\varepsilon\|\partial_t(h(a^\varepsilon)\nabla a^\varepsilon)\|_{\widetilde{L}_t^1(\dot{B}_{2,1}^{\frac{d}{2}})}^{\ell,\varepsilon}	& \lesssim 2^{-m_0}\|\partial_t a^\varepsilon \tilde{h}'(a^\varepsilon)\|_{\widetilde{L}_t^1(\dot{B}_{2,1}^{\frac{d}{2}})}^{\ell,\varepsilon}\nonumber\\
                &\lesssim 2^{-m_0} \|\partial_t a^\var\|_{\widetilde{L}^1_t(\dot{B}^{\frac{d}{2}}_{2,1})} \|\tilde{h}'(a^\varepsilon)\|_{\widetilde{L}^{\infty}_t(\dot{B}^{\frac{d}{2}}_{2,1})} \nonumber\\          
                %&\lesssim  \|\partial_t\tilde{h}(a^\varepsilon)\|_{L_t^1(\dot{B}_{2,1}^{\frac{d}{2}})}^{\ell,\varepsilon}\\
				%&\lesssim \|\partial_t a^\varepsilon \tilde{h}'(a^\varepsilon)\|_{L_t^1(\dot{B}_{2,1}^{\frac{d}{2}})}^{\ell,\varepsilon}\\
				%&\lesssim \|\partial_t (a^\varepsilon)^{\ell,\var} \tilde{h}'(a^\varepsilon)\|_{L_t^1(\dot{B}_{2,1}^{\frac{d}{2}})}^{\ell,\varepsilon}+ \|\partial_t (a^\varepsilon)^{h,\var} \tilde{h}'(a^\varepsilon)\|_{L_t^1(\dot{B}_{2,1}^{\frac{d}{2}})}^{\ell,\varepsilon}\\
				%&\lesssim \|\partial_t a^* \tilde{h}'(a^\varepsilon)\|_{L_t^1(\dot{B}_{2,1}^{\frac{d}{2}})}^{\ell,\varepsilon}+\frac{1}{\var}\|\partial_t (\delta a)^{\ell,\var} \tilde{h}'(a^\varepsilon)\|_{L_t^1(\dot{B}_{2,1}^{\frac{d}{2}-1})}^{\ell,\varepsilon}+ \|\partial_t (a^\varepsilon)^{h,\var} \tilde{h}'(a^\varepsilon)\|_{L_t^1(\dot{B}_{2,1}^{\frac{d}{2}})}^{\ell,\varepsilon}\\
				%&\lesssim (\|\partial_ta^*\|_{L_t^1(\dot{B}_{2,1}^{\frac{d}{2}})}+\frac{1}{\var}\|\partial_t(\delta a)^{\ell,\var}\|_{L_t^1(\dot{B}_{2,1}^{\frac{d}{2}-1})})\|a^\var\|_{L_t^\infty(\dot{B}_{2,1}^{\frac{d}{2}})}+\|\partial_t(a^\var)^{h,\var}\|_{L_t^1(\dot{B}_{2,1}^{\frac{d}{2}})}\|a^\var\|_{L_t^\infty(\dot{B}_{2,1}^{\frac{d}{2}})}\\
				%&\lesssim (\|\partial_ta^*\|_{L_t^1(\dot{B}_{2,1}^{\frac{d}{2}})}+\frac{1}{\var}\|\partial_t\delta a\|_{L_t^1(\dot{B}_{2,1}^{\frac{d}{2}-1})}^{\ell,\var}+\|\partial_ta^\var\|_{L_t^1(\dot{B}_{2,1}^{\frac{d}{2}})}^{h,\var})	\|a^\var\|_{L_t^\infty(\dot{B}_{2,1}^{\frac{d}{2}})}\\
				&\lesssim 2^{-m_0}(E_0\delta X(t)+(\delta X(t))^2).
			\end{align}
			%where the following fact is used 
				%\begin{align*}
				%	\|\partial_ta^\var\|_{L_t^1(\dot{B}_{2,1}^{\frac{d}{2}})}^{h,\var}
					%&\lesssim \frac{1}{\var}\|a^\var w^\var\|_{L_t^1(\dot{B}_{2,1}^{\frac{d}{2}+1})}^{h,\var}+\frac{1}{\var}\|w^\var\|_{L_t^1(\dot{B}_{2,1}^{\frac{d}{2}+1})}^{h,\var}\\
					%&\lesssim \frac{1}{\var}\|a^\var\|_{L_t^\infty(\dot{B}_{2,1}^{\frac{d}{2}})}\|w^\var\|_{L_t^1(\dot{B}_{2,1}^{\frac{d}{2}+1})} +\frac{1}{\var}\|w^\var\|_{L_t^1(\dot{B}_{2,1}^{\frac{d}{2}+1})}^{h,\var}\\
					%&\lesssim \delta X(t).
				%\end{align*}
		Similar to the proof of \eqref{T2}, we easily obtain
			\begin{align*}
				\frac{1}{\varepsilon}\|w^\varepsilon\cdot\nabla w^\varepsilon\|_{\widetilde{L}_t^1(\dot{B}_{2,1}^{\frac{d}{2}})}^{\ell,\varepsilon}
				\lesssim \frac{1}{\varepsilon^2}\|w^\varepsilon\cdot\nabla w^\varepsilon\|_{\widetilde{L}_t^1(\dot{B}_{2,1}^{\frac{d}{2}-1})}^{\ell,\varepsilon}
				\lesssim \var(\delta X(t))^2.
			\end{align*}
		Consequently, it concludes from the above estimates  that 
			\begin{align}\label{A8}
				&\|\mathcal{Z}^\varepsilon\|_{\widetilde{L}_t^\infty(\dot{B}_{2,1}^{\frac{d}{2}})}^{\ell,\varepsilon}+\frac{1}{\varepsilon^2}
				\|\mathcal{Z}^\varepsilon\|_{\widetilde{L}_t^1(\dot{B}_{2,1}^{\frac{d}{2}})}^{\ell,\varepsilon}\nonumber\\
				&\lesssim (1+2^{-m_0})(E_0+\delta E_0+(\delta X(t))^2)+\frac{2^{-2m_0}}{\var}\|\partial_t\delta a\|_{\widetilde{L}_t^1(\dot{B}_{2,1}^{\frac{d}{2}-1})}^{\ell,\varepsilon}.
			\end{align}
			
			Multiplying \eqref{T6} by a suitably chosen constant (independent of $\var$) and adding the resulting inequality to \eqref{A8}, we can find a  uniform positive constant $C^*$ such that
			\begin{align*}
				&\frac{1}{\varepsilon}\|\delta a\|_{\widetilde{L}_t^\infty(\dot{B}_{2,1}^{\frac{d}{2}-1})}^{\ell,\varepsilon}
				+\frac{1}{\varepsilon}\|\delta a\|_{\widetilde{L}_t^1(\dot{B}_{2,1}^{\frac{d}{2}+1})}^{\ell,\varepsilon}
				+\frac{1}{\varepsilon} \|\partial_t\delta a\|_{\widetilde{L}_t^1(\dot{B}_{2,1}^{\frac{d}{2}-1})}^{\ell,\varepsilon}
				+\|\mathcal{Z}^\varepsilon\|_{\widetilde{L}_t^\infty(\dot{B}_{2,1}^{\frac{d}{2}})}^{\ell,\varepsilon}+\frac{1}{\varepsilon^2}\|\mathcal{Z}^\varepsilon\|_{\widetilde{L}_t^1(\dot{B}_{2,1}^{\frac{d}{2}})}^{\ell,\varepsilon}\nonumber\\
				&\leq C^*(1+2^{-m_0}) (E_0+\delta E_0+(\delta X(t))^2)+\frac{C^*  2^{-2m_0}}{\varepsilon} \|\partial_t\delta a\|_{\widetilde{L}_t^1(\dot{B}_{2,1}^{\frac{d}{2}-1})}^{\ell,\varepsilon}.
			\end{align*}
			
			Now we take 
			\begin{align}
				m_0=[\frac{1}{2}\log_{2} 2C^*]+1\label{m0},
			\end{align}
			such that $C^*2^{-2m_0}<\frac{1}{2}$ and then 
			\begin{align}\label{A9}
				&\frac{1}{\varepsilon}\|\delta a\|_{\widetilde{L}_t^\infty(\dot{B}_{2,1}^{\frac{d}{2}-1})}^{\ell,\varepsilon}
				+\frac{1}{\varepsilon}\|\delta a\|_{\widetilde{L}_t^1(\dot{B}_{2,1}^{\frac{d}{2}+1})}^{\ell,\varepsilon}
				+\frac{1}{\varepsilon} \|\partial_t\delta a\|_{\widetilde{L}_t^1(\dot{B}_{2,1}^{\frac{d}{2}-1})}^{\ell,\varepsilon}
				+\|\mathcal{Z}^\varepsilon\|_{\widetilde{L}_t^\infty(\dot{B}_{2,1}^{\frac{d}{2}})}^{\ell,\varepsilon}+\frac{1}{\varepsilon^2}\|\mathcal{Z}^\varepsilon\|_{\widetilde{L}_t^1(\dot{B}_{2,1}^{\frac{d}{2}})}^{\ell,\varepsilon}\nonumber\\
				&\lesssim E_0+\delta E_0+ (\delta X(t))^2.
			\end{align}
            
 In what follows, we will not specify the constant $m_0$ when using frequency cutoff since it has been fixed in \eqref{m0}.
			
			 \textbf{Part III.} According to \eqref{RT1}, we are able to show that  
			\begin{align}
				\mathcal{P}^\top w^\varepsilon=\mathcal{Z}^\varepsilon-\varepsilon(1+h(a^\varepsilon))\nabla a^\varepsilon.	
			\end{align}
			In terms of \eqref{A9}, we can obtain the estimate of $\mathcal{P}^\top w^\varepsilon$ 
			\begin{align}
				\|\mathcal{P}^\top w^\varepsilon\|_{\widetilde{L}_t^\infty(\dot{B}_{2,1}^{\frac{d}{2}})}^{\ell,\varepsilon}
				&\lesssim \|\mathcal{Z}^\varepsilon\|_{\widetilde{L}_t^\infty(\dot{B}_{2,1}^{\frac{d}{2}})}^{\ell,\varepsilon}+\varepsilon
				\|\nabla a^\varepsilon\|_{\widetilde{L}_t^\infty(\dot{B}_{2,1}^{\frac{d}{2}})}^{\ell,\varepsilon}+\varepsilon
				\|h(a^\varepsilon)\nabla a^\varepsilon\|_{\widetilde{L}_t^\infty(\dot{B}_{2,1}^{\frac{d}{2}})}\nonumber\\
				&\lesssim  \|\mathcal{Z}^\varepsilon\|_{\widetilde{L}_t^\infty(\dot{B}_{2,1}^{\frac{d}{2}})}^{\ell,\varepsilon}
				+\|a^\varepsilon\|_{\widetilde{L}_t^\infty(\dot{B}_{2,1}^{\frac{d}{2}})}^{\ell,\varepsilon}+
				\|h(a^\varepsilon)\nabla a^\varepsilon\|_{\widetilde{L}_t^\infty(\dot{B}_{2,1}^{\frac{d}{2}-1})}\nonumber\\
				&\lesssim  \|\mathcal{Z}^\varepsilon\|_{\widetilde{L}_t^\infty(\dot{B}_{2,1}^{\frac{d}{2}})}^{\ell,\varepsilon}
				+\frac{1}{\var}\|\delta a\|_{\widetilde{L}_t^\infty(\dot{B}_{2,1}^{\frac{d}{2}-1})}^{\ell,\varepsilon}+\|a^*\|_{\widetilde{L}_t^\infty(\dot{B}_{2,1}^{\frac{d}{2}})}+\|h(a^\varepsilon)\nabla a^\varepsilon\|_{\widetilde{L}_t^\infty(\dot{B}_{2,1}^{\frac{d}{2}-1})}.\label{TYU2}
			\end{align}
	
	Thanks to the following fact
			\begin{align}
				\|\delta a\|_{\widetilde{L}_t^\infty(\dot{B}_{2,1}^{\frac{d}{2}})}
				&\lesssim \|\delta a\|_{\widetilde{L}_t^\infty(\dot{B}_{2,1}^{\frac{d}{2}})}^{\ell,\var}
				+\|\delta a\|_{\widetilde{L}_t^\infty(\dot{B}_{2,1}^{\frac{d}{2}})}^{h,\var}\nonumber\\
				&\lesssim 
				\frac{1}{\var}	\|\delta a\|_{\widetilde{L}_t^\infty(\dot{B}_{2,1}^{\frac{d}{2}-1})}^{\ell,\var}+
				\| a^\varepsilon\|_{\widetilde{L}_t^\infty(\dot{B}_{2,1}^{\frac{d}{2}})}^{h,\var}+
				\| a^*\|_{\widetilde{L}_t^\infty(\dot{B}_{2,1}^{\frac{d}{2}})}^{h,\var}\nonumber\\
				&\lesssim 	\frac{1}{\var}	\|\delta a\|_{\widetilde{L}_t^\infty(\dot{B}_{2,1}^{\frac{d}{2}-1})}^{\ell,\var}+
				\var\|a^\varepsilon\|_{\widetilde{L}_t^\infty(\dot{B}_{2,1}^{\frac{d}{2}+1})}^{h,\var}
				+\|a^*\|_{\widetilde{L}_t^\infty(\dot{B}_{2,1}^{\frac{d}{2}})}\nonumber\\
				&\lesssim E_0+\delta X(t),\label{TYU1}
			\end{align}
			we discover that 
			\begin{align*}
				\|h(a^\varepsilon)\nabla a^\varepsilon\|_{\widetilde{L}_t^\infty(\dot{B}_{2,1}^{\frac{d}{2}-1})}
				\lesssim \|h(a^\varepsilon)\|_{\widetilde{L}_t^\infty(\dot{B}_{2,1}^{\frac{d}{2}})}
				\|\nabla a^\varepsilon\|_{\widetilde{L}_t^\infty(\dot{B}_{2,1}^{\frac{d}{2}-1})}
				\lesssim \|a^\varepsilon\|_{\widetilde{L}_t^\infty(\dot{B}_{2,1}^{\frac{d}{2}})}^2
				\lesssim E_0+(\delta X(t))^2.
			\end{align*}
			Substituting the above estimates into \eqref{TYU2} yields that 
			\begin{align}
				\|\mathcal{P}^\top w^\varepsilon\|_{\widetilde{L}_t^\infty(\dot{B}_{2,1}^{\frac{d}{2}})}^{\ell,\varepsilon}	\lesssim E_0+\delta E_0+(\delta X(t))^2.
			\end{align}
	It follows from \eqref{A9} and \eqref{TYU1} that 
			\begin{align}
				\frac{1}{\varepsilon}\|\mathcal{P}^\top w^\varepsilon\|_{\widetilde{L}_t^1(\dot{B}_{2,1}^{\frac{d}{2}+1})}^{\ell,\varepsilon}
				&\lesssim \frac{1}{\varepsilon^2} \|\mathcal{Z}^\varepsilon\|_{\widetilde{L}_t^1(\dot{B}_{2,1}^{\frac{d}{2}})}^{\ell,\varepsilon}+\|\nabla a^\varepsilon\|_{\widetilde{L}_t^1(\dot{B}_{2,1}^{\frac{d}{2}+1})}^{\ell,\var}+\|h(a^\varepsilon)\nabla a^\varepsilon\|_{\widetilde{L}_t^1(\dot{B}_{2,1}^{\frac{d}{2}+1})}^{\ell,\varepsilon}\nonumber\\
				&\lesssim \frac{1}{\varepsilon^2} \|\mathcal{Z}^\varepsilon\|_{\widetilde{L}_t^1(\dot{B}_{2,1}^{\frac{d}{2}})}^{\ell,\varepsilon}+\frac{1}{\var}\| \delta a\|_{\widetilde{L}_t^1(\dot{B}_{2,1}^{\frac{d}{2}+1})}^{\ell,\var}+\| a^*\|_{\widetilde{L}_t^1(\dot{B}_{2,1}^{\frac{d}{2}+2})}\nonumber\\
				&\quad +\|h(a^\varepsilon)\|_{\widetilde{L}_t^\infty(\dot{B}_{2,1}^{\frac{d}{2}})}\|\nabla (a^\varepsilon)^{\ell,\var}\|_{\widetilde{L}_t^1(\dot{B}_{2,1}^{\frac{d}{2}+1})}+\frac{1}{\var}\|h(a^\varepsilon)\|_{\widetilde{L}_t^\infty(\dot{B}_{2,1}^{\frac{d}{2}})}\|\nabla (a^\varepsilon)^{h,\var}\|_{\widetilde{L}_t^1(\dot{B}_{2,1}^{\frac{d}{2}})}\nonumber\\
				&\lesssim \frac{1}{\varepsilon^2} \|\mathcal{Z}^\varepsilon\|_{\widetilde{L}_t^1(\dot{B}_{2,1}^{\frac{d}{2}})}^{\ell,\varepsilon}+\frac{1}{\var}\|\delta a\|_{\widetilde{L}_t^1(\dot{B}_{2,1}^{\frac{d}{2}+1})}^{\ell,\var}+\| a^*\|_{\widetilde{L}_t^1(\dot{B}_{2,1}^{\frac{d}{2}+2})}+\frac{1}{\var}\|a^\varepsilon\|_{\widetilde{L}_t^\infty(\dot{B}_{2,1}^{\frac{d}{2}})}\|\delta a\|_{\widetilde{L}_t^1(\dot{B}_{2,1}^{\frac{d}{2}+1})}^{\ell,\var}\nonumber\\
				&\quad+\|a^\var\|_{\widetilde{L}_t^\infty(\dot{B}_{2,1}^{\frac{d}{2}})}\|a^*\|_{\widetilde{L}_t^1(\dot{B}_{2,1}^{\frac{d}{2}+2})}+\frac{1}{\var}\|a^\var\|_{\widetilde{L}_t^\infty(\dot{B}_{2,1}^{\frac{d}{2}})}\|a^\var\|_{\widetilde{L}_t^1(\dot{B}_{2,1}^{\frac{d}{2}+1})}^{h,\var} \nonumber\\
				&\lesssim E_0+\delta E_0+(\delta X(t))^2.\label{TYU2-2}
			\end{align}
			
			Moreover, we also obtain that 
			\begin{align}
				\frac{1}{\varepsilon}\|\mathcal{P}^\top w^\varepsilon\|_{\widetilde{L}_t^2(\dot{B}_{2,1}^{\frac{d}{2}})}^{\ell,\varepsilon}
				&\lesssim \frac{1}{\varepsilon} \|\mathcal{Z}^\varepsilon\|_{\widetilde{L}_t^2(\dot{B}_{2,1}^{\frac{d}{2}})}^{\ell,\varepsilon}+\|\nabla a^\varepsilon\|_{\widetilde{L}_t^2(\dot{B}_{2,1}^{\frac{d}{2}})}^{\ell,\var}+\|h(a^\varepsilon)\nabla a^\varepsilon\|_{\widetilde{L}_t^2(\dot{B}_{2,1}^{\frac{d}{2}})}^{\ell,\varepsilon}\nonumber\\
				%&\lesssim  \|\mathcal{Z}^\varepsilon\|_{\widetilde{L}_t^\infty(\dot{B}_{2,1}^{\frac{d}{2}})}^{\ell,\varepsilon}
				%+\frac{1}{\varepsilon^2} \|\mathcal{Z}^\varepsilon\|_{\widetilde{L}_t^1(\dot{B}_{2,1}^{\frac{d}{2}})}^{\ell,\varepsilon}+\frac{1}{\var}\|\delta a\|_{\widetilde{L}_t^2(\dot{B}_{2,1}^{\frac{d}{2}})}^{\ell,\var}+\| a^*\|_{\widetilde{L}_t^2(\dot{B}_{2,1}^{\frac{d}{2}+1})}\nonumber\\
				%&\quad+\|h(a^\varepsilon)\|_{\widetilde{L}_t^\infty(\dot{B}_{2,1}^{\frac{d}{2}})}\|\nabla a^\varepsilon\|_{\widetilde{L}_t^2(\dot{B}_{2,1}^{\frac{d}{2}})}\nonumber\\
				&\lesssim  \|\mathcal{Z}^\varepsilon\|_{\widetilde{L}_t^\infty(\dot{B}_{2,1}^{\frac{d}{2}})}^{\ell,\varepsilon}
				+\frac{1}{\varepsilon^2} \|\mathcal{Z}^\varepsilon\|_{\widetilde{L}_t^1(\dot{B}_{2,1}^{\frac{d}{2}})}^{\ell,\varepsilon}\nonumber\\
                &\quad+\frac{1}{\var}\|\delta a\|_{\widetilde{L}_t^\infty(\dot{B}_{2,1}^{\frac{d}{2}-1})}^{\ell,\var}+\frac{1}{\var}\|\delta a\|_{\widetilde{L}_t^1(\dot{B}_{2,1}^{\frac{d}{2}+1})}^{\ell,\var}+\| a^*\|_{\widetilde{L}_t^\infty(\dot{B}_{2,1}^{\frac{d}{2}})}\nonumber\\
				&\quad+\| a^*\|_{\widetilde{L}_t^1(\dot{B}_{2,1}^{\frac{d}{2}+2})}+\|a^\varepsilon\|_{\widetilde{L}_t^\infty(\dot{B}_{2,1}^{\frac{d}{2}})}\|a^\var\|_{\widetilde{L}_t^2(\dot{B}_{2,1}^{\frac{d}{2}+1})}\nonumber\\
				&\lesssim E_0+\delta E_0+(\delta X(t))^2,\label{TYU2-3}
			\end{align}
			where we have used the fact that 
			\begin{align*}
				\|a^\varepsilon\|_{\widetilde{L}_t^2(\dot{B}_{2,1}^{\frac{d}{2}+1})}&\lesssim 
				\| a^\varepsilon\|_{\widetilde{L}_t^2(\dot{B}_{2,1}^{\frac{d}{2}+1})}^{\ell,\var}+\|a^\var\|_{\widetilde{L}_t^2(\dot{B}_{2,1}^{\frac{d}{2}+1})}^{h,\var}\\
				&\lesssim \|\delta a\|_{\widetilde{L}_t^2(\dot{B}_{2,1}^{\frac{d}{2}+1})}^{\ell,\var}
				+\| a^*\|_{\widetilde{L}_t^2(\dot{B}_{2,1}^{\frac{d}{2}+1})}+\|a^\var\|_{\widetilde{L}_t^2(\dot{B}_{2,1}^{\frac{d}{2}+1})}^{h,\var} \\
				&\lesssim \frac{1}{\var}\|\delta a\|_{\widetilde{L}_t^\infty(\dot{B}_{2,1}^{\frac{d}{2}-1})}^{\ell,\var}
				+\frac{1}{\var}\|\delta a\|_{\widetilde{L}_t^1(\dot{B}_{2,1}^{\frac{d}{2}+1})}^{\ell,\var}+
				\| a^*\|_{\widetilde{L}_t^\infty(\dot{B}_{2,1}^{\frac{d}{2}})}+\| a^*\|_{\widetilde{L}_t^1(\dot{B}_{2,1}^{\frac{d}{2}+2})}\nonumber\\
				&\quad+\var\|a^\var\|_{\widetilde{L}_t^\infty(\dot{B}_{2,1}^{\frac{d}{2}+1})}^{h,\var}+\frac{1}{\var}\|a^\var\|_{\widetilde{L}_t^1(\dot{B}_{2,1}^{\frac{d}{2}+1})}^{h,\var} \\
				&\lesssim E_0+\delta X(t).
			\end{align*}

			Finally, we combine \eqref{A9}, \eqref{TYU2}, \eqref{TYU2-2} and \eqref{TYU2-3} to show \eqref{lowaw}, which completes the proof of this lemma.	
		\end{proof}
		
		\subsection{Step 3: High-frequency estimate of $(a^\varepsilon,  w^\varepsilon, \mathcal{Z}^\varepsilon)$}
		In this subsection, we aim to establish the estimates of \texorpdfstring{$(a^\varepsilon,w^\varepsilon,\mathcal{Z}^\varepsilon)$}{(a, w, Z)} in the high-frequency part.
					\begin{lemma}\label{P3.4}
			Assume $(a^\varepsilon, w^\varepsilon,u^\varepsilon)$ is a solution of the system \eqref{main3}. Then it holds
			\begin{align}
				&\varepsilon\|(a^\varepsilon,w^\varepsilon)\|_{\widetilde{L}_t^\infty(\dot{B}_{2,1}^{\frac{d}{2}+1})}^{h,\varepsilon}
				+\frac{1}{\varepsilon}\|(a^\varepsilon,w^\varepsilon)\|_{\widetilde{L}_t^1(\dot{B}_{2,1}^{\frac{d}{2}+1})}^{h,\varepsilon}
				+\frac{1}{\varepsilon^2}\|\mathcal{Z}^\varepsilon\|_{\widetilde{L}_t^1(\dot{B}_{2,1}^{\frac{d}{2}})}^{h,\varepsilon}+\frac{1}{\varepsilon^2}\|\mathcal{R}^\varepsilon\|_{\widetilde{L}_t^1(\dot{B}_{2,1}^{\frac{d}{2}})}^{h,\varepsilon}\nonumber\\
				&\lesssim  E_0+\delta E_0
				+(\delta X(t))^2+\|\delta u^\varepsilon\|_{\widetilde{L}_t^1(\dot{B}_{2,1}^{\frac{d}{2}+1})}.\label{high1}
			\end{align}
           % and
           % \begin{align}\label{high2}
           % \frac{1}{\varepsilon^2}\|\mathcal{R}^\varepsilon\|_{\widetilde{L}_t^1(\dot{B}_{2,1}^{\frac{d}{2}})}^{h,\varepsilon}\lesssim E_0+\delta E_0+(\delta X(t))^2+\|\delta u^\var\|_{L^1_t(\dot{B}^{\frac{d}{2}+1}_{2,1})}^h.
           % \end{align}
		\end{lemma}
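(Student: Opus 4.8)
The plan is to work directly with the original system \eqref{main3} for $(a^\varepsilon,w^\varepsilon)$ in the high-frequency regime $j\ge J_\varepsilon-1$, rather than with the reformulated error system, since the high-frequency bound requires controlling a full derivative and the damped modes $\mathcal{Z}^\varepsilon,\mathcal{R}^\varepsilon$ cost derivatives that are not available there. First I would apply $\dot\Delta_j$ to the first two equations of \eqref{main3}, symmetrize the hyperbolic part by multiplying the $a^\varepsilon$-equation by $(1+h(a^\varepsilon))$ (so that the pressure and continuity cross-terms become skew-symmetric up to commutators), and perform an $L^2$ energy estimate on $\|\dot\Delta_j a^\varepsilon\|_{L^2}^2+\|\dot\Delta_j w^\varepsilon\|_{L^2}^2$. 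The relative-velocity damping $\varepsilon^{-2}(w^\varepsilon-\varepsilon u^\varepsilon)$ produces a coercive term $\varepsilon^{-2}\|\dot\Delta_j w^\varepsilon\|_{L^2}^2$ together with a lower-order coupling $\varepsilon^{-1}\langle \dot\Delta_j w^\varepsilon,\dot\Delta_j u^\varepsilon\rangle$ which is absorbed using the parabolic smoothing of $u^\varepsilon$ already controlled in $\delta X(t)$. Because there is no direct dissipation of $a^\varepsilon$ at this level, I would add a small multiple of the cross term $\langle\nabla\dot\Delta_j a^\varepsilon,\dot\Delta_j w^\varepsilon\rangle$ (the standard Shizuta--Kawashima / hypocoercivity correction): its time-derivative produces, on high frequencies where $2^j\gtrsim\varepsilon^{-1}$, a coercive contribution $\sim\varepsilon\,2^{2j}\|\dot\Delta_j a^\varepsilon\|_{L^2}^2\gtrsim\varepsilon^{-1}\|\dot\Delta_j a^\varepsilon\|_{L^2}^2$ after multiplying by the appropriate weight, at the cost of terms controlled by the $w^\varepsilon$-dissipation. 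Summing the resulting Lyapunov inequality with weights $2^{j(d/2+1)}$ over $j\ge J_\varepsilon-1$, integrating in time, and using $2^{J_\varepsilon}\sim\varepsilon^{-1}$ gives the claimed left-hand side for $(a^\varepsilon,w^\varepsilon)$.

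Next I would handle the nonlinear and coupling source terms. The convective terms $\varepsilon^{-1}w^\varepsilon\cdot\nabla a^\varepsilon$, $\varepsilon^{-1}w^\varepsilon\cdot\nabla w^\varepsilon$ and $u^\varepsilon\cdot\nabla u^\varepsilon$ are treated by commutator estimates (Lemma~\ref{lemma25}) to exchange $\dot\Delta_j$ past the transport coefficient, followed by the product and composition laws (Lemmas~\ref{lemma23} and \ref{lemma24}) and the Bernstein inequality (Lemma~\ref{lemma21}); each contributes a factor bounded by $\delta X(t)$ times a dissipation norm, hence yields $(\delta X(t))^2$ after time integration and the \emph{a priori} assumption \eqref{asss}. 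The pressure commutator $[(1+h(a^\varepsilon)),\dot\Delta_j]\nabla a^\varepsilon$ from the symmetrization is the term requiring care with the regularity index $d/2+1$, but it is covered by \eqref{commutator1} with $s=d/2+1$ and $p=2$, costing $\|a^\varepsilon\|_{\dot B^{d/2+1}_{2,1}}\|a^\varepsilon\|_{\dot B^{d/2}_{2,1}}$-type quantities that fit into $(\delta X(t))^2$. The coupling term $\varepsilon^{-1}(1+a^\varepsilon)(\varepsilon u^\varepsilon-w^\varepsilon)$ entering through $u^\varepsilon$ in \eqref{Main:com} is exactly why the extra term $\|\delta u^\varepsilon\|_{\widetilde L^1_t(\dot B^{d/2+1}_{2,1})}$ appears: writing $u^\varepsilon=\delta u+u^*$, the $u^*$ part is bounded by $E_0$ via Proposition~\ref{thm1}, while the $\delta u$ part cannot be absorbed by dissipation at high frequencies and must be kept on the right-hand side (it will be absorbed later in Proposition~\ref{P31} using Lemma~\ref{P3.2}).

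Finally, the high-frequency bounds for $\mathcal{Z}^\varepsilon$ and $\mathcal{R}^\varepsilon$ follow by the maximal-regularity estimate for the damped equation (Lemma~\ref{maximaldamped}) applied to \eqref{T8} with $s=d/2$: the gain $\varepsilon^{-2}\|\cdot\|_{\widetilde L^1_t(\dot B^{d/2}_{2,1})}^{h,\varepsilon}$ on the left is paid for by the already-established high-frequency bounds on $(a^\varepsilon,w^\varepsilon,u^\varepsilon)$ — note that on $\{j\ge J_\varepsilon-1\}$ one has $\varepsilon\Delta u^\varepsilon$, $\varepsilon u^\varepsilon\cdot\nabla u^\varepsilon$ and $\varepsilon^{-1}w^\varepsilon\cdot\nabla w^\varepsilon$ all controlled, and $\|\mathcal{Z}_0^\varepsilon\|^{h,\varepsilon}_{\dot B^{d/2}_{2,1}}$, $\|\mathcal{R}_0^\varepsilon\|^{h,\varepsilon}_{\dot B^{d/2}_{2,1}}$ are bounded by $\varepsilon\|(a_0^\varepsilon,w_0^\varepsilon)\|^{h,\varepsilon}_{\dot B^{d/2+1}_{2,1}}+\|\mathcal{P}^\top w_0^\varepsilon\|$-type data, i.e.\ by $E_0+\delta E_0$. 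The main obstacle I anticipate is the hypocoercivity step: choosing the weight of the cross term $\langle\nabla a_j^\varepsilon,w_j^\varepsilon\rangle$ so that it is simultaneously small enough to keep the quadratic form $\|a_j^\varepsilon\|^2+\|w_j^\varepsilon\|^2+\langle\nabla a_j^\varepsilon,w_j^\varepsilon\rangle$ equivalent to $\|a_j^\varepsilon\|^2+\|w_j^\varepsilon\|^2$ \emph{uniformly in $j\ge J_\varepsilon-1$ and in $\varepsilon$}, and large enough that the derived coercivity $\gtrsim\varepsilon^{-1}(\|a_j^\varepsilon\|^2+\|w_j^\varepsilon\|^2)$ dominates the cross-coupling $\varepsilon^{-1}\langle w_j^\varepsilon,u_j^\varepsilon\rangle$ and the nonlinear remainders; this is where the precise choice $2^{J_\varepsilon}\sim\varepsilon^{-1}$ (and the freedom in $m_0$) is used decisively.
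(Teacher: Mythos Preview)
Your plan for the $(a^\varepsilon,w^\varepsilon)$ high-frequency estimate is essentially the paper's proof: the symmetrizing weight $1+h(a^\varepsilon)=\tfrac{1}{1+a^\varepsilon}$ on the $a^\varepsilon$-equation, the hypocoercivity correction $\langle\nabla\dot\Delta_j a^\varepsilon,\dot\Delta_j w^\varepsilon\rangle$ (the paper takes coefficient $\eta_0/(2^{2j}\varepsilon)$, which resolves the ``obstacle'' you anticipated), the commutator treatment via Lemma~\ref{lemma25}, and the splitting $u^\varepsilon=u^*+\delta u$ with the $\delta u$ piece kept on the right, all match.

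The gap is in your final paragraph. Applying Lemma~\ref{maximaldamped} to \eqref{T8} at level $\dot B^{d/2}_{2,1}$ forces you to bound the sources there: for $\mathcal R^\varepsilon$ this means $\varepsilon\|\Delta u^\varepsilon\|^{h,\varepsilon}_{\widetilde L^1_t(\dot B^{d/2}_{2,1})}=\varepsilon\|u^\varepsilon\|^{h,\varepsilon}_{\widetilde L^1_t(\dot B^{d/2+2}_{2,1})}$, and for $\mathcal Z^\varepsilon$ it means $\varepsilon\|\nabla\partial_t a^\varepsilon\|^{h,\varepsilon}_{\widetilde L^1_t(\dot B^{d/2}_{2,1})}$, which through $\eqref{main3}_1$ requires $\|(1+a^\varepsilon)w^\varepsilon\|_{\widetilde L^1_t(\dot B^{d/2+2}_{2,1})}$. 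Neither $\delta X(t)$ nor Proposition~\ref{thm1} supplies $\dot B^{d/2+2}_{2,1}$-control of $u^\varepsilon$ or $w^\varepsilon$, and on high frequencies Bernstein only lets you trade regularity \emph{down}, not up, so these terms are genuinely unavailable. The paper avoids the damped equations altogether here and simply reads off the bounds from the algebraic definitions \eqref{RT1}: since $2^j\gtrsim\varepsilon^{-1}$ for $j\ge J_\varepsilon-1$, one has directly
\[
\tfrac{1}{\varepsilon^2}\|\mathcal Z^\varepsilon\|^{h,\varepsilon}_{\widetilde L^1_t(\dot B^{d/2}_{2,1})}
\lesssim \tfrac{1}{\varepsilon}\|w^\varepsilon\|^{h,\varepsilon}_{\widetilde L^1_t(\dot B^{d/2+1}_{2,1})}
+\tfrac{1}{\varepsilon}\|(1+h(a^\varepsilon))\nabla a^\varepsilon\|^{h,\varepsilon}_{\widetilde L^1_t(\dot B^{d/2}_{2,1})},
\]
and likewise $\tfrac{1}{\varepsilon^2}\|\mathcal R^\varepsilon\|^{h,\varepsilon}_{\widetilde L^1_t(\dot B^{d/2}_{2,1})}\lesssim\tfrac{1}{\varepsilon}\|w^\varepsilon\|^{h,\varepsilon}_{\widetilde L^1_t(\dot B^{d/2+1}_{2,1})}+\|u^\varepsilon\|^{h,\varepsilon}_{\widetilde L^1_t(\dot B^{d/2+1}_{2,1})}$, both of which are exactly what Part~I just produced.
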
  
		\begin{proof}	For clarity we divide the proof into two parts.

                \textbf{Part I.}
			Noting that
			\begin{equation}\label{main8}
				\left\{
				\begin{aligned}
					&\partial_ta^\varepsilon+\frac{1}{\varepsilon}w^\varepsilon\cdot\nabla a^\varepsilon+\frac{1}{\varepsilon}(1+a^\varepsilon)\text{div}w^\varepsilon=0,\\
					&\partial_tw^\varepsilon+\frac{1}{\varepsilon}w^\varepsilon\cdot\nabla w^\varepsilon+\frac{1}{\varepsilon}(1+h(a^\varepsilon))\nabla a^\varepsilon+\frac{1}{\varepsilon^2}w^\varepsilon=\frac{1}{\varepsilon} u^\varepsilon.
				\end{aligned}
				\right.
			\end{equation}
			Applying $\dot{\Delta}_j$ to the system \eqref{main8}, we have
			\begin{equation}\label{main9}
				\left\{
				\begin{aligned}
					&\partial_ta_j^\varepsilon+\frac{1}{\varepsilon}w^\var\cdot\nabla a_j^\varepsilon+\frac{1}{\varepsilon}(1+a^\varepsilon)\text{div}w_j^\varepsilon=
					\frac{1}{\varepsilon}(\mathcal{N}^\varepsilon_{1j}+\mathcal{N}^\varepsilon_{2j}),\\
					&\partial_tw_j^\varepsilon+\frac{1}{\varepsilon}w^\varepsilon\cdot\nabla w^\varepsilon_j+\frac{1}{\varepsilon}(1+h(a^\varepsilon))\nabla a_j^\varepsilon+\frac{1}{\varepsilon^2}w_j^\varepsilon=\frac{1}{\varepsilon} u_j^\var+\frac{1}{\varepsilon} (\mathcal{N}_{3j}^{\var}+\mathcal{N}_{4j}^\var),
				\end{aligned}
				\right.
			\end{equation}
			where the commutators $\mathcal{N}_{1j}, \mathcal{N}_{2j},  \mathcal{N}_{3j}$ and $\mathcal{N}_{4j}$ are given by
			\begin{align*}
				&\mathcal{N}^\varepsilon_{1j}=[w^\varepsilon,\dot{\Delta}_j]\nabla a^\varepsilon,\quad 	\mathcal{N}^\varepsilon_{2j}=[a^\varepsilon,\dot{\Delta}_j]\text{div} w^\varepsilon,\\
				&\mathcal{N}^\varepsilon_{3j}=[w^\varepsilon,\dot{\Delta}_j]\nabla w^\varepsilon,\quad \mathcal{N}^\varepsilon_{4j}=
				[h(a^\var),\dot{\Delta}_j]\nabla a^\var.
			\end{align*}
			
			Multiplying $\eqref{main9}_1$ by $\frac{1}{1+a^\var}a_j^\varepsilon$ and integrating the resulting equation in $\mathbb{R}^d$ yields that 
			\begin{align}\label{en1}
				&\frac{1}{2}\frac{d}{dt}\int_{\mathbb{R}^d}\frac{1}{1+a^\var}|a_j^\varepsilon|^2dx+\frac{1}{\varepsilon}\int_{\mathbb{R}^d}\text{div}w_j^\varepsilon a_j^\varepsilon dx\nonumber\\
				&=\int_{\mathbb{R}^d}\Big(-\frac{1}{2}\frac{\partial_ta^\varepsilon}{(1+a^\var)^2}|a_j^\varepsilon|^2+\frac{1}{2\varepsilon}\text{div}(\frac{w^\var}{1+a^\var})|a_j^\varepsilon|^2+\frac{1}{\varepsilon(1+a^\var)}(\mathcal{N}^\varepsilon_{1j}+\mathcal{N}^\varepsilon_{2j})a_j^\varepsilon\Big)dx\nonumber\\
				&\lesssim \Big\|\frac{\partial_ta^\varepsilon}{(1+a^\var)^2}\Big\|_{L^\infty}\|a_j^\varepsilon\|_{L^2}^2+\frac{1}{\varepsilon}\Big\|\text{div}(\frac{w^\var}{1+a^\var})\Big\|_{L^\infty}\|a_j^\varepsilon\|_{L^2}^2
				\nonumber\\
                &\quad +\frac{1}{\varepsilon}\Big\|\frac{1}{1+a^\var}\Big\|_{L^\infty}	\|(\mathcal{N}^\varepsilon_{1j},\mathcal{N}^\varepsilon_{2j})\|_{L^2}\|a_j^\var\|_{L^2}.
			\end{align}
			Multiplying $\eqref{main9}_2$ by $w_j^\varepsilon$ and then integrating the resulting equation over the whole space gives 
			\begin{align}\label{en2}
				&\frac{1}{2}\frac{d}{dt}\|w_j^\varepsilon\|_{L^2}^2+\frac{1}{\varepsilon}\int_{\mathbb{R}^d} w_j^\varepsilon\cdot \nabla a_j^\varepsilon dx+\frac{1}{\varepsilon^2}\|w_j^\varepsilon\|_{L^2}^2\nonumber\\
				&=\int_{\mathbb{R}^d}\Big(\frac{1}{2\varepsilon}\text{div}w^\varepsilon |w_j^\varepsilon|^2+\frac{1}{\varepsilon}u_j^\varepsilon\cdot w_j^\varepsilon+\frac{1}{\varepsilon}(\mathcal{N}^\varepsilon_{3j}+\mathcal{N}^\varepsilon_{4j})w_j^\varepsilon-\frac{1}{\var}h(a^\var)\nabla a_j^\var \cdot w_j^\var\Big)dx\nonumber\\
				&\lesssim 
				\frac{1}{\varepsilon}\|\text{div}w^\varepsilon\|_{L^\infty}\|w_j^\varepsilon\|_{L^2}^2+\frac{1}{\varepsilon}\|u_j^\varepsilon\|_{L^2}\|w_j^\varepsilon\|_{L^2} \nonumber\\
                &\quad+\frac{1}{\varepsilon}(\|\mathcal{N}^\varepsilon_{3j}\|_{L^2}+\|\mathcal{N}^\varepsilon_{4j}\|_{L^2})\|w_j^\varepsilon\|_{L^2}
				+\frac{1}{\var}\|a_j^\var\|_{L^2}\|\nabla a^\var\|_{L^\infty}\|w_j^\varepsilon\|_{L^2}.
			\end{align}
			To capture the dissipation of $a_j^\var$, we multiply $\eqref{main9}_2$ by $\varepsilon \nabla a_j^\varepsilon$ and then integrate the resulting equation in $\mathbb{R}^d$ so that
			\begin{align}\label{en3}
				&\varepsilon\frac{d}{dt}\int_{\mathbb{R}^d}w_j^\varepsilon\cdot\nabla a_j^\varepsilon dx+\|\nabla a_j^\varepsilon\|_{L^2}^2-\int_{\mathbb{R}^d}(1+a^\varepsilon)|\text{div}w_j^\varepsilon|^2dx+\frac{1}{\varepsilon}\int_{\mathbb{R}^d}w_j^\varepsilon\cdot\nabla a_j^\varepsilon dx\nonumber\\	
				&=\int_{\mathbb{R}^d}(-w_j^\varepsilon\cdot\nabla w^\varepsilon_j-h(a^\var)\nabla a^\var+u_j^\var+(\mathcal{N}_{3j}^\var+\mathcal{N}_{4j}^\var))\cdot\nabla a_j^\var dx\nonumber\\
				&\quad+\int_{\mathbb{R}^d}(w^\var\cdot\nabla a_j^\var\text{div}w_j^\var+\text{div}w_j^\var(\mathcal{N}_{1j}^\var+\mathcal{N}_{2j}^\var))dx.
			\end{align}
			
			Let $\eta_0$ be a small positive constant. Define 
			\begin{align*}
				L_j^{h,\varepsilon}(t)=\int_{\mathbb{R}^d}(\frac{1}{1+a^\var}|a_j^\varepsilon|^2+|w_j^\varepsilon|^2)dx+\frac{\eta_0}{2^{2j}\var}\int_{\mathbb{R}^d} w_j^\varepsilon\cdot\nabla a_j^\varepsilon dx,
			\end{align*}
			and 
			\begin{align*}
				D_j^{h,\varepsilon}(t)=\frac{1}{\varepsilon^2}\|w_j^\varepsilon\|_{L^2}^2+\frac{\eta_0}{2^{2j}\varepsilon^2 }\Big(\|\nabla a_j^\varepsilon\|_{L^2}^2-\int_{\mathbb{R}^d}(1+a^\var)|\text{div}w_j^\varepsilon|^2dx+\frac{1}{\varepsilon}\int_{\mathbb{R}^d}w_j^\varepsilon\cdot\nabla a_j^\varepsilon dx\Big).
			\end{align*}
			Thanks to the \emph{a priori} assumption \eqref{asss} and the smallness of $\zeta_1$ and $E_0$, we have 
			\begin{align*}
				\|a^\varepsilon\|_{L_t^\infty L_x^\infty}&\lesssim\|a^\varepsilon\|_{\widetilde{L}_t^\infty(\dot{B}_{2,1}^{\frac{d}{2}})}
				\lesssim \| a^\varepsilon\|_{\widetilde{L}_t^\infty(\dot{B}_{2,1}^{\frac{d}{2}})}^{\ell,\var}
				+\| a^\varepsilon\|_{\widetilde{L}_t^\infty(\dot{B}_{2,1}^{\frac{d}{2}})}^{h,\var}\\
				&\lesssim 
				\frac{1}{\var}\|\delta a\|_{\widetilde{L}_t^\infty(\dot{B}_{2,1}^{\frac{d}{2}-1})}^{\ell,\var}
				+\|a^*\|_{\widetilde{L}_t^\infty(\dot{B}_{2,1}^{\frac{d}{2}})}+\var\| a^\varepsilon\|_{\widetilde{L}_t^\infty(\dot{B}_{2,1}^{\frac{d}{2}+1})}^{h,\var}\\
				&\lesssim \delta X(t)+E_0\lesssim \zeta_1+E_0\leq \frac{1}{2},	
			\end{align*}
			which implies that 
			\begin{align*}
				\frac{2}{3}\leq \frac{1}{1+a^\var}\leq 2.
			\end{align*}
			Then we have 
			\begin{align*}
				D_j^{h,\varepsilon}(t)&\geq \frac{1}{\var^2}\|w_j^\var\|_{L^2}^2+\frac{\eta_0}{2^{2j}\var^2 }(\|\nabla a_j^\var\|_{L^2}^2-2\|\nabla w_j^\var\|_{L^2}^2-\frac{1}{2\var^2}\|w_j^\var\|_{L^2}^2-\frac{1}{2}\|\nabla a_j^\var\|_{L^2}^2)\\
				&\geq \frac{2c}{\var^2}(1-C\eta_0)(\| a_j^\var\|_{L^2}^2+\|w_j^\var\|_{L^2}^2),
			\end{align*}
			for some uniform constants $c, C>0$. Here we have used  $2^{J_\epsilon}>\frac{1}{\var}$ and the fact that 
			\begin{align*}
				\frac{1}{\varepsilon}\Big|\int_{\mathbb{R}^d}w_j^\varepsilon\cdot\nabla a_j^\varepsilon dx\Big|
				\leq \frac{1}{\varepsilon}\|w_j^\varepsilon\|_{L^2}\|\nabla a_j^\varepsilon\|_{L^2}	
				\leq \frac{1}{2\var^2}\|w_j^\varepsilon\|_{L^2}^2+\frac{1}{2}\|\nabla a_j^\varepsilon\|_{L^2}^2.
			\end{align*}
	We can choose a suitable constant $\eta_0$ independent of $\varepsilon$ such that 
			\begin{align}\label{3.44}
				L_j^{h,\var}(t)\sim \|(a_j^\varepsilon,w_j^\varepsilon)\|_{L^2}^2,\quad
				D_j^{h,\var}(t)\geq \frac{c}{\varepsilon^2}L_j^{h,\var}(t).	
			\end{align}
            
			Consequently, by \eqref{en1}-\eqref{3.44} we obtain 
			\begin{align}
				&\frac{d}{dt}L_j^{h,\var}(t)+\frac{c}{\varepsilon^2}L_j^{h,\var}(t)\nonumber\\
                &\lesssim \frac{d}{dt}L_j^{h,\var}(t)+D_j^{h,\var}(t)\nonumber\\
				&\lesssim 
				\Big((\|\nabla w^\varepsilon\|_{L^2}+\|\nabla a^\varepsilon\|_{L^\infty}+\|w^\varepsilon\|_{L^\infty})\frac{1}{\varepsilon}\|(a_j^\varepsilon,w_j^\varepsilon)\|_{L^2}^2+\|\partial_ta^\varepsilon\|_{L^\infty}
				\|a_j^\varepsilon\|_{L^2}\nonumber\\
				&\quad+\frac{1}{\varepsilon}\|(\mathcal{N}^\varepsilon_{1j},\mathcal{N}_{2j}^\var,\mathcal{N}_{3j}^\var,\mathcal{N}^\varepsilon_{4j})\|_{L^2}
				+\frac{1}{\varepsilon}\|u_j^\varepsilon\|_{L^2}	\Big)\sqrt{L_j^{h,\var}(t)}.\label{Lyhigh}
			\end{align}

            With the aid of the inequality \eqref{Lyhigh}, we establish uniform estimates in the high-frequency regime. From \eqref{Lyhigh}, we arrive at
			\begin{align*}
				&\varepsilon \|(a_j^\varepsilon,w_j^\varepsilon)\|_{L_t^\infty (L^2)}+\frac{1}{\varepsilon}\|(a_j^\varepsilon,w_j^\varepsilon)\|_{L_t^1(L^2)}\\
				&\lesssim \varepsilon \|\dot{\Delta}_j(a_{0},w_{0})\|_{L^2}
				+\Big(\|\nabla w^\varepsilon\|_{L_t^2(L^\infty)}+\|\nabla a^\varepsilon\|_{L_t^2(L^\infty)}+\|w^\varepsilon\|_{L_t^2(L^\infty)}\Big)\|(a_j^\varepsilon,w_j^\varepsilon)\|_{L_t^2(L^2)}\\	
				&\quad+\varepsilon\|\partial_ta^\varepsilon\|_{L_t^1L^\infty}\|(a_j^\varepsilon,w_j^\varepsilon)\|_{L_t^\infty(L^2)}
				+\|(\mathcal{N}^\varepsilon_{1j},\mathcal{N}^\varepsilon_{2j},\mathcal{N}^\varepsilon_{3j},\mathcal{N}^\varepsilon_{4j})\|_{L_t^1(L^2)}
				+\|u_j^\varepsilon\|_{L_t^1(L^2)}.
			\end{align*}
		Summing the above inequality over  the regime $j\geq J_\var-1$, one obtains 
			\begin{align}
				&\varepsilon \|(a^\varepsilon,w^\varepsilon)\|_{\widetilde{L}_t^\infty(\dot{B}_{2,1}^{\frac{d}{2}+1})}^{h,\varepsilon}
				+\frac{1}{\varepsilon}\|(a^\varepsilon,w^\varepsilon)\|_{\widetilde{L}_t^1(\dot{B}_{2,1}^{\frac{d}{2}+1})}^{h,\varepsilon}\nonumber\\
				&\lesssim  \|(a_0^\varepsilon,w_0^\varepsilon)\|_{\widetilde{L}_t^\infty(\dot{B}_{2,1}^{\frac{d}{2}+1})}^{h,\varepsilon}
				+\|(\nabla w^\varepsilon,w^\varepsilon,\nabla a^\varepsilon)\|_{\widetilde{L}_t^2(\dot{B}_{2,1}^{\frac{d}{2}})}^{h,\varepsilon}\|(a^\varepsilon,w^\varepsilon)\|_{\widetilde{L}_t^2(\dot{B}_{2,1}^{\frac{d}{2}+1})}^{h,\varepsilon}\nonumber\\
				&\quad+\|\partial_ta^\varepsilon\|_{\widetilde{L}_t^1(\dot{B}_{2,1}^{\frac{d}{2}})}\varepsilon\|(a^\varepsilon,w^\varepsilon)\|_{\widetilde{L}_t^\infty(\dot{B}_{2,1}^{\frac{d}{2}+1})}^{h,\varepsilon}\nonumber\\
        &\quad+\sum_{2^{J_\var}>\frac{1}{\varepsilon}}2^{(\frac{d}{2}+1)j}\|(\mathcal{N}^\varepsilon_{1j},\mathcal{N}^\varepsilon_{2j},\mathcal{N}^\varepsilon_{3j},\mathcal{N}^\varepsilon_{4j})\|_{L_t^1(L^2)}+\|u^\varepsilon\|_{\widetilde{L}_t^1(\dot{B}_{2,1}^{\frac{d}{2}+1})}^{h,\varepsilon}.\label{TYU4}
                \end{align}
Gathering the commutator estimates in Lemma \ref{lemma25} leads to          
                \begin{align}
					&\sum_{2^{J_\var}>\frac{1}{\varepsilon}}2^{(\frac{d}{2}+1)j}\|(\mathcal{N}^\varepsilon_{1j},\mathcal{N}^\varepsilon_{2j},\mathcal{N}^\varepsilon_{3j},\mathcal{N}^\varepsilon_{4j})\|_{L_t^1(L^2)}\lesssim \|w^\varepsilon\|_{\widetilde{L}_t^2(\dot{B}_{2,1}^{\frac{d}{2}+1})}\|(w^\varepsilon,a^\varepsilon)\|_{\widetilde{L}_t^2(\dot{B}_{2,1}^{\frac{d}{2}+1})}.\label{TYU41}
			\end{align}			
By using the uniform bound \eqref{r0}, the frequency cutoff property \eqref{lh} and the definition of $\delta X(t)$, we are able to demonstrate that 
			\begin{align}
				&\|w^\varepsilon\|_{\widetilde{L}_t^2(\dot{B}_{2,1}^{\frac{d}{2}+1})}+\|a^\varepsilon\|_{\widetilde{L}_t^2(\dot{B}_{2,1}^{\frac{d}{2}+1})}+	\|(a^\varepsilon,w^\varepsilon)\|_{\widetilde{L}_t^2(\dot{B}_{2,1}^{\frac{d}{2}+1})}\lesssim E_0+\delta X(t).\label{TYU3}
			\end{align}
Thus, substituting \eqref{TYU41}, \eqref{TYU3} and \eqref{347} into \eqref{TYU4} yields that 
			\begin{align}
				&\varepsilon\|(a^\varepsilon,w^\varepsilon)\|_{\widetilde{L}_t^\infty(\dot{B}_{2,1}^{\frac{d}{2}+1})}^{h,\varepsilon}
				+\frac{1}{\varepsilon}\|(a^\varepsilon,w^\varepsilon)\|_{\widetilde{L}_t^1(\dot{B}_{2,1}^{\frac{d}{2}+1})}^{h,\varepsilon}\nonumber\\
				&\lesssim  E_0+\delta E_0+E_0\delta X(t)
				+(\delta X(t))^2+\|u^*\|_{\widetilde{L}_t^1(\dot{B}_{2,1}^{\frac{d}{2}+1})}^{h,\varepsilon}+\|\delta u\|_{\widetilde{L}_t^1(\dot{B}_{2,1}^{\frac{d}{2}+1})}^{h,\varepsilon}\nonumber\\
				&\lesssim E_0+\delta E_0+(\delta X(t))^2+\|\delta u^\varepsilon\|_{\widetilde{L}_t^1(\dot{B}_{2,1}^{\frac{d}{2}+1})}.\label{091401}
			\end{align}

 \textbf{Part II.}           
The next step is to derive the estimates of $\mathcal{Z}^\varepsilon$ and $w^\varepsilon$ in the high-frequency part as follows:
			\begin{align*}
				\|\mathcal{Z}^\varepsilon\|_{\widetilde{L}_t^1(\dot{B}_{2,1}^{\frac{d}{2}})}^{h,\varepsilon}
				&\lesssim 
				\|\mathcal{P}^\top w^\varepsilon\|_{\widetilde{L}_t^1(\dot{B}_{2,1}^{\frac{d}{2}})}^{h,\varepsilon}
				+\varepsilon \|\nabla a^\varepsilon\|_{\widetilde{L}_t^1(\dot{B}_{2,1}^{\frac{d}{2}})}^{h,\varepsilon}	
				+\varepsilon \|h(a^\varepsilon)\nabla a^\varepsilon\|_{\widetilde{L}_t^1(\dot{B}_{2,1}^{\frac{d}{2}})}^{h,\varepsilon}\\
				&\lesssim 	\varepsilon\|(a^\varepsilon, w^\varepsilon)\|_{\widetilde{L}_t^1(\dot{B}_{2,1}^{\frac{d}{2}+1})}^{h,\varepsilon}+\varepsilon \|h(a^\varepsilon)\nabla a^\varepsilon\|_{\widetilde{L}_t^1(\dot{B}_{2,1}^{\frac{d}{2}})}^{h,\varepsilon}\\
				&\lesssim \var^2(E_0+\delta E_0+(\delta X(t))^2)+\varepsilon \|h(a^\varepsilon)\nabla a^\varepsilon\|_{\widetilde{L}_t^1(\dot{B}_{2,1}^{\frac{d}{2}})}^{h,\varepsilon}.
			\end{align*}
			Noting that 
			\begin{align*}
				\varepsilon \|h(a^\varepsilon)\nabla a^\varepsilon\|_{\widetilde{L}_t^1(\dot{B}_{2,1}^{\frac{d}{2}})}^{h,\varepsilon}
				&\lesssim \varepsilon \|h(a^\varepsilon)\nabla (a^\varepsilon)^\ell\|_{\widetilde{L}_t^1(\dot{B}_{2,1}^{\frac{d}{2}})}^{h,\varepsilon}
				+ \varepsilon \|h(a^\varepsilon)\nabla (a^\varepsilon)^h\|_{\widetilde{L}_t^1(\dot{B}_{2,1}^{\frac{d}{2}})}^{h,\varepsilon}\\
				&\lesssim \varepsilon^2 \|h(a^\varepsilon)\|_{\widetilde{L}_t^\infty(\dot{B}_{2,1}^{\frac{d}{2}})}\|\nabla (a^\varepsilon)^\ell\|_{\widetilde{L}_t^1(\dot{B}_{2,1}^{\frac{d}{2}+1})}
				\\
				&\quad+\varepsilon \|h(a^\varepsilon)\|_{\widetilde{L}_t^\infty(\dot{B}_{2,1}^{\frac{d}{2}})}\|\nabla (a^\varepsilon)^h\|_{\widetilde{L}_t^1(\dot{B}_{2,1}^{\frac{d}{2}})}\\
				&\lesssim  \varepsilon^2\|a^\varepsilon\|_{\widetilde{L}_t^\infty(\dot{B}_{2,1}^{\frac{d}{2}})}\|a^\varepsilon\|_{\widetilde{L}_t^1(\dot{B}_{2,1}^{\frac{d}{2}+2})}^{\ell,\varepsilon}
				+\varepsilon\|a^\varepsilon\|_{\widetilde{L}_t^\infty(\dot{B}_{2,1}^{\frac{d}{2}})}\|a^\varepsilon\|_{\widetilde{L}_t^1(\dot{B}_{2,1}^{\frac{d}{2}+1})}^{h,\varepsilon}\\
				&\lesssim \varepsilon^2(E_0+\delta E_0+(\delta X(t))^2),
			\end{align*}
			which implies that 
			\begin{align}\label{091402}
				\frac{1}{\varepsilon^2}\|\mathcal{Z}^\varepsilon\|_{\widetilde{L}_t^1(\dot{B}_{2,1}^{\frac{d}{2}})}^{h,\varepsilon}\lesssim 
				E_0+\delta E_0+(\delta X(t))^2.
			\end{align}
			%	It also holds that 
			%	\begin{align}
				%			\frac{1}{\varepsilon^2}\|w^\varepsilon\|_{L_t^1(\dot{B}_{2,1}^{\frac{d}{2}})}^{h,\varepsilon}\lesssim 
				%		\frac{1}{\varepsilon}\|w^\varepsilon\|_{L_t^1(\dot{B}_{2,1}^{\frac{d}{2}+1})}^{h,\varepsilon}\lesssim E_0+\delta E_0+(\delta X(t))^2.
				%		\end{align}
            Furthermore, one also obtains from \eqref{r0}, \eqref{lh} and  \eqref{high1} that
            \begin{align*}
            \frac{1}{\varepsilon^2}\|\mathcal{R}^\varepsilon\|_{\widetilde{L}_t^1(\dot{B}_{2,1}^{\frac{d}{2}})}^{h,\varepsilon}\lesssim \|(w^\var, u^\var)\|_{\widetilde{L}^1_t(\dot{B}^{\frac{d}{2}+1}_{2,1})}^{h,\var}\lesssim E_0+\delta E_0+(\delta X(t))^2+\|\delta u^\var\|_{\widetilde{L}^1_t(\dot{B}^{\frac{d}{2}+1}_{2,1})}.
            \end{align*}
Then, we combine \eqref{091401} and \eqref{091402} to show \eqref{high1} and  finish the proof of this lemma.
		\end{proof}	

        \vspace{2mm}

		\noindent{\it\textbf{Proof of Proposition \ref{P31}.}\ }		 Multiplying \eqref{high1} by a sufficiently small (uniform) constant and adding the resulting inequality to \eqref{step1:es} and \eqref{lowaw}, we obtain that there exist uniform constants $C_0^*, \var_0^*$ such that for $\var\leq \var_0^*$.
		\begin{align}
			\delta X(t)\leq C_0^*(E_0+\delta E_0+\var\delta X(t)+(\delta X(t))^2).
		\end{align}
		Then, under the condition \eqref{asss}, one has 
        \begin{align*}
        \delta X(t)\leq C_0^*(E_0+\delta E_0)+(C_0^*\var+C_0^*\zeta_1)\delta X(t).
        \end{align*}
        Now we take 
        \begin{align}\label{var1}
        \var\leq \var_0=\min\Big\{\var_0^*,\frac{1}{4C_0^*}\Big\},\quad \zeta_1=\frac{1}{4C_0^*},
        \end{align}
        such that  \eqref{3.3} 
        with $C_0=2C_0^*$ holds. To this end, we require $E_0+\delta E_0\leq \frac{1}{C_0}\zeta_1$. Therefore, we complete the proof of Proposition \ref{P31}. \hfill $\square$
%\vspace{1em}

\vspace{3mm}
    Now we are ready to prove Theorem \ref{thm2}.\\
     \noindent{\it\textbf{Proof of Theorem \ref{thm2}.}\ }   
	Let $(a^\var,w^\var,u^\var)$ be a local solution on $[0,T_0]\times \mathbb{R}^d$ with a positive time $T_0$ such that $\sup_{t\in[0,T_0]}\delta X(t)\leq \zeta_1$. Then, employing Proposition \ref{P31}, we have
\begin{align}
\delta X(t)\leq C_0(E_0+\delta E_0)\leq \frac{\zeta_1}{2},
\end{align}
provided $E_0+\delta E_0\leq \frac{\zeta_1}{2C_0}$. By a standard continuous method, we can extend the local solution to a global-in-time solution that satisfies $\sup_{t\in \mathbb{R}_+} \delta X(t) \leq 2C_0( E_0+\delta E_0)$. Owing to the definition of $E^\var(t), D^\var(t)$, \eqref{r0} and the uniform bound of $\delta X(t)$ at hand, one can recover the estimate of  $E^\var(t), D^\var(t)$ as follows:
   % Since $\delta X(t)<\frac{1}{2}\zeta_1$, we close the \emph{a priori} assumption \eqref{asss} on . Then we obtain the uniform estimates of the solutions. Using the definitions of $E^\var(t)$ and $D^\var(t)$ in \eqref{DTT} and \eqref{ETT}, we have, by the triangle inequality, that
		\begin{align*}	E^\varepsilon(t)+D^\varepsilon(t)&\lesssim \delta X(t)+\|a^*\|_{\widetilde{L}^{\infty}_t(\dot{B}^{\frac{d}{2}}_{2,1}\cap \dot{B}^{\frac{d}{2}-1}_{2,1})\cap \widetilde{L}^{1}_t(\dot{B}^{\frac{d}{2}+2}_{2,1}\cap \dot{B}^{\frac{d}{2}+1}_{2,1})}+\|u^*\|_{\widetilde{L}^{\infty}_t(\dot{B}^{\frac{d}{2}-1}_{2,1})\cap \widetilde{L}^{1}_t(\dot{B}^{\frac{d}{2}+1}_{2,1})}\\
        &\lesssim E_0+\delta E_0.
		\end{align*}

Furthermore, by \eqref{darcy} and \eqref{B1}, we have 
\begin{align}\label{112002}
\mathcal{P}(\var^{-1}w^\var-W^*)=\var^{-1}\mathcal{P}w^\var-u^*=
\var^{-1}\mathcal{R}^\var+u^\var-u^*,
\end{align}
and 
\begin{align}\label{112003}
\mathcal{P}^\top(\var^{-1}w^\var-W^*)=\var^{-1}\mathcal{P}^\top w^\var+\frac{\nabla \rho^*}{\rho^*}=\var^{-1}\mathcal{Z}^\var-\nabla(\log\rho^\var-\log\rho^*).
\end{align}

Thus, it follows from \eqref{3.3}, \eqref{112002} and \eqref{112003} that 
\begin{align*}
\|\mathcal{P}(\var^{-1}w^\var-W^*)\|_{\widetilde{L}_t^1(\dot{B}_{2,1}^{\frac{d}{2}}+\dot{B}_{2,1}^{\frac{d}{2}+1})}\lesssim \var^{-1}\|\mathcal{R}^\var\|_{\widetilde{L}^1_t(\dot{B}_{2,1}^{\frac{d}{2}})}+\|u^\var-u^*\|_{\widetilde{L}_t^1(\dot{B}_{2,1}^{\frac{d}{2}+1})}\lesssim \var (E_0+\delta E_0),
\end{align*}
and 
\begin{align*}
  \|\mathcal{P}^\top(\var^{-1}w^\var-W^*)\|_{\widetilde{L}_t^1(\dot{B}_{2,1}^\frac{d}{2})}\lesssim \var^{-1}\|\mathcal{Z}^\var\|_{\widetilde{L}_t^1(\dot{B}_{2,1}^{\frac{d}{2}})} +\|\rho^\var-\rho^*\|_{\widetilde{L}^1_t(\dot{B}_{2,1}^{\frac{d}{2}+1})}\lesssim \var(E_0+\delta E_0).
\end{align*}
 Thus, we complete the proof of Theorem  \ref{thm2}.\hfill $\square$

		\section{Proof of Theorem \ref{thm3}}\label{S5}
In this section, we establish the large-time behaviors of the solution and the error in Besov spaces.
		
		\subsection{Propagation of regularity in \texorpdfstring{$\dot{B}^{\sigma_1}_{2,\infty}$}{B-sigma1-2-infty}}

        		At the beginning, we establish the uniform boundedness of the solution in $\dot{B}^{\sigma_1}_{2,\infty}$, which plays a key role in deriving uniform time-decay estimates.
		
		\begin{prop}
			Assume $(a^\varepsilon, w^\varepsilon,u^\varepsilon)$ is a solution of the system \eqref{main3}. Under the assumptions in Theorem \ref{thm3}, it follows that
			\begin{align}\label{evolution}
				\|(a^\var, w^\var, u^\var)\|_{\widetilde{L}_t^{\infty}(\dot{B}^{\sigma_1}_{2,\infty})}
				+\frac{1}{\var}\|\delta a\|_{\widetilde{L}_t^{\infty}(\dot{B}^{\sigma_1}_{2,\infty})}+\frac{1}{\var}	\| \delta u\|_{\widetilde{L}_t^{\infty}(\dot{B}^{\sigma_1}_{2,\infty})} \lesssim 1.
			\end{align}
			In addition, if $-\frac{d}{2}\leq \sigma_1<\frac{d}{2}-2$, then we have
			\begin{align}\label{improve}
				\|\delta a\|_{\widetilde{L}_t^{\infty}(\dot{B}^{\sigma_1-1}_{2,\infty})}\lesssim \var.
			\end{align}
		\end{prop}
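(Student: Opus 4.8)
# Proof Proposal for the Propagation Estimate in $\dot B^{\sigma_1}_{2,\infty}$

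The plan is to propagate the low-regularity norm $\dot B^{\sigma_1}_{2,\infty}$ along the same structural decomposition used in Section~\ref{S4}: treat $(\delta a,\delta u)$ via the diffusive error system \eqref{main6}, treat $(\mathcal Z^\varepsilon,\mathcal R^\varepsilon)$ via the fast-damped system \eqref{T8}, and close the high-frequency part of $(a^\varepsilon,w^\varepsilon)$ via the hyperbolic Lyapunov functional from Lemma~\ref{P3.4}. The key point, exactly as in Remark following Theorem~\ref{thm3}, is that $\sigma_1<d/2-1$ lies \emph{strictly below} the critical index, so all nonlinear terms can be estimated by the product law \eqref{uv3} (the $\dot B^{s_1}_{2,1}\cdot\dot B^{s_2}_{2,\infty}$ version) putting the ``bad'' low-regularity factor in $\dot B^{\sigma_1}_{2,\infty}$ and the other factor in a critical $\dot B^{\cdot}_{2,1}$ space already controlled by $\delta X(t)\lesssim E_0+\delta E_0$ from Proposition~\ref{P31}. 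The assumption $\rho_0^\varepsilon=\rho_0^*$, $u_0^\varepsilon=u_0^*$ in \eqref{a2} gives $\delta a_0=\delta u_0=0$, which is what produces the extra factor $\varepsilon$ on the $\delta a,\delta u$ lines (the initial data for $\mathcal Z_0^\varepsilon,\mathcal R_0^\varepsilon$ are still only $\mathcal O(1)$, contributing the non-$\varepsilon$-weighted part), together with $\|(\rho_0^*-1,w_0^\varepsilon,u_0^\varepsilon)\|_{\dot B^{\sigma_1}_{2,\infty}}\le\beta_1$.

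First I would apply the maximal-regularity estimate for the heat flow, Lemma~\ref{lemma33}, to the $\delta a$ and $\delta u$ equations in \eqref{main6} at regularity $\sigma_1$, and Lemma~\ref{maximaldamped} / the damped estimate to $\mathcal Z^\varepsilon$, $\mathcal R^\varepsilon$ in \eqref{T8} at regularity $\sigma_1$ (and $\sigma_1+1$ where the $\tfrac1{\varepsilon^2}$ gain is needed), plus the hyperbolic high-frequency estimate for $(a^\varepsilon,w^\varepsilon)$ at some shifted index $\sigma_1+k$. One assembles a functional, say
\[
\delta\mathcal Y(t):=\frac1\varepsilon\|\delta a\|_{\widetilde L^\infty_t(\dot B^{\sigma_1}_{2,\infty})}+\frac1\varepsilon\|\delta u\|_{\widetilde L^\infty_t(\dot B^{\sigma_1}_{2,\infty})}+\|(a^\varepsilon,w^\varepsilon,u^\varepsilon)\|_{\widetilde L^\infty_t(\dot B^{\sigma_1}_{2,\infty})}+\cdots,
\]
with the analogous dissipation terms (e.g. $\tfrac1{\varepsilon^2}\|\mathcal R^\varepsilon\|_{\widetilde L^1_t(\dot B^{\sigma_1}_{2,\infty})}$) appended, and shows a closed inequality $\delta\mathcal Y(t)\lesssim \beta_1+(\,\text{const}\,)(E_0+\delta E_0+\varepsilon)\,\delta\mathcal Y(t)$. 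Each nonlinear term is handled as in the proofs of Lemmas~\ref{P3.2}, \ref{P3.3}, \ref{P3.4}, but with one factor always measured in $\dot B^{\sigma_1}_{2,\infty}$ via \eqref{uv3}; the companion factor's critical norm is bounded by $\delta X(t)+E_0\lesssim E_0+\delta E_0$ from Proposition~\ref{P31}. Since $E_0,\delta E_0,\varepsilon$ are all taken small, the linear-in-$\delta\mathcal Y$ terms are absorbed and $\delta\mathcal Y(t)\lesssim 1$, which is \eqref{evolution}.

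For the improved estimate \eqref{improve} under $\sigma_1<d/2-2$, the idea is that $\delta a$ can in fact be propagated one derivative lower, in $\dot B^{\sigma_1-1}_{2,\infty}$. Running Lemma~\ref{lemma33} on the $\delta a$ equation of \eqref{main6} at index $\sigma_1-1$: the initial datum vanishes; the transport/coupling terms $u^\varepsilon\cdot\nabla\delta a$ and $\delta u\cdot\nabla a^*$ are estimated by \eqref{uv3} with the low factor ($\delta a$ resp.\ $\delta u$, appearing with a derivative) now landing in $\dot B^{\sigma_1-1}_{2,\infty}$, which is legitimate precisely because $\sigma_1-1<d/2-3<d/2$ and the sum of indices is still nonnegative when $\sigma_1\ge -d/2$ (this is the role of the hypothesis $\sigma_1<d/2-2$: it gives enough room in the product law and in the embedding $\dot B^{\sigma_1}_{2,\infty}\hookrightarrow \dot B^{\sigma_1-1}_{2,\infty}$-type bookkeeping, and ensures $\sigma_1-1\geq -d/2-1$ so the already-established \eqref{evolution} bounds on $\delta a$ can be interpolated down); the crucial source term $\tfrac1\varepsilon Y^\varepsilon=-\tfrac1\varepsilon\operatorname{div}((1+a^\varepsilon)(\mathcal Z^\varepsilon+\mathcal R^\varepsilon))$ contributes $\tfrac1\varepsilon\|(1+a^\varepsilon)(\mathcal Z^\varepsilon+\mathcal R^\varepsilon)\|_{\widetilde L^1_t(\dot B^{\sigma_1}_{2,\infty})}$, which by the $\tfrac1{\varepsilon^2}$ damping gain on $(\mathcal Z^\varepsilon,\mathcal R^\varepsilon)$ (already used in \eqref{evolution} at index $\sigma_1$) is $\lesssim\varepsilon\cdot\delta\mathcal Y(t)\lesssim\varepsilon$. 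Collecting these gives $\|\delta a\|_{\widetilde L^\infty_t(\dot B^{\sigma_1-1}_{2,\infty})}\lesssim\varepsilon$, i.e.\ \eqref{improve}.

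The main obstacle I anticipate is purely bookkeeping but genuinely delicate: one must verify that every nonlinear product arising in \eqref{main6}, \eqref{T8}, and in the commutators $\mathcal N^\varepsilon_{ij}$ of Lemma~\ref{P3.4} admits a splitting into (a low-regularity $\dot B^{\sigma_1}_{2,\infty}$ — or $\dot B^{\sigma_1-1}_{2,\infty}$ — factor) times (a critical-regularity $\dot B^{\cdot}_{2,1}$ factor) satisfying the index constraints of \eqref{uv3}, namely $s_1\le d/2$, $s_2<d/2$, $s_1+s_2\ge 0$, for \emph{all} choices forced by the worst terms (e.g.\ $\tfrac1{\varepsilon^2}a^\varepsilon(\mathcal Z^\varepsilon+\mathcal R^\varepsilon)$ in $\delta u$, $\tfrac1{\varepsilon^2}Y^\varepsilon$ in $\delta a$, and the pressure-type commutator $[h(a^\varepsilon),\dot\Delta_j]\nabla a^\varepsilon$ in high frequency); the constraint $s_1+s_2\ge 0$ is exactly where $\sigma_1\ge -d/2$ (for \eqref{evolution}) and $\sigma_1 < d/2-2$ (for \eqref{improve}) enter, and for the high-frequency hyperbolic estimate one additionally needs a commutator bound in $\dot B^{\sigma_1}_{2,\infty}$-type norms, obtained by adapting Lemma~\ref{lemma25} and the $\ell^\infty$ summation in $j$ of the Lyapunov inequality \eqref{Lyhigh}, taking care that all $\ell^1$-in-$j$ sums there become $\sup_j$'s controlled by a convolution of a summable sequence with an $\ell^\infty$ sequence. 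Apart from this, the scheme is a faithful low-regularity mirror of the argument already carried out in Section~\ref{S4}.
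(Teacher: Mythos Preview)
Your overall strategy for \eqref{evolution} is sound and mirrors the paper's: assemble an error functional at regularity $\sigma_1$, rerun the diffusive and damped estimates of Lemmas~\ref{P3.2}--\ref{P3.3} with the product law \eqref{uv3} replacing \eqref{uv2}, and close by smallness of $E_0+\delta E_0$. One simplification you are missing: the ``main obstacle'' you anticipate---redoing the high-frequency Lyapunov functional of Lemma~\ref{P3.4} with $\ell^\infty$ summation and adapted commutator bounds---is unnecessary. Because $\sigma_1<\tfrac{d}{2}-1$ is strictly subcritical, the frequency cutoff \eqref{lh} directly gives, for instance,
\[
\tfrac{1}{\varepsilon}\|\delta a\|_{\widetilde L^\infty_t(\dot B^{\sigma_1}_{2,\infty})}^{h,\varepsilon}
\lesssim \|(a^\varepsilon,a^*)\|_{\widetilde L^\infty_t(\dot B^{\frac{d}{2}}_{2,1})}^{h,\varepsilon}\lesssim E_0+\delta E_0,
\]
so all high-frequency $\dot B^{\sigma_1}_{2,\infty}$ norms are controlled for free by the critical estimates already established in Theorem~\ref{thm2} and Proposition~\ref{thm1}; no hyperbolic energy argument at index $\sigma_1$ is needed.

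There is, however, a genuine gap in your treatment of \eqref{improve}. You propose to bound $\|u^\varepsilon\cdot\nabla\delta a\|_{\widetilde L^1_t(\dot B^{\sigma_1-1}_{2,\infty})}$ via \eqref{uv3} with $\nabla\delta a\in\dot B^{\sigma_1-1}_{2,\infty}$ and $u^\varepsilon\in\dot B^{d/2}_{2,1}$, asserting that ``the sum of indices is still nonnegative when $\sigma_1\ge -d/2$''. This is an arithmetic slip: the sum is $\tfrac{d}{2}+(\sigma_1-1)$, which is nonnegative only when $\sigma_1\ge 1-\tfrac{d}{2}$. Your estimate therefore fails on $[-\tfrac{d}{2},\,1-\tfrac{d}{2})$; for $d=3$ this is the \emph{entire} admissible range $[-\tfrac32,-\tfrac12)$. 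The missing idea is to first rewrite the $\delta a$ equation in divergence form, using $\operatorname{div}u^\varepsilon=\operatorname{div}u^*=0$:
\[
\partial_t(\delta a)-\Delta(\delta a)=\operatorname{div}\Big(-u^\varepsilon\otimes\delta a-\delta u\otimes a^*+\tfrac{1}{\varepsilon}(1+a^\varepsilon)(\mathcal Z^\varepsilon+\mathcal R^\varepsilon)\Big).
\]
Applying Lemma~\ref{lemma33} at index $\sigma_1-1$ then only requires $\|u^\varepsilon\otimes\delta a\|_{\widetilde L^1_t(\dot B^{\sigma_1}_{2,\infty})}$, and now \eqref{uv3} with $s_1=\tfrac{d}{2}$, $s_2=\sigma_1$ yields exactly the constraint $\sigma_1\ge-\tfrac{d}{2}$. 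The resulting factor $\|\delta a\|_{\widetilde L^2_t(\dot B^{\sigma_1}_{2,\infty})}$ is interpolated between $\widetilde L^\infty_t(\dot B^{\sigma_1-1}_{2,\infty})$ and $\widetilde L^1_t(\dot B^{\sigma_1+1}_{2,\infty})$ and absorbed via smallness of $\|u^\varepsilon\|_{\widetilde L^2_t(\dot B^{d/2}_{2,1})}$. The hypothesis $\sigma_1<\tfrac{d}{2}-2$ enters separately, to interpolate $\|a^*\|_{\widetilde L^1_t(\dot B^{d/2}_{2,1})}$ between $\widetilde L^1_t(\dot B^{\sigma_1+2}_{2,\infty})$ and $\widetilde L^1_t(\dot B^{d/2+1}_{2,1})$.
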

		\begin{proof}
			We define the error energy functional 
			\begin{align}
				\delta\mathcal{X}_{\sigma_1}&= \frac{1}{\var} \|\delta a\|_{\widetilde{L}_t^{\infty}(\dot{B}^{\sigma_1}_{2,\infty})}+\frac{1}{\var}\|\delta a\|_{\widetilde{L}_t^1(\dot{B}^{\sigma_1+2}_{2,\infty})}+\|w^\var\|_{\widetilde{L}^{\infty}_t(\dot{B}^{\sigma_1}_{2,\infty})}\nonumber\\
				&\quad+\frac{1}{\var}\|\delta u\|_{\widetilde{L}_t^{\infty}(\dot{B}^{\sigma_1}_{2,\infty})}+\frac{1}{\var}\| \delta u\|_{\widetilde{L}_t^1(\dot{B}^{\sigma_1+2}_{2,\infty})}+\frac{1}{\var^2}\big(\|\mathcal{Z}^\var\|_{\widetilde{L}_t^1(\dot{B}_{2,\infty}^{\sigma_1})}+\|\mathcal{R}^\var\|_{\widetilde{L}_t^1(\dot{B}_{2,\infty}^{\sigma_1})}\big).\label{X1}
			\end{align}  
		%	In what follows, our task is to prove the uniform boundedness of $\delta\mathcal{X}_{\sigma_1}$ with respect to small $\var$.        
Recall that $\delta a(0,x)=\delta u(0,x)=0$. Following the same line as in the proofs of Lemma \ref{P3.2}-\ref{P3.3} (replacing the product law \eqref{uv2} with \eqref{uv3}), we establish the following estimates
\begin{align}
& \frac{1}{\varepsilon}\|\delta u\|_{{\widetilde{L}_t^\infty(\dot{B}_{2,\infty}^{\sigma_1})}}+\frac{1}{\varepsilon}
			\|\delta u\|_{{\widetilde{L}_t^1(\dot{B}_{2,\infty}^{\sigma_1+2})}}+\|\mathcal{P}w^\varepsilon\|_{\widetilde{L}_t^\infty(\dot{B}_{2,\infty}^{\sigma_1})}
			\nonumber\\
			&\quad+\|\mathcal{P}w^\varepsilon\|_{\widetilde{L}_t^\infty(\dot{B}_{2,\infty}^{\sigma_1+1})}^{\ell,\var}+\frac{1}{\var}\|\mathcal{P}w^\varepsilon\|_{{\widetilde{L}^2_t(\dot{B}_{2,\infty}^{\sigma_1})}}^{\ell,\varepsilon}+\frac{1}{\varepsilon}
			\|\mathcal{P}w^\varepsilon\|_{{\widetilde{L}_t^1(\dot{B}_{2,\infty}^{\sigma_1+2})}}^{\ell,\varepsilon}\nonumber\\
			&\quad +\frac{1}{\varepsilon^2}\|\mathcal{R}^\varepsilon\|_{{\widetilde{L}_t^1(\dot{B}_{2,\infty}^{\sigma_1})}}+\frac{1}{\varepsilon^2}\|\mathcal{R}^\varepsilon\|_{{\widetilde{L}_t^1(\dot{B}_{2,\infty}^{\sigma_1+1})}}^{\ell,\varepsilon}\nonumber\\
                &\lesssim \|w_0^\var\|_{\dot{B}^{\sigma_1}_{2,\infty}}+E_0+\delta E_0+(E_0+\delta E_0)\delta\mathcal{X}_{\sigma_1}\nonumber\\
                &\quad+ \|a^*\|_{\widetilde{L}_t^\infty( {\dot{B}^{\sigma_1}_{2,\infty}})\cap \widetilde{L}_t^1( \dot{B}_{2,\infty}^{\sigma_1})}+\|u^*\|_{\widetilde{L}_t^\infty(\dot{B}_{2,\infty}^{\sigma_1})\cap \widetilde{L}_t^1(\dot{B}_{2,\infty}^{\sigma_1+2})},\label{sigma1:es}
\end{align}
and
			\begin{align}
					&\frac{1}{\varepsilon}\|\delta a\|_{\widetilde{L}_t^\infty(\dot{B}_{2,\infty}^{\sigma_1})}^{\ell,\varepsilon}
					+\frac{1}{\varepsilon}\|\delta a\|_{\widetilde{L}_t^1(\dot{B}_{2,\infty}^{\sigma_1+2})}^{\ell,\varepsilon}+\frac{1}{\varepsilon}\|\partial_t \delta a\|_{\widetilde{L}_t^1(\dot{B}_{2,\infty}^{\sigma_1})}^{\ell,\varepsilon}\nonumber\\
                    &\quad
					+\|\mathcal{P}^{\top}w^\varepsilon\|_{\widetilde{L}_t^\infty(\dot{B}_{2,\infty}^{\sigma_1+1})}^{\ell,\varepsilon}+\frac{1}{\var}\|\mathcal{P}^{\top}w^\varepsilon\|_{\widetilde{L}_t^1(\dot{B}_{2,\infty}^{\sigma_1+2})}^{\ell,\varepsilon}+	\frac{1}{\varepsilon}\|\mathcal{P}^\top w^\varepsilon\|_{\widetilde{L}_t^2(\dot{B}_{2,\infty}^{\sigma_1+1})}^{\ell,\varepsilon}+\frac{1}{\varepsilon^2}\|\mathcal{Z}^\varepsilon\|_{\widetilde{L}_t^1(\dot{B}_{2,\infty}^{\sigma_1+1})}^{\ell,\varepsilon}\nonumber\\
					&\lesssim E_0+\delta E_0+\|w_0^\var\|_{\dot{B}^{\sigma_1}_{2,\infty}}+(E_0+\delta E_0)\delta \mathcal{X}_{\sigma_1}\nonumber\\
                   &\quad + \|a^*\|_{\widetilde{L}_t^\infty( {\dot{B}^{\sigma_1}_{2,\infty}})\cap \widetilde{L}_t^1( \dot{B}_{2,\infty}^{\sigma_1})}+\|u^*\|_{\widetilde{L}_t^\infty(\dot{B}_{2,\infty}^{\sigma_1})\cap \widetilde{L}_t^1(\dot{B}_{2,\infty}^{\sigma_1+2})}.\label{sigma1:es1}
				\end{align}
%Since \eqref{sigma1:es} can be shown by performing similar computations to those in Lemma \ref{P3.2}-\ref{P3.3}, we omit the details for simplicity. 

Then, due to $\sigma_1<\frac{d}{2}-1$, from the uniform estimates \eqref{r0} and \eqref{r1} as well as the high-frequency cutoff property, it holds
\begin{align}\label{vvv3}
\frac{1}{\var} \|\delta a\|_{\widetilde{L}_t^{\infty}(\dot{B}^{\sigma_1}_{2,\infty})}^{h,\var}+\frac{1}{\var}\|\delta a\|_{\widetilde{L}_t^1(\dot{B}^{\sigma_1+2}_{2,\infty})}^{h,\var}&\lesssim \|(a^\var,a^*)\|_{\widetilde{L}_t^{\infty}(\dot{B}^{\frac{d}{2}}_{2,1})}^{h,\var}+\|a^*\|_{\widetilde{L}_t^{1}(\dot{B}^{\frac{d}{2}+2}_{2,1})}^{h,\var}+\frac{1}{\var}\| a^\var\|_{\widetilde{L}_t^1(\dot{B}^{\frac{d}{2}+1}_{2,1})}^{h,\var}\nonumber\\
&\lesssim E_0+\delta E_0.
\end{align}

We further establish some additional estimates of $\mathcal{Z}^\var$. In accordance with Lemma \ref{maximaldamped} applied to $\eqref{A6}_2$ and performing the usual product laws as well as interpolation inequalities, we can derive
\begin{align}
&\|\mathcal{Z}^\var\|_{\widetilde{L}^{\infty}_t(\dot{B}^{\sigma_1}_{2,\infty})}+\frac{1}{\var^2} \|\mathcal{Z}^\var\|_{\widetilde{L}^{1}_t(\dot{B}^{\sigma_1}_{2,\infty})}\nonumber\\
&\lesssim \|\mathcal{Z}_0^\var\|_{\dot{B}^{\sigma_1}_{2,\infty}}+\|\nabla \partial_t \tilde{h}(a^\var)\|_{\widetilde{L}^{1}_t(\dot{B}^{\sigma_1}_{2,\infty})}+\frac{1}{\var}\|w^\var\cdot \nabla w^\var\|_{\widetilde{L}^{1}_t(\dot{B}^{\sigma_1}_{2,\infty})} \nonumber\\
&\lesssim \|w_0^\var\|_{\dot{B}^{\sigma_1}_{2,\infty}}+\|a_0^\var\|_{\dot{B}^{\sigma_1+1}_{2,\infty}}+ \|(\partial_t \delta a,\partial_t a^*)\|_{\widetilde{L}^1_t(\dot{B}^{\sigma_1+1}_{2,\infty})}+\frac{1}{\var}\|w^\var\|_{\widetilde{L}^2_t(\dot{B}^{\sigma_1+1}_{2,\infty})}\|w^\var\|_{\widetilde{L}^2_t(\dot{B}^{\frac{d}{2}}_{2,1})} \nonumber\\
&\lesssim \|w_0^\var\|_{\dot{B}^{\sigma_1}_{2,\infty}}+E_0+\delta E_0+(E_0+\delta E_0)\delta \mathcal{X}_{\sigma_1}\nonumber \\
&\quad+ \|a^*\|_{\widetilde{L}_t^\infty( {\dot{B}^{\sigma_1}_{2,1}})\cap \widetilde{L}_t^1( \dot{B}_{2,\infty}^{\sigma_1})}+\|u^*\|_{\widetilde{L}_t^\infty(\dot{B}_{2,\infty}^{\sigma_1})\cap \widetilde{L}_t^1(\dot{B}_{2,\infty}^{\sigma_1+2})}+\|\partial_t a^*\|_{\widetilde{L}_t^1(\dot{B}_{2,\infty}^{\sigma_1})}.\label{47}
\end{align}

Combining the estimates \eqref{sigma1:es}-\eqref{47}, we obtain
\begin{align}
\delta \mathcal{X}_{\sigma_1}&\lesssim \|w_0^\var\|_{\dot{B}^{\sigma_1}_{2,\infty}}+E_0+\delta E_0+(E_0+\delta E_0)\delta \mathcal{X}_{\sigma_1}\nonumber \\
&\quad+ \|a^*\|_{\widetilde{L}_t^\infty( {\dot{B}^{\sigma_1}_{2,1}})\cap \widetilde{L}_t^1( \dot{B}_{2,\infty}^{\sigma_1})}+\|u^*\|_{\widetilde{L}_t^\infty(\dot{B}_{2,\infty}^{\sigma_1})\cap \widetilde{L}_t^1(\dot{B}_{2,\infty}^{\sigma_1+2})}+\|\partial_t a^*\|_{\widetilde{L}_t^1(\dot{B}_{2,\infty}^{\sigma_1})}.
\end{align}
Due to \eqref{AP9} and the fact that $E_0+\delta E_0$ is small enough, one arrives at the error estimate
\begin{align}
\delta \mathcal{X}_{\sigma_1}\lesssim 1.\label{deltaXboind}
\end{align}
This, together with \eqref{AP9}, yields the bound for the solution $(a^\var,w^\var,u^\var)$:
\begin{align}
\|(a^\var, w^\var, u^\var)\|_{\widetilde{L}_t^{\infty}(\dot{B}^{\sigma_1}_{2,\infty})}\lesssim \|(\delta a, w^\var, \delta u)\|_{\widetilde{L}_t^{\infty}(\dot{B}^{\sigma_1}_{2,\infty})}+\|(a^*,u^*)\|_{\widetilde{L}_t^{\infty}(\dot{B}^{\sigma_1}_{2,\infty})}\lesssim 1.
\end{align}
Consequently, \eqref{evolution} is proved.

        Furthermore, we are able to establish lower-order estimates of $\delta a$ if $-\frac{d}{2}\leq \sigma_1<\frac{d}{2}-2$. Indeed, using $\div u^\var=\div u^*=0$, one can rewrite the equation of $\delta a$ by
			\begin{equation}
				\left\{
				\begin{aligned}\label{pgggh}
					&\partial_t (\delta a)-\Delta (\delta a)=\div \Big(-u^\varepsilon \otimes \delta a-\delta u \otimes a^*+\frac{1}{\varepsilon}(1+a^\varepsilon)(\mathcal{Z}^\var+\mathcal{R}^\var)\Big),\\
					&\delta a(0,x)=0.
				\end{aligned}
				\right.
			\end{equation}
			Then, applying Lemma \ref{lemma33} to \eqref{pgggh}, we have
			\begin{align}\label{ssfgggghh}
				&\frac{1}{\var}\|\delta a\|_{\widetilde{L}_t^{\infty}(\dot{B}^{\sigma_1-1}_{2,\infty})}+\frac{1}{\var}\|\delta a\|_{\widetilde{L}_t^1(\dot{B}^{\sigma_1+1}_{2,\infty})}\nonumber\\
				&\lesssim \frac{1}{\var}\|(u^\var \otimes \delta a, \delta u\otimes a^*)\|_{\widetilde{L}_t^1(\dot{B}^{\sigma_1}_{2,\infty})}+\frac{1}{\var^2}\|(1+a^\varepsilon)(\mathcal{Z}^\var+\mathcal{R}^\var)\|_{\widetilde{L}_t^1(\dot{B}^{\sigma_1}_{2,\infty})}\nonumber\\
				&\lesssim \frac{1}{\var}\|u^\var \|_{\widetilde{L}_t^2(\dot{B}^{\frac{d}{2}}_{2,1})}\|\delta a\|_{\widetilde{L}_t^2(\dot{B}^{\sigma_1}_{2,\infty})}+\frac{1}{\var}\|\delta u\|_{\widetilde{L}_t^\infty(\dot{B}^{\sigma_1}_{2,\infty})} \|a^*\|_{\widetilde{L}_t^1(\dot{B}^{\frac{d}{2}}_{2,1})}\nonumber\\
				&\quad+\frac{1}{\var^2}(1+\|a^\var\|_{\widetilde{L}_t^{\infty}(\dot{B}^{\frac{d}{2}}_{2,1})})\|(\mathcal{Z}^\var, \mathcal{R}^\var)\|_{\widetilde{L}_t^1(\dot{B}^{\sigma_1}_{2,\infty})}\nonumber\\
				&\lesssim \frac{1}{\var}\|u^\var \|_{\widetilde{L}_t^2(\dot{B}^{\frac{d}{2}}_{2,1})}(\|\delta a\|_{\widetilde{L}_t^\infty(\dot{B}^{\sigma_1-1}_{2,\infty})}+\|\delta a\|_{\widetilde{L}_t^1(\dot{B}^{\sigma_1+1}_{2,\infty})})+\frac{1}{\var}\|\delta u\|_{\widetilde{L}_t^\infty(\dot{B}^{\sigma_1}_{2,\infty})} \|a^*\|_{\widetilde{L}_t^1(\dot{B}^{\frac{d}{2}}_{2,1})}\nonumber\\
				&\quad+\frac{1}{\var^2}(1+\|a^\var\|_{\widetilde{L}_t^{\infty}(\dot{B}^{\frac{d}{2}}_{2,1})})\|(\mathcal{Z}^\var, \mathcal{R}^\var)\|_{\widetilde{L}_t^1(\dot{B}^{\sigma_1}_{2,\infty})}.
			\end{align}
			Since $\frac{d}{2}> \sigma_1+2$ due to $-\frac{d}{2}\leq \sigma_1< \frac{d}{2}-2$, we discover by interpolation that 
			\begin{align*}
				%&\|u^\var\|_{\widetilde{L}_t^2(\dot{B}^{\frac{d}{2}}_{2,1})}\lesssim \|u^\var\|_{\widetilde{L}_t^\infty(\dot{B}_{2,1}^{\frac{d}{2}-1})}+\|u^\var\|_{\widetilde{L}_t^1(\dot{B}^{\frac{d}{2}+1}_{2,1})}\lesssim E_0+\delta E_0,\\
				&\|a^*\|_{\widetilde{L}_t^1(\dot{B}^{\frac{d}{2}}_{2,1})}\lesssim \|a^*\|_{\widetilde{L}_t^1(\dot{B}^{\sigma_1+2}_{2,\infty})}
				+\|a^*\|_{\widetilde{L}_t^1(\dot{B}^{\frac{d}{2}+1}_{2,1})}\lesssim E_0+1\lesssim 1.
				%&\|a^\var\|_{\widetilde{L}_t^\infty(\dot{B}^{\frac{d}{2}}_{2,1})}\lesssim \frac{1}{\var}\|\delta a\|_{\widetilde{L}_t^\infty(\dot{B}^{\frac{d}{2}-1}_{2,1})}^{\ell,\var}	+\|a^*\|_{\widetilde{L}_t^\infty(\dot{B}^{\frac{d}{2}}_{2,1})}+\var\|a^\var\|_{\widetilde{L}_t^\infty(\dot{B}_{2,1}^{\frac{d}{2}+1})}^{h,\var}\lesssim E_0+1\lesssim 1.
			\end{align*}
%A difficulty for bounding \eqref{ssfgggghh} lies in the fact that $\mathcal{R}^\var$ is only shown to be bounded in $\widetilde{L}^1_t(\dot{B}^{\sigma_1+1}_{2,\infty})$ (see \eqref{deltaXboind} and \eqref{deltaXboind}), which is weaker than the required $\widetilde{L}^1_t(\dot{B}^{\sigma_1}_{2,\infty})$-regularity. To overcome this, we improve the dissipation estimate of $\mathcal{Z}^\var$ when $\mathcal{P}^{\top}w_0^\var=-\frac{\var}{\rho_0^\var}\nabla \rho_0^\var$, i.e., $\mathcal{Z}^\var(0,x)=0$. 

			Consequently, making use of the known estimates \eqref{r1} and \eqref{evolution}, we know that 
			\begin{align}
				\|\delta a\|_{\widetilde{L}_t^{\infty}(\dot{B}^{\sigma_1-1}_{2,\infty})}+\|\delta a\|_{\widetilde{L}_t^1(\dot{B}^{\sigma_1+1}_{2,\infty})}\lesssim \var.
			\end{align}
			Thus, we complete the proof of this proposition.		
		\end{proof}
		\subsection{Uniform time-weighted estimates}
		With the uniform boundedness of the solution in $\dot{B}_{2,\infty}^{\sigma_1}$, we are in a position to obtain the optimal time-decay rates of  the solution.
		
		At the beginning, we define the time-weighted functional as follows
		\begin{align}
			\delta\mathcal{D}(t)=&
			\|\tau^M \delta a\|_{\widetilde{L}_t^\infty(\dot{B}^{\frac{d}{2}}_{2,1})\cap \widetilde{L}_t^1(\dot{B}^{\frac{d}{2}+2}_{2,1})}^{\ell,\var}+\frac{1}{\var}\|\tau^M \delta u\|_{\widetilde{L}_t^\infty(\dot{B}^{\frac{d}{2}}_{2,1})\cap \widetilde{L}_t^1(\dot{B}^{\frac{d}{2}+2}_{2,1})}\nonumber\\
			&+\|\tau^M w^\var\|_{\widetilde{L}_t^\infty( \dot{B}_{2,1}^{\frac{d}{2}})}+\frac{1}{\var}
			\|\tau^M w^\var\|_{\widetilde{L}_t^2(\dot{B}_{2,1}^{\frac{d}{2}})}\nonumber\\
            &
			+\var \|\tau^M(a^\var,w^\var)\|_{\widetilde{L}_t^\infty(\dot{B}_{2,1}^{\frac{d}{2}+1})}^{h,\var}
			+\frac{1}{\var}\|\tau^M(a^\var,w^\var)\|_{\widetilde{L}_t^1(\dot{B}_{2,1}^{\frac{d}{2}+1})}^{h,\var}\nonumber\\
			&+\|\tau^M(\mathcal{Z}^\var,\mathcal{R}^\var)\|_{\widetilde{L}_t^{\infty}(\dot{B}_{2,1}^{\frac{d}{2}})}+\frac{1}{\var^2}\|\tau^M(\mathcal{Z}^\var,\mathcal{R}^\var)\|_{\widetilde{L}_t^1(\dot{B}_{2,1}^{\frac{d}{2}})}.\label{deltaDY}
		\end{align}
		
		We establish the time-weighted estimates as follows.
		
		\begin{prop}\label{prop41}
			For any given $-\frac{d}{2}\leq \sigma_1<\frac{d}{2}-1$ and 
			$M>\max\{1,\frac{1}{2}(\frac{d}{2}-\sigma_1)\}$, there holds
			\begin{align}\label{timeweighted}
				\delta\mathcal{D}(t)\lesssim \Big( E_0+\delta E_0+\frac{1}{\var}\|\delta a\|_{\widetilde{L}_t^{\infty}(\dot{B}^{\sigma_1}_{2,\infty})}+\frac{1}{\var}	\| \delta u\|_{\widetilde{L}_t^{\infty}(\dot{B}^{\sigma_1}_{2,\infty})}\Big)t^{M-\frac{1}{2}(\frac{d}{2}-\sigma_1)},\quad t>0.
			\end{align}
		\end{prop}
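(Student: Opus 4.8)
The plan is to rerun, with a time weight, exactly the three-block scheme that proved Theorem~\ref{thm2}: heat maximal regularity (Lemma~\ref{lemma33}) for the $(\delta a,\delta u)$-system~\eqref{main6}, damped maximal regularity (Lemma~\ref{maximaldamped}) for the $(\mathcal Z^\var,\mathcal R^\var)$-system~\eqref{T8}, and the high-frequency Lyapunov functional of Lemma~\ref{P3.4} for the symmetrized system~\eqref{main8}. Multiplying \eqref{main6} by $\tau^M$ turns it into a heat system for $(\tau^M\delta a,\tau^M\delta u)$ with the same nonlinearities (now carrying a $\tau^M$) and the extra source terms $M\tau^{M-1}\delta a$, $M\tau^{M-1}\delta u$; multiplying \eqref{T8} by $\tau^M$ produces the sources $M\tau^{M-1}\mathcal Z^\var$, $M\tau^{M-1}\mathcal R^\var$; and weighting the block functional $L_j^{h,\var}$ from the proof of Lemma~\ref{P3.4} gives
\[
\tfrac{d}{dt}\bigl(\tau^M L_j^{h,\var}\bigr)+\tfrac{c}{\var^2}\,\tau^M L_j^{h,\var}\lesssim M\tau^{M-1}L_j^{h,\var}+\tau^M\bigl(\text{commutators}+u^\var\text{-terms}\bigr).
\]
Since $\delta a(0,x)=\delta u(0,x)=0$ and $M\ge1$, the weighted differences and the weighted $w^\var$, $(a^\var,w^\var)^{h,\var}$ vanish at $\tau=0$, so only $E_0+\delta E_0$ survives from the data.

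\textbf{Nonlinear and coupling terms.} Reapplying Lemmas~\ref{lemma33}, \ref{maximaldamped} and the argument of Lemma~\ref{P3.4} with the weight, and estimating the weighted products $\tau^M(u^\var\!\cdot\!\nabla\delta a)$, $\tau^M(\delta u\!\cdot\!\nabla a^*)$, $\tfrac1\var\tau^M(1+a^\var)(\mathcal Z^\var+\mathcal R^\var)$, $\tfrac1\var\tau^M w^\var\!\cdot\!\nabla w^\var$, $\tau^M a^\var(\mathcal Z^\var+\mathcal R^\var)$, $\mu\tau^M\Delta u^\var$, etc., by the product and composition estimates of Lemma~\ref{lemma23} (putting the full factor $\tau^M$ on whichever factor is measured by $\delta\mathcal D(t)$ and bounding the remaining factor by the \emph{unweighted} uniform estimates \eqref{r0}--\eqref{r1} and the limit decay \eqref{decayNSKS}), one controls every nonlinearity by $(E_0+\delta E_0+\var)\,\delta\mathcal D(t)$: each product is of the form (weighted quantity)\,$\times$\,(small unweighted quantity), hence linear in $\delta\mathcal D(t)$.

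\textbf{Weight-derivative terms — the crux.} The terms $M\tau^{M-1}(\delta a,\delta u,\mathcal Z^\var,\mathcal R^\var,(a^\var,w^\var)^{h,\var})$ are where the growth factor $t^{M-\frac12(\frac d2-\sigma_1)}$ enters. Each is handled by interpolating, via \eqref{inter}, the relevant Besov norm between the uniform low-regularity bound \eqref{evolution} (so that $\tfrac1\var\|\delta a\|_{\widetilde L^\infty_t(\dot B^{\sigma_1}_{2,\infty})}$ and $\tfrac1\var\|\delta u\|_{\widetilde L^\infty_t(\dot B^{\sigma_1}_{2,\infty})}$ appear, multiplied by their $\var$ and left explicit on the right of \eqref{timeweighted}) and the time-weighted high-regularity norms already contained in $\delta\mathcal D(t)$ itself; then Hölder in $\tau$ redistributes the weight. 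Careful bookkeeping of the powers of $\tau$ shows that each such term is bounded by $\bigl(E_0+\delta E_0+\tfrac1\var\|\delta a\|_{\widetilde L^\infty_t(\dot B^{\sigma_1}_{2,\infty})}+\tfrac1\var\|\delta u\|_{\widetilde L^\infty_t(\dot B^{\sigma_1}_{2,\infty})}\bigr)t^{M-\frac12(\frac d2-\sigma_1)}$ plus a sublinear-in-$\delta\mathcal D(t)$ remainder, the assumptions $M>1$ and $M>\frac12(\frac d2-\sigma_1)$ being precisely what makes the $\tau$-integrals converge with the stated exponent. Collecting everything yields
\[
\delta\mathcal D(t)\lesssim\Bigl(E_0+\delta E_0+\tfrac1\var\|\delta a\|_{\widetilde L^\infty_t(\dot B^{\sigma_1}_{2,\infty})}+\tfrac1\var\|\delta u\|_{\widetilde L^\infty_t(\dot B^{\sigma_1}_{2,\infty})}\Bigr)t^{M-\frac12(\frac d2-\sigma_1)}+\bigl(E_0+\delta E_0+\var\bigr)\delta\mathcal D(t),
\]
and smallness of $E_0+\delta E_0$ and $\var$, together with a continuity argument (and Young's inequality for the sublinear remainder), absorbs the last term and closes \eqref{timeweighted}.

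\textbf{Main obstacle.} The delicate point is the high-frequency block: one cannot weight the $(\delta a,\delta u)$-equations there (the hyperbolic symmetrization of \eqref{main8} loses a derivative), so the weight must be placed on the Lyapunov functional $L_j^{h,\var}$ directly, its differential inequality re-derived with the $\tau^M$ factor, and then $\|\tau^{M-1}(a^\var,w^\var)^{h,\var}\|$ estimated by interpolating the weighted $\dot B^{\frac d2+1}_{2,1}$-norm in $\delta\mathcal D(t)$ against the uniform $\dot B^{\sigma_1}_{2,\infty}$-bound of $(a^\var,w^\var)$, all while keeping the commutator bounds of Lemma~\ref{lemma25} and the $\var$-weights organized so that no power of $\var^{-1}$ is created. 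Coordinating the three weighted building blocks — in particular matching the gain $\tau^M$ with the $\var^{-2}$-damping of $(\mathcal Z^\var,\mathcal R^\var)$ and with the $\var^{-1}$ factors in the coupling — into a single closed inequality with the correct power of $t$ \emph{uniformly in} $\var$ is the hardest part; the analogous ladder with the functional $\delta\mathcal D^*(t)$ of \eqref{DT*} and the bound \eqref{improve} then gives the refined density rate.
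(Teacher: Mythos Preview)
Your proposal is correct and follows essentially the same approach as the paper: the paper's proof is organized into three steps (weighted estimates for $(\delta u,\mathcal Pw^\var,\mathcal R^\var)$, low-frequency weighted estimates for $(\delta a,\mathcal P^\top w^\var,\mathcal Z^\var)$, and high-frequency weighted estimates via the time-weighted Lyapunov functional $t^{2M}L_j^{h,\var}$), handles the $\tau^{M-1}$ source terms by the interpolation--H\"older splitting you describe, bounds the nonlinearities by $(E_0+\delta E_0)\delta\mathcal D(t)$ via the unweighted estimates \eqref{r0}--\eqref{r1}, and closes by absorption. One small point: for the damped and high-frequency blocks the paper does not actually interpolate in regularity against the $\dot B^{\sigma_1}_{2,\infty}$-bound, but rather splits the same norm as $\|\cdot\|^{\theta_1}\|\cdot\|^{1-\theta_1}$ and uses H\"older in $\tau$ to separate the $\widetilde L^\infty_t$ energy from the $\widetilde L^1_t$ dissipation (both at the same regularity level); your regularity-interpolation variant also works, but the paper's trick is slightly simpler there.
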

		\begin{proof}
			We carry out similar computations in Section \ref{S4} and use interpolation tricks to derive the time-weighted estimate \eqref{timeweighted}. The main difference is that we establish the $\dot{B}^{d/2}_{2,1}$-regularity instead of the $\dot{B}^{d/2-1}_{2,1}$-regularity for $\delta a$ in low frequencies and  $\delta u$. The proof will be divided into the following three steps.

\noindent
		 {\textbf{Step 1: Estimates of $(\delta u,\mathcal{P}w^\var,\mathcal{R}^\var)$.}}  Multiplying \eqref{main7} by $t^M$, we obtain
			\begin{equation}\label{416}
				\left\{
				\begin{aligned}
					&\partial_t(t^M \delta u)-\mu\Delta (t^M \delta u)= Mt^{M-1}\delta u+\frac{1}{\var}t^M\mathcal{P}(a^\var(\mathcal{Z}^\var+\mathcal{R}^\var))\\
					&\quad-t^M\mathcal{P}(u^\var\cdot\nabla \delta u+\delta u\cdot\nabla u^*)+\frac{1}{\var}t^M\mathcal{R}^\var,\\
					&\partial_t(t^M\mathcal{R}^\var)+(1+\frac{1}{\var^2})(t^M\mathcal{R}^\var)=Mt^{M-1}\mathcal{R}^\var-\var\mu\Delta(t^M\delta u)-\var \mu\Delta(t^M u^*)\\
					&\qquad+\var t^M\mathcal{P}(u^\var\cdot\nabla u^\var)-t^M\mathcal{P}(a^\var(\mathcal{Z}^\var+\mathcal{R}^\var))-\frac{1}{\var}t^M\mathcal{P}(w^\var\cdot\nabla w^\var),\\
					&(t^M\delta u,t^M \mathcal{R}^\var)(x,0)=(0,0).
				\end{aligned}
				\right.
			\end{equation}
			Then, making use of Lemma \ref{lemma33} for $\eqref{416}$, we have 
			\begin{align*}
				&\frac{1}{\var}\|\tau^M\delta u\|_{\widetilde{L}_t^\infty(\dot{B}_{2,1}^{\frac{d}{2}})}+\frac{1}{\var}\|\tau^M\delta u\|_{ \widetilde{L}_t^1(\dot{B}_{2,1}^{\frac{d}{2}+2})}\nonumber\\
				& \lesssim \frac{1}{\var}\|\tau^{M-1}\delta u\|_{\widetilde{L}_t^1(\dot{B}_{2,1}^{\frac{d}{2}})}+\frac{1}{\var^2}\|\tau^M\mathcal{P}(a^\var(\mathcal{Z}^\var+\mathcal{R}^\var))\|_{\widetilde{L}_t^1(\dot{B}_{2,1}^{\frac{d}{2}})}\\
				&\quad+\frac{1}{\var}\|\tau^M\mathcal{P}(u^\var\cdot\nabla \delta u+\delta u\cdot\nabla u^*)\|_{\widetilde{L}_t^1(\dot{B}_{2,1}^{\frac{d}{2}})}+\frac{1}{\var^2} \|\tau^M\mathcal{R}^\var\|_{\widetilde{L}_t^1(\dot{B}_{2,1}^{\frac{d}{2}})},
			\end{align*}
			and
			\begin{align*}
&\|\tau^M\mathcal{R}^\var\|_{\widetilde{L}_t^\infty(\dot{B}_{2,1}^{\frac{d}{2}})}
				+\frac{1}{\var^2}\|\tau^M\mathcal{R}^\var\|_{\widetilde{L}_t^1(\dot{B}_{2,1}^{\frac{d}{2}})}\nonumber\\
				&\lesssim \|\tau^{M-1}\mathcal{R}^\var\|_{\widetilde{L}_t^1(\dot{B}_{2,1}^{\frac{d}{2}})}+\var \|\tau^M\delta u\|_{\widetilde{L}_t^1(\dot{B}_{2,1}^{\frac{d}{2}+2})}+\var \|\tau^M u^*\|_{\widetilde{L}_t^1(\dot{B}_{2,1}^{\frac{d}{2}+2})}\nonumber\\
				&\quad+\var\|\tau^M\mathcal{P}(u^\var\cdot\nabla u^\var)\|_{\widetilde{L}_t^1(\dot{B}_{2,1}^{\frac{d}{2}})}+\|\tau^M\mathcal{P}(a^\var(\mathcal{Z}^\var+\mathcal{R}^\var))\|_{\widetilde{L}_t^1(\dot{B}_{2,1}^{\frac{d}{2}})}+\frac{1}{\var}\|\tau^M\mathcal{P}(w^\var\cdot\nabla w^\var)\|_{\widetilde{L}_t^1(\dot{B}_{2,1}^{\frac{d}{2}})}.
			\end{align*}
			Due to the two inequalities mentioned above and the smallness of $\var$, one has 
			\begin{align}\label{timeener}
				&\frac{1}{\var}\|\tau^M\delta u\|_{\widetilde{L}_t^\infty(\dot{B}_{2,1}^{\frac{d}{2}})}+\frac{1}{\var}\|\tau^M\delta u\|_{ \widetilde{L}_t^1(\dot{B}_{2,1}^{\frac{d}{2}+2})}+\|\tau^M\mathcal{R}^\var\|_{\widetilde{L}_t^\infty(\dot{B}_{2,1}^{\frac{d}{2}})}
				+\frac{1}{\var^2}\|\tau^M\mathcal{R}^\var\|_{\widetilde{L}_t^1(\dot{B}_{2,1}^{\frac{d}{2}})}\nonumber\\
				&\lesssim \var \|\tau^M u^*\|_{\widetilde{L}_t^1(\dot{B}_{2,1}^{\frac{d}{2}+2})}+\frac{1}{\var}\|\tau^{M-1}\delta u\|_{\widetilde{L}_t^1(\dot{B}_{2,1}^{\frac{d}{2}})}+\|\tau^{M-1}\mathcal{R}^\var\|_{\widetilde{L}_t^1(\dot{B}_{2,1}^{\frac{d}{2}})}\nonumber\\
				&\quad+\frac{1}{\var}\|\tau^M\mathcal{P}(u^\var\cdot\nabla \delta u,\delta u\cdot\nabla u^*)\|_{\widetilde{L}_t^1(\dot{B}_{2,1}^{\frac{d}{2}})}+\var\|\tau^M\mathcal{P}(u^\var\cdot\nabla u^\var)\|_{\widetilde{L}_t^1(\dot{B}_{2,1}^{\frac{d}{2}})}\nonumber\\
				&\quad+\frac{1}{\var^2}\|\tau^M\mathcal{P}(a^\var(\mathcal{Z}^\var+\mathcal{R}^\var))\|_{\widetilde{L}_t^1(\dot{B}_{2,1}^{\frac{d}{2}})}+\frac{1}{\var}\|\tau^M\mathcal{P}(w^\var\cdot\nabla w^\var)\|_{\widetilde{L}_t^1(\dot{B}_{2,1}^{\frac{d}{2}})}.
			\end{align}
			According to Appendix A, we have
			\begin{align*}
				\var\|\tau^M u^*\|_{\widetilde{L}_t^1(\dot{B}_{2,1}^{\frac{d}{2}+2})}\lesssim t^{M-\frac{1}{2}(\frac{d}{2}-\sigma_1)}(\|(a^*, u^*)\|_{\widetilde{L}_t^\infty(\dot{B}_{2,\infty}^{\sigma_1})}
				+E_0).
			\end{align*}
			Moreover, let  $\theta_1\in (0,1)$ be given by $\frac{d}{2}=\theta_1\sigma_1+(1-\theta_1)(\frac{d}{2}+2)$, and $\eta_1>0$ be a constant to be chosen later. By the interpolation inequality, we obtain 
			\begin{align*}
				\frac{1}{\var}\|\tau^{M-1}\delta u\|_{\widetilde{L}_t^1(\dot{B}_{2,1}^{\frac{d}{2}})}
				&\lesssim \frac{1}{\var}\int_0^t \tau^{M-1}\|\delta u\|_{\dot{B}_{2,\infty}^{\sigma_1}}^{\theta_1}\|\delta u\|_{\dot{B}_{2,1}^{\frac{d}{2}+2}}^{1-\theta_1}d\tau\\
				&\lesssim \frac{1}{\var}\|\delta u\|_{\widetilde{L}_t^\infty(\dot{B}_{2,\infty}^{\sigma_1})}^{\theta_1}
				\Big(\int_0^t \tau^{\frac{M-1-(1-\theta_1)M}{\theta_1}}d\tau\Big)^{\theta_1}\Big(\int_0^t \tau^M\|\delta u\|_{\dot{B}_{2,1}^{\frac{d}{2}+2}}d\tau\Big)^{1-\theta_1}\\
				&\lesssim \frac{1}{\var}\Big(t^{M-\frac{1}{2}(\frac{d}{2}-\sigma_1)}\|\delta u\|_{\widetilde{L}_t^\infty(\dot{B}_{2,\infty}^{\sigma_1})}\Big)^{\theta_1}\|\tau^M \delta u\|_{\widetilde{L}_t^1(\dot{B}^{\frac{d}{2}+2}_{2,1})}^{1-\theta_1}\\
				&\lesssim \eta_1^{-1}t^{M-\frac{1}{2}(\frac{d}{2}-\sigma_1)}\frac{1}{\var}\|\delta u\|_{\widetilde{L}_t^\infty(\dot{B}_{2,\infty}^{\sigma_1})}+\frac{\eta_1}{\var}\|\tau^M\delta u\|_{\widetilde{L}_t^1(\dot{B}_{2,1}^{\frac{d}{2}+2})}.
			\end{align*}
			%where $\theta_1\in (0,1)$ is given by $\frac{d}{2}=\theta_1\sigma_1+(1-\theta_1)(\frac{d}{2}+2)$, and $\eta_1>0$ is a constant to be chosen later.
			Similarly, we have 
			\begin{align*}
				\frac{1}{\var}\|\tau^{M-1}\mathcal{R}^\var\|_{\widetilde{L}_t^1(\dot{B}_{2,1}^{\frac{d}{2}})}
				&\lesssim \frac{1}{\var}\int_0^t \tau^{M-1}\| \mathcal{R}^\var\|_{\dot{B}_{2,1}^{\frac{d}{2}}}^{\theta_1}\| \mathcal{R}^\var\|_{\dot{B}_{2,1}^{\frac{d}{2}}}^{1-\theta_1}d\tau\\
				%&\lesssim \frac{1}{\var}\| \mathcal{R}^\var\|_{\widetilde{L}_t^\infty(\dot{B}_{2,1}^{\frac{d}{2}})}^{\theta_1}
				%\Big(\int_0^t \tau^{\frac{M-1-(1-\theta_1)M}{\theta_1}}d\tau\Big)^{\theta_1}\Big(\int_0^t \tau^M\|\mathcal{R}^\var\|_{\dot{B}_{2,1}^{\frac{d}{2}}}d\tau\Big)^{1-\theta_1}\\
				%&\lesssim \frac{1}{\var}\Big(t^{M-\frac{1}{2}(\frac{d}{2}-\sigma_1)}\|\mathcal{R} ^\var\|_{\widetilde{L}_t^\infty(\dot{B}_{2,1}^{\frac{d}{2}})}\Big)^{\theta_1}\|\tau^M  \mathcal{R}^\var\|_{\widetilde{L}_t^1(\dot{B}^{\frac{d}{2}}_{2,1})}^{1-\theta_1}\\
				&\lesssim \eta_1^{-1}t^{M-\frac{1}{2}(\frac{d}{2}-\sigma_1)}\frac{1}{\var}\| \mathcal{R}^\var\|_{\widetilde{L}_t^\infty(\dot{B}_{2,1}^{\frac{d}{2}})}+\frac{\eta_1}{\var}\|\tau^M\mathcal{R}^\var\|_{\widetilde{L}_t^1(\dot{B}_{2,1}^{\frac{d}{2}})}.
			\end{align*}

			The nonlinear terms in \eqref{timeener} can be addressed as follows. First, one deduces from \eqref{r0}, \eqref{r1} and \eqref{uv2} that
			\begin{align*}
				&\frac{1}{\var}\|\tau^M(u^\var\cdot\nabla \delta u,\delta u\cdot\nabla u^*)\|_{\widetilde{L}_t^1(\dot{B}_{2,1}^{\frac{d}{2}})}\\
				&\lesssim \frac{1}{\var}\|\tau^M(u^\var\otimes\delta u)\|_{\widetilde{L}_t^1(\dot{B}_{2,1}^{\frac{d}{2}+1})}+\frac{1}{\var}\|\tau^M(\delta u\otimes u^*)\|_{\widetilde{L}_t^1(\dot{B}_{2,1}^{\frac{d}{2}+1})}\\
				&\lesssim \|u^\var\|_{\widetilde{L}_t^2(\dot{B}_{2,1}^{\frac{d}{2}})}
				\frac{1}{\var}\|\tau^M\delta u\|_{\widetilde{L}_t^2(\dot{B}_{2,1}^{\frac{d}{2}+1})}
				+\|u^*\|_{\widetilde{L}_t^2(\dot{B}_{2,1}^{\frac{d}{2}})}
				\frac{1}{\var}\|\tau^M\delta u\|_{\widetilde{L}_t^2(\dot{B}_{2,1}^{\frac{d}{2}+1})}\\
				&\lesssim (E_0+\delta E_0) \delta\mathcal{D}(t).
			\end{align*}
      We also discover that       
			\begin{align*}
				\var \|\tau^M u^\var\cdot\nabla u^\var\|_{\widetilde{L}_t^1(\dot{B}_{2,1}^{\frac{d}{2}})} 
				&\lesssim \var \|\tau^M u^*\cdot\nabla u^*\|_{\widetilde{L}_t^1(\dot{B}_{2,1}^{\frac{d}{2}})}
				+\var \|\tau^M u^\var\cdot\nabla \delta u\|_{\widetilde{L}_t^1(\dot{B}_{2,1}^{\frac{d}{2}})}
				+\var\|\tau^M \delta u\cdot\nabla u^*\|_{\widetilde{L}_t^1(\dot{B}_{2,1}^{\frac{d}{2}})}\\ 
				&\lesssim \var \|\tau^M u^\var\otimes u^*\|_{\widetilde{L}_t^1(\dot{B}_{2,1}^{\frac{d}{2}+1})}
				+\var \|\tau^M u^\var\otimes\delta u\|_{\widetilde{L}_t^1(\dot{B}_{2,1}^{\frac{d}{2}+1})}
				+\var\|\tau^M \delta u\otimes u^*\|_{\widetilde{L}_t^1(\dot{B}_{2,1}^{\frac{d}{2}+1})}\\ 
				&\lesssim \|u^*\|_{\widetilde{L}_t^2(\dot{B}_{2,1}^{\frac{d}{2}+1})}\|\tau^M u^*\|_{\widetilde{L}_t^2(\dot{B}_{2,1}^{\frac{d}{2}})}
				+\var\|(u^\var,u^*)\|_{\widetilde{L}_t^2(\dot{B}_{2,1}^{\frac{d}{2}})}\|\tau^M\delta u\|_{\widetilde{L}_t^2(\dot{B}_{2,1}^{\frac{d}{2}+1})}\\
				&\lesssim (E_0+\delta E_0)t^{M-\frac{1}{2}(\frac{d}{2}-\sigma_1)}+(E_0+\delta E_0) \delta\mathcal{D}(t),
			\end{align*}
			and 
			\begin{align*}
				\frac{1}{\var} \|\tau^M w^\var\cdot\nabla w^\var\|_{\widetilde{L}_t^1(\dot{B}_{2,1}^{\frac{d}{2}})}\lesssim \frac{1}{\var} \|w^\var\|_{\widetilde{L}_t^2(\dot{B}^{\frac{d}{2}}_{2,1})}\|\tau^M w^\var\|_{\widetilde{L}_t^2(\dot{B}_{2,1}^{\frac{d}{2}+1})}
				\lesssim (E_0+\delta E_0)\delta\mathcal{D}(t).
			\end{align*}
			
			We now substitute the above estimates into \eqref{timeener} and choose a suitably small $\eta_1$ such that
			\begin{align}\label{time:1}
				&\frac{1}{\var}\|\tau^M\delta u\|_{\widetilde{L}_t^\infty(\dot{B}_{2,1}^{\frac{d}{2}})}+\frac{1}{\var}\|\tau^M\delta u\|_{\widetilde{L}_t^1(\dot{B}_{2,1}^{\frac{d}{2}+2})}
				+\|\tau^M\mathcal{R}^\var\|_{\widetilde{L}_t^\infty(\dot{B}_{2,1}^{\frac{d}{2}})}
				+\frac{1}{\var^2}\|\tau^M \mathcal{R}^\var\|_{\widetilde{L}_t^1(\dot{B}_{2,1}^{\frac{d}{2}})}\nonumber\\
				&\lesssim (\|(\delta u, a^*, u^*)\|_{\widetilde{L}_t^\infty(\dot{B}_{2,\infty}^{\sigma_1})}
				+E_0+\delta E_0)t^{M-\frac{1}{2}(\frac{d}{2}-\sigma_1)}
				+(E_0+\delta E_0)\delta\mathcal{D}(t)\nonumber\\
				&\lesssim (E_0+\delta E_0+1)t^{M-\frac{1}{2}(\frac{d}{2}-\sigma_1)}+(E_0+\delta E_0)\delta\mathcal{D}(t).
			\end{align}
			
				By \eqref{time:1} and $\mathcal{P}w^\var=\mathcal{R}^\var+\var u^\var$, we can recover the estimates of $\mathcal{P}w^\var$ as follows:
			\begin{align}\label{11}
				& \|\tau^M \mathcal{P}w^\var\|_{\widetilde{L}_t^\infty(\dot{B}_{2,1}^{\frac{d}{2}})}+\frac{1}{\var}\|\tau^M \mathcal{P}w^\var\|_{\widetilde{L}_t^2(\dot{B}_{2,1}^{\frac{d}{2}+1})}\nonumber\\
				&\lesssim (E_0+\delta E_0+1)t^{M-\frac{1}{2}(\frac{d}{2}-\sigma_1)}+(E_0+\delta E_0)\delta\mathcal{D}(t).
			\end{align}

            \noindent
  {\textbf{Step 2: Low-frequency estimates for $(\delta a,\mathcal{P}^\top w^\var,\mathcal{Z}^\var)$.}}	By \eqref{A6}, we have 
			\begin{equation}\label{4-1}
				\left\{
				\begin{aligned}
					&\partial_t(t^M \delta  a)-\Delta (t^M \delta a)=Mt^{M-1} \delta  a-t^M( u^\var\cdot\nabla \delta a+\delta u\cdot\nabla a^*)+\frac{1}{\var} t^M Y^\var,\\
					&\partial_t(t^M \mathcal{Z}^\var)+\frac{1}{\var^2}(t^ M\mathcal{Z}^\var)=Mt^{M-1} \mathcal{Z}^\var+\var t^M \partial_t((1+h(a^\var))\nabla a^\var)-\frac{1}{\var}t^M \mathcal{P}^\top (w^\var\cdot\nabla w^\var),\\
                    &(t^M \delta a, t^M \mathcal{Z}^\var)(0,x)=(0,0).
				\end{aligned}
				\right.
			\end{equation} 
			Then, similarly to the proof in Lemma \ref{P3.3}, we can perform energy estimates on \eqref{4-1} and obtain 
			\begin{align}\label{sfggg}
				& \|\tau^M \delta a\|_{\widetilde{L}_t^\infty(\dot{B}_{2,1}^{\frac{d}{2}})}^{\ell,\var}+\|\tau^M \delta a\|_{\widetilde{L}_t^1(\dot{B}_{2,1}^{\frac{d}{2}+2})}^{\ell,\var}+ \|\tau^M \partial_t\delta a\|_{\widetilde{L}_t^1(\dot{B}_{2,1}^{\frac{d}{2}})}^{\ell,\var}\nonumber\\
				&\quad\lesssim \|\tau^{M-1}\delta a\|_{\widetilde{L}_t^1(\dot{B}_{2,1}^{\frac{d}{2}})}^{\ell,\var}+\|\tau^M(u^\var\cdot\nabla \delta a, \delta u\cdot\nabla a^*)\|_{\widetilde{L}_t^1(\dot{B}_{2,1}^{\frac{d}{2}})}^{\ell,\var}+\frac{1}{\var}\|\tau^M Y^\var\|_{\widetilde{L}_t^1(\dot{B}_{2,1}^{\frac{d}{2}})}^{\ell,\var},
			\end{align}
			and 
			\begin{align}\label{sfggg1}
				\frac{1}{\var}\|\tau^M Y^\var\|_{\widetilde{L}_t^1(\dot{B}_{2,1}^{\frac{d}{2}})}^{\ell,\var}\lesssim \frac{1}{\var^2} \|\tau^M(\mathcal{Z}^\var,\mathcal{R}^\var)\|_{\widetilde{L}_t^1(\dot{B}_{2,1}^{\frac{d}{2}})}^{\ell,\var}+ \frac{1}{\var^2} \|\tau^Ma^\var(\mathcal{Z}^\var+\mathcal{R}^\var)\|_{\widetilde{L}_t^1(\dot{B}_{2,1}^{\frac{d}{2}})}^{\ell,\var}.
			\end{align}
			On the other hand, one also deduces from $\eqref{4-1}_2$ that
			\begin{align}\label{sfggg2}
				&\|\tau^M \mathcal{Z}^\var\|_{\widetilde{L}_t^\infty(\dot{B}_{2,1}^{\frac{d}{2}})}^{\ell,\var}+\frac{1}{\var^2}\|\tau^M \mathcal{Z}^\var\|_{\widetilde{L}_t^1(\dot{B}_{2,1}^{\frac{d}{2}})}^{\ell,\var}\nonumber\\
				&\quad\lesssim \|\tau^{M-1} \mathcal{Z}^\var\|_{\widetilde{L}_t^1(\dot{B}_{2,1}^{\frac{d}{2}})}^{\ell,\var}+\|\tau^M\partial_t  a^\var\|_{\widetilde{L}_t^1(\dot{B}_{2,1}^{\frac{d}{2}})}^{\ell,\var}\nonumber\\
				&\quad\quad+\var\|\tau^M\partial_t(h(a^\var)\nabla a^\var)\|_{\widetilde{L}_t^1(\dot{B}_{2,1}^{\frac{d}{2}})}^{\ell,\var}+\frac{1}{\var}\|\tau^M\mathcal{P}^\top (w^\var\cdot\nabla w^\var)\|_{\widetilde{L}_t^1(\dot{B}_{2,1}^{\frac{d}{2}})}^{\ell,\var}.
			\end{align}
			By \eqref{sfggg}-\eqref{sfggg2}, we arrive at
			\begin{align}\label{83101}
				&  \|\tau^M \delta a\|_{\widetilde{L}_t^\infty(\dot{B}_{2,1}^{\frac{d}{2}})\cap \widetilde{L}_t^1(\dot{B}_{2,1}^{\frac{d}{2}+2})}^{\ell,\var}+\|\tau^M \partial_t\delta a\|_{\widetilde{L}_t^1(\dot{B}_{2,1}^{\frac{d}{2}})}^{\ell,\var}+\|\tau^M \mathcal{Z}^\var\|_{\widetilde{L}_t^\infty(\dot{B}_{2,1}^{\frac{d}{2}})}^{\ell,\var}+\frac{1}{\var^2}\|\tau^M \mathcal{Z}^\var\|_{\widetilde{L}_t^1(\dot{B}_{2,1}^{\frac{d}{2}})}^{\ell,\var}\nonumber\\
				&\lesssim \|\tau^{M-1}\delta a\|_{\widetilde{L}_t^1(\dot{B}_{2,1}^{\frac{d}{2}})}^{\ell,\var}+\|\tau^{M-1}  \mathcal{Z}^\var\|_{\widetilde{L}_t^1(\dot{B}_{2,1}^{\frac{d}{2}})}^{\ell,\var}
				+\frac{1}{\var^2}\|\tau^M a^\var(\mathcal{Z}^\var+\mathcal{R}^\var)\|_{\widetilde{L}_t^1(\dot{B}_{2,1}^{\frac{d}{2}})}^{\ell,\var}\nonumber\\
				&\quad+\|\tau^M(u^\var\cdot\nabla \delta a, \delta u\cdot\nabla a^*)\|_{\widetilde{L}_t^1(\dot{B}_{2,1}^{\frac{d}{2}})}^{\ell,\var}+\var\|\tau^M\partial_t(h(a^\var)\nabla a^\var)\|_{\widetilde{L}_t^1(\dot{B}_{2,1}^{\frac{d}{2}})}^{\ell,\var}\nonumber\\
				&\quad+\frac{1}{\var}\|\tau^M\mathcal{P}^\top (w^\var\cdot\nabla w^\var)\|_{\widetilde{L}_t^1(\dot{B}_{2,1}^{\frac{d}{2}})}^{\ell,\var}+\frac{1}{\var^2}\|\tau^M\mathcal{R}^\var\|_{\widetilde{L}_t^1(\dot{B}_{2,1}^{\frac{d}{2}})}^{\ell,\var}.
			\end{align}
			The first two terms on the right-hand side of \eqref{83101} satisfy
			\begin{align*}
				& \|\tau^{M-1}\delta a\|_{\widetilde{L}_t^1(\dot{B}_{2,1}^{\frac{d}{2}})}^{\ell,\var}+\|\tau^{M-1} \mathcal{Z}^\var\|_{\widetilde{L}_t^1(\dot{B}_{2,1}^{\frac{d}{2}})}^{\ell,\var}\\  
				&\lesssim \tilde{\eta}(\|\tau^M \delta  a\|_{\widetilde{L}_t^1(\dot{B}_{2,1}^{\frac{d}{2}+2})}^{\ell,\var}+\frac{1}{\var^2}\|\tau^M  \mathcal{Z}^\var\|_{\widetilde{L}_t^1(\dot{B}_{2,1}^{\frac{d}{2}})}^{\ell,\var})+C\tilde{\eta}^{-1}(\|\delta a\|_{\widetilde{L}_t^\infty(\dot{B}_{2,\infty}^{\sigma_1})}+E_0+\delta E_0)t^{M-\frac{1}{2}(\frac{d}{2}-\sigma_1)}\\
				&\lesssim \tilde{\eta}(\|\tau^M \delta  a\|_{\widetilde{L}_t^1(\dot{B}_{2,1}^{\frac{d}{2}+2})}^{\ell,\var}+\frac{1}{\var^2}\|\tau^M  \mathcal{Z}^\var\|_{\widetilde{L}_t^1(\dot{B}_{2,1}^{\frac{d}{2}})}^{\ell,\var})+C\tilde{\eta}^{-1}(E_0+\delta E_0)t^{M-\frac{1}{2}(\frac{d}{2}-\sigma_1)}.
			\end{align*}
			The third term on the right-hand side of \eqref{83101} satisfies
			\begin{align*}
				\frac{1}{\var^2}\|\tau^M a^\var(\mathcal{Z}^\var+\mathcal{R}^\var)\|_{\widetilde{L}_t^1(\dot{B}_{2,1}^{\frac{d}{2}})}^{\ell,\var}
				\lesssim \|a^\var\|_{\widetilde{L}_t^\infty(\dot{B}_{2,1}^{\frac{d}{2}})}\frac{1}{\var^2}\|\tau^M(\mathcal{Z}^\var,\mathcal{R}^\var)\|_{\widetilde{L}_t^1(\dot{B}_{2,1}^{\frac{d}{2}})}
				\lesssim (E_0+\delta E_0)\delta\mathcal{D}(t).
			\end{align*}
			As in \eqref{dta}, the fourth term on the right-hand side of \eqref{83101} satisfies
			\begin{align*}
				& \var\|\tau^M \partial_t(h(a^\var)\nabla a^\var)\|_{\widetilde{L}_t^1(\dot{B}_{2,1}^{\frac{d}{2}})}^{\ell,\var}\\ 
				%&\lesssim \|\tau^M \tilde{h}(a^\var)\partial_ta^\var\|_{\widetilde{L}_t^1(\dot{B}_{2,1}^{\frac{d}{2}})}^{\ell,\var}\\
				%&\lesssim \|\tilde{h}(a^\var)\|_{\widetilde{L}_t^\infty(\dot{B}_{2,1}^{\frac{d}{2}})} \|\tau^M\partial_ta^\var\|_{\widetilde{L}_t^1(\dot{B}_{2,1}^{\frac{d}{2}})}\\
				&\lesssim \|a^\var\|_{\widetilde{L}_t^\infty(\dot{B}_{2,1}^{\frac{d}{2}})} \Big( \|\tau^M\partial_t a^*\|_{\widetilde{L}_t^1(\dot{B}_{2,1}^{\frac{d}{2}})} +\|\tau^M\partial_t\delta a\|_{\widetilde{L}_t^1(\dot{B}_{2,1}^{\frac{d}{2}})}^{\ell,\var}+\|\tau^M\partial_t\delta a\|_{\widetilde{L}_t^1(\dot{B}_{2,1}^{\frac{d}{2}})}^{h,\var}\Big)\\
				&\lesssim (\|(a^*, u^*)\|_{\dot{B}_{2,\infty}^{\sigma_1}}+E_0) t^{M-\frac{1}{2}(\frac{d}{2}-\sigma_1)}
				+(E_0+\delta E_0)\delta\mathcal{D}(t)\\
				&\lesssim (E_0+1) t^{M-\frac{1}{2}(\frac{d}{2}-\sigma_1)}
				+(E_0+\delta E_0)\delta\mathcal{D}(t).
			\end{align*}
			Therefore, substituting the above estimates into \eqref{83101}, one has 
			\begin{align}\label{21}
				&\|\tau^M \delta a\|_{\widetilde{L}_t^\infty(\dot{B}_{2,1}^{\frac{d}{2}})}^{\ell,\var}+\|\tau^M \delta a\|_{\widetilde{L}_t^1(\dot{B}_{2,1}^{\frac{d}{2}+2})}^{\ell,\var}+\|\tau^M \mathcal{Z}^\var\|_{\widetilde{L}_t^\infty(\dot{B}^{\frac{d}{2}}_{2,1})}^{\ell,\var}+ \frac{1}{\var^2}\|\tau^M \mathcal{Z}^\var\|_{\widetilde{L}_t^1(\dot{B}_{2,1}^{\frac{d}{2}})}^{\ell,\var}\nonumber\\
				&\lesssim (E_0+\delta E_0+1) t^{M-\frac{1}{2}(\frac{d}{2}-\sigma_1)}+(E_0+\delta E_0)\delta\mathcal{D}(t),
			\end{align}
			which, together with $\mathcal{P}^\top w^\var=\mathcal{Z}^\var-\var(1+h(a^\var))\nabla a^\var$, leads to 
			\begin{align}\label{22}
				& \|\tau^M \mathcal{P}^\top w^\var\|_{\widetilde{L}_t^\infty(\dot{B}_{2,1}^{\frac{d}{2}})}^{\ell,\var}+\frac{1}{\var}\|\tau^M \mathcal{P}^\top w^\var\|_{\widetilde{L}_t^2(\dot{B}_{2,1}^{\frac{d}{2}})}^{\ell,\var}\nonumber\\
				&\lesssim \|\tau^Ma^*\|_{\widetilde{L}_t^\infty(\dot{B}_{2,1}^{\frac{d}{2}})\cap \widetilde{L}_t^2(\dot{B}_{2,1}^{\frac{d}{2}+1})}+\|\delta a\|_{\widetilde{L}_t^\infty(\dot{B}_{2,1}^{\frac{d}{2}})\cap \widetilde{L}_t^2(\dot{B}_{2,1}^{\frac{d}{2}+1})}
				+\|\tau^M \mathcal{Z}^\var\|_{\widetilde{L}_t^\infty(\dot{B}_{2,1}^{\frac{d}{2}})}^{\ell,\var}\nonumber\\
				&\quad+\frac{1}{\var}\|\tau^M \mathcal{Z}^\var\|_{\widetilde{L}_t^2(\dot{B}_{2,1}^{\frac{d}{2}})}^{\ell,\var}+\|a^\var\|_{\widetilde{L}_t^\infty(\dot{B}_{2,1}^{\frac{d}{2}})}\|\tau^M a^\var\|_{\widetilde{L}_t^\infty(\dot{B}_{2,1}^{\frac{d}{2}})\cap \widetilde{L}_t^2(\dot{B}_{2,1}^{\frac{d}{2}+1})}\nonumber\\
				&\lesssim (E_0+\delta E_0+1) t^{M-\frac{1}{2}(\frac{d}{2}-\sigma_1)}+(E_0+\delta E_0)\delta\mathcal{D}(t).
			\end{align}
			
\noindent
\textbf{Step 3: High-frequency estimates of $(a^\var, w^\var, \mathcal{Z}^\var)$}. Multiplying \eqref{main8} by $t^{2M}$ leads to 
			\begin{align*}
				& \frac{d}{dt}(t^{2M}L_j^{h,\var}(t) )+\frac{1}{\var^2}t^{2M} L_j^{h,\var}(t)\\
				&\lesssim 2M t^{2M-1} L_j^{h,\var}(t)+t^M \Big((\|\nabla w^\var\|_{L^\infty}+\|\nabla a^\var\|_{L^\infty}+\|w^\var\|_{L^\infty})\frac{1}{\var}\|(a_j^\var, w_j^\var)\|_{L^2}\\
				&\quad+\|\partial_t a^\var\|_{ L^\infty}\|a_j^\var\|_{L^2}+\frac{1}{\var}\|(\mathcal{N}_{1j}^\var,\mathcal{N}_{2j}^\var,\mathcal{N}_{3j}^\var,\mathcal{N}_{4j}^\var)\|_{L^2}+\frac{1}{\var}\|u_j^\var\|_{L^2} \Big)\sqrt{L_j^{h,\var}(t)},
			\end{align*}
			which implies that 
			\begin{align*}
				&\var \|\tau^M (a_j^\var,w_j^\var)\|_{L_t^\infty (L^2)}+\frac{1}{\var}\|\tau^M(a_j^\var, w_j^\var)\|_{L_t^1(L^2)}\\
				&\lesssim \var\|\tau^{M-1}(a_j^\var,w_j^\var)\|_{L_t^1(L^2)}+(\|\nabla w^\var\|_{L^2_t(L^\infty)}+\|\nabla a^\var\|_{L^2_t(L^\infty)}+\|w^\var\|_{L^2_t(L^\infty)})\|\tau^M(a_j^\var,w_j^\var)\|_{L^2_t(L^2)}\\
				&\quad+\var\|\partial_ta^\var\|_{L_t^1(L^\infty)} \|\tau^M a_j^\var\|_{L_t^\infty(L^2)}
				+\sum_{j\geq J_{\var}-1}2^{(\frac{d}{2}+1)j}\|\tau^M(\mathcal{N}_{1j}^\var,\mathcal{N}_{2j}^\var,\mathcal{N}_{3j}^\var,\mathcal{N}_{4j}^\var)\|_{L_t^1(L^2)}.
			\end{align*}
			By using Gr\"{o}nwall's inequality, we have 
			\begin{align}\label{31}
				&\var \|\tau^M(a^\var, w^\var)\|_{\widetilde{L}_t^\infty(\dot{B}_{2,1}^{\frac{d}{2}+1})}^{h,\var}+\frac{1}{\var}\|\tau^M(a^\var, w^\var)\|_{\widetilde{L}_t^1(\dot{B}_{2,1}^{\frac{d}{2}+1})}^{h,\var}\nonumber\\
				&\lesssim \|\tau^{M-1}(a^\var, w^\var)\|_{\widetilde{L}_t^1(\dot{B}_{2,1}^{\frac{d}{2}+1})}^{h,\var}
				+\|(\nabla w^\var, w^\var,\nabla a^\var)\|_{\widetilde{L}_t^2(\dot{B}_{2,1}^{\frac{d}{2}})}\|\tau^M (a^\var,w^\var)\|_{\widetilde{L}_t^2(\dot{B}_{2,1}^{\frac{d}{2}+1})}\nonumber\\
				&\quad+\|\partial_ta^\var\|_{\widetilde{L}_t^1(\dot{B}_{2,1}^{\frac{d}{2}})}\var \|\tau^M (a^\var, w^\var)\|_{\widetilde{L}_t^\infty(\dot{B}_{2,1}^{\frac{d}{2}+1})}^{h,\var}+\sum_{j\geq J_{\var}-1}2^{(\frac{d}{2}+1)j} \|\tau^M(\mathcal{N}_{1j}^\var,\mathcal{N}_{2j}^\var,\mathcal{N}_{3j}^\var,\mathcal{N}_{4j}^\var)\|_{L_t^1(L^2)}.
			\end{align}
			Then, similarly to the previous computations, one has 
			\begin{align*}
				&\var\|\tau^M (a^\var, w^\var)\|_{\widetilde{L}_t^\infty(\dot{B}_{2,1}^{\frac{d}{2}+1})}^{h,\var}+\frac{1}{\var} \|\tau^M(a^\var, w^\var)\|_{\widetilde{L}_t^1(\dot{B}_{2,1}^{\frac{d}{2}+1})}^{h,\var}\\
				&\lesssim (E_0+\delta E_0+1) t^{M-\frac{1}{2}(\frac{d}{2}-\sigma_1)}+(E_0+\delta E_0)\delta\mathcal{D}(t),
			\end{align*}
			which in particular implies that 
			\begin{align}\label{3}
				\|\tau^M\mathcal{Z}^\var\|_{\widetilde{L}_t^1(\dot{B}_{2,1}^{\frac{d}{2}})}^{h,\var}+\|\tau^M\mathcal{R}^\var\|_{\widetilde{L}_t^1(\dot{B}_{2,1}^{\frac{d}{2}})}^{h,\var}
				\lesssim (E_0+\delta E_0+1) t^{M-\frac{1}{2}(\frac{d}{2}-\sigma_1)}+(E_0+\delta E_0)\delta\mathcal{D}(t).
			\end{align}
			%We combine the above estimates to show that 
			%\begin{align}
			%	&\|\tau^M \delta a\|_{\widetilde{L}_t^\infty(\dot{B}^{\frac{d}{2}}_{2,1})\cap \widetilde{L}_t^1(\dot{B}^{\frac{d}{2}+2}_{2,1})}^{\ell,\var}+\frac{1}{\var}\|\tau^M \delta u\|_{\widetilde{L}_t^\infty(\dot{B}^{\frac{d}{2}}_{2,1})\cap \widetilde{L}_t^1(\dot{B}^{\frac{d}{2}+2}_{2,1})}\nonumber\\
			%	&\quad+\|\tau^M w^\var\|_{\widetilde{L}_t^\infty( \dot{B}_{2,1}^{\frac{d}{2}})}+\frac{1}{\var}
			%	\|\tau^M w^\var\|_{\widetilde{L}_t^2(\dot{B}_{2,1}^{\frac{d}{2}})} \nonumber\\
				% &+\|\tau^M\mathcal{P}^\top w^\var\|_{\widetilde{L}_t^\infty(\dot{B}_{2,1}^{\frac{d}{2}})}^{\ell,\var}
				%  +\frac{1}{\var}\|\tau^M\mathcal{P}^\top w^\var\|_{\widetilde{L}^2_t(\dot{B}_{2,1}^{\frac{d}{2}})}\\
			%	&\quad+\var \|\tau^M(a^\var,w^\var)\|_{\widetilde{L}_t^\infty(\dot{B}_{2,1}^{\frac{d}{2}+1})}^{h,\var}
			%	+\frac{1}{\var}\|\tau^M(a^\var,w^\var)\|_{\widetilde{L}_t^1(\dot{B}_{2,1}^{\frac{d}{2}+1})}^{h,\var}\nonumber\\
				%&\quad+\|\tau^M(\mathcal{Z}^\var,\mathcal{R}^\var)\|_{\widetilde{L}_t^{\infty}(\dot{B}_{2,1}^{\frac{d}{2}})}+\frac{1}{\var^2}\|\tau^M(\mathcal{Z}^\var,\mathcal{R}^\var)\|_{\widetilde{L}_t^1(\dot{B}_{2,1}^{\frac{d}{2}})}\nonumber\\
			%	&\lesssim (E_0+\delta E_0+1) t^{M-\frac{1}{2}(\frac{d}{2}-\sigma_1)}+(E_0+\delta E_0)\delta\mathcal{D}(t).\label{3}
			%\end{align}

			Finally, collecting the estimates \eqref{time:1}, \eqref{11}, \eqref{21}, \eqref{22}, \eqref{31} and \eqref{3} and recalling the definition of $\delta\mathcal{D}(t)$, it is proved that
			\begin{align*}
				\delta\mathcal{D}(t) 	\lesssim (E_0+\delta E_0+1) t^{M-\frac{1}{2}(\frac{d}{2}-\sigma_1)}+(E_0+\delta E_0)\delta\mathcal{D}(t),
			\end{align*}
			and then, due to the smallness of $E_0$ and $\delta E_0$, we have 
			\begin{align}
				\delta\mathcal{D}(t) 	\lesssim 	(E_0+\delta E_0+1) t^{M-\frac{1}{2}(\frac{d}{2}-\sigma_1)}.
			\end{align}
			This finishes the proof of Proposition \ref{prop41}.
		\end{proof}

		\subsection{Proof of Theorem \ref{thm3}: Decay estimates \texorpdfstring{\eqref{decay0}-\eqref{decay2}}{decay0-decay2}}

    Relying on the time-weighted estimate \eqref{timeweighted}, we establish the decay rates of the error and the solution in Theorem \ref{thm3} as follows.
    \iffalse
		\begin{prop}\label{prop42}
			For any given $-\frac{d}{2}\leq \sigma_1<\frac{d}{2}-1$ and 
			$\sigma_1<\sigma\leq \frac{d}{2}$, it holds
			\begin{align*}
				\|\delta a\|_{\dot{B}^{\sigma}_{2,1}}&\lesssim  (1+t)^{-\frac{1}{2}(\sigma-\sigma_1)},\\
				\|\delta u\|_{\dot{B}^{\sigma}_{2,1}}&\lesssim  \var(1+t)^{-\frac{1}{2}(\sigma-\sigma_1)},\\
				\|w^\var\|_{\dot{B}^{\sigma}_{2,1}}&\lesssim  (1+t)^{-\frac{1}{2}(\sigma-\sigma_1)},\\
				\|(\mathcal{Z}^\var,\mathcal{R}^\var)\|_{\dot{B}^{\sigma}_{2,1}}&\lesssim  (1+t)^{-\frac{1}{2}(\sigma-\sigma_1)}.
			\end{align*}
			Furthermore, if $\sigma_1<\sigma\leq \frac{d}{2}-1$, then it holds
			\begin{align*}
				\|(\mathcal{Z}^\var,\mathcal{R}^\var)(t)\|_{\dot{B}^{\sigma}_{2,1}}&\lesssim  (1+t)^{-\frac{1}{2}(\sigma-\sigma_1+1)},\\
				\|(\mathcal{P}w^\varepsilon-\varepsilon u^\varepsilon)(t)\|_{\dot{B}^{\sigma}_{2,1}}&\lesssim \var(1+t)^{-\frac{1}{2}(\sigma-\sigma_1+1)},\\
				\|\mathcal{P}^\top w^\varepsilon(t)\|_{\dot{B}^{\sigma}_{2,1}}&\lesssim \var(1+t)^{-\frac{1}{2}(\sigma-\sigma_1+1)},\\
				\|w^\var(t)\|_{\dot{B}^{\sigma}_{2,1}}&\lesssim \var(1+t)^{-\frac{1}{2}(\sigma-\sigma_1)}.
			\end{align*}
		\end{prop}
        \fi
%\noindent
%\textbf{Proof of Theorem \ref{thm3}.}
According to Proposition \ref{prop41}, we have
			\begin{align}\label{ssgbbb}
				&\|\tau^M \delta a\|_{\widetilde{L}_t^\infty(\dot{B}^{\frac{d}{2}}_{2,1})\cap \widetilde{L}_t^1(\dot{B}^{\frac{d}{2}+2}_{2,1})}^{\ell,\var}+\frac{1}{\var}\|\tau^M \delta u\|_{\widetilde{L}_t^\infty(\dot{B}^{\frac{d}{2}}_{2,1})\cap \widetilde{L}_t^1(\dot{B}^{\frac{d}{2}+2}_{2,1})}
				+\|\tau^M w^\var\|_{\widetilde{L}_t^\infty( \dot{B}_{2,1}^{\frac{d}{2}})}\nonumber\\
				&\quad+\frac{1}{\var}
				\|\tau^M w^\var\|_{\widetilde{L}_t^2(\dot{B}_{2,1}^{\frac{d}{2}})}+\var \|\tau^M(a^\var,w^\var)\|_{\widetilde{L}_t^\infty(\dot{B}_{2,1}^{\frac{d}{2}+1})}^{h,\var}
				+\frac{1}{\var}\|\tau^M(a^\var,w^\var)\|_{\widetilde{L}_t^1(\dot{B}_{2,1}^{\frac{d}{2}+1})}^{h,\var}\nonumber\\
    &\quad+\|\tau^M(\mathcal{Z}^\var,\mathcal{R}^\var)\|_{\widetilde{L}_t^{\infty}(\dot{B}_{2,1}^{\frac{d}{2}})}+\frac{1}{\var^2}\|\tau^M(\mathcal{Z}^\var,\mathcal{R}^\var)\|_{\widetilde{L}_t^1(\dot{B}_{2,1}^{\frac{d}{2}})}\nonumber\\
				&\lesssim t^{M-\frac{1}{2}(\frac{d}{2}-\sigma_1)},
			\end{align}	
			which yields for $t\geq 1$ that
			\begin{align}\label{sgggg}
				&\frac{1}{\var}\|\delta u(t)\|_{\dot{B}^{\frac{d}{2}}_{2,1}}+\|\delta a(t)\|_{\dot{B}^{\frac{d}{2}}_{2,1}}^{\ell,\var}+\|w^\var(t)\|_{ \dot{B}_{2,1}^{\frac{d}{2}}}
				+\var \|(a^\var,w^\var)(t)\|_{\dot{B}_{2,1}^{\frac{d}{2}+1}}^{h,\var}
				+\|(\mathcal{Z}^\var,\mathcal{R}^\var)(t)\|_{\dot{B}^{\frac{d}{2}}_{2,1}}\nonumber\\
				&\lesssim t^{-\frac{1}{2}(\frac{d}{2}-\sigma_1)}\lesssim (1+t)^{-\frac{1}{2}(\frac{d}{2}-\sigma_1)}.
			\end{align}
			%On the one hand, according to the existence results obtained in Theorem \ref{thm1}, we can show for $0<t\leq 1$ that 
			%\begin{align}
			%	&\|\delta u\|_{\dot{B}^{\frac{d}{2}}_{2,1}}+\|\delta a\|_{\dot{B}^{\frac{d}{2}}_{2,1}}^{\ell,\var}+\|w^\var\|_{ \dot{B}_{2,1}^{\frac{d}{2}}}
			%	+\var \|(a^\var,w^\var)\|_{\dot{B}_{2,1}^{\frac{d}{2}+1}}^{h,\var}
			%	+\|(\mathcal{Z}^\var,\mathcal{R}^\var)\|_{\dot{B}^{\frac{d}{2}}_{2,1}}\nonumber\\
			%	&\lesssim E_0+\delta E_0\lesssim  E_0+\delta E_0(1+t)^{-\frac{1}{2}(\frac{d}{2}-\sigma_1)}\nonumber\\
			%	&\lesssim (1+t)^{-\frac{1}{2}(\frac{d}{2}-\sigma_1)}.
			%\end{align}
			%Combining this two parts together yields for $t>0$ that 
			%\begin{align}\label{sgggg}
			%	&\frac{1}{\var}\|\delta u\|_{\dot{B}^{\frac{d}{2}}_{2,1}}+\frac{1}{\var}\|\delta a\|_{\dot{B}^{\frac{d}{2}}_{2,1}}^{\ell,\var}+\|w^\var\|_{ \dot{B}_{2,1}^{\frac{d}{2}}}
			%	+\var \|(a^\var,w^\var)\|_{\dot{B}_{2,1}^{\frac{d}{2}+1}}^{h,\var}
			%	+\|(\mathcal{Z}^\var,\mathcal{R}^\var)\|_{\dot{B}^{\frac{d}{2}}_{2,1}}\nonumber\\
			%	&\lesssim (1+t)^{-\frac{1}{2}(\frac{d}{2}-\sigma_1)}.
			%\end{align}
			Using \eqref{sgggg}, the interpolation inequality \eqref{inter} and high-frequency norm of $\delta a$, for $\sigma_1<\sigma\leq \frac{d}{2}$ and $t\geq1$, we have
			\begin{align}
				\|\delta u(t)\|_{\dot{B}^{\sigma}_{2,1}}&\lesssim \|\delta u\|_{\dot{B}^{\sigma_1}_{2,\infty}}^{1-\theta_2} \|\delta u\|_{\dot{B}^{\frac{d}{2}}_{2,1}}^{\theta_2}\lesssim \var (1+t)^{-\frac{1}{2}(\sigma-\sigma_1)},\label{430}\\
				\|(\delta a,w^\var,\mathcal{Z}^\var,\mathcal{R}^\var)\|_{\dot{B}^{\sigma}_{2,1}}&\lesssim \|(\delta a,w^\var,\mathcal{Z}^\var,\mathcal{R}^\var)\|_{\dot{B}^{\sigma_1}_{2,\infty}}^{1-\theta_2} \|(\delta a,w^\var,\mathcal{Z}^\var,\mathcal{R}^\var)\|_{\dot{B}^{\frac{d}{2}}_{2,1}}^{\theta_2} \lesssim (1+t)^{-\frac{1}{2}(\sigma-\sigma_1)}, \label{431}
				%\|w^\var\|_{\dot{B}^{\sigma}_{2,1}}&\lesssim \|w^\var\|_{\dot{B}^{\sigma_1}_{2,\infty}}^{1-\theta_2} \|w^\var\|_{\dot{B}^{\frac{d}{2}}_{2,1}}^{\theta_2}\lesssim  (1+t)^{-\frac{1}{2}(\sigma-\sigma_1)},\label{432}\\
				%\|\mathcal{Z}^\var\|_{\dot{B}^{\sigma}_{2,1}}&\lesssim \|\mathcal{Z}^\var\|_{\dot{B}^{\sigma_1}_{2,\infty}}^{1-\theta_2} \|\mathcal{Z}^\var\|_{\dot{B}^{\frac{d}{2}}_{2,1}}^{\theta_2}\lesssim  (1+t)^{-\frac{1}{2}(\sigma-\sigma_1)}, \label{433}\\
				%\|\mathcal{R}^\var\|_{\dot{B}^{\sigma}_{2,1}}&\lesssim \|\mathcal{R}^\var\|_{\dot{B}^{\sigma_1}_{2,\infty}}^{1-\theta_2} \|\mathcal{R}^\var\|_{\dot{B}^{\frac{d}{2}}_{2,1}}^{\theta_2}\lesssim  (1+t)^{-\frac{1}{2}(\sigma-\sigma_1)},\label{434}
			\end{align}
			where $\theta_2\in (0,1)$ is given by $\sigma=\sigma_1(1-\theta_2)+\frac{d}{2} \theta_2$. 

Combining \eqref{decayNSKS} and \eqref{430}-\eqref{431}, we can derive the decay of $(a^\var, u^\var)$ as follows: 
\begin{align}
\|(a^\var,u^\var)(t)\|_{\dot{B}^{\sigma}_{2,1}}\lesssim \|(\delta a,\delta u)(t)\|_{\dot{B}^{\sigma}_{2,1}}+\|(a^*,u^*)(t)\|_{\dot{B}^{\sigma}_{2,1}}\lesssim (1+t)^{-\frac{1}{2}(\sigma-\sigma_1)},\label{4341}
\end{align}
for $\sigma_1<\sigma\leq \frac{d}{2}$ and $t\geq1$.

Note that the decay of $\|\delta a\|_{\dot{B}^{\sigma}_{2,1}}$ $(\sigma_1<\sigma\le \frac d2-1)$ can be further improved with an additional convergence factor~$\varepsilon$. Indeed, considering the weighted equations \eqref{416}-\eqref{4-1} for any sufficiently large $M$, and arguing as in Lemma~\ref{P3.2}, one obtains 
\begin{align*}
\frac1\varepsilon\|\tau^M\delta a\|_{\widetilde{L}_t^\infty(\dot{B}_{2,1}^{\frac d2-1})}^{\ell,\varepsilon}
	\lesssim t^{\,M-\frac12(\frac d2-1-\sigma_1)}.
\end{align*}
The proof relies on controlling a weighted functional similar to~\eqref{Fn1}. Since the argument parallels that of Proposition~\ref{P31}, together with the interpolation argument in Proposition~\ref{prop41} (replacing the $\dot{B}^{\frac d2}_{2,1}$–energy by $\dot{B}^{\frac d2-1}_{2,1}$ for $(\delta u,\delta a)$), we omit the details. Consequently,
\begin{align}
\|\delta a(t)\|_{\dot{B}_{2,1}^{\frac d2-1}}^{\ell,\varepsilon}
	\lesssim \varepsilon (1+t)^{-\frac12(\frac d2-1-\sigma_1)},\qquad t\ge1.
\end{align}
For the high–frequency part, using \eqref{sgggg} and the decay of $a^*$, we deduce for $t\ge1$ that
\begin{align}\label{11113r}
\|\delta a(t)\|_{\dot{B}_{2,1}^{\frac d2-1}}^{h,\varepsilon}
	\lesssim \varepsilon\|a^{\varepsilon}(t)\|_{\dot{B}_{2,1}^{\frac d2}}^{h,\varepsilon}
	+ \varepsilon\|a^*(t)\|_{\dot{B}_{2,1}^{\frac d2}}^{h,\varepsilon}
	\lesssim \varepsilon (1+t)^{-\frac12(\frac d2-\sigma_1)}.
\end{align}
Combining \eqref{inter}, \eqref{evolution} and \eqref{11113r}  yields
\begin{align}\label{additionaldeltaa}
\|\delta a(t)\|_{\dot{B}^{\sigma}_{2,1}}
	\lesssim \varepsilon (1+t)^{-\frac12(\sigma-\sigma_1)},
\end{align}
for $\sigma\in (\sigma_1,\frac{d}{2}-1]$ and $t\geq1$.
			
			The next step is to prove faster decay rates of $\mathcal{Z}^\var$. Recalling that $\mathcal{Z}^\var$ and $\mathcal{R}^\var$ satisfy \eqref{T8}, we have
			\begin{align}\label{100701}
				\mathcal{Z}^\varepsilon=e^{-\frac{t}{\varepsilon^2}}\mathcal{Z}_0^\var+\int_0^t e^{-\frac{t-\tau}{\varepsilon^2}}\Big(\var\partial_t \nabla \int_0^{a^\var}  (1+h(s))\,ds-\frac{1}{\varepsilon}\mathcal{P}^{\top}(w^\varepsilon\cdot\nabla w^\varepsilon)\Big)\,d\tau,
			\end{align}
			and
			\begin{align}
				\mathcal{R}^\varepsilon&=e^{-(1+\frac{1}{\var^2})t}\mathcal{R}_0^\var+\int_0^t e^{-(1+\frac{1}{\var^2})(t-\tau)}\Big(-\varepsilon\mu\Delta u+\varepsilon\mathcal{P}(u^\varepsilon\cdot\nabla u^\varepsilon)\nonumber\\
				&\quad-\mathcal{P}(a^\var(\mathcal{Z}^\var+\mathcal{R}^\var))-\frac{1}{\varepsilon}\mathcal{P}(w^\varepsilon\cdot\nabla w^\varepsilon)\Big)d\tau.\label{100702}
			\end{align}
			Taking the low-frequency $\dot{B}^{\sigma}_{2,1}$-norm of \eqref{100701}, we have
			\begin{align}
				\|\mathcal{Z}^\varepsilon(t)\|_{\dot{B}^{\sigma}_{2,1}}^{\ell,\var}&\lesssim e^{-\frac{t}{\varepsilon^2}} \|\mathcal{Z}_0^\var\|_{\dot{B}^{\sigma}_{2,1}}^{\ell,\var}+\int_0^t  e^{-\frac{t-\tau}{\varepsilon^2}} \Big(  \|\partial_t\int_0^{a^\var}  (1+h(s))\,ds\|_{\dot{B}^{\sigma}_{2,1}}^{\ell,\var}+\frac{1}{\var}\|w^\var\cdot\nabla w^\var\|_{\dot{B}^{\sigma}_{2,1}}^{\ell,\var}\Big)\,d\tau\nonumber\\
				&\lesssim e^{-\frac{t}{\varepsilon^2}} \|\mathcal{Z}_0^\var\|_{\dot{B}^{\sigma}_{2,1}}^{\ell,\var}+\int_0^t  e^{-\frac{t-\tau}{\varepsilon^2}} \Big( \|\partial_t a^\var\|_{\dot{B}^{\sigma}_{2,1}}+\frac{1}{\var}\|w^\var\|_{\dot{B}^{\frac{d}{2}}_{2,1}}\|\nabla w^\var\|_{\dot{B}^{\sigma}_{2,1}}\Big)\,d\tau.
			\end{align}
			For $\sigma_1<\sigma\leq \frac{d}{2}-1$, by $\eqref{Main:incom}_1$ it holds 
			\begin{align}
				\|\partial_t a^\var\|_{\dot{B}^{\sigma}_{2,1}}\lesssim  \frac{1}{\var}(1+\|a^\var\|_{\dot{B}^{\frac{d}{2}}_{2,1}}) \|w^\var\|_{\dot{B}^{\sigma+1}_{2,1}}.
			\end{align}
			Since $\|w^\var\|_{\dot{B}^{\sigma+1}_{2,1}}\lesssim (1+t)^{-\frac{1}{2}(\sigma-\sigma_1+1)}$, for $\sigma_1<\sigma\leq\frac{d}{2}-1$ we arrive at
			\begin{align}\label{1005}
				\|\mathcal{Z}^\varepsilon(t)\|_{\dot{B}^{\sigma}_{2,1}}^{\ell,\var}&\lesssim e^{-\frac{t}{\varepsilon^2}} \|\mathcal{Z}_0^\var\|_{\dot{B}^{\sigma}_{2,1}}^{\ell,\var}+\frac{1}{\var}\int_0^t  e^{-\frac{t-\tau}{\varepsilon^2}} (1+\tau)^{-\frac{1}{2}(\sigma-\sigma_1+1)}\,d\tau\lesssim \var (1+t)^{-\frac{1}{2}(\sigma-\sigma_1+1)}.
			\end{align}
	For high-frequency, it holds by \eqref{431} that
			\begin{align}
				\|\mathcal{Z}^\varepsilon(t)\|_{\dot{B}^{\sigma}_{2,1}}^{h,\var}
				\lesssim \var\|\mathcal{Z}^\varepsilon\|_{\dot{B}^{\sigma+1}_{2,1}}^{h,\var}
				\lesssim \var\|\mathcal{Z}^\varepsilon\|_{\dot{B}^{\sigma+1}_{2,1}}\lesssim
				\var (1+t)^{-\frac{1}{2}(\sigma-\sigma_1+1)},
			\end{align}
			which, together with \eqref{1005}, implies that
			\begin{align}
				&\|\mathcal{Z}^\varepsilon(t)\|_{\dot{B}^{\sigma}_{2,1}}\lesssim \var (1+t)^{-\frac{1}{2}(\sigma-\sigma_1+1)},\quad \sigma_1<\sigma\leq \frac{d}{2}-1.\label{Zfaster}
			\end{align}

			To capture faster rates of $\mathcal{R}^\var$, we similarly deduce from \eqref{100702} that
			\begin{align}
				\|\mathcal{R}^\varepsilon(t)\|_{\dot{B}^{\sigma}_{2,1}}^{\ell,\var}&\lesssim e^{-(1+\frac{1}{\var^2})t}\|\mathcal{R}_0^\var\|_{\dot{B}^{\sigma}_{2,1}}+\int_0^t e^{-(1+\frac{1}{\var^2})(t-\tau)} \Big(\var \|u^\var\|_{\dot{B}^{\sigma+2}_{2,1}}^{\ell,\var}+\var\|u^\var\|_{\dot{B}^{\frac{d}{2}}_{2,1}}\|u^\var\|_{\dot{B}^{\sigma+1}_{2,1}}\nonumber\\
				&\quad\quad+\|a^\var\|_{\dot{B}^{\frac{d}{2}}_{2,1}}(\|\mathcal{Z}^\var\|_{\dot{B}^{\sigma}_{2,1}}+\|\mathcal{R}^\var\|_{\dot{B}^{\sigma}_{2,1}})+\frac{1}{\var}\|w^\var\|_{\dot{B}^{\frac{d}{2}}_{2,1}}\|w^\var\|_{\dot{B}^{\sigma+1}_{2,1}}\Big)\,d\tau.
			\end{align}
            for $\sigma_1<\sigma\leq \frac{d}{2}-1$. Now we define
			\begin{align}
				\mathcal{X}_{\sigma}=\sup_{t>0}\{(1+t)^{\frac{1}{2}(\sigma-\sigma_1+1)}\|\mathcal{R}^\var\|_{\dot{B}^{\sigma}_{2,1}}^{\ell,\var}\}.
			\end{align}	
We obtain from \eqref{430}-\eqref{431} that 
			\begin{align*}
				&\|a^\var\|_{\dot{B}^{\frac{d}{2}}_{2,1}}(\|\mathcal{Z}^\var\|_{\dot{B}^{\sigma}_{2,1}}+\|\mathcal{R}^\var\|_{\dot{B}^{\sigma}_{2,1}})\\
                &\lesssim \|a^\var\|_{\dot{B}^{\frac{d}{2}}_{2,1}}(\|\mathcal{Z}^\var\|_{\dot{B}^{\sigma}_{2,1}}^{\ell,\var}+\|\mathcal{R}^\var\|_{\dot{B}^{\sigma}_{2,1}}^{\ell,\var})+\|a^\var\|_{\dot{B}^{\frac{d}{2}}_{2,1}}(\var^{\frac{d}{2}-\sigma}\|\mathcal{Z}^\var\|_{\dot{B}^{\frac{d}{2}}_{2,1}}^{h,\var}+\var^{\frac{d}{2}-\sigma}\|\mathcal{R}^\var\|_{\dot{B}^{\frac{d}{2}}_{2,1}}^{h,\var})\\
				&\lesssim (E_0+\delta E_0)(\|\mathcal{Z}^\var\|_{\dot{B}^{\sigma}_{2,1}}^{\ell,\var}+ \|\mathcal{R}^\var\|_{\dot{B}^{\sigma}_{2,1}}^{\ell,\var})+\var(1+t)^{-\frac{1}{2}(\sigma-\sigma_1+1)}\\
				%&\lesssim \var(1+t)^{-\frac{1}{2}(\sigma-\sigma_1+1)}+(E_0+\delta E_0)\|\mathcal{R}^\var\|_{\dot{B}^{\sigma}_{2,1}}^{\ell,\var}+\var \|\mathcal{R}^\var\|_{\dot{B}^{\sigma+1}_{2,1}}\\
				&\lesssim \var(1+t)^{-\frac{1}{2}(\sigma-\sigma_1+1)}+(E_0+\delta E_0)(1+t)^{-\frac{1}{2}(\sigma-\sigma_1+1)}\mathcal{X}_{\sigma}.
			\end{align*}         
According to \eqref{430}-\eqref{431}, we can clearly see that
			\begin{align*}
				\var\|u^\var\|_{\dot{B}^{\sigma+2}_{2,1}}^{\ell,\var}\lesssim \|\delta u\|_{\dot{B}^{\sigma+1}_{2,1}}^{\ell,\var}+\var\|a^*\|_{\dot{B}^{\sigma+2}_{2,1}}\lesssim
				\|\delta u\|_{\dot{B}^{\sigma+1}_{2,1}}+\var\|a^*\|_{\dot{B}^{\sigma+2}_{2,1}}\lesssim 
				\var(1+t)^{-\frac{1}{2}(\sigma-\sigma_1+1)},
			\end{align*}
			and 
			\begin{align*}
				\var\|u^\var\|_{\dot{B}^{\frac{d}{2}}_{2,1}}\|u^\var\|_{\dot{B}^{\sigma+1}_{2,1}}+\frac{1}{\var}\|w^\var\|_{\dot{B}^{\frac{d}{2}}_{2,1}}\|w^\var\|_{\dot{B}^{\sigma+1}_{2,1}}\lesssim 
				\var(1+t)^{-\frac{1}{2}(\sigma-\sigma_1+1)}.
			\end{align*}
			
			It can be concluded from the above estimates that 
			\begin{align}
				\|\mathcal{R}^\varepsilon(t)\|_{\dot{B}^{\sigma}_{2,1}}^{\ell,\var}&\lesssim e^{-(1+\frac{1}{\var^2})t}\|\mathcal{R}_0^\var\|_{\dot{B}^{\sigma}_{2,1}}
				+\int_0^t e^{-(1+\frac{1}{\var^2})(t-\tau)}\Big(\var(1+\tau)^{-\frac{1}{2}(\sigma-\sigma_1+1)}\nonumber\\
                &\quad+(E_0+\delta E_0)(1+\tau)^{-\frac{1}{2}(\sigma-\sigma_1+1)}\mathcal{X}_{\sigma}\Big)\,d\tau\nonumber\\
				&\lesssim \var(1+t)^{-\frac{1}{2}(\sigma-\sigma_1+1)}+(E_0+\delta E_0)(1+t)^{-\frac{1}{2}(\sigma-\sigma_1+1)}\mathcal{X}_{\sigma}.
			\end{align}
			
			According to the definition of $\mathcal{X}_{\sigma}$, it holds 
			\begin{align*}
				\mathcal{X}_{\sigma}\lesssim \var+(E_0+\delta E_0)\mathcal{X}_{\sigma}.
			\end{align*}
			Due to the small size of $E_0$ and $\delta E_0$, one has 
			\begin{align*}
				\mathcal{X}_{\sigma}\lesssim \var.
			\end{align*}
			Thus, we obtain that 
			\begin{align*}
				\|\mathcal{R}^\varepsilon(t)\|_{\dot{B}^{\sigma}_{2,1}}^{\ell,\var}\lesssim \var(1+t)^{-\frac{1}{2}(\sigma-\sigma_1+1)}.
			\end{align*}
			The high frequency part is given by 
			\begin{align*}
				\|\mathcal{R}^\varepsilon(t)\|_{\dot{B}^{\sigma}_{2,1}}^{h,\var}\lesssim	\var\|\mathcal{R}^\varepsilon(t)\|_{\dot{B}^{\sigma+1}_{2,1}}^{h,\var}\lesssim \var(1+t)^{-\frac{1}{2}(\sigma-\sigma_1+1)}.
			\end{align*}
			Combining these two estimates yields that, for $\sigma_1<\sigma\leq\frac{d}{2}-1$ and $t\geq1$, 
			\begin{align}\label{Rfaster}
				\|\mathcal{R}^\varepsilon(t)\|_{\dot{B}^{\sigma}_{2,1}}\lesssim \var(1+t)^{-\frac{1}{2}(\sigma-\sigma_1+1)}.
			\end{align}

			Noting that $\mathcal{R}^\var=\mathcal{P}w^\varepsilon-\varepsilon u^\varepsilon$, it holds for $\sigma_1<\sigma\leq\frac{d}{2}-1$ and $t\geq1$ that
			\begin{align}
				\|(\mathcal{P}w^\varepsilon-\varepsilon u^\varepsilon)(t)\|_{\dot{B}^{\sigma}_{2,1}}\lesssim \var(1+t)^{-\frac{1}{2}(\sigma-\sigma_1+1)},
			\end{align}
			and 
			\begin{align}
				\|	\mathcal{P}^\top w^\varepsilon(t)\|_{\dot{B}^{\sigma}_{2,1}}&\lesssim \|\mathcal{Z}^\varepsilon(t)\|_{\dot{B}^{\sigma}_{2,1}}+\varepsilon\|\nabla a^\varepsilon(t)\|_{\dot{B}^{\sigma}_{2,1}}+\varepsilon\|\nabla a^\varepsilon(t)\|_{\dot{B}^{\sigma}_{2,1}}\lesssim \var(1+t)^{-\frac{1}{2}(\sigma-\sigma_1+1)}.
			\end{align}	
			Consequently, by the triangle inequality, for $\sigma_1<\sigma\leq\frac{d}{2}-1$ and  $t\geq1$, one has 
			\begin{align}\label{wdfgg}
				\|(w^\varepsilon-\var u^\var)(t)\|_{\dot{B}^{\sigma}_{2,1}}&\lesssim  	\|\mathcal{P}^\top w^\varepsilon(t)\|_{\dot{B}^{\sigma}_{2,1}}+\|(\mathcal{P} w^\varepsilon-\var u^\var)(t)\|_{\dot{B}^{\sigma}_{2,1}}\lesssim \var(1+t)^{-\frac{1}{2}(\sigma-\sigma_1+1)}.
			\end{align}	
			Therefore, by \eqref{430}-\eqref{4341}, \eqref{additionaldeltaa}, \eqref{Zfaster}, \eqref{Rfaster} and \eqref{wdfgg},   we complete the proof of \eqref{decay1}-\eqref{decay2} in Theorem \ref{thm3}.
	\
		\subsection{Proof of Theorem \ref{thm3}: Improved decay estimate \eqref{decay3} for \texorpdfstring{$\delta a$}{delta a}}
		We are going to establish faster decay rates of $\delta a$ when $-\frac{d}{2}\leq \sigma_1<\frac{d}{2}-2$ ($d\geq3$). In this case, we know $\|\delta a\|_{\dot{B}^{\sigma_1-1}_{2,\infty}}$ has a $\mathcal{O}(\varepsilon)$-bounds (see \eqref{improve}).
        \iffalse
		\begin{prop}\label{prop43}
			Assume $-\frac{d}{2}\leq \sigma_1<\frac{d}{2}-2$ ($d\geq3$). Then the decay rates for  the difference of the density can be improved as follows
			\begin{align}
				\|\delta a(t)\|_{\dot{B}^{\sigma}_{2,1}}\lesssim \varepsilon (1+t)^{-\frac{1}{2}(\sigma-\sigma_1+1)},\quad \sigma_1<\sigma\leq \frac{d}{2}-1.
			\end{align}
		\end{prop}
        \fi
			Similarly to prove Proposition \ref{prop41}, we replace the $\dot{B}^{\frac{d}{2}}_{2,1}$ estimates for $\delta a$ with the $\dot{B}^{\frac{d}{2}-1}_{2,1}$ estimates and define
			\begin{align}
				\delta\mathcal{D}^*(t)&=
				\frac{1}{\var}\|\tau^M \delta a\|_{\widetilde{L}_t^\infty(\dot{B}^{\frac{d}{2}-1}_{2,1})\cap \widetilde{L}_t^1(\dot{B}^{\frac{d}{2}+1}_{2,1})}^{\ell,\var}+\frac{1}{\var}\|\tau^M \delta u\|_{\widetilde{L}_t^\infty(\dot{B}^{\frac{d}{2}}_{2,1})\cap \widetilde{L}_t^1(\dot{B}^{\frac{d}{2}+2}_{2,1})}\nonumber\\
				&\quad+\|\tau^M w^\var\|_{\widetilde{L}_t^\infty( \dot{B}_{2,1}^{\frac{d}{2}})}+\frac{1}{\var}
				\|\tau^M w^\var\|_{\widetilde{L}_t^2(\dot{B}_{2,1}^{\frac{d}{2}})}\nonumber\\
				% &+\|\tau^M\mathcal{P}^\top w^\var\|_{\widetilde{L}_t^\infty(\dot{B}_{2,1}^{\frac{d}{2}})}^{\ell,\var}
				%  +\frac{1}{\var}\|\tau^M\mathcal{P}^\top w^\var\|_{\widetilde{L}^2_t(\dot{B}_{2,1}^{\frac{d}{2}})}\\
				&\quad+\var \|\tau^M(a^\var,w^\var)\|_{\widetilde{L}_t^\infty(\dot{B}_{2,1}^{\frac{d}{2}+1})}^{h,\var}
				+\frac{1}{\var}\|\tau^M(a^\var,w^\var)\|_{\widetilde{L}_t^1(\dot{B}_{2,1}^{\frac{d}{2}+1})}^{h,\var}\nonumber\\
				&\quad+\|\tau^M(\mathcal{Z}^\var,\mathcal{R}^\var)\|_{\widetilde{L}_t^{\infty}(\dot{B}_{2,1}^{\frac{d}{2}})}+\frac{1}{\var^2}\|\tau^M(\mathcal{Z}^\var,\mathcal{R}^\var)\|_{\widetilde{L}_t^1(\dot{B}_{2,1}^{\frac{d}{2}})}.\label{DT*}
			\end{align}
			
			We follow a similar method developed in Proposition \ref{prop41} to demonstrate that 
			\begin{align}
				\delta\mathcal{D}^*(t)\lesssim t^{M-\frac{1}{2}(\frac{d}{2}-\sigma_1)}+(E_0+\delta E_0)\delta\mathcal{D}^*(t). 
			\end{align}
			Due to the smallness of $E_0$ and $\delta E_0$, one has 
			\begin{align}
				\delta\mathcal{D}^*(t)\lesssim t^{M-\frac{1}{2}(\frac{d}{2}-\sigma_1)},
			\end{align}
			which implies that 
			\begin{align}
				\|\delta a(t)\|_{\dot{B}^{\frac{d}{2}-1}_{2,1}}^{\ell,\var}\lesssim\var t^{-\frac{1}{2}(\frac{d}{2}-\sigma_1)}. 
			\end{align}
			Then we have 
			\begin{align}
				\|\delta a(t)\|_{\dot{B}^{\frac{d}{2}-1}_{2,1}}\lesssim 	\|\delta a(t)\|_{\dot{B}^{\frac{d}{2}-1}_{2,1}}^{\ell,\var}+\var\| a^\var(t)\|_{\dot{B}^{\frac{d}{2}}_{2,1}}^{h,\var}+\var\| a^*(t)\|_{\dot{B}^{\frac{d}{2}}_{2,1}}^{h,\var}\lesssim\var t^{-\frac{1}{2}(\frac{d}{2}-\sigma_1)}. 
			\end{align}
			By the interpolation inequality, for $\sigma_1-1<\sigma<\frac{d}{2}-1$ and $t\geq1$, one has
			\begin{align}
				\|\delta a(t)\|_{\dot{B}^{\sigma}_{2,1}}\lesssim \|\delta a(t)\|_{\dot{B}^{\sigma_1-1}_{2,1}}^{1-\theta_3}\|\delta a(t)\|_{\dot{B}^{\frac{d}{2}-1}_{2,1}}^{\theta_3} \lesssim \var (1+t)^{-\frac{1}{2}(\sigma-\sigma_1+1)},
			\end{align}
			where $\theta_3$ satisfies that $\sigma=(1-\theta_3)(\sigma_1-1)+\theta_3(\frac{d}{2}-1)$.

            Consequently, we derive \eqref{decay3} and complete the proof of Theorem \ref{thm3}.
		%\end{proof}
		
		%We combine Propositions \ref{prop42} and \ref{prop43} to complete the proof of Theorem \ref{thm3}.
		\appendix

\section{Proof of Proposition \ref{thm1}}\label{APA}

%\noindent{\it\textbf{Proof of Proposition \ref{thm1}.}\ }
%In this section, we give the proof of Proposition \ref{thm1}.

%To this end, we divide the proof into several steps.

\subsection{Global existence}
We consider the sequence of approximate solutions ($n\geq 1$) defined by
\begin{equation}\label{AP1}
	\left\{
	\begin{aligned}
		&\partial_t a_{n+1}^* - \Delta a_{n+1}^* = -u_{n}^* \cdot \nabla a_{n}^*,\\
		&\partial_t u_{n+1}^* - \mu\Delta u_{n+1}^* = -\mathcal{P}(u_{n}^* \cdot \nabla u_{n}^*),\\
		&{\rm div}\, u_{n+1}^* = 0,\\
		&(a_{n+1}^*,u_{n+1}^*)(0,x) = (a_0^*,u_0^*).
	\end{aligned}
	\right.
\end{equation}
Set $(a_1^*,u_1^*) = (e^{t\Delta}a_0^*,e^{t\mu\Delta}u_0^*)$  serves as the first (linear) approximate solution in our iteration.

\noindent\textbf{Step 1: Construction of approximate solutions.}	 Assume that for $n\geq 1$, the pair $(a_{n}^*,u_{n}^*)$ exists and satisfies 
\begin{align*}
	&a_{n}^* \in C(\mathbb{R}_+; \dot{B}^{\frac{d}{2}-1}_{2,1}\cap \dot{B}^{\frac{d}{2}}_{2,1})
	   \cap L^1(\mathbb{R}_+;\dot{B}^{\frac{d}{2}+1}_{2,1}\cap \dot{B}^{\frac{d}{2}+2}_{2,1}),\quad u_{n}^* \in C(\mathbb{R}_+; \dot{B}^{\frac{d}{2}-1}_{2,1})
	   \cap L^1(\mathbb{R}_+;\dot{B}^{\frac{d}{2}+1}_{2,1}).
\end{align*}
In terms of the product law \eqref{uv2}, the nonlinear terms in \eqref{AP1} satisfy 
\begin{align}
	\|u_{n}^* \cdot \nabla a_{n}^*\|_{\widetilde{L}_t^1(\dot{B}^{\frac{d}{2}}_{2,1})}
	&\lesssim \|u_{n}^*\|_{\widetilde{L}_t^2(\dot{B}^{\frac{d}{2}}_{2,1})}
	         \|\nabla a_{n}^*\|_{\widetilde{L}_t^2(\dot{B}^{\frac{d}{2}}_{2,1})},\\
	\|u_{n}^* \cdot \nabla a_{n}^*\|_{\widetilde{L}_t^1(\dot{B}^{\frac{d}{2}-1}_{2,1})}
	&\lesssim \|u_{n}^*\|_{\widetilde{L}_t^\infty(\dot{B}^{\frac{d}{2}-1}_{2,1})}
	         \|\nabla a_{n}^*\|_{\widetilde{L}_t^2(\dot{B}^{\frac{d}{2}}_{2,1})},\\
	\|u_{n}^* \cdot \nabla u_{n}^*\|_{\widetilde{L}_t^1(\dot{B}^{\frac{d}{2}-1}_{2,1})}
	&\lesssim \|u_{n}^*\|_{\widetilde{L}_t^\infty(\dot{B}^{\frac{d}{2}-1}_{2,1})}
	         \|\nabla u_{n}^*\|_{\widetilde{L}_t^2(\dot{B}^{\frac{d}{2}}_{2,1})}.
\end{align}
Therefore, by the standard theory for parabolic equations, there exists a unique solution $(a_{n+1}^*,u_{n+1}^*)$ to \eqref{AP1} such that 
\begin{align}
	&a_{n+1}^* \in C(\mathbb{R}_+; \dot{B}^{\frac{d}{2}-1}_{2,1}\cap \dot{B}^{\frac{d}{2}}_{2,1})
	   \cap \widetilde{L}_t^1(\mathbb{R}_+;\dot{B}^{\frac{d}{2}+1}_{2,1}\cap \dot{B}^{\frac{d}{2}+2}_{2,1}),\\
	&u_{n+1}^* \in C(\mathbb{R}_+; \dot{B}^{\frac{d}{2}-1}_{2,1})
	   \cap \widetilde{L}_t^1(\mathbb{R}_+;\dot{B}^{\frac{d}{2}+1}_{2,1}).
\end{align}

\medskip
\noindent\textbf{Step 2: Uniform estimates.} We claim that there exist constants $\tilde{C}_0>0$ and $\var_0>0$ such that if 
\begin{align}
	E_n(t)=\|a_{n}^*\|_{\widetilde{L}_t^\infty(\dot{B}^{\frac{d}{2}-1}_{2,1}\cap \dot{B}^{\frac{d}{2}}_{2,1})}
	 +\|a_{n}^*\|_{\widetilde{L}_t^1(\dot{B}^{\frac{d}{2}+1}_{2,1}\cap \dot{B}^{\frac{d}{2}+2}_{2,1})}+\|u_{n}^*\|_{\widetilde{L}_t^\infty(\dot{B}^{\frac{d}{2}-1}_{2,1})}
	 +\|u_{n}^*\|_{\widetilde{L}_t^1(\dot{B}^{\frac{d}{2}+1}_{2,1})} 
	\leq \var_0,	
\end{align}	
then it holds for all $n\geq 0$ that 
\begin{align}\label{AP2}
	&E_n(t)\leq \tilde{C}_0\Big(\|a_0^*\|_{\dot{B}^{\frac{d}{2}}_{2,1}\cap \dot{B}^{\frac{d}{2}-1}_{2,1}}
                  +\|u_0^*\|_{\dot{B}^{\frac{d}{2}-1}_{2,1}}\Big).
	%&\leq 2C_0\Big(\|a_0^*\|_{\dot{B}^{\frac{d}{2}}_{2,1}\cap \dot{B}^{\frac{d}{2}-1}_{2,1}}+\|u_0^*\|_{\dot{B}^{\frac{d}{2}-1}_{2,1}}\Big).
\end{align}

We have already analyzed the source terms in \eqref{AP1}. According to maximal regularity estimates for the heat equation, one has 
\begin{align}
	&\|a_{n+1}^*\|_{\widetilde{L}_t^\infty(\dot{B}^{\frac{d}{2}-1}_{2,1}\cap \dot{B}^{\frac{d}{2}}_{2,1})}
	   +\|a_{n+1}^*\|_{\widetilde{L}_t^1(\dot{B}^{\frac{d}{2}+1}_{2,1}\cap \dot{B}^{\frac{d}{2}+2}_{2,1})}
	   +\|u_{n+1}^*\|_{\widetilde{L}_t^\infty(\dot{B}^{\frac{d}{2}-1}_{2,1})}
	   +\|u_{n+1}^*\|_{\widetilde{L}_t^1(\dot{B}^{\frac{d}{2}+1}_{2,1})}\nonumber\\
	&\leq C_1\Big(\|a_0^*\|_{\dot{B}^{\frac{d}{2}-1}_{2,1}\cap \dot{B}^{\frac{d}{2}}_{2,1}}
	              +\|u_0^*\|_{\dot{B}^{\frac{d}{2}-1}_{2,1}}\Big)
	   +C_1\Big(\|u_{n}^* \cdot \nabla a_{n}^*\|_{\widetilde{L}_t^1(\dot{B}^{\frac{d}{2}-1}_{2,1}\cap \dot{B}^{\frac{d}{2}+1}_{2,1})}
	           +\|u_{n}^* \cdot \nabla u_{n}^*\|_{\widetilde{L}_t^1(\dot{B}^{\frac{d}{2}}_{2,1})}\Big)\nonumber\\
	%&\leq C_1\Big(\|a_0^*\|_{\dot{B}^{\frac{d}{2}-1}_{2,1}\cap \dot{B}^{\frac{d}{2}}_{2,1}}+\|u_0^*\|_{\dot{B}^{\frac{d}{2}-1}_{2,1}}\Big)+C_2\Big(\|a_0^*\|_{\dot{B}^{\frac{d}{2}-1}_{2,1}\cap \dot{B}^{\frac{d}{2}}_{2,1}}+\|u_0^*\|_{\dot{B}_{2,1}^{\frac{d}{2}-1}}\Big)^2\nonumber\\
	&\leq C_1\Big(\|a_0^*\|_{\dot{B}^{\frac{d}{2}-1}_{2,1}\cap \dot{B}^{\frac{d}{2}}_{2,1}}
	              +\|u_0^*\|_{\dot{B}^{\frac{d}{2}-1}_{2,1}}\Big)
	   +C_2\var_0\Big(\|a_0^*\|_{\dot{B}^{\frac{d}{2}-1}_{2,1}\cap \dot{B}^{\frac{d}{2}}_{2,1}}
	                  +\|u_0^*\|_{\dot{B}_{2,1}^{\frac{d}{2}-1}}\Big).
\end{align}
Let $\tilde{C}_0 = 2C_1$ and choose $\var_0$ so small that $C_2\var_0 \leq \tilde{C}_0$. Then \eqref{AP2} holds for $n+1$, and by induction, it holds for all $n\geq 0$.

\medskip
\noindent\textbf{Step 3: Convergence.} We now show that the approximate sequence converges. Taking the difference between $(a_{n}^*, u_{n}^*)$ and $(a_{n-1}^*,u_{n-1}^*)$ for $n\geq 2$, we obtain
\begin{equation}\label{AP3}
	\left\{
	\begin{aligned}
		&\partial_t(a_{n}^* - a_{n-1}^*) - \Delta (a_{n}^* - a_{n-1}^*) \\
		&\qquad= -u_{n-1}^* \cdot \nabla (a_{n-1}^* - a_{n-2}^*)
		         -(u_{n-1}^* - u_{n-2}^*) \cdot \nabla a_{n-2}^*,\\[1mm]
		&\partial_t(u_{n}^* - u_{n-1}^*) -\mu\Delta (u_{n}^* - u_{n-1}^*) \\
		&\qquad= -u_{n-1}^* \cdot \nabla(u_{n-1}^* - u_{n-2}^*)
		         -(u_{n-1}^* - u_{n-2}^*) \cdot \nabla u_{n-2}^*,\\[1mm]
		&(a_{n}^* - a_{n-1}^*,u_{n}^* - u_{n-1}^*)(0,x) = (0,0).
	\end{aligned}
	\right.
\end{equation}

Applying maximal regularity estimates, the product law \eqref{uv2} and \eqref{AP2} yields 
\begin{align}
	&\|(a_{n}^* - a_{n-1}^*,u_{n}^* - u_{n-1}^*)\|_{\widetilde{L}_t^\infty(\dot{B}^{\frac{d}{2}-1}_{2,1})\cap \widetilde{L}_t^1(\dot{B}^{\frac{d}{2}+1}_{2,1})}	\nonumber\\
	&\lesssim \|u_{n-1}^* \cdot \nabla(a_{n-1}^* - a_{n-2}^*)\|_{\widetilde{L}_t^1(\dot{B}^{\frac{d}{2}-1}_{2,1})}
	      +\|(u_{n-1}^* - u_{n-2}^*) \cdot \nabla a_{n-2}^*\|_{\widetilde{L}_t^1(\dot{B}_{2,1}^{\frac{d}{2}-1})}\nonumber\\
	&\quad +\|u_{n-1}^* \cdot \nabla(u_{n-1}^* - u_{n-2})\|_{\widetilde{L}_t^1(\dot{B}_{2,1}^{\frac{d}{2}-1})}
	      +\|(u_{n-1}^* - u_{n-2}^*) \cdot \nabla u_{n-2}^*\|_{\widetilde{L}_t^1(\dot{B}_{2,1}^{\frac{d}{2}-1})}\nonumber\\
	&\lesssim \|(a_{n-1}^*,u_{n-1}^*)\|_{\widetilde{L}_t^2(\dot{B}_{2,1}^{\frac{d}{2}})}
	          \|(a_{n-1}^* - a_{n-2}^*, u_{n-1}^* - u_{n-2}^*)\|_{\widetilde{L}_t^2(\dot{B}_{2,1}^{\frac{d}{2}})}\nonumber\\
	&\leq C_3\var_0 \|(a_{n-1}^* - a_{n-2}^*, u_{n-1}^* - u_{n-2}^*)\|_{\widetilde{L}_t^\infty(\dot{B}_{2,1}^{\frac{d}{2}-1})\cap \widetilde{L}_t^1(\dot{B}_{2,1}^{\frac{d}{2}+1})}.
\end{align}
Let $\var_0\leq \min\{\frac{1}{C_2},\frac{1}{2C_3}\}$. Then, for $n\geq2$, we obtain 
\begin{align}
	&\|(a_{n-1}^* - a_{n-2}^*,u_{n-1}^* - u_{n-2}^*)\|_{\widetilde{L}_t^\infty(\dot{B}_{2,1}^{\frac{d}{2}-1})\cap \widetilde{L}_t^1(\dot{B}_{2,1}^{\frac{d}{2}+1})}\nonumber\\
	&\leq \frac{1}{2^{n-2}}\|(a_2^* - a_1^*, u_2^* - u_1^*)\|_{\widetilde{L}_t^\infty(\dot{B}_{2,1}^{\frac{d}{2}-1})\cap \widetilde{L}_t^1(\dot{B}_{2,1}^{\frac{d}{2}+1})}.\label{mao}
\end{align}
Since $(a_1^*,u_1^*) = (e^{t\Delta}a_0^*,e^{t\mu\Delta}u_0^*)$ and $(a_{n}^*, u_{n}^*) =\sum\limits_{k=2}^{n}(a_k^* - a_{k-1}^*, u_k^* - u_{k-1}^*)+(a_1^*,u_1^*)$, 
one has
\begin{align*}
	\sum_{i=2}^n\|(a_{i}^* - a_{i-1}^*, u_{i}^* - u_{i-1}^*)\|_{\widetilde{L}_t^\infty(\dot{B}_{2,1}^{\frac{d}{2}-1})\cap \widetilde{L}_t^1(\dot{B}_{2,1}^{\frac{d}{2}+1})}
	\lesssim \|a_0^*\|_{\dot{B}^{\frac{d}{2}-1}_{2,1}\cap \dot{B}^{\frac{d}{2}}_{2,1}}
	+\|u_0^*\|_{\dot{B}_{2,1}^{\frac{d}{2}-1}}.
\end{align*}
This allows us to define 
\begin{align*}
	(a^*,u^*): =\sum_{k=2}^{\infty}(a_k^* - a_{k-1}^*, u_k^* - u_{k-1}^*)+(a_1^*,u_1^*).	
\end{align*}
Then we discover that
\begin{align}
	&\|(a_{n}^* - a^*, u_{n}^* - u^*)\|_{\widetilde{L}_t^\infty(\dot{B}_{2,1}^{\frac{d}{2}-1})\cap \widetilde{L}_t^1(\dot{B}_{2,1}^{\frac{d}{2}+1})}	\nonumber\\
	&\lesssim \sum_{k=n+1}^{\infty}\|(a_k^* - a_{k-1}^*,u_{k}^* - u_{k-1}^*)\|_{\widetilde{L}_t^\infty(\dot{B}_{2,1}^{\frac{d}{2}-1})\cap \widetilde{L}_t^1(\dot{B}_{2,1}^{\frac{d}{2}+1})}\lesssim \sum_{k=n+1}^{\infty}\frac{1}{2^k} \rightarrow 0\quad \text{as}\quad n\rightarrow +\infty.
\end{align}

Consequently, we see that the sequence $(a_n^*, u_n^*)$ converges strongly to a limit $(a^*,u^*)$ as $n\rightarrow\infty$. After passing to the limit in \eqref{AP1} in the sense of distributions, one can conclude that $(\rho^*,u^*)$ with $\rho^*=1+a^*$ is a global solution to the Cauchy problem \eqref{main2}-\eqref{main2d} for the KS-NS equations. The proof of uniqueness is similar to \eqref{AP3}-\eqref{mao}, so the details are omitted. This completes the global well-posedness part of Proposition \ref{thm1}.

\subsection{Large-time behavior}
We now proceed to study the large-time behavior of the solution to the KS-NS equations. In addition to \eqref{a0},
if we further assume $
			(\rho_0^*-1, u_0^*)\in \dot{B}_{2,\infty}^{\sigma_1}$ with $\sigma_1\in [-\frac{d}{2},\frac{d}{2}-1)$, then we are able to show that the solution $(a, u)$ to KS-NS satisfies the decay estimates \eqref{decayNSKS},
which in particular implies that $(a, u)$ becomes smooth for any positive time $t>0$.

We divide the proof into three steps.

\medskip
\noindent\textbf{Step 1: Time-weighted  estimates.} For any $M>\max\{1,\frac{1}{2}(\sigma-\sigma_1)\}$, it holds that
\begin{equation}\label{AP4}
	\left\{
	\begin{aligned}
		&\partial_t(t^M a^*) - \Delta (t^M a^*)
		 = Mt^{M-1}a^* + t^M u^* \cdot \nabla a^*,\\
		&\partial_t(t^M u^*) - \mu\Delta (t^M u^*)
		 = Mt^{M-1}u^* + t^M u^* \cdot \nabla u^*,\\
		&(t^M a^*, t^M u^*)(0,x) = (0,0).
	\end{aligned}
	\right.
\end{equation}
By maximal regularity estimates, one has 
\begin{align}\label{TP1}
	&\|\tau^M(a^*,u^*)\|_{\widetilde{L}_t^\infty(\dot{B}_{2,1}^\sigma)}
	 +\int_0^t\|\tau^M(a^*,u^*)(\tau)\|_{\dot{B}_{2,1}^{\sigma+2}}\,d\tau\nonumber\\
	&\lesssim \int_0^t \Big(\tau^{M-1}\|(a^*, u^*)(\tau)\|_{\dot{B}_{2,1}^\sigma}
	   +\|\tau^M(u^*\cdot\nabla a^*, u^*\cdot\nabla u^*)(\tau)\|_{\dot{B}_{2,1}^\sigma}\Big)\,d\tau.	
\end{align}

Using the interpolation and Hölder's inequalities, we obtain 
\begin{align}\label{AP5}
	&\int_0^t \tau^{M-1}\|(a^*, u^*)(\tau)\|_{\dot{B}_{2,1}^\sigma}\,d\tau\nonumber\\	
	&\lesssim \int_0^t \tau^{M-1}
	  \|(a^*, u^*)\|_{\dot{B}_{2,\infty}^{\sigma_1}}^\theta
	  \|(a^*, u^*)\|_{\dot{B}_{2,1}^{\sigma+2}}^{1-\theta}\,d\tau\nonumber\\
	&\lesssim \|(a^*,u^*)\|_{\widetilde{L}_t^\infty(\dot{B}_{2,\infty}^{\sigma_1})}^\theta
	  \Big(\int_0^t \tau^{\frac{M-1-(1-\theta)M}{\theta}}\,d\tau\Big)^\theta
	  \|\tau^M (a^*, u^*)\|_{\widetilde{L}_t^1(\dot{B}_{2,1}^{\sigma+2})}^{1-\theta}\nonumber\\
	%&\lesssim t^{\theta\big(M-\frac{1}{2}(\sigma-\sigma_1)\big)}   \|(a^*,u^*)\|_{\widetilde{L}_t^\infty(\dot{B}_{2,\infty}^{\sigma_1})}^\theta \|\tau^M (a^*, u^*)\|_{\widetilde{L}_t^1(\dot{B}_{2,1}^{\sigma+2})}^{1-\theta}\nonumber\\
	&\lesssim \eta_2\|\tau^M(a^*, u^*)\|_{\widetilde{L}_t^1(\dot{B}_{2,1}^{\sigma+2})}
	  +\frac{1}{\eta_2}
	   \|(a^*, u^*)\|_{\widetilde{L}_t^\infty(\dot{B}_{2,\infty}^{\sigma_1})}
	   t^{M-\frac{1}{2}(\sigma-\sigma_1)},
\end{align}
where the index $\sigma$ satisfies $\sigma=\theta \sigma_1+(1-\theta)(\sigma+2)$ and $\eta_2>0$ is a constant that will be chosen later.

Concerning the nonlinear terms in \eqref{TP1}, we divide into two cases:
\begin{itemize}
	\item If $\sigma_1<\sigma\leq \frac{d}{2}$, then by the product law \eqref{uv2}, it holds 
	\begin{align}\label{AP6}
		\|\tau^M(u^*\cdot\nabla a^*, u^*\cdot\nabla u^*)\|_{\widetilde{L}_t^1(\dot{B}_{2,1}^\sigma)}	
		&\lesssim \int_0^t \tau^M\|u^*\|_{\dot{B}_{2,1}^\sigma}
		  \|(\nabla a^*, \nabla u^*)\|_{\dot{B}_{2,1}^{\frac{d}{2}}}\,d\tau\nonumber\\
		&\lesssim \int_0^t\|(a^*, u^*)\|_{\dot{B}_{2,1}^{\frac{d}{2}+1}}
		  \|\tau^M(a^*, u^*)\|_{\dot{B}_{2,1}^\sigma}\,d\tau.
	\end{align} 
	
	\item If $\sigma>\frac{d}{2}$, then the Moser type estimate \eqref{uv1} and the interpolation \eqref{inter} ensure that
	\begin{align}\label{AP7}
		&\|\tau^M(u^*\cdot\nabla a^*, u^*\cdot\nabla u^*)\|_{\widetilde{L}_t^1(\dot{B}_{2,1}^\sigma)}	\nonumber\\
		&\lesssim \int_0^t \tau^M\Big(\|u^*\|_{L_t^\infty}
		 \|(\nabla a^*, \nabla u^*)\|_{\dot{B}_{2,1}^\sigma}
		 +\|u^*\|_{\dot{B}_{2,1}^\sigma}\|(\nabla a^*,\nabla u^*)\|_{L_t^\infty}\Big)\,d\tau\nonumber\\
		&\lesssim \int_0^t\tau^M\Big(\|u^*\|_{\dot{B}_{2,1}^{\frac{d}{2}}}
		 \|(a^*, u^*)\|_{\dot{B}_{2,1}^\sigma}^{\frac{1}{2}}
		 \|(a^*, u^*)\|_{\dot{B}_{2,1}^{\sigma+2}}^{\frac{1}{2}}
		 +\|u^*\|_{\dot{B}_{2,1}^\sigma}\|(a^*, u^*)\|_{\dot{B}_{2,1}^{\frac{d}{2}+1}}
		 \Big)\,d\tau\nonumber\\
		&\lesssim \eta_2\|\tau^M(a^*, u^*)\|_{\widetilde{L}_t^1(\dot{B}_{2,1}^{\sigma+2})}
		  +\int_0^t\Big(\|u^*\|_{\dot{B}_{2,1}^{\frac{d}{2}}}^2
		  +\|(a^*, u^*)\|_{\dot{B}_{2,1}^{\frac{d}{2}+1}}\Big)
		   \|\tau^M(a^*, u^*)\|_{\dot{B}_{2,1}^\sigma}\,d\tau.
	\end{align} 
\end{itemize}

Combining \eqref{TP1}–\eqref{AP7} and choosing $\eta_2>0$ sufficiently small, we obtain 
\begin{align*}
	&\|\tau^M (a^*, u^*)\|_{\dot{B}_{2,1}^\sigma}
	 +\int_0^t\tau^M\|(a^*, u^*)(\tau)\|_{\dot{B}_{2,1}^{\sigma+2}}\,d\tau\nonumber\\
	&\lesssim \int_0^t\Big(\|u^*\|_{\dot{B}_{2,1}^{\frac{d}{2}}}^2
	 +\|(a^*, u^*)\|_{\dot{B}_{2,1}^{\frac{d}{2}+1}}\Big)
	 \|\tau^M(a^*, u^*)\|_{\dot{B}_{2,1}^\sigma}\,d\tau
	 +\|(a^*, u^*)\|_{\widetilde{L}_t^\infty(\dot{B}_{2,\infty}^{\sigma_1})}
	   t^{M-\frac{1}{2}(\sigma-\sigma_1)}.
\end{align*}

By Grönwall's inequality and the global existence estimates in \eqref{r0}, it follows that
\begin{align}\label{AP8}
	&\|\tau^M(a^*, u^*)\|_{\dot{B}_{2,1}^\sigma}
	 +\int_0^t \|\tau^M(a^*, u^*)(\tau)\|_{\dot{B}_{2,1}^{\sigma+2}}\,d\tau\nonumber\\
	&\lesssim \|(a^*, u^*)\|_{\widetilde{L}_t^\infty(\dot{B}_{2,\infty}^{\sigma_1})}
	  t^{M-\frac{1}{2}(\sigma-\sigma_1)}
	  \Big(1+{\rm{Exp}}\Big\{\int_0^t \Big(\|u^*\|_{\dot{B}_{2,1}^{\frac{d}{2}}}^2
	    +\|(a^*,u^*)\|_{\dot{B}_{2,1}^{\frac{d}{2}+1}}\Big)\,d\tau\Big\}\Big)\nonumber\\
        &\lesssim \|(a^*, u^*)\|_{\widetilde{L}_t^\infty(\dot{B}_{2,\infty}^{\sigma_1})}.
\end{align} 

%According to the global existence estimates in \eqref{r0}, we have 
%\begin{align*}	
	%\int_0^t \Big(\|(a^*,u^*)\|_{\dot{B}_{2,1}^{\frac{d}{2}+1}}
	%   +\|u^*\|_{\dot{B}_{2,1}^{\frac{d}{2}}}^2\Big)\,d\tau\lesssim 1.
%\end{align*} 

\medskip
\noindent\textbf{Step 2: Gain of time-decay rates.}  By maximal regularity for the KS-NS system \eqref{main4}, we obtain 
\begin{align}\label{evolution00}
	&\|(a^*, u^*)\|_{\widetilde{L}_t^\infty(\dot{B}_{2,\infty}^{\sigma_1})}
	 +\|(a^*, u^*)\|_{ \widetilde{L}_t^1(\dot{B}_{2,\infty}^{\sigma_1+2})}\nonumber\\
	&\lesssim \|(a_0^*, u_0^*)\|_{\dot{B}_{2,\infty}^{\sigma_1}}
	 +\|(u^*\cdot\nabla a^*, u^*\cdot\nabla u^*)\|_{\widetilde{L}_t^1(\dot{B}_{2,\infty}^{\sigma_1})}.
\end{align}
Moreover, using \eqref{uv3} leads to
\begin{align*}
	\|(u^*\cdot\nabla a^*, u^*\cdot\nabla u^*)\|_{\widetilde{L}_t^1(\dot{B}_{2,\infty}^{\sigma_1})}
	%&\lesssim \int_0^t \|(u^*\cdot\nabla a^*, u^* \cdot\nabla u^*)\|_{\dot{B}_{2,\infty}^{\sigma_1}}\,d\tau\\ 
    &\lesssim \int_0^t \|(\nabla a^*, \nabla u^*)\|_{\dot{B}_{2,1}^{\frac{d}{2}}}
	   \|u^*\|_{\dot{B}_{2,\infty}^{\sigma_1}}\,d\tau.
\end{align*}
By Grönwall's inequality and the fact $(\nabla a^*, \nabla u^*)\in L^1(\mathbb{R}^+;\dot{B}^{\frac{d}{2}}_{2,1})$, we deduce 
\begin{align}\label{AP9}
	&\|(a^*, u^*)\|_{\widetilde{L}_t^\infty(\dot{B}_{2,\infty}^{\sigma_1})\cap \widetilde{L}_t^1(\dot{B}_{2,\infty}^{\sigma_1+2})} %&\lesssim e^{\int_0^t\|(a^*, u^*)\|_{\dot{B}_{2,1}^{\frac{d}{2}+1}}\,d\tau}  \|(a_0^*, u_0^*)\|_{\dot{B}_{2,\infty}^{\sigma_1}}
	\lesssim \|(a_0^*, u_0^*)\|_{\dot{B}_{2,\infty}^{\sigma_1}}.
\end{align}
Substituting the estimate \eqref{AP9} into \eqref{AP8}, we obtain
\begin{align*}
	t^M\|(a^*,u^*)\|_{\dot{B}_{2,1}^\sigma}
	\lesssim t^{M-\frac{1}{2}(\sigma-\sigma_1)},
\end{align*}
which immediately implies the desired decay estimate \eqref{decayNSKS}.  This completes the proof of Proposition \ref{thm1}.

		\section*{Acknowledgements}
M. Fei is supported partly by the National Science Foundation of China (Grant No.12271004, No.12471222) and the Natural Science Foundation of Anhui Province of China (Grant No.2308085J10). H. Tang is supported by the National Science Foundation of China (Grant No.12501293) and the  Natural Science Foundation  of Anhui Province of China  (Grant No.2408085QA031). L. Shou is supported by the National Science Foundation of China (Grant No.12301275).

	\bibliographystyle{abbrv}  %类似plain，将月份全拼改为缩写，更显紧凑.
	\bibliography{reference}

\begin{thebibliography}{10}

\bibitem{MR3227296}
H.-O. Bae, Y.-P. Choi, S.-Y. Ha, and M.-J. Kang.
\newblock Global existence of strong solution for the
  {C}ucker-{S}male-{N}avier-{S}tokes system.
\newblock {\em J. Differential Equations}, 257(6):2225--2255, 2014.

\bibitem{bahouri1}
H.~Bahouri, J.-Y. Chemin, and R.~Danchin.
\newblock {\em Fourier analysis and nonlinear partial differential equations},
  volume 343 of {\em Grundlehren der mathematischen Wissenschaften [Fundamental
  Principles of Mathematical Sciences]}.
\newblock Springer, Heidelberg, 2011.

\bibitem{MR2226800}
C.~Baranger, L.~Boudin, P.-E. Jabin, and S.~Mancini.
\newblock A modeling of biospray for the upper airways.
\newblock In {\em C{EMRACS} 2004---mathematics and applications to biology and
  medicine}, volume~14 of {\em ESAIM Proc.}, pages 41--47. EDP Sci., Les Ulis,
  2005.

\bibitem{MR2754341}
K.~Beauchard and E.~Zuazua.
\newblock Large time asymptotics for partially dissipative hyperbolic systems.
\newblock {\em Arch. Ration. Mech. Anal.}, 199(1):177--227, 2011.

\bibitem{MR2349349}
S.~Bianchini, B.~Hanouzet, and R.~Natalini.
\newblock Asymptotic behavior of smooth solutions for partially dissipative
  hyperbolic systems with a convex entropy.
\newblock {\em Comm. Pure Appl. Math.}, 60(11):1559--1622, 2007.

\bibitem{MR2041452}
L.~Boudin, L.~Desvillettes, and R.~Motte.
\newblock A modeling of compressible droplets in a fluid.
\newblock {\em Commun. Math. Sci.}, 1(4):657--669, 2003.

\bibitem{MR709743}
R.~Caflisch and G.~C. Papanicolaou.
\newblock Dynamic theory of suspensions with {B}rownian effects.
\newblock {\em SIAM J. Appl. Math.}, 43(4):885--906, 1983.

\bibitem{MR3465376}
J.~A. Carrillo, Y.-P. Choi, and T.~K. Karper.
\newblock On the analysis of a coupled kinetic-fluid model with local alignment
  forces.
\newblock {\em Ann. Inst. H. Poincar\'{e} C Anal. Non Lin\'{e}aire},
  33(2):273--307, 2016.

\bibitem{MR1753481}
J.-Y. Chemin.
\newblock Th\'eor\`emes d'unicit\'e{} pour le syst\`eme de {N}avier-{S}tokes
  tridimensionnel.
\newblock {\em J. Anal. Math.}, 77:27--50, 1999.

\bibitem{MR3356581}
Y.-P. Choi.
\newblock Compressible {E}uler equations interacting with incompressible flow.
\newblock {\em Kinet. Relat. Models}, 8(2):335--358, 2015.

\bibitem{MR3546341}
Y.-P. Choi.
\newblock Global classical solutions and large-time behavior of the two-phase
  fluid model.
\newblock {\em SIAM J. Math. Anal.}, 48(5):3090--3122, 2016.

\bibitem{MR3723164}
Y.-P. Choi.
\newblock Finite-time blow-up phenomena of {V}lasov/{N}avier-{S}tokes equations
  and related systems.
\newblock {\em J. Math. Pures Appl. (9)}, 108(6):991--1021, 2017.

\bibitem{MR4344262}
Y.-P. Choi and J.~Jung.
\newblock Asymptotic analysis for a
  {V}lasov-{F}okker-{P}lanck/{N}avier-{S}tokes system in a bounded domain.
\newblock {\em Math. Models Methods Appl. Sci.}, 31(11):2213--2295, 2021.

\bibitem{MR4316126}
Y.-P. Choi and J.~Jung.
\newblock On the {C}auchy problem for the pressureless
  {E}uler-{N}avier-{S}tokes system in the whole space.
\newblock {\em J. Math. Fluid Mech.}, 23(4):Paper No. 99, 16, 2021.

\bibitem{choijung24}
Y.-P. Choi, J.~Jung, and J.~Kim.
\newblock A revisit to the pressureless {E}uler--{N}avier-{S}tokes system in
  the whole space and its optimal temporal decay.
\newblock {\em J. Differential Equations}, 401:231--281, 2024.

\bibitem{MR3403400}
Y.-P. Choi and B.~Kwon.
\newblock Global well-posedness and large-time behavior for the inhomogeneous
  {V}lasov-{N}avier-{S}tokes equations.
\newblock {\em Nonlinearity}, 28(9):3309--3336, 2015.

\bibitem{MR3487272}
Y.-P. Choi and B.~Kwon.
\newblock The {C}auchy problem for the pressureless {E}uler/isentropic
  {N}avier-{S}tokes equations.
\newblock {\em J. Differential Equations}, 261(1):654--711, 2016.

\bibitem{MR2255190}
J.-F. Coulombel and T.~Goudon.
\newblock The strong relaxation limit of the multidimensional isothermal
  {E}uler equations.
\newblock {\em Trans. Amer. Math. Soc.}, 359(2):637--648, 2007.

\bibitem{MR4419369}
T.~Crin-Barat and R.~Danchin.
\newblock Partially dissipative one-dimensional hyperbolic systems in the
  critical regularity setting, and applications.
\newblock {\em Pure Appl. Anal.}, 4(1):85--125, 2022.

\bibitem{MR4612415}
T.~Crin-Barat and R.~Danchin.
\newblock Global existence for partially dissipative hyperbolic systems in the
  {$L^p$} framework, and relaxation limit.
\newblock {\em Math. Ann.}, 386(3-4):2159--2206, 2023.

\bibitem{MR4643014}
T.~Crin-Barat, Q.~He, and L.-Y. Shou.
\newblock The hyperbolic-parabolic chemotaxis system for vasculogenesis: global
  dynamics and relaxation limit toward a {K}eller-{S}egel model.
\newblock {\em SIAM J. Math. Anal.}, 55(5):4445--4492, 2023.

\bibitem{MR4878172}
T.~Crin-Barat, Y.-J. Peng, L.-Y. Shou, and J.~Xu.
\newblock A new characterization of the dissipation structure and the
  relaxation limit for the compressible {E}uler-{M}axwell system.
\newblock {\em J. Funct. Anal.}, 289(2):Paper No. 110918, 51, 2025.

\bibitem{MR4548999}
T.~Crin-Barat and L.-Y. Shou.
\newblock Diffusive relaxation limit of the multi-dimensional {J}in-{X}in
  system.
\newblock {\em J. Differential Equations}, 357:302--331, 2023.

\bibitem{MR4752555}
T.~Crin-Barat, L.-Y. Shou, and J.~Tan.
\newblock Quantitative derivation of a two-phase porous media system from the
  one-velocity {B}aer-{N}unziato and {K}apila systems.
\newblock {\em Nonlinearity}, 37(7):Paper No. 075002, 55, 2024.

\bibitem{MR4970471}
T.~Crin-Barat, L.-Y. Shou, and J.~Zhang.
\newblock Strong relaxation limit and uniform time asymptotics of the
  {J}in-{X}in model in the {$L^p$} framework.
\newblock {\em Sci. China Math.}, 68(11):2593--2628, 2025.

\bibitem{MR4951948}
T.~Crin-Barat, L.-Y. Shou, and E.~Zuazua.
\newblock Large time asymptotics for partially dissipative hyperbolic systems
  without {F}ourier analysis: application to the nonlinearly damped
  {$p$}-system.
\newblock {\em Ann. Inst. H. Poincar\'e{} C Anal. Non Lin\'eaire},
  42(5):1165--1218, 2025.

\bibitem{MR3468916}
C.~M. Dafermos.
\newblock {\em Hyperbolic conservation laws in continuum physics}, volume 325
  of {\em Grundlehren der mathematischen Wissenschaften [Fundamental Principles
  of Mathematical Sciences]}.
\newblock Springer-Verlag, Berlin, fourth edition, 2016.

\bibitem{fqw2024}
Z.~Fang, K.~Qi, and H.~Wen.
\newblock The small {D}eborah number limit for the fluid-particle flows:
  incompressible case.
\newblock {\em Math. Models Methods Appl. Sci.}, 34(12):2265--2304, 2024.

\bibitem{MR2729436}
T.~Goudon, L.~He, A.~Moussa, and P.~Zhang.
\newblock The {N}avier-{S}tokes-{V}lasov-{F}okker-{P}lanck system near
  equilibrium.
\newblock {\em SIAM J. Math. Anal.}, 42(5):2177--2202, 2010.

\bibitem{MR2106333}
T.~Goudon, P.-E. Jabin, and A.~Vasseur.
\newblock Hydrodynamic limit for the {V}lasov-{N}avier-{S}tokes equations. {I}.
  {L}ight particles regime.
\newblock {\em Indiana Univ. Math. J.}, 53(6):1495--1515, 2004.

\bibitem{MR2106334}
T.~Goudon, P.-E. Jabin, and A.~Vasseur.
\newblock Hydrodynamic limit for the {V}lasov-{N}avier-{S}tokes equations.
  {II}. {F}ine particles regime.
\newblock {\em Indiana Univ. Math. J.}, 53(6):1517--1536, 2004.

\bibitem{MR4076066}
D.~Han-Kwan, A.~Moussa, and I.~Moyano.
\newblock Large time behavior of the {V}lasov-{N}avier-{S}tokes system on the
  torus.
\newblock {\em Arch. Ration. Mech. Anal.}, 236(3):1273--1323, 2020.

\bibitem{MR4776378}
F.~Huang, H.~Tang, G.~Wu, and W.~Zou.
\newblock Global well-posedness and large-time behavior of classical solutions
  to the {E}uler-{N}avier-{S}tokes system in {$\mathbb{R}^3$}.
\newblock {\em J. Differential Equations}, 410:76--112, 2024.

\bibitem{htz24}
F.~Huang, H.~Tang, and W.~Zou.
\newblock Global well-posedness and optimal time decay rates of solutions to
  the pressureless {E}uler-{N}avier-{S}tokes system.
\newblock {\em Commun. Math. Anal. Appl.}, 3(4):582--623, 2024.

\bibitem{MR1900673}
S.~Junca and M.~Rascle.
\newblock Strong relaxation of the isothermal {E}uler system to the heat
  equation.
\newblock {\em Z. Angew. Math. Phys.}, 53(2):239--264, 2002.

\bibitem{MR2107774}
S.~Kawashima and W.-A. Yong.
\newblock Dissipative structure and entropy for hyperbolic systems of balance
  laws.
\newblock {\em Arch. Ration. Mech. Anal.}, 174(3):345--364, 2004.

\bibitem{MR165243}
P.~D. Lax.
\newblock Development of singularities of solutions of nonlinear hyperbolic
  partial differential equations.
\newblock {\em J. Math. Phys.}, 5:611--613, 1964.

\bibitem{MR4596744}
H.-L. Li and L.-Y. Shou.
\newblock Global existence and optimal time-decay rates of the compressible
  {N}avier-{S}tokes-{E}uler system.
\newblock {\em SIAM J. Math. Anal.}, 55(3):1810--1846, 2023.

\bibitem{MR3057138}
C.~Lin and J.-F. Coulombel.
\newblock The strong relaxation limit of the multidimensional {E}uler
  equations.
\newblock {\em NoDEA Nonlinear Differential Equations Appl.}, 20(3):447--461,
  2013.

\bibitem{MR872145}
T.-P. Liu.
\newblock Hyperbolic conservation laws with relaxation.
\newblock {\em Comm. Math. Phys.}, 108(1):153--175, 1987.

\bibitem{MR1042662}
P.~Marcati and A.~Milani.
\newblock The one-dimensional {D}arcy's law as the limit of a compressible
  {E}uler flow.
\newblock {\em J. Differential Equations}, 84(1):129--147, 1990.

\bibitem{MR920759}
P.~Marcati, A.~J. Milani, and P.~Secchi.
\newblock Singular convergence of weak solutions for a quasilinear
  nonhomogeneous hyperbolic system.
\newblock {\em Manuscr. Math.}, 60(1):49--69, 1988.

\bibitem{1981Collective}
P.~J. O'Rourke.
\newblock Collective drop effects on vaporizing liquid sprays.
\newblock {\em Phd Thesis Princeton University}, 1981.

\bibitem{MR3519534}
Y.-J. Peng and V.~Wasiolek.
\newblock Parabolic limit with differential constraints of first-order
  quasilinear hyperbolic systems.
\newblock {\em Ann. Inst. H. Poincar\'e{} C Anal. Non Lin\'eaire},
  33(4):1103--1130, 2016.

\bibitem{MR1419319}
T.~Runst and W.~Sickel.
\newblock {\em Sobolev spaces of fractional order, {N}emytskij operators, and
  nonlinear partial differential equations}, volume~3 of {\em De Gruyter Series
  in Nonlinear Analysis and Applications}.
\newblock Walter de Gruyter \& Co., Berlin, 1996.

\bibitem{xuzhangshou}
L.-Y. Shou, J.~Xu, and P.~Zhang.
\newblock Decay character theory for partially dissipative hyperbolic systems
  of balance laws.
\newblock {\em arXiv:2507.11450}, 2025.

\bibitem{MR1978315}
T.~C. Sideris, B.~Thomases, and D.~Wang.
\newblock Long time behavior of solutions to the 3{D} compressible {E}uler
  equations with damping.
\newblock {\em Comm. Partial Differential Equations}, 28(3-4):795--816, 2003.

\bibitem{MR4262061}
H.~Tang and Y.~Zhang.
\newblock Large time behavior of solutions to a two phase fluid model in
  {$\mathbb{R}^3$}.
\newblock {\em J. Math. Anal. Appl.}, 503(2):Paper No. 125296, 23, 2021.

\bibitem{2006Large}
I.~Vinkovic, C.~Aguirre, S.~Simo{\"e}s, and M.~Gorokhovski.
\newblock Large eddy simulation of droplet dispersion for inhomogeneous
  turbulent wall flow.
\newblock {\em Int. J. Multiph. Flow}, 32(3):344--364, 2006.

\bibitem{MR1834121}
W.~Wang and T.~Yang.
\newblock The pointwise estimates of solutions for {E}uler equations with
  damping in multi-dimensions.
\newblock {\em J. Differential Equations}, 173(2):410--450, 2001.

\bibitem{Williams1958Spray}
F.~A. Williams.
\newblock Spray combustion and atomization.
\newblock {\em Physics of Fluids}, 1(6):541, 1958.

\bibitem{MR4175837}
G.~Wu, Y.~Zhang, and L.~Zhou.
\newblock Optimal large-time behavior of the two-phase fluid model in the whole
  space.
\newblock {\em SIAM J. Math. Anal.}, 52(6):5748--5774, 2020.

\bibitem{MR3149065}
J.~Xu and S.~Kawashima.
\newblock Global classical solutions for partially dissipative hyperbolic
  system of balance laws.
\newblock {\em Arch. Ration. Mech. Anal.}, 211(2):513--553, 2014.

\bibitem{MR3360739}
J.~Xu and S.~Kawashima.
\newblock The optimal decay estimates on the framework of {B}esov spaces for
  generally dissipative systems.
\newblock {\em Arch. Ration. Mech. Anal.}, 218(1):275--315, 2015.

\bibitem{MR3003282}
J.~Xu and Z.~Wang.
\newblock Relaxation limit in {B}esov spaces for compressible {E}uler
  equations.
\newblock {\em J. Math. Pures Appl. (9)}, 99(1):43--61, 2013.

\bibitem{MR2058165}
W.-A. Yong.
\newblock Entropy and global existence for hyperbolic balance laws.
\newblock {\em Arch. Ration. Mech. Anal.}, 172(2):247--266, 2004.

\bibitem{MR3073216}
C.~Yu.
\newblock Global weak solutions to the incompressible
  {N}avier-{S}tokes-{V}lasov equations.
\newblock {\em J. Math. Pures Appl. (9)}, 100(2):275--293, 2013.

\end{thebibliography}

		\vspace{5ex}

		(M.-W. Fei)\par\nopagebreak
		\noindent\textsc{School of Mathematics and Statistics, Anhui Normal University, Wuhu 241002, P. R. China}
		
		Email address: {\texttt{mwfei@ahnu.edu.cn}}
		
		\vspace{3ex}

		(L.-Y. Shou)\par\nopagebreak
		\noindent\textsc{School of Mathematical Sciences, Ministry of Education Key Laboratory of NSLSCS, 
and Key Laboratory of Jiangsu Provincial Universities of FDMTA, Nanjing Normal University, Nanjing 210023, P. R. China}
		
		Email address: {\texttt{shoulingyun11@gmail.com}}
		
		\vspace{3ex}

		(H.-Z. Tang)\par\nopagebreak
		\noindent\textsc{School of Mathematics and Statistics, Anhui Normal University, Wuhu 241002, P. R. China}
		
		Email address: {\texttt{houzhitang@ahnu.edu.cn}}

	\end{document}